\newcommand\linkcolor{BrickRed}
\let\crefinner=\cref
\newcommand\mycref[1]{{\color{\linkcolor}\crefinner{#1}}}
\def\cref{\mycref}
\let\refinner=\ref
\newcommand\myref[1]{{\color{\linkcolor}\refinner{#1}}}
\def\ref{\myref}
\DeclareMathAlphabet{\matheuler}{U}{eus}{m}{n}
\setlist[enumerate]{align=right, label=\llap{\bfseries{(\arabic*)}}}
\newenvironment{alphabetize}{\begin{enumerate}[label=\llap{\bfseries{(\alph*)}}]}{\end{enumerate}}
\pgfplotsset{width=7cm,compat=1.9}
\definecolor{codegreen}{rgb}{0,0.6,0}
\definecolor{codegray}{rgb}{0.5,0.5,0.5}
\definecolor{codepurple}{rgb}{0.58,0,0.82}
\definecolor{backcolour}{rgb}{0.95,0.95,0.92}
\lstdefinestyle{mystyle}{
	backgroundcolor=\color{backcolour},   
	commentstyle=\color{codegreen},
	keywordstyle=\color{magenta},
	numberstyle=\tiny\color{codegray},
	stringstyle=\color{codepurple},
	basicstyle=\footnotesize,
	breakatwhitespace=false,         
	breaklines=true,                 
	captionpos=b,                    
	keepspaces=true,                 
	numbers=left,                    
	numbersep=5pt,                  
	showspaces=false,                
	showstringspaces=false,
	showtabs=false,                  
	tabsize=2
}
\newcommand{\MAT}[7]{\begin{pmatrix}{#1}&{#2}&{#3}&{#4}\\{#5}&{#6}&{#7}&\MATHelper}
\newcommand{\MATHelper}[9]{{#1}\\{#2}&{#3}&{#4}&{#5}\\{#6}&{#7}&{#8}&{#9}\end{pmatrix}}
\newcommand{\Matrix}[7]{\begin{pmatrix}{#1}&{#2}&{#3}&{#4}&{#5}\\{#6}&{#7}&\MatrixHelper}
\newcommand{\MatrixHelper}[9]{{#1}&{#2}&{#3}\\{#4}&{#5}&{#6}&{#7}&{#8}\\{#9}&\MatrixHelperii}
\newcommand{\MatrixHelperii}[9]{{#1}&{#2}&{#3}&{#4}\\{#5}&{#6}&{#7}&{#8}&{#9}\end{pmatrix}}
\newcommand{\dual}[1]{#1^\vee}
\newcommand{\pdual}[1]{\p{#1}^\vee}
\newcommand{\units}[1]{#1^{\times}}
\newcommand{\inv}[2][1]{#2^{-#1}}
\newcommand{\pinv}[2][1]{\inv[#1]{\parens{#2}}}
\newcommand{\sqinv}[2][1]{\inv[#1]{\sq{#2}}}
\newcommand{\ctsHom}{\Hom_{\mrm{cts}}}
\DeclareMathOperator{\Hom}{Hom}
\DeclareMathOperator{\Isom}{Isom}
\DeclareMathOperator{\sHom}{\mathscr{H\mkern-7mu}\textit{om}}
\DeclareMathOperator{\sExt}{\mathscr{E\mkern-4mu}\textit{xt}}
\DeclareMathOperator{\Sym}{Sym}
\DeclareMathOperator{\coker}{coker}
\DeclareMathOperator{\Ext}{Ext}
\renewcommand{\hom}{\operatorname H}
\DeclareMathOperator{\Stab}{Stab}
\DeclareMathOperator{\im}{im}
\DeclareMathOperator{\ob}{ob}
\newcommand{\zmod}[1]{\mathbb Z/#1\mathbb Z}
\DeclareMathOperator{\Aut}{Aut}
\DeclareMathOperator{\Gal}{Gal}
\DeclareMathOperator{\Char}{char}
\DeclareMathOperator{\Frac}{Frac}
\DeclareMathOperator{\rank}{rank}
\DeclareMathOperator{\Pic}{Pic}
\DeclareMathOperator{\Sel}{Sel}
\DeclareFontFamily{U}{wncy}{}
\DeclareFontShape{U}{wncy}{m}{n}{<->wncyr10}{}
\DeclareSymbolFont{mcy}{U}{wncy}{m}{n}
\DeclareMathSymbol{\Sha}{\mathord}{mcy}{"58} 
\newcommand{\bigperp}{%
  \mathop{\mathpalette\bigp@rp\relax}%
  \displaylimits
}
\newcommand{\bigp@rp}[2]{%
  \vcenter{
    \m@th\hbox{\scalebox{\ifx#1\displaystyle2.1\else1.5\fi}{$#1\perp$}}
  }%
}
\newcommand{\openset}{\overset{\text{open}}\subset}
\newcommand{\dm}[0]{\d m}
\renewcommand{\d}{\mathrm d} 
\DeclareMathOperator{\Bun}{Bun}
\newcommand{\GL}{\operatorname{GL}}
\newcommand{\PGL}{\operatorname{PGL}}
\newcommand{\Jac}{\operatorname{Jac}}
\newcommand{\fppf}[1]{#1_{\mathrm{fppf}}} 
\DeclareMathOperator{\spec}{Spec}
\DeclareMathOperator{\Proj}{Proj}
\DeclareMathOperator{\rProj}{\mbf{Proj}} 
\DeclareMathOperator{\Div}{Div}
\DeclareMathOperator{\Ht}{ht}
\newcommand{\abs}[1]{\left|#1\right|}
\let\templim\lim
\renewcommand{\lim}{\templim\limits}
\newcommand{\bits}{\{0,1\}}
\newcommand{\Ith}[2]{{#1}^{\left(#2\right)}}
\newcommand{\ith}[1]{\Ith{#1}i}
\newcommand{\commsquare}[8]{
	\begin{tikzcd}[ampersand replacement=\&]
	{\displaystyle #1}\ar[r, "{#2}"]\ar[d, "{#4}" left]\&{\displaystyle #3}\ar[d, "{#5}" right]\\
	{\displaystyle #6}\ar[r, "{#7}" above]\&{\displaystyle #8}
	\end{tikzcd}
}
\newcommand{\mapover}[6]{
    \begin{tikzcd}[ampersand replacement=\&]
        {\displaystyle #1}\ar[rr, "{#2}" above]\ar[dr, "{#4}"']\&\&{#3}\ar[dl, "{#5}"]\\
        \&{\displaystyle #6}
    \end{tikzcd}
}
\newcommand{\compdiag}[6]{ 
    \begin{tikzcd}[ampersand replacement=\&]
        {\displaystyle #1}\ar[r, "{#2}" below]\ar[rr, bend left, "{#6}" above]\&{\displaystyle #3}\ar[r, "{#4}" below]\&{\displaystyle #5}
    \end{tikzcd}
}
\newcommand\homses[4]{
    \begin{tikzcd}[ampersand replacement=\&]
        0\ar[r]\&{\displaystyle #1}\ar[r, "{#2}"]\ar[r]\&{\displaystyle #3}\ar[r, "{#4}"]\&
        \homsesHELPER
}
\newcommand\homsesHELPER[9]{
        {\displaystyle #1}\ar[r]\& 0\\
        0\ar[r]\&{\displaystyle #5}\ar[r, "{#6}"]\&{\displaystyle #7}\ar[r, "{#8}"]\&{\displaystyle #9}\ar[r]\& 0
        \ar[from=lllu, to=lll, "{#2}"]\ar[from=llu, to=ll, "{#3}"]\ar[from=lu, to=l, "{#4}"]
    \end{tikzcd}
}
\newcommand\frontthreesimplex[1]{
    \begin{tikzcd}[every arrow/.append style={dash}, ampersand replacement=\&, row sep = large]
        \&{\displaystyle#1}
        \frontthreesimplexHELPER
}
\newcommand\frontthreesimplexHELPER[9]{
        \ar[ddl, "{#1}"']\ar[d, "{#2}" description]\ar[ddr, "{#3}"]\\
        \&{\displaystyle#4}\ar[dl, "{#5}"]\ar[dr, "{#6}"']\\
        {\displaystyle#7}\ar[rr, "{#8}"']\&\&{\displaystyle#9}
    \end{tikzcd}
}
\renewcommand{\phi}{\varphi}
\newcommand{\mbf}{\mathbf}
\newcommand{\mbfD}{\mathbf D}
\newcommand{\msO}{\mathscr O}
\newcommand{\mf}{\mathfrak}
\newcommand{\msI}{\mathscr I}
\newcommand{\msA}{\mathscr A}
\newcommand{\msB}{\mathscr B}
\newcommand{\msD}{\mathscr D}
\newcommand{\msK}{\mathscr K}
\newcommand{\msF}{\ms F}
\newcommand{\msG}{\ms G}
\newcommand{\msL}{\ms L}
\newcommand{\msM}{\ms M}
\newcommand{\msN}{\ms N}
\newcommand{\msQ}{\ms Q}
\newcommand{\msV}{\ms V}
\newcommand{\msY}{\ms Y}
\newcommand{\ms}{\mathscr}
\newcommand{\mfQ}{\mf Q}
\newcommand{\mc}{\mathcal} 
\newcommand{\mcA}{\mc A}
\newcommand{\msT}{\ms T}
\newcommand{\mrm}{\mathrm}
\newcommand{\F}{\mathbb F}
\newcommand{\Q}{\mathbb Q}
\newcommand{\Z}{\mathbb Z}
\newcommand{\R}{\mathbb R}
\newcommand{\C}{\mathbb C}
\newcommand{\E}{\mathbb E}
\newcommand{\N}{\mathbb N}
\newcommand{\A}{\mathbb A}
\newcommand{\G}{\mathbb G}
\renewcommand{\H}{\mathbb H}
\newcommand{\eps}{\varepsilon}
\renewcommand{\tau}{\uptau}
\renewcommand{\P}{\mathbb P}
\newcommand{\msE}{\ms E}
\renewcommand{\b}{\beta}
\newcommand{\me}{\matheuler}
\newcommand{\meC}{\me C}
\newcommand{\meD}{\me D}
\newcommand{\meG}{\me G}
\newcommand{\meH}{\me H}
\newcommand{\meM}{\me M}
\newcommand{\meO}{\me O}
\newcommand{\meP}{\me P}
\newcommand{\meQ}{\me Q}
\newcommand{\meV}{\me V}
\newcommand{\floor}[1]{\left\lfloor#1\right\rfloor}
\newcommand{\ceil}[1]{\left\lceil#1\right\rceil}
\newcommand{\parens}[1]{\!\left(#1\right)}
\newcommand{\p}[1]{\!\left(#1\right)} 
\newcommand{\pfrac}[2]{\parens{\frac{#1}{#2}}}
\newcommand{\brackets}[1]{\left\{#1\right\}}
\newcommand{\bracks}[1]{\brackets{#1}}
\renewcommand{\b}{\bracks} 
\newcommand{\sqbracks}[1]{\!\left[#1\right]} 
\newcommand{\sq}{\sqbracks}
\newcommand{\angles}[1]{\left\langle#1\right\rangle}
\newcommand{\dparens}[1]{\!\left(\!\left(#1\right)\!\right)}
\newcommand\dps\dparens 
\newcommand{\mapstoo}{\longmapsto}
\newcommand{\from}{\leftarrow}
\newcommand{\xto}{\xrightarrow}
\newcommand{\too}{\longrightarrow}
\newcommand{\xtoo}{\xlongrightarrow}
\newcommand{\iso}{\xto\sim}
\newcommand{\isoo}{\xtoo\sim}
\newcommand{\xiso}[1]{\xto[#1]\sim}
\newcommand{\into}{\hookrightarrow}
\newcommand{\xinto}[1]{\overset{#1}\into}
\newcommand{\intoo}{\longhookrightarrow}
\newcommand{\onto}{\twoheadrightarrow}
\newcommand{\xonto}[1]{\overset{#1}\onto}
\newcommand{\squigto}{\rightsquigarrow}
\newcommand*{\da@rightarrow}{\mathchar"0\hexnumber@\symAMSa 4B }
\newcommand*{\da@leftarrow}{\mathchar"0\hexnumber@\symAMSa 4C }
\newcommand*{\xdashrightarrow}[2][]{%
  \mathrel{%
    \mathpalette{\da@xarrow{#1}{#2}{}\da@rightarrow{\,}{}}{}%
  }%
}
\newcommand{\xdashleftarrow}[2][]{%
  \mathrel{%
    \mathpalette{\da@xarrow{#1}{#2}\da@leftarrow{}{}{\,}}{}%
  }%
}
\newcommand*{\da@xarrow}[7]{%
  \sbox0{$\ifx#7\scriptstyle\scriptscriptstyle\else\scriptstyle\fi#5#1#6\m@th$}%
  \sbox2{$\ifx#7\scriptstyle\scriptscriptstyle\else\scriptstyle\fi#5#2#6\m@th$}%
  \sbox4{$#7\dabar@\m@th$}%
  \dimen@=\wd0 %
  \ifdim\wd2 >\dimen@
    \dimen@=\wd2 %
  \fi
  \count@=2 %
  \def\da@bars{\dabar@\dabar@}%
  \@whiledim\count@\wd4<\dimen@\do{%
    \advance\count@\@ne
    \expandafter\def\expandafter\da@bars\expandafter{%
      \da@bars
      \dabar@ 
    }%
  }%
  \mathrel{#3}%
  \mathrel{%
    \mathop{\da@bars}\limits
    \ifx\\#1\\%
    \else
      _{\copy0}%
    \fi
    \ifx\\#2\\%
    \else
      ^{\copy2}%
    \fi
  }%
  \mathrel{#4}%
}
\newcommand\xdashto\xdashrightarrow
\newcommand\xdashfrom\xdashleftarrow
\providecommand{\leftsquigarrow}{%
  \mathrel{\mathpalette\reflect@squig\relax}%
}
\newcommand{\reflect@squig}[2]{%
  \reflectbox{$\m@th#1\rightsquigarrow$}%
}
\newcommand{\actson}{\curvearrowright}
\newcommand*{\rom}[1]{\textup{\uppercase\expandafter{\romannumeral#1}}}
\newcommand{\tbf}{\textbf}
\newcommand{\bp}[1]{\tbf{(#1)}} 
\newcommand{\Item}[1]{\item[\tbf{(#1)}]}
\newcommand{\sm}{\setminus}
\newcommand{\bs}{\backslash}
\newcommand{\wt}{\widetilde}
\newcommand{\ul}{\underline}
\renewcommand{\ast}[1]{#1^*}
\newcommand{\Twocases}[4]{
	\begin{cases}
		\hfill\displaystyle #1\hfill&\text{if }\displaystyle #2\\
		\hfill\displaystyle #3\hfill&\text{if }\displaystyle #4
	\end{cases}
}
\newcommand{\Threecases}[6]{
	\begin{cases}
		\hfill\displaystyle #1\hfill&\text{if }\displaystyle #2\\
		\hfill\displaystyle #3\hfill&\text{if }\displaystyle #4\\
		\hfill\displaystyle #5\hfill&\text{if }\displaystyle #6
	\end{cases}
}
\newcommand{\push}[1]{#1_*}
\newcommand{\pull}[1]{#1^*}
\newcommand{\by}{\times}
\newcommand{\mapdesc}[5]{
	\begin{matrix}
		\ifblank{#1}{}{\displaystyle#1:}&\displaystyle#2&\longrightarrow&\displaystyle#3\\
		&\displaystyle#4&\longmapsto&\displaystyle #5
	\end{matrix}
}
\renewcommand{\bar}{\overline}
\DeclareMathOperator{\supp}{supp}
\DeclareMathOperator{\id}{id}
\newcommand\colonequals\coloneqq
\newcommand\equalscolon\eqqcolon
\newcommand{\tand}{\,\text{ and }\,} 
\newcommand{\twhere}{\,\text{ where }\,}
\newcommand{\twith}{\,\text{ with }\,}
\newcommand{\tas}{\,\text{ as }\,}
\newcommand{\tso}{\,\text{ so }\,}
\newcommand{\tcomma}{\text{, }\,\,}
\newcommand{\tforall}{\,\text{ for all }\,}
\newcommand{\tforany}{\,\text{ for any }\,}
\newcommand{\tforsome}{\,\text{ for some }\,}
\newcommand{\tandso}{\,\text{ and so }\,}
\renewcommand\t\text 
\newcommand\ttt\texttt
\newcommand\emphasize[1]{{\color{violet}{\texttt{#1}}}} 
\newcommand{\important}[1]{\textit{#1}}
\newcommand\noteworthy\important
\newcommand{\define}[1]{\emphasize{#1}\index{#1}}
\tikzset{%
	symbol/.style={%
		,draw=none
		,every to/.append style={%
			edge node={node [sloped, allow upside down, auto=false]{$#1$}}}
	}
}
\def\renewtheorem#1{%
	\expandafter\let\csname#1\endcsname\relax
	\expandafter\let\csname c@#1\endcsname\relax
	\gdef\renewtheorem@envname{#1}
	\renewtheorem@secpar
}
\def\renewtheorem@secpar{\@ifnextchar[{\renewtheorem@numberedlike}{\renewtheorem@nonumberedlike}}
\def\renewtheorem@numberedlike[#1]#2{\newtheorem{\renewtheorem@envname}[#1]{#2}}
\def\renewtheorem@nonumberedlike#1{  
	\def\renewtheorem@caption{#1}
	\edef\renewtheorem@nowithin{\noexpand\newtheorem{\renewtheorem@envname}{\renewtheorem@caption}}
	\renewtheorem@thirdpar
}
\def\renewtheorem@thirdpar{\@ifnextchar[{\renewtheorem@within}{\renewtheorem@nowithin}}
\def\renewtheorem@within[#1]{\renewtheorem@nowithin[#1]}
\hfill\rlap{%
		\bgroup\color{#4}%
		\hskip-\dimexpr#1-#3\relax\rule{#1}{#2}%
		\hskip-\dimexpr#1/#5\relax\rule[-\dimexpr#1-\dimexpr#1/#5\relax]{#2}{#1}%
		\egroup
	}%
\bgroup\color{#4}%
\providecommand{\customgenericname}{}
\newcommand{\newcustomtheorem}[2]{%
	\newenvironment{#1}[1]
	{%
		\renewcommand\customgenericname{#2}%
		\renewcommand\theinnercustomgeneric{##1}%
		\innercustomgeneric
	}
	{\endinnercustomgeneric}
}
    \newcommand{\subjclass}[2][1991]{%
      \let\@oldtitle\@title%
      \gdef\@title{\@oldtitle\footnotetext{#1 \emph{Mathematics subject classification.} #2}}%
    }
    \newcommand{\keywords}[1]{%
      \let\@@oldtitle\@title%
      \gdef\@title{\@@oldtitle\footnotetext{\emph{Key words and phrases.} #1.}}%
    }
\theoremstyle{plain}
\newtheorem{thm}{Theorem}
\newtheorem{thma}{Theorem}
\newtheorem{lemma}[thm]{Lemma}
\newtheorem{cor}[thm]{Corollary}
\newtheorem{cora}[thma]{Corollary}
\newtheorem{conja}[thma]{Conjecture}
\newtheorem{prop}[thm]{Proposition}
\theoremstyle{definition}
\newtheorem{defninner}[thm]{Definition}
\newtheorem{notn}[thm]{Notation}
\newtheorem{recinner}[thm]{Recall}
\newtheorem{set}[thm]{Setup}
\newtheorem{exinner}[thm]{Example}
\newtheorem{reminner}[thm]{Remark}
\newtheorem*{exampleinner}{Example}
\newtheorem*{assump}{Assumption}
\newtheorem*{nonexinner}{Non-example}
\newtheorem{warninner}[thm]{Warning}
\newtheorem*{ansinner}{Answer}
\newtheorem*{histinner}{History}
\theoremstyle{remark}
\newtheorem{constructinner}[thm]{Construction}
\crefname{thm}{Theorem}{Theorems}
\crefname{prop}{Proposition}{Propositions}
\crefname{lemma}{Lemma}{Lemmas}
\crefname{cor}{Corollary}{Corollaries}
\crefname{exinner}{Example}{Examples}
\crefname{reminner}{Remark}{Remarks}
\crefname{ansinner}{Answer}{Answers}
\crefname{warninner}{Warning}{Warnings}
\crefname{nonexinner}{Non-example}{Non-examples}
\crefname{recinner}{Recall}{Recall}
\crefname{defninner}{Definition}{Definitions}
\crefname{histinner}{History}{History}
\crefname{constructinner}{Construction}{Constructions}
\newcommand\exsymbol{$\triangle$}
\newcommand\remsymbol{$\circ$}
\newcommand\anssymbol{$\star$}
\newcommand\warnsymbol{$\bullet$}
\newcommand\proofsymbol{$\blacksquare$}
\newcommand\nonexsymbol{$\triangledown$}
\newcommand\recsymbol{$\odot$}
\newcommand\defnsymbol{$\diamond$}
\newcommand\histsymbol{$\ominus$}
\newcommand\constructsymbol{$\octagon$}
\newenvironment{ex}[1][]{\begin{exinner}[#1]\pushQED{\qed}\renewcommand\qedsymbol\exsymbol}{\popQED\end{exinner}}
\newenvironment{rem}[1][]{\begin{reminner}[#1]\pushQED{\qed}\renewcommand\qedsymbol\remsymbol}{\popQED\end{reminner}}
\newenvironment{warn}[1][]{\begin{warninner}[#1]\pushQED{\qed}\renewcommand\qedsymbol\warnsymbol}{\popQED\end{warninner}}
\newenvironment{rec}[1][]{\begin{recinner}[#1]\pushQED{\qed}\renewcommand\qedsymbol\recsymbol}{\popQED\end{recinner}}
\newenvironment{defn}[1][]{\begin{defninner}[#1]\pushQED{\qed}\renewcommand\qedsymbol\defnsymbol}{\popQED\end{defninner}}
\newenvironment{construct}[1][]{\begin{constructinner}[#1]\pushQED{\qed}\renewcommand\qedsymbol\constructsymbol}{\popQED\end{constructinner}}
\let\proofinner=\proof
\newcommand\myproof[1][Proof]{\proofinner[#1]\renewcommand\qedsymbol\proofsymbol}
\def\proof{\myproof}
\numberwithin{equation}{section}
\newcommand{\niven}[1]{}
\newcommand{\niventodo}[1]{\niven{(TODO) #1}}
\DeclareMathOperator\AR{AR}
\DeclareMathOperator\AS{AS}
\DeclareMathOperator\MAS{MAS}
\DeclareMathOperator\IAS{IAS}
\DeclareMathOperator\CSel{\mc{S\mkern-2mu}\textit{el}}
\DeclareMathOperator\UW{UW}
\DeclareMathOperator\WE{WE}
\newcommand\rPic{\ul\Pic}
\newcommand\cby[1]{\overset{#1}\by} 
\newcommand\PG{\P G}
\author{Niven Achenjang \orcidlink{0000-0001-9551-5821}}
\date{\today}
\title{The Average Size of 2-Selmer Groups of Elliptic Curves in Characteristic 2}
\keywords{elliptic curves, Selmer groups, arithmetic statistics, global function fields, characteristic 2}
\subjclass[2020]{11G05 (primary), 14G05, 14G17, 11D45 (secondary)}
\begin{document}

\begin{abstract}
    Let $K$ be the function field of a smooth curve $B$ over a finite field $k$ of arbitrary characteristic. We prove that the average size of the $2$-Selmer groups of elliptic curves $E/K$ is at most $1+2\zeta_B(2)\zeta_B(10)$, where $\zeta_B$ is the zeta function of $B$. In particular, in the limit as $q=\#k\to\infty$ (with the genus $g(B)$ fixed), we see that the average size of 2-Selmer is bounded above by $3$, even in ``bad'' characteristics. This completes the proof that the average rank of elliptic curves, over \textit{any} fixed global field, is finite. Handling the case of characteristic $2$ requires us to develop a new theory of integral models of 2-Selmer elements, dubbed ``hyper-Weierstrass curves.'' 
\end{abstract}
\maketitle
\vspace{-3em}
\noindent\hrulefill
\tableofcontents
\vspace{-3em}
\noindent\hrulefill
\hypersetup{linkcolor=\linkcolor}

\section{\bf Introduction}\label{sect:intro}

Much recent work in arithmetic statistics is concerned with understanding the distribution of ranks of elliptic curves over a fixed global field $K$. In particular, early work of Brumer \cite{brumer} showed that the average rank of elliptic curves over $K=\F_q(t)$ is finite, at least when $\Char q\ge5$. His bound was strengthened and extended to all $q$ by de Jong \cite{dejong}, and an average rank bound over $\Q$ was later obtained by Bhargava and Shankar \cite{bhar-shan}. While Brumer used analytic techniques to obtain his rank bound, de Jong and Bhargava--Shankar pioneered the approach of producing average rank bounds by first bounding the average size of the $n$-Selmer groups of elliptic curves $E/K$, for some fixed value of $n$. Their work and subsequent investigations have led to the following conjecture, which appears for example in \cite[Section 2]{dejong}, \cite[Conjecture 1.4]{poonen-rains}, \cite[Conjecture 4]{bhargava2013average}, and \cite[Section 5.7]{ec-stats}.
\begin{conja}\label{conj:main}
    Let $K$ be a global field. When all elliptic curves $E/K$ are ordered by height, the average size of their $n$-Selmer groups is $\sum_{d\mid n}d$.
\end{conja}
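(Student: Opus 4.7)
The plan is to attack Conjecture~A via the orbit-counting paradigm introduced by Bhargava and Shankar, suitably generalized to function fields in all characteristics. First, for each $n$ one would need a representation-theoretic parametrization: an algebraic group $G_n$ acting on a linear representation $V_n$ so that $G_n(K)$-orbits on $V_n(K)$ (subject to mild conditions) are in height-equivariant bijection with pairs $(E,\sigma)$ consisting of an elliptic curve $E/K$ and a Selmer element $\sigma\in\Sel_n(E/K)$. Such parametrizations are classical for $n\leq 5$ via binary quartic forms, ternary cubics, pairs of quaternary quadrics, and genus-one models of degree $5$. For $(n,\Char K)=(2,2)$, the hyper-Weierstrass curves developed in this paper play the role of $V_2$. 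For $n\geq 6$, no such parametrization is known, which is the first and most serious obstacle.

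Granting a parametrization, I would proceed in four steps. In step one, develop integral models over $\ints K$ of the orbit space and establish a theory of \emph{minimal} representatives with explicitly bounded invariants, generalizing the hyper-Weierstrass construction used here. In step two, count $G_n(\ints K)$-orbits on $V_n(\ints K)$ of height at most $H$, using Bhargava-style averaging in the number-field setting, or point counts on moduli stacks of $G_n$-torsors equipped with an extra section in the function-field setting (in the spirit of de~Jong and Landesman). In step three, sieve onto the locally soluble locus via a uniform \textbf{large prime} tail estimate, reducing the global count to a finite product of $p$-adic orbital densities. In step four, evaluate this product: matching the Tamagawa number formula for $G_n$ against explicit $p$-adic integration should recover the predicted value $\sum_{d\mid n}d$, in line with the Poonen--Rains model of $\Sel_n$ as an intersection of maximal isotropic subspaces in a quadratic space.

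The principal obstructions I would expect are threefold. The hardest by far is the absence of any orbit parametrization for $n\geq 6$: no Vinberg-type representation is presently known whose invariant theory captures $\Sel_n$, and producing one seems to require new representation-theoretic input rather than a refinement of existing tools. The second is the uniformity estimate controlling contributions from curves of large discriminant, which in residue characteristics dividing $n$ must be engineered to handle wild reduction; the present paper resolves this for $(n,\Char K)=(2,2)$ via hyper-Weierstrass curves, and any proof of the full conjecture in the remaining cases $\Char K\mid n$ would demand analogous new integral-model theories. The third, more technical, obstacle is the verification that the resulting local densities multiply to \emph{exactly} $\sum_{d\mid n}d$ rather than merely to a bound; this requires a delicate place-by-place matching of orbital integrals with the Poonen--Rains local model, including at places of additive reduction and at places where $n$ is not a unit. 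I expect the first obstacle to be the main one: absent a new parametrization, the method sketched here can at best yield Conjecture~A for a finite list of small $n$.
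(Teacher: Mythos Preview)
The statement you are addressing is Conjecture~A, which the paper presents as an \emph{open conjecture}, not as a theorem. The paper does not prove it and makes no claim to; rather, the paper establishes a partial result in its direction (Theorem~B), namely an upper bound $\AS_B\le 1+2\zeta_B(2)\zeta_B(10)$ for the $n=2$ case over an arbitrary global function field. So there is no ``paper's own proof'' of Conjecture~A to compare against.

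Your proposal is not a proof but a research-program outline, and you are candid about this: you correctly flag that no orbit parametrization is known for $n\ge 6$, that wild phenomena when $\Char K\mid n$ require new integral-model machinery, and that obtaining exact equality (rather than an upper bound) demands a further local-density matching that even the present paper does not carry out. These are indeed the principal obstructions, and each one on its own currently blocks a complete proof. In particular, the first obstruction is not a ``gap'' in your argument so much as a gap in mathematics: the conjecture remains open for all $n\ge 6$ over every global field, and even for $n\le 5$ the exact average has only been established over $\Q$ (and in certain large-$q$ function-field limits). Your outline is a reasonable description of how one expects the eventual proof to go, but it should be labeled as heuristic motivation for the conjecture rather than as a proof proposal.
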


The main purpose of this paper is to verify the $2$-Selmer case of \cref{conj:main} for \important{arbitrary} global function fields $K$, up to a limit as ``$q\to\infty$.''
\numberwithin{thm}{section}
\begin{set}\label{set:main}
    Let $k=\F_q$ be a finite field, let $B/k$ be a smooth $k$-curve of genus $g=g(B)$, and let $K=k(B)$ be its function field. Let
    \[\zeta_B(s)\coloneqq\prod_{v\in B}\frac1{1-q^{-s\deg v}},\]
    with $v$ ranging over \important{closed} points of $B$, be the zeta function of $B$.
\end{set}
Additionally, let $\AS_B$ (resp. $\AR_B$) denote the ``average size of $2$-Selmer groups (resp. average rank) of elliptic curves over $K$'' (see \cref{sect:conventions} for a precise definition).
\begin{thma}[= \cref{thm:main}]\label{thma:main}
    With notation as in \cref{set:main},
    \[\AS_B\le1+2\zeta_B(2)\zeta_B(10).\]
\end{thma}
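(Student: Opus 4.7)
The plan is to adapt the Bhargava--Shankar--de Jong strategy to arbitrary characteristic. Concretely, I would write
\[
\AS_B - 1 \;=\; \lim_{H\to\infty}\frac{\#\{(E,\alpha) : E/K\t{ elliptic, }\t{ht}(E)\le H,\ \alpha\in\Sel_2(E/K)\setminus\{0\}\}}{\#\{E : \t{ht}(E)\le H\}},
\]
and bound the numerator by producing, for each nonzero $2$-Selmer element, a global integral model on $B$, counting these models by the height of the underlying elliptic curve, and dividing by the count of elliptic curves of bounded height. The form of the target $2\zeta_B(2)\zeta_B(10)$ suggests a factorization: $\zeta_B(10)$ should emerge from the geometric count of Weierstrass models (whose discriminant is a section of $\mcL^{\otimes 12}$), and $\zeta_B(2)$ should emerge from the Euler product of local solubility densities at closed points of $B$. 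The ``$+1$'' is just the identity element of each Selmer group.

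Step $1$ is the construction of integral models. In characteristic $\ne 2$ one uses locally soluble binary quartic forms on $B$, obtained by completing the square in a Weierstrass equation; in characteristic $2$ the square cannot be completed, so I would instead define a \emph{hyper-Weierstrass curve} to be the corresponding $2$-characteristic analog: a datum over $B$ generalizing equations of the shape $y^2 + (\t{linear in }x)y = (\t{cubic in }x)$, with coefficients sections of prescribed powers of a line bundle $\mcL$, together with the extra auxiliary structure needed to recover a Selmer class. One then proves a parametrization theorem: every $\alpha\in\Sel_2(E/K)$ is represented by a locally soluble hyper-Weierstrass curve over $B$, and any two such representatives differ by an explicit change-of-variables group $G$. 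Step $2$ is the geometric count: using Riemann--Roch on $B$ to compute the dimension of the parameter space of hyper-Weierstrass curves of height $\le H$, dividing by $G$-equivalence, and comparing to the analogous count of honest Weierstrass models, yields the $\zeta_B(10)$ factor. Step $3$ is the local-to-global sieve: at each $v\in B$, the density of hyper-Weierstrass data that are locally soluble (and hence genuinely give rise to Selmer elements rather than merely $H^1$-elements) is bounded above by a local factor whose Euler product over $v\in B$ yields $\zeta_B(2)$; assembling these uniformly, one gets the claimed bound.

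The main obstacle is Step $1$ in characteristic $2$: every prior parametrization of $2$-Selmer elements relies on completing the square, so none of the existing integral models extend across places of $B$ lying above the prime $2$. Defining hyper-Weierstrass curves correctly, proving that the assignment $\{\t{hyper-Weierstrass curves}\}/G \to \bigsqcup_E H^1(K,E[2])$ surjects onto the Selmer locus, and carrying out the orbit--stabilizer computation uniformly across all of $B$ (and not merely at the generic point, where the classical story suffices), is precisely the new ingredient that the paper is designed to supply. Once this is in hand, Steps $2$ and $3$ should be reasonably routine function-field geometry-of-numbers, and the bound $\AS_B\le 1 + 2\zeta_B(2)\zeta_B(10)$ follows by assembling the counts.
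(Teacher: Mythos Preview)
Your overall architecture is close to the paper's, but two of your three steps misidentify where the zeta factors come from, and your description of the integral models is off.

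First, hyper-Weierstrass curves are \emph{not} of the shape $y^2+(\text{linear})y=(\text{cubic})$; those are ordinary Weierstrass curves. A nonzero $2$-Selmer element is a locally soluble genus~$1$ curve $C$ with a degree~$2$ divisor $D$, and $|D|$ realizes $C$ as a double cover of $\P^1$, so the local model is $Y^2+(a_0X^2+a_1XZ+a_2Z^2)Y=c_0X^4+\cdots+c_4Z^4$ inside $\P(1,2,1)$. Globally over $B$ there is in general no such equation: the coefficients live in a rank~$8$ vector bundle that need not split, and controlling its $h^1$ (via rational singularities of the hW model) is one of the paper's main technical points---not, as you suggest, ``reasonably routine function-field geometry-of-numbers.''

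Second, there is \emph{no local-solubility sieve} in the paper, and $\zeta_B(2)$ does not arise from one. The paper simply drops the local-solubility condition (this only enlarges the count) and bounds the number of all minimal hW curves. The change-of-variables group acting on hW equations is essentially $\PGL_2$ (acting on the $X,Z$ coordinates) times a unipotent piece, and the factor $2\zeta_B(2)$ is exactly the groupoid cardinality $\#\Bun_{\PGL_2}(k)=2q^{3(g-1)}\zeta_B(2)$, computed via Siegel's formula and the fact that $\tau(\PGL_2)=2$. The $\zeta_B(10)$ enters through the denominator: the asymptotic count of elliptic curves of height $d$ is $\sim\#\Pic^0(B)\,q^{10d+2(1-g)}/[(q-1)\zeta_B(10)]$. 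Finally, you omit entirely the step that lets one pass from the modified average (weighted by groupoid automorphisms in $\CSel_2$) to the honest average $\AS_B$: one must show that elliptic curves with $E[2](K)\ne 0$ contribute nothing, which in characteristic~$2$ requires a delicate Schwartz--Zippel argument and is not automatic.
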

\begin{cora}\label{cora:main}
    With notation as in \cref{set:main},
    \[\limsup_{n\to\infty}\AS_{B_{\F_{q^n}}}\le3.\]
\end{cora}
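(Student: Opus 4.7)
The plan is to deduce \cref{cora:main} directly from \cref{thma:main}. I would first observe that for each $n$, the base change $B_{\F_{q^n}}$ is a smooth curve over the finite field $\F_{q^n}$ of the same (geometric) genus $g$, so \cref{thma:main} applies verbatim to yield
\[\AS_{B_{\F_{q^n}}} \le 1 + 2\zeta_{B_{\F_{q^n}}}(2)\,\zeta_{B_{\F_{q^n}}}(10).\]
It therefore suffices to prove that $\zeta_{B_{\F_{q^n}}}(s) \to 1$ as $n \to \infty$ for each fixed real $s > 1$; specializing to $s = 2$ and $s = 10$ then gives $\limsup_n \AS_{B_{\F_{q^n}}} \le 1 + 2\cdot 1\cdot 1 = 3$.

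For the zeta-function limit, I would take the logarithm of the Euler product and unravel
\[\log \zeta_{B_{\F_{q^n}}}(s) \;=\; \sum_{m \ge 1} \frac{\#B(\F_{q^{nm}})}{m}\, q^{-nms},\]
using the fact that $\F_{q^n}$-points of $B_{\F_{q^n}}$ over degree-$m$ extensions correspond to $\F_{q^{nm}}$-points of $B$. Applying the Hasse--Weil bound $\#B(\F_{q^{nm}}) \le q^{nm} + 1 + 2g\, q^{nm/2}$ term by term, and summing the geometric series $\sum_{m\ge 1} x^m/m = -\log(1-x)$, I obtain
\[0 \;\le\; \log \zeta_{B_{\F_{q^n}}}(s) \;\le\; -\log\!\bigl(1 - q^{-n(s-1)}\bigr) - \log\!\bigl(1 - q^{-ns}\bigr) - 2g\log\!\bigl(1 - q^{-n(s-1/2)}\bigr).\]
For any fixed $s > 1$ (so in particular $s > 1/2$), each of the three exponents is strictly positive, so each logarithm tends to $0$ as $n \to \infty$. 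Hence $\zeta_{B_{\F_{q^n}}}(s) \to 1$ for $s = 2$ and $s = 10$, as required.

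There is no genuine obstacle in this argument; \cref{cora:main} is a soft corollary of \cref{thma:main}, with all of the substantive work — the bound on $\AS_B$ itself — contained in the theorem. The only thing to check carefully is that $B_{\F_{q^n}}$ still falls within \cref{set:main} and that the geometric genus is preserved under base change, both of which are immediate once $B$ is taken to be (geometrically) integral.
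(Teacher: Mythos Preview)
Your proposal is correct and takes essentially the same approach as the paper. The paper does not spell out a proof of \cref{cora:main}, but \cref{rem:zeta-asymp} supplies the needed ingredient by using the Weil-conjectures factorization $\zeta_B(s)=\prod_{i=1}^{2g}(1-\alpha_i q^{-s})/\bigl[(1-q^{-s})(1-q^{1-s})\bigr]$ with $|\alpha_i|=q^{1/2}$ to bound $\zeta_B(s)\le (1+q^{1/2-s})^{2g}/\bigl[(1-q^{-s})(1-q^{1-s})\bigr]=1+O_{g,s}(q^{1-s})$; your logarithmic version via the Hasse--Weil point-count bound is the same estimate in additive form.
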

Using the simple fact that $2x\le2^x$ along with the exact sequence \cref{ses:Sel-fund}, one obtains the following.
\begin{cora}\label{cora:rank}
    With notation as in \cref{set:main},
    \[\AR_B\le\frac12+\zeta_B(2)\zeta_B(10)\tandso\limsup_{n\to\infty}\AR_{B_{\F_{q^n}}}\le\frac32.\]
\end{cora}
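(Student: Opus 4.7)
The plan is to deduce both inequalities directly from Theorem A (respectively, Corollary A), using only the fundamental short exact sequence for $2$-Selmer groups and a trivial integer inequality; no new input beyond Theorem A is required. The sequence \cref{ses:Sel-fund} reads
\[0 \to E(K)/2E(K) \to \Sel_2(E) \to \Sha(E)[2] \to 0.\]
By Mordell--Weil, $E(K) \cong \Z^r \oplus T$ with $T$ finite and $r = \rank E(K)$, so $E(K)/2E(K)$ contains $(\Z/2\Z)^r$ as a subgroup. Combining this with the exact sequence yields the pointwise bound $|\Sel_2(E)| \ge 2^{\rank E(K)}$. Applying the elementary inequality $2n \le 2^n$ for $n \in \Z_{\ge 0}$ (immediate for $n \le 2$, then by induction since $2^{n+1} = 2 \cdot 2^n \ge 4n \ge 2(n+1)$ when $n \ge 1$) to $n = \rank E(K)$ gives the pointwise estimate $2\rank E(K) \le |\Sel_2(E)|$ for every elliptic curve $E/K$.

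Averaging this inequality over the height-ordered family of elliptic curves used to define $\AR_B$ and $\AS_B$ yields $2\AR_B \le \AS_B$, and substituting Theorem A produces
\[\AR_B \le \tfrac{1}{2}\AS_B \le \tfrac{1}{2}\bigl(1 + 2\zeta_B(2)\zeta_B(10)\bigr) = \tfrac{1}{2} + \zeta_B(2)\zeta_B(10),\]
which is the first inequality of the corollary.

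For the limsup statement, I would repeat the same argument after base-changing $B$ to $\F_{q^n}$: the pointwise bound $2\rank E \le |\Sel_2(E)|$ holds over every global function field, so averaging gives $2\AR_{B_{\F_{q^n}}} \le \AS_{B_{\F_{q^n}}}$, and applying Corollary A yields
\[\limsup_{n \to \infty} \AR_{B_{\F_{q^n}}} \le \tfrac{1}{2}\limsup_{n\to\infty} \AS_{B_{\F_{q^n}}} \le \tfrac{3}{2}.\]
The only formality is that $\limsup$ distributes across the averaging defining $\AR$ and $\AS$, which is automatic under the standard convention of defining these as limsups of partial averages. There is no substantive obstacle; the corollary is essentially bookkeeping on top of Theorem A.
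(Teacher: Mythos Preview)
Your proposal is correct and follows exactly the approach the paper indicates: the paper's entire argument is the one-line remark ``Using the simple fact that $2x\le2^x$ along with the exact sequence \cref{ses:Sel-fund}, one obtains the following,'' and you have simply written out the details of that sentence. The only cosmetic point is that $\AR_B$ and $\AS_B$ are defined as $\limsup_{d\to\infty}$ of the finite-height averages $\AR_B(d)$ and $\AS_B(d)$, so the clean phrasing is that your pointwise bound gives $2\AR_B(d)\le\AS_B(d)$ for every $d$, whence $2\AR_B\le\AS_B$ by monotonicity of $\limsup$; this is what you say at the end, and it is indeed routine.
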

In his thesis, Shankar bounded the average rank of elliptic curves over any number field (see \cite{shankar-thesis} for a precise definition). Given this, \cref{thma:main} completes the proof that the average rank of elliptic curves, over \important{any} global field, is finite.
\begin{thma}[\cref{thma:main} + {\cite[Theorem 1.0.1]{shankar-thesis}}]\label{thma:rank-always-finite}
    Let $K$ be an arbitrary global field. When all elliptic curves $E/K$ are ordered by height, their average rank is finite.
\end{thma}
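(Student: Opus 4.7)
The plan is to dispose of the two disjoint cases for a global field $K$ separately and invoke the appropriate existing result in each. Since every global field is either a number field or the function field of a smooth curve over a finite field, this dichotomy exhausts the hypothesis.

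If $K$ is a number field, I would directly invoke Shankar's theorem \cite[Theorem 1.0.1]{shankar-thesis}, which already establishes finiteness of the average rank in that setting, so nothing more is needed.

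If instead $K$ is a global function field, I would write $K = k(B)$ as in \cref{set:main} and apply \cref{thma:main} to obtain $\AS_B \le 1 + 2\zeta_B(2)\zeta_B(10)$. Since the Euler product defining $\zeta_B(s)$ converges absolutely for $\Re(s) > 1$, the values $\zeta_B(2)$ and $\zeta_B(10)$ are finite, so this bound is a finite real number. To pass from Selmer to rank, I would use the standard short exact sequence
\[0 \to E(K)/2E(K) \to \Sel_2(E/K) \to \Sha(E/K)[2] \to 0,\]
which yields $2^{\rank E(K)} \le |\Sel_2(E/K)|$; combining with the elementary inequality $2x \le 2^x$ for $x \in \Z_{\ge 0}$ gives $2\rank E(K) \le |\Sel_2(E/K)|$, and averaging over all $E/K$ produces $\AR_B \le \tfrac12 + \zeta_B(2)\zeta_B(10) < \infty$, which is exactly \cref{cora:rank}.

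The real obstacle is not in this packaging step, which is essentially a case-split plus a routine inequality, but rather in proving the input \cref{thma:main} itself. In particular, the characteristic $2$ regime is the genuinely difficult case: the classical parametrization of $2$-Selmer elements by integral Weierstrass models degenerates when $2$ is not a unit, and handling it will require the new theory of hyper-Weierstrass curves developed in the body of the paper.
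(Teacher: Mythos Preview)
Your proposal is correct and matches the paper's approach exactly: the paper states this theorem as an immediate combination of \cref{thma:main} (yielding \cref{cora:rank} via the sequence \cref{ses:Sel-fund} and the inequality $2x\le2^x$) with Shankar's number-field result, which is precisely the case split and rank-from-Selmer passage you describe.
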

\begin{rem}\label{rem:zeta-asymp}
    We can be more precise about how the bound produced in \cref{thma:main} compares to the predicted value of $3$. Use notation as in \cref{set:main}. By the Weil conjectures, $\zeta_B(s)=\left.\prod_{i=1}^{2g}(1-\alpha_i\inv[s]q)\right/(1-\inv[s]q)(1-q^{1-s})$ for some $\alpha\in\C$ with $\abs{\alpha_i}=q^{1/2}$. Thus, for $s\ge1$,
    \[\zeta_B(s)\le\frac{(1+q^{1/2-s})^{2g}}{(1-q^{-s})(1-q^{1-s})}=1+q^{1-s}+O_{g,s}\p{q^{-s}}\tas q\to\infty.\]
    Thus, the bound in \cref{thma:main} is
    \begin{equation}\label{eqn:main-asymp}
        \AS_B\le1+2\zeta_B(2)\zeta_B(10)=3+\frac2q+O_g\p{\inv[2]q}.
    \end{equation}
    Similarly, the bound in \cref{cora:rank} is
    \[\AR_B\le\frac32+\frac1q+O_g\p{\inv[2]q}.\qedhere\]
\end{rem}
Along the road towards establishing \cref{thma:main}, we obtain a few other results which may be of independent interest. Below, $\Ht(E)$ denotes the height of an elliptic curve as defined in \cref{sect:conventions}.

\begin{thma}\label{thma:EC-count}
    With notation as in \cref{set:main},
    \[\sum_{\substack{E/K\\\Ht(E)=d}}\frac1{\#\Aut(E)}\sim\#\Pic^0(B)\cdot\frac{q^{10d+2(1-g)}}{(q-1)\zeta_B(10)}\]
    as $d\to\infty$. See \cref{thm:EC-d-asymp} for a more precise asymptotic.
\end{thma}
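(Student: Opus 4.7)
The plan is to parametrize isomorphism classes of elliptic curves $E/K$ of height $d$ by equivalence classes of integral Weierstrass triples $(\mcL, a, b)$, with $\mcL \in \Pic^d(B)$ and $(a, b) \in H^0(B, \mcL^{\otimes 4}) \times H^0(B, \mcL^{\otimes 6})$ subject to (i) nonzero discriminant $\Delta(a, b) \in H^0(B, \mcL^{\otimes 12})$ and (ii) local minimality (at no closed point $v \in B$ do we simultaneously have $v(a) \ge 4$ and $v(b) \ge 6$), modulo the scaling action $u \cdot (a, b) = (u^4 a, u^6 b)$ of $u \in k^\times$. In residue characteristics $\ne 2, 3$ this is the classical short-Weierstrass description; in characteristics $2$ and $3$, one invokes the hyper-Weierstrass formalism developed in the body of the paper, whose design is precisely to extend this kind of parameterization uniformly across characteristics. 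Orbit-stabilizer then gives
\[\sum_{\substack{E/K\\ \Ht(E) = d}} \frac{1}{\#\Aut(E)} \;=\; \frac{1}{q - 1} \sum_{\mcL \in \Pic^d(B)} \#\bracks{(a, b) : \Delta(a,b) \ne 0 \tand (a,b) \t{ minimal}}.\]

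For $d$ sufficiently large (so that $4d, 6d > 2g - 2$), Riemann--Roch gives $|H^0(\mcL^{\otimes i})| = q^{id + 1 - g}$ uniformly in $\mcL \in \Pic^d(B)$ for $i = 4, 6$, so the ``raw'' count per line bundle is $q^{10d + 2(1-g)}$. Summing over the $\#\Pic^d(B) = \#\Pic^0(B)$ line bundles of degree $d$ and dividing by $q - 1$ for the scaling action yields the candidate main term $\#\Pic^0(B) \cdot q^{10d + 2(1-g)}/(q - 1)$. The factor $\zeta_B(10)^{-1}$ is then produced by an inclusion--exclusion sieve enforcing local minimality: at each closed point $v \in B$, the condition ``non-minimal at $v$'' cuts out an $\F_q$-subspace of codimension $10 \deg v$ (since it asks for $v(a) \ge 4$ and $v(b) \ge 6$), so has local density $q^{-10 \deg v}$; formal independence across places gives the Euler product $\prod_v \p{1 - q^{-10 \deg v}} = \zeta_B(10)^{-1}$. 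The discriminant-vanishing locus lies in codimension $\ge 1$ and contributes an $O(q^{10d + 2(1-g) - 1})$ error.

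\textbf{The main obstacle} is executing the sieve uniformly in $d$: one truncates the Euler product at places of degree $\le \kappa d$ for a suitably chosen constant $\kappa$ and must separately control (a) the head contribution via inclusion--exclusion on finite sets of small-degree places, and (b) the tail contribution from simultaneous non-minimality at many places, using the dimension estimate that non-minimality at a set $S$ of places cuts $\F_q$-codimension $\ge 10 \deg S$, which decays rapidly once $\deg S \gtrsim d$. In characteristics $2$ and $3$, there is a second layer of care: one must verify that the hyper-Weierstrass parameterization genuinely identifies minimal integral models of elliptic curves with the locally minimal triples described above, and this is exactly the integral-model input that the rest of the paper is built to provide. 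With those ingredients in place, the sieve proceeds uniformly and yields the stated asymptotic, with the sharper error term referenced in \cref{thm:EC-d-asymp} reflecting quantitative control of the sieve's tail together with Riemann--Roch's error in low degrees.
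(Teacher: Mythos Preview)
There is a genuine gap in your handling of characteristics $2$ and $3$. Your short-Weierstrass parameterization $(\mcL,a,b)$ with $a\in H^0(\mcL^{\otimes 4})$, $b\in H^0(\mcL^{\otimes 6})$ modulo $k^\times$-scaling simply does not classify elliptic curves when $\Char K\in\{2,3\}$. You appeal to ``the hyper-Weierstrass formalism developed in the body of the paper'' to patch this, but that is a misreading: the hyper-Weierstrass curves of \cref{sect:hW} are integral models of \emph{$2$-Selmer elements} (genus-$1$ curves equipped with a degree-$2$ divisor), not a characteristic-free replacement for short Weierstrass equations of elliptic curves. They play no role in \cref{sect:EC-count}.

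The paper's actual proof avoids this entirely by working with the full five-coefficient Weierstrass equation
\[
Y^2Z+a_1XYZ+a_3YZ^2=X^3+a_2X^2Z+a_4XZ^2+a_6Z^3,\qquad a_i\in H^0(B,\msL^{\otimes i}),
\]
which is valid in every characteristic. The change-of-variable group is correspondingly larger: one shows (\cref{cor:count-eqns-for-W}) that each Weierstrass curve of height $d$ is cut out by exactly $(q-1)q^{6d+3(1-g)}/\#\Aut(E)$ equations, so the weighted count $\WE_\msL$ equals the unweighted count of minimal equations divided by $(q-1)q^{6d+3(1-g)}$. The raw count of all equations is $q^{16d+5(1-g)}$ per $\msL$; note $16d-6d=10d$, recovering your exponent. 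Rather than a sieve, the paper subtracts generically singular equations ($q^{8d+4(1-g)}$, \cref{prop:count-gen-sing-weier}) and non-minimal equations via a \emph{recursion} in $d$ (\cref{prop:count-nonmin-weier}, \cref{prop:UW-recur}), then passes to generating functions and extracts the asymptotic by locating poles (\cref{thm:EC-d-asymp}): the factor $\zeta_B(10)^{-1}$ emerges from the identity $C(T)Z_B(q^{-10}T)=\text{(simple rational function)}+\text{(holomorphic tail)}$. Your inclusion--exclusion sieve could be made to work in characteristic $\ge 5$, and even in small characteristic if rebuilt on the long Weierstrass form with the correct $10$-dimensional non-minimality condition $(v(a_i)\ge i\text{ for all }i)$; but as written the proposal does not supply a valid parameterization in the cases the paper is specifically designed to address.
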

\begin{rem}
    When $B=\P^1_{\F_q}$, de Jong \cite{dejong} gave an \important{exact} weighted count of (isomorphism classes) of elliptic curves of height $d$ (his result is recalled in \cref{rem:comp-dJ-exact}), so the utility of \cref{thma:EC-count} is that it applies to more general bases. Prior to de Jong, Brumer \cite{brumer} computed an asymptotic count of the (unweighted) number of elliptic curves over $K=\F_q(t)$ (using a slightly different height function) when $\Char K\ge5$.
\end{rem}
\begin{thma}[= \cref{thm:E[2]-bound-char-not-2} + \cref{thm:E[2]-bound-char-2}]\label{thma:E[2]}
    Use notation as in \cref{set:main}. Then,
    \[\sum_{\substack{E/K\\\Ht(E)=d\\E[2](K)\neq0}}\frac1{\#\Aut(E)}=O\p{q^{Cd}}\twhere C=\Twocases6{\Char K\neq2}9{\Char K=2}\]
    as $d\to\infty$. 
\end{thma}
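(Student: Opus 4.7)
The plan is to parametrize each elliptic curve $E/K$ of height $d$ with a nontrivial $K$-rational 2-torsion point by an explicit Weierstrass-type equation with a marked point, and then bound the weighted count of such models via Riemann--Roch dimension estimates. In characteristic different from $2$, any such $E$ admits a translation of the $x$-coordinate placing the 2-torsion point at $(0,0)$ and making the equation take the form $y^2 = x(x^2 + a_2 x + a_4)$, where $a_2$ and $a_4$ are global sections of $L^{-2}$ and $L^{-4}$ respectively, for some line bundle $L$ on $B$ whose degree grows linearly in $d$. By Riemann--Roch, the number of such pairs is at most $q^{(2d + 1 - g) + (4d + 1 - g)} = q^{6d + O_g(1)}$, and summing over the $O_g(1)$ choices of line bundle in each degree and weighting by $1/\#\Aut(E)$ (which contributes only bounded factors coming from the residual gauge group and the automorphism weighting) yields the bound $O(q^{6d})$.

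In characteristic $2$, matters are more delicate. A curve with $E[2](K) \neq 0$ is automatically ordinary (since in char $2$, supersingular curves have $E[2](\bar K) = 0$), and moving the nontrivial 2-torsion point to $(0,0)$ gives a form $y^2 + a_1 xy = x^3 + a_2 x^2 + a_4 x$ with $a_1 \neq 0$. The naive parameter count, $\dim H^0(L^{-1}) + \dim H^0(L^{-2}) + \dim H^0(L^{-4}) \sim 7d$, is in fact tighter than the target $9d$; one either works with this clean form (as a crude bound suffices), or, more likely given the paper's emphasis on hyper-Weierstrass curves, one allows a broader class of integral models whose additional flexibility---reflecting the Artin--Schreier-type twisting peculiar to characteristic $2$---inflates the dimension count to at most $9d + O_g(1)$ while still capturing every elliptic curve with rational 2-torsion.

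I expect the main obstacle to be the characteristic-$2$ case: unlike in char $\neq 2$, the 2-torsion subscheme $E[2]$ is non-\'etale, so having a rational 2-torsion point is a subtler condition than the factorization of a cubic, and one must set up the right integral model in which this condition cuts out a variety of controllable dimension. The hyper-Weierstrass framework developed earlier in the paper should supply the correct notion of integral model, and the bulk of the proof will go into verifying that every such elliptic curve admits such a model of appropriately bounded degrees and that the weighting by $1/\#\Aut(E)$ contributes only bounded constants to the final count.
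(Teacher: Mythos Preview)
Your approach in characteristic $\neq 2$ is essentially the same as the paper's, just with a pre-simplification: the paper keeps the full Weierstrass form and parametrizes the marked $2$-torsion point by a pair $(\sigma_X, \sigma_Y) \in \hom^0(\msL^2) \times \hom^0(\msL^3)$, observing that the $2$-torsion condition determines $a_3$ and $a_6$ from the remaining data, whereas you first complete the square and translate. Both yield the $O(q^{6d})$ bound, and both rest on the same key fact---that in characteristic $\neq 2$ the $2$-torsion section stays away from the point at infinity on the minimal Weierstrass model (equivalently, a $K$-rational root of a monic integral cubic is integral).

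In characteristic $2$, however, your proposed translation to $(0,0)$ has a genuine gap. The $2$-torsion condition on a non-identity point $(x_0, y_0)$ reads $a_1 x_0 + a_3 = 0$, so $x_0 = a_3/a_1$. Where $a_1$ vanishes but $a_3$ does not, $x_0$ acquires a pole; the $2$-torsion section then meets the zero section and you \emph{cannot} globally translate it to the origin while keeping the Hodge bundle of degree $d$. Hence the form $y^2 + a_1 xy = x^3 + a_2 x^2 + a_4 x$ with $a_i \in \hom^0(\msL^i)$ need not exist at height $d$, and your $7d$ parameter count is unjustified. The paper does \emph{not} invoke the hyper-Weierstrass framework here; instead it observes that if a non-identity section is fixed by $[-1]$ then $z \coloneqq a_1^2 y_0$ satisfies
\[
z^2 = a_1 a_3^3 + a_1^2 a_2 a_3^2 + a_1^3 a_3 a_4 + a_1^4 a_6,
\]
and crucially $z \in \hom^0(\msL^5)$ even though $y_0$ itself may have poles (in characteristic $2$, regularity of $z^2$ forces regularity of $z$). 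One then counts tuples $(z, a_1, a_2, a_3, a_4)$ with $a_6$ determined, giving $q^{15d + O(1)}$ Weierstrass equations and hence $O(q^{9d})$ after dividing by the gauge group of size $(q-1)q^{6d+3(1-g)}$.
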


\subsubsection*{\bf Prior Work}

As previously mentioned, bounds on the average size of Selmer groups of elliptic curves go back to work of de Jong \cite{dejong}, where he verified \cref{conj:main}, up to a limit as ``$q\to\infty$'', in the case of $3$-Selmer groups over $K=\F_q(t)$. A number field case of \cref{conj:main} was first handled by Bhargava and Shankar \cite{bhar-shan} who computed the average size of $2$-Selmer groups over $K=\Q$. Since then, many new works have been produced verifying cases of \cref{conj:main} (or variations of it); a non-exhaustive list of such papers includes \cite{dejong,bhar-shan,shankar-thesis,bhar-shan-3sel,bhargava2013average,bhargava2013averagesize5selmergroup,ho-lehung-ngo,thorne,aaron-selmer,feng-landesman-rains,sun-woo,ellenberg2024homologicalstabilitygeneralizedhurwitz}. However, to the best of the author's knowledge, there is not a single paper which investigates \cref{conj:main} for an \important{arbitrary} global function field $K$ (and fixed $n$). Generally, authors will at least require that $\Char K\nmid2n$ and/or that $K=\F_q(t)$. In contrast, this paper allows for any choice of $K$ in \cref{thma:main}/\cref{set:main}; this extra permissiveness is what allows us to deduce \cref{thma:rank-always-finite}.

In the case of $2$-Selmer groups over function fields, \cref{cora:main} was previously attained by H{\`{\^o}}, L{\^e} H{\`u}ng, and Ng{\^o} \cite{ho-lehung-ngo} when $\Char K\ge5$. The methods of their paper and the present paper have a common ancestor in the work of de Jong \cite{dejong}, but new complications in small characteristic have required us to introduce new ideas, as we explain below. Finally, the difference between our main theorem \cref{thma:main} and the main theorem of \cite{ho-lehung-ngo} will be stated more explicitly at the end of this introduction.

\subsubsection*{\bf New Ideas in Small Characteristics} 

From a zoomed out perspective, the proof of \cref{thma:main} follows the usual ``parameterize and count'' strategy often employed in arithmetic statistics. However, because we wish to allow function fields of characteristic $2$ in this paper, new complications arise in several steps of the arguments previously used to count 2-Selmer elements (see \cite{bhar-shan,shankar-thesis,ho-lehung-ngo}). In fact, we cannot even use the same parameterization as employed by previous authors. Below, we enumerate a few of the extra hurdles we must overcome to obtain \cref{thma:main}; we use the notation of \cref{set:main}.
\begin{enumerate}
    \item To parameterize $2$-Selmer elements, we define and develop a theory of ``hyper-Weierstrass curves'' (see \cref{sect:hW,sect:selmer-groupoid-card}).

    In good characteristics (see \cite{bhar-shan,shankar-thesis,ho-lehung-ngo}), one parameterizes 2-Selmer elements (of elliptic curves over $K$) via binary quartic forms $f(x,z)$. One reduces to a problem of counting ``integral'' (think: existing over $B$) such forms of bounded invariants which one handles by leveraging a classically developed theory of such forms. When $\Char K=2$, these objects no longer parameterize 2-Selmer elements. Instead, one needs to study certain relative curves $H/B$ which serve as ``integral models'' of genus 1 double covers of $\P^1$; these are our hyper-Weierstrass curves (see \cref{defn:hW}). These curves naturally live in certain $\P(1,2,1)$-bundles over $B$, but they are \important{not} cut out by a global equation in the same way that usual Weierstrass curves are (see \cref{prop:high-height=>weier-eqn} for the Weierstrass case), which ultimately means one has to count them by counting sections of some \important{non-split} rank $8$ vector bundles on $B$, the vector bundles $\msV\coloneqq\push p\msO_\P(H)$ of \cref{prop:hW-norm-bund}.

    \item In order to understand the rank $8$ vector bundles $\msV$ which arise as above, we need to use the theory of rational singularities.

    As mentioned above, one is interested in counting sections of certain rank $8$ vector bundles $\msV$. When $\Char K\ge5$, the role of $\msV$ is played by an analogous rank $5$ vector bundle, called `$V(\msE,\msL)$' in \cite{ho-lehung-ngo}, which \important{splits as a sum of line bundles} in the large height case, and so which can be more easily analyzed. In the present paper, our $\msV$'s are generally non-split. To count their sections, we exploit the fact that they arise geometrically from a (possibly singular) relative curve $H\to B$ in order to ultimately bound their $h^1$'s in terms of some intersection theory on a desingularization of $H$, at least when $H$ has at worst rational singularities (this is the goal of \cref{sect:geom-lemma}, which is exploited in \cref{sect:count-hW}).

    \item As in previous work, one needs to show that elliptic curves with non-trivial 2-torsion contribute $0$ to $\AS_B$.

    In \cite[Section 6.2]{ho-lehung-ngo}, the authors show this, for $q$ sufficiently large (and $\Char K\ge5$), by separately bounding the number of such elliptic curves and the possible size of their 2-Selmer groups. In \cref{sect:count-EC-2tors}, we produce a stronger bound on sizes of 2-Selmer groups in characteristic $\ge3$ which allows us to rule out the contributions of such elliptic curves (even when $q$ is small). When $\Char K=2$, we need a more delicate argument to rule out their contribution. This is accomplished in \cref{sect:MAS-AS-char-2} by carefully counting the hyper-Weierstrass curves which could correspond to a 2-Selmer element of an elliptic curve with non-trivial 2-torsion; the argument here ultimately rests upon an application of the Schwartz--Zippel Lemma (\cref{lem:schartz-zippel}).
\end{enumerate}

\subsubsection*{\bf Paper Organization}

In \cref{sect:conventions} we lay out common conventions and notation used throughout the paper. In \cref{sect:EC-count}, we obtain an asymptotic count of elliptic curves (ordered by height) over an arbitrary global function field (\cref{thm:EC-d-asymp}), i.e. we estimate the denominator of \cref{eqn:AS-def}. In \cref{sect:hW}, we define the objects -- in this paper, dubbed `hyper-Weierstrass curves' (see \cref{defn:hW}) -- which will serve as our integral models of 2-Selmer elements. In the same section, we make explicit their relation to 2-Selmer groups of elliptic curves via the introduction of a `2-Selmer groupoid' (see \cref{defn:Selmer-groupoid}). This groupoid is used to define a modified average count $\MAS_B$ which is closely related, but not exactly equal, to $\AS_B$. In \cref{sect:selmer-groupoid-card}, we implement the ``count'' part of the ``parameterize and count'' strategy, obtaining the desired upper bound on the modified average $\MAS_B$ defined in terms of the 2-Selmer groupoid (see \cref{thm:mod-estimate}). After that, all that remains is to compare $\MAS_B$ and $\AS_B$. These quantities differ only in their contributions coming from elliptic curves with extra automorphisms or with non-trivial 2-torsion. Curves with extra automorphisms a priori contribute \important{more} to $\MAS_B$ than they do to $\AS_B$, while curves with non-trivial 2-torsion a priori contribute \important{less} to $\MAS_B$ than they do to $\AS_B$. Hence, to prove the upper bound $\AS_B\le\MAS_B$, it suffices to show that curves with non-trivial 2-torsion \important{do not contribute} to these averages. When $\Char K=2$, we show this in \cref{sect:MAS-AS-char-2} by bounding the number of 2-Selmer elements attached to such curves showing up in the count in \cref{sect:selmer-groupoid-card}. Finally, in \cref{sect:main-results}, we prove the inequality $\AS_B\le\MAS_B$ when $\Char K\neq2$, and so deduce our main result (\cref{thm:main}).
\begin{rem}
    We remark that, in order to prove \cref{thma:main} when $\Char K=2$, most of \cref{sect:main-results} is unneeded. One only needs \cref{prop:count-triv-sel} (and its consequence \cref{cor:cor-mod-estimate}) whose proof (in characteristic $2$) does not rely on any of the other results in that section. On the other hand, to prove \cref{thma:main} when $\Char K\neq2$, \cref{sect:MAS-AS-char-2} is unnecessary (except \cref{lem:IAS-MAS-comp}).
\end{rem}

\subsubsection*{\bf Brief Comparison with \cite{ho-lehung-ngo}}

We briefly explicitly state the difference between our \cref{thma:main} and the main theorem of \cite{ho-lehung-ngo}, stated below.
\begin{thma}[{\cite[Corollary 2.2.3]{ho-lehung-ngo}}]\label{thma:hlhn}
    Use notation as in \cref{set:main}, and assume that $\Char K\ge5$.
    If $q>64$, then
    \[3\zeta_B(10)^{-1}\le\AS_B\le3+\frac T{q-1}\]
    for some constant $T=T(g)$ depending only on the genus of $B$.
\end{thma}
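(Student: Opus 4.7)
The plan is to implement the classical "parameterize and count" strategy for 2-Selmer groups, which is available when $\Char K \ge 5$ via the binary-quartic parameterization. In this range, work of Birch--Swinnerton-Dyer and Cremona gives a bijection between 2-Selmer elements of an elliptic curve $E/K$ and $\GL_2(K)$-equivalence classes of everywhere locally soluble binary quartic forms $f(x,z) \in K[x,z]$ whose classical invariants $(I(f), J(f))$ match the short-Weierstrass invariants of $E$. First I would integralize this bijection over $B$: to each $E/K$ of height $d$ associate its minimal Weierstrass model, which determines a line bundle $\msL$ on $B$ of degree essentially $d$, and realize 2-Selmer elements as $B$-sections of a rank 5 vector bundle $V(\msE,\msL)$ whose geometric fibers parameterize binary quartic forms with prescribed invariants, subject to appropriate local conditions at primes of bad reduction.

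The key geometric input is that, when $d$ is large compared to $g(B)$, the vector bundle $V(\msE,\msL)$ splits as a direct sum of explicit twists of $\msL$, so the number of global sections of each summand can be computed via Riemann--Roch. Summing the resulting counts over $E$ of height $d$, sieving out non-generic forms (reducible ones, corresponding to the trivial Selmer element and to 2-torsion-induced classes), and dividing by the asymptotic count from \cref{thma:EC-count} would yield a main term of the expected order $3$. The lower bound $3\zeta_B(10)^{-1}$ would be deduced by restricting the count to just the "distinguished" locally trivial Selmer elements and normalizing against the $\zeta_B(10)^{-1}$ minimality correction already built into the denominator count of \cref{thma:EC-count}.

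The main obstacle is controlling the error term $T/(q-1)$ in the upper bound. This requires (a) a squarefree sieve for the discriminant of $f$ over the function-field base $B$, analogous to Poonen's squarefree sieve, to control the tail of binary quartics with non-squarefree discriminant; (b) separately bounding the contribution from elliptic curves with non-trivial $K$-rational 2-torsion, which would follow from the $\Char K \ne 2$ case of \cref{thma:E[2]} together with an independent bound on their 2-Selmer ranks (e.g.\ via the 2-descent over a rational 2-isogeny); and (c) absorbing the contribution of $j \in \{0, 1728\}$ curves with extra automorphisms, which is where the hypothesis $\Char K \ge 5$ simplifies matters by ruling out the wild automorphism groups that appear in characteristics 2 and 3. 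The numerical hypothesis $q > 64$ enters to ensure the uniform dominance of the main term over the absolute error constants in the section-counting step, and the condition $\Char K \ge 5$ is essential throughout because the binary-quartic parameterization breaks down in residue characteristic 2, motivating the hyper-Weierstrass machinery developed in the present paper to bypass it.
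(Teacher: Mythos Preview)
This statement is not proved in the present paper at all: \cref{thma:hlhn} is simply a quotation of \cite[Corollary 2.2.3]{ho-lehung-ngo}, included in the introduction for comparison with \cref{thma:main}. There is therefore no proof in this paper to compare your proposal against. Your sketch is, at a high level, a reasonable outline of the strategy of \cite{ho-lehung-ngo} (binary quartic parameterization, integral models over $B$, section counting via Riemann--Roch in the split rank-5 bundle $V(\msE,\msL)$), so as a description of where the cited result comes from it is broadly accurate.

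That said, one detail in your outline is off relative to what the paper records about \cite{ho-lehung-ngo}. You attribute the hypothesis $q>64$ to ``uniform dominance of the main term over the absolute error constants in the section-counting step.'' According to \cref{warn:hlhn-err}, the numerical restriction in \cite{ho-lehung-ngo} originates specifically from their Section 6.2 argument ruling out the contribution of elliptic curves with nontrivial $2$-torsion (via a comparison of $\#\Sel_2$ against the discriminant degree), not from the main section-counting step. Indeed, the present paper points out that its \cref{prop:comp-IAS-AS} removes this restriction entirely. So if you were to flesh out your plan into an actual proof, the place where $q>64$ would appear is in bounding the $2$-torsion contribution, not in the asymptotic counting of sections.
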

Our statement of \cref{thma:hlhn} above differs slightly from the statement of \cite[Corollary 2.2.3]{ho-lehung-ngo}; see \cref{warn:hlhn-err}.

Our \cref{thma:main} shows that one can still bound $\AS_B$ by a quantity of the form $3+o(1)$ (with the $o(1)$ going to $0$ as $q\to\infty$) even when $\Char K$ (or $\#k$) is small. In addition, the form of our upper bound stated in \cref{eqn:main-asymp} shows that it is comparable to the upper bound obtained in \cite{ho-lehung-ngo}. However, because much of the author's motivation for studying this problem was in its application to bounding the average rank of elliptic curves (see \cref{cora:rank}), we do not obtain an analogous lower bound.
\begin{rem}
    We further remark that it is possible to slightly strengthen \cref{thma:hlhn} by replacing parts of \cite{ho-lehung-ngo} with results from our paper. The requirement $q>64$ in \cref{thma:hlhn} only exists because this is needed in their argument for showing that
    that elliptic curves with non-trivial 2-torsion do not contribute to the average size of $2$-Selmer. However, our \cref{prop:comp-IAS-AS} proves this even for small $q$, and so shows that \cref{thma:hlhn} remains valid for all $K$ of characteristic $\ge5$.
\end{rem}
\begin{rem}\label{warn:hlhn-err}
     In \cite{ho-lehung-ngo}, the authors claim that their main result only requires $q>32$ (instead of $q>64$) and that they obtain an upper bound of the form $3+T/(q-1)^2$ (instead of $3+T/(q-1)$). The parts of their paper where this `$32$' and `$T/(q-1)^2$' originate appear to contain minor errors.
    \begin{itemize}
        \item This restriction on the size of the base field originates in \cite[Section 6.2]{ho-lehung-ngo}. At one point the authors write that the degree of the discriminant divisor of a height $d$ elliptic curve is $10d$, whereas it should really be $12d$. Redoing the computations at the end of that section with this in mind shows that they need $q^4>4^{12}$ (i.e. $q>64$) in order to rule out the contribution coming from elliptic curves with non-trivial $2$-torsion.
        \item The summand $T/(q-1)^2$ originates in \cite[Case 3 in Section 6.1]{ho-lehung-ngo}. Use notation as in \cref{set:main} (so $B$ for us will be $C$ for them). Following \cite{ho-lehung-ngo}, write $\Sym^m_B(\F_q)$ for the set of effective, degree $m$ divisors on $B$. In the first series of displayed equations/inequalities appearing there, the authors seem to implicitly appeal to
        \[\sum_{\msM\in\Pic^{2d-n}(B)}\#\hom^0(B,\msM)=\#\Sym^{2d-n}_B(\F_q),\]
        whereas the correct identity is
        \[\sum_{\msM\in\Pic^{2d-n}(B)}\frac{\#\hom^0(B,\msM)-1}{q-1}=\#\Sym^{2d-n}_B(\F_q).\]
        Redoing \cite[Case 3 in Section 6]{ho-lehung-ngo} using this identity instead results in an upper bound of the form $T/(q-1)$.
        \qedhere
    \end{itemize}
\end{rem}

\subsubsection*{\bf Acknowledgements} I would like to extend immense gratitude towards Bjorn Poonen for suggesting this project and for providing much helpful guidance throughout its duration. I would also like to thank Aise Johan de Jong for answering questions about his paper \cite{dejong}. In addition, I greatly benefited from conversations with Levent Alp\"oge and Aaron Landesman, and had helpful email exchanges with Armand Brumer, Jean Gillibert, and Douglas Ulmer. Finally, over the course of completing this project, I was supported at various points by the Massachusetts Institute of Technology's Dean of Science Fellowship as well as by National Science Foundation grants DGE-2141064, DMS-1601946 and DMS-2101040.

\section{\bf Conventions}\label{sect:conventions}
In this paper, we work throughout in the fppf topology. All unadorned cohomology groups should be interpreted as fppf cohomology. For $G$ a sheaf of groups, we say \define{$G$-torsor} to mean an fppf-locally trivial right $G$-torsor sheaf.

\subsubsection*{\bf Vector bundles} Let $\msV$ be a vector bundle (by which, we mean locally free sheaf of finite rank) on a scheme $B$. We write $\msV^\vee:=\sHom(\msV,\msO_B)$ for the dual bundle. If $\msL$ is a line bundle, we also denote this by $\inv\msL:=\msL^\vee$.

If $\msV$ is a vector bundle on a scheme $B$, we write $\GL(\msV)$ to denote its group of $\msO_B$-linear automorphisms. We write $\ul\GL(\msV)$ to denote its automorphism sheaf, i.e., for $U$ a $B$-scheme, we set $\Gamma(U/B,\ul\GL(\msV))=\GL(\msV\vert_U)$.

Finally, for $\msV$ a vector bundle on a scheme $B$, its associated projective bundle is $\P(\msV):=\rProj_B\p{\Sym(\msV)}$.

\subsubsection*{\bf Duality} Let $f:X\to Y$ be a morphism of schemes. The dualizing sheaf, when it exists, of this morphism will be denoted $\omega_{X/Y}$. If $Y=\spec F$ is the spectrum of a field, then we often simply denote this by $\omega_X:=\omega_{X/F}:=\omega_{X/\spec F}$.

\subsubsection*{\bf Curves} Let $B$ be an arbitrary scheme. We say that a $B$-scheme $C\to B$ is a \define{$B$-curve} (or \define{curve over $B$} or simply a \define{curve}) if it is flat, proper, and finitely presented over $B$ with Gorenstein, connected, 1-dimensional geometric fibers. Note that, for $C\to B$ a curve, the dualizing sheaf $\omega_{C/B}$ exists and is invertible. 

If $E_1,E_2$ are elliptic curves (so, in particular, they are equipped with choices of identity points), then by an isomorphism $E_1\iso E_2$, we always mean an isomorphism of group schemes.

\subsubsection*{\bf Heights} Let $B$ be a smooth curve over a field $F$. Let $X\xto\pi B$ be a curve over $B$ such that $\push\pi\msO_X=\msO_B$ and whose relative dualizing sheaf $\omega_{X/B}$ is isomorphic to $\pull\pi\msL$ for some $\msL\in\Pic(B)$. Then, we define the \define{height} of $X/B$ to be
\[\Ht(X/B):=\deg(\msL)=\deg(\push\pi\omega_{X/B})\in\Z,\]
with the latter equality holding by the projection formula. In this situation, we call $\msL\simeq\push\pi\omega_{X/B}$ the \define{Hodge bundle} of the curve.

Let $K$ be a global function field, with corresponding curve $B$. If $C/K$ is a curve of genus at least 1, then we define its \define{height} to be
\[\Ht(C/K):=\Ht(\meC/B),\]
with $\meC/B$ the minimal proper regular model of $C$. In this situation, the \define{Hodge bundle} of $C$ is defined to be the Hodge bundle of its minimal proper regular model.

\subsubsection*{\bf Global Function Fields} Let $K$ be the function field of a smooth curve $B/\F_q$. We implicitly identify places $v$ of $K$ with closed points $v\in B$. Given such a place, we let $K_v$ denote the completion of $K$ at $v$, and we let $\msO_v$ denote the valuation ring of $K_v$, i.e. the completion of the stalk $\msO_{B,v}$. We let $\kappa(v)$ denote the residue field at $v$.

\subsubsection*{\bf Selmer Groups} Let $K$ be a global field, and let $E/K$ be an elliptic curve. For any $n\ge1$, its \define{$n$-Selmer group} is
\[\Sel_n(E)\coloneqq\ker\p{\hom^1(K,E[n])\too\prod_v\hom^1(K_v,E)},\]
where $v$ ranges over all places of $K$. Recall that $\Sel_n(E)$ fits into the short exact sequence
\begin{equation}\label{ses:Sel-fund}
    0\too E(K)/nE(K) \too \Sel_n(E) \too \Sha(E)[n] \too 0,
\end{equation}
where $\Sha(E)[n]$ denotes the $n$-torsion in the Tate-Shafarevich group of $E$.

\subsubsection*{\bf Averages} Use notation as in \cref{set:main}. Given a nonnegative integer $d$, we define
\begin{equation}\label{eqn:AS-def}
    \AS_B(d):=\frac{\displaystyle\sum_{\substack{E/K\\\Ht(E)\le d}}\frac{\#\Sel_2(E)}{\#\Aut(E)}}{\displaystyle\sum_{\substack{E/K\\\Ht(E)\le d}}\frac1{\#\Aut(E)}}
    \;\tand\!
    \AR_B(d):=\frac{\displaystyle\sum_{\substack{E/K\\\Ht(E)\le d}}\frac{\rank_\Z E(K)}{\#\Aut(E)}}{\displaystyle\sum_{\substack{E/K\\\Ht(E)\le d}}\frac1{\#\Aut(E)}}
    ,
\end{equation}
the average size of $2$-Selmer (resp. average rank) of elliptic curves over $K$ of height $\le d$. We furthermore define
\[\AS_B\coloneqq\limsup_{d\to\infty}\AS_B(d)\tand\AR_B\coloneqq\limsup_{d\to\infty}\AR_B(d).\]

\subsubsection*{\bf Asymptotics} When working within the context of \cref{set:main}, we allow our big-O constants to depend on the function field $K$. That is, when we write $f(x)=O(g(x))$ we mean that there exists some $C=C(K)>0$ such that $\abs{f(x)}\le Cg(x)$ for all large values of $x$.

\subsubsection*{\bf Groupoids} Let $\meG$ be a groupoid. We write $\abs\meG$ to denote the set of isomorphism classes of its objects. Its \define{groupoid cardinality} (or simply \define{cardinality}) is
\[\#\meG:=\sum_{x\in\abs\meG}\frac1{\#\Aut_\meG(x)}.\]
If we say that $\meG'\into\meG$ is a subgroupoid, we always mean that it is a \important{full} subgroupoid, i.e. $\Aut_{\msG'}(x)=\Aut_{\msG}(x)$ for any $x\in\meG'$.

\section{\bf An Asymptotic Count of Elliptic Curves of Bounded Height}\label{sect:EC-count}
The main result of this section (\cref{thm:EC-d-asymp}) produces an asymptotic count of the number of elliptic curves of bounded height over an arbitrary global function field. For elliptic curves over $\F_q(t)$, for arbitrary $q$, an exact count was produced already by de Jong \cite[Proposition 4.12]{dejong}. In this section, we adapt his argument to work over a general base curve $B$ instead of just $\P^1$. 

For $K$ a global function field, we introduce the following notation.
\begin{notn}
    Let $\meM_{1,1}(K)$ denote the groupoid of elliptic curves over $K$. For any $d\ge0$, we let $\meM_{1,1}^{=d}(K),\meM_{1,1}^{\le d}(K)\into\meM_{1,1}(K)$ denote, respectively, the (full) subgroupoids consisting of those elliptic curves of height $=d$ and those of height $\le d$.
\end{notn}

In order to count elliptic curves over $K$, we first briefly recall the main results of the theory of Weierstrass models. We then related the (unweighted) count of minimal Weierstrass equations of to the (weighted) count of elliptic curves, and use this to obtain as asymptotic for the latter.

\subsection{Background on Weierstrass Models}
\numberwithin{thm}{subsection}

\begin{defn}
    For an arbitrary base scheme $B$, a \define{Weierstrass curve} $(W\xto\pi B,S)$ over $B$ is a curve $W/B$ whose fibers are geometrically integral of arithmetic genus 1, equipped with a section $S\subset W$ of $\pi$ which is contained in $\pi$'s smooth locus.
\end{defn}
\begin{thm}[Summary of the theory of Weierstrass curves]\label{thm:weier-sum}
    Let $B$ be an arbitrary base scheme, let $(W\xto\pi B,S)$ be a Weierstrass curve, and let $\msL:=\push\pi\omega_{W/B}$ be its Hodge bundle. Then,
    \begin{enumerate}
        \item $\push\pi\msO_W\simeq\msO_B$ and $R^1\push\pi\msO_W\simeq\inv\msL$ both hold after arbitrary base change.
        \item For any integer $n\ge1$,
        \begin{itemize}
            \item $\push\pi\msO_X(nS)$ is a locally free sheaf of rank $n$ on $B$ whose formation commutes with arbitrary base change.
            \item $R^1\push\pi\msO_X(nS)=0$.
        \end{itemize}
        \item For $n\ge2$, there are exact sequences
        \begin{equation}\label{ses:push-nS}
            0\too\push\pi\msO_W((n-1)S)\too\push\pi\msO_W(nS)\too\msL^{-n}\too0.
        \end{equation}
        Furthermore, $\push\pi\msO_W(S)\simeq\msO_B$.
        \item The natural map $\pull\pi\push\pi\msO_W(3S)\to\msO_W(3S)$ is a surjection, and induces an embedding
        \[W\into\P(\push\pi\msO_W(3S)):=\rProj_B\p{\Sym\p{\push\pi\msO_W(3S)}}\]
        over $B$ such that $\msO_W(1):=\msO_{\P(\push\pi\msO_W(3S))}(1)\vert_W\simeq\msO_W(3S)$.
        \item There is a canonical section $\Delta\in\hom^0(B,\msL^{12})$, called the \define{discriminant} of $W$, whose zero scheme is supported exactly on the points with non-smooth fiber.
    \end{enumerate}
\end{thm}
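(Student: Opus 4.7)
The plan is to reduce everything to a fiberwise analysis on each geometric fiber $W_{\bar b}$---an integral, Gorenstein, arithmetic genus $1$ curve with a smooth marked point---and to combine cohomology and base change with the adjunction formula for $S \hookrightarrow W$.

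For parts (1) and (2), I would first observe that on each $W_{\bar b}$, Riemann--Roch and Serre duality yield $h^0(\msO(nS)) = n$ and $h^1(\msO(nS)) = 0$ for $n \ge 1$, together with $h^0(\msO) = h^1(\msO) = 1$. Constancy of these dimensions and Grauert's theorem then give that $\push\pi\msO_W(nS)$ is locally free of rank $n$ with formation commuting with arbitrary base change, and $R^1\push\pi\msO_W(nS) = 0$ for $n \ge 1$. Applied to $\msO_W$ itself, the same input shows that $\push\pi\msO_W$ and $R^1\push\pi\msO_W$ are line bundles compatible with base change; the unit $\msO_B \to \push\pi\msO_W$ is a fiberwise isomorphism of line bundles, hence an isomorphism, and relative Serre duality identifies $R^1\push\pi\msO_W \simeq (\push\pi\omega_{W/B})^{\vee} = \inv\msL$.

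For part (3), the ideal-sheaf short exact sequence
\[
0 \to \msO_W((n-1)S) \to \msO_W(nS) \to S_*S^*\msO_W(nS) \to 0
\]
pushes forward via $\pi$; combined with $R^1\push\pi\msO_W((n-1)S) = 0$ for $n \ge 2$ and the identification $S^*\msO_W(nS) \simeq \msL^{-n}$, this yields the desired sequence. To establish the identification, I would first show $\omega_{W/B} \simeq \pull\pi\msL$: the adjunction map $\pull\pi\push\pi\omega_{W/B} \to \omega_{W/B}$ is a morphism of line bundles that is fiberwise an isomorphism (each genus $1$ fiber has trivial dualizing sheaf generated by its unique up-to-scalar nowhere vanishing global section), so it is a global isomorphism. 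Then adjunction for the effective Cartier divisor $S \hookrightarrow W$ (in the smooth locus), together with $\omega_{S/B} \simeq \msO_S$, yields $S^*\omega_{W/B} \simeq (S^*\msO_W(S))^{-1}$, i.e.\ $S^*\msO_W(S) \simeq \inv\msL$. The identity $\push\pi\msO_W(S) \simeq \msO_B$ then falls out of part (2), since $\msO_B = \push\pi\msO_W \hookrightarrow \push\pi\msO_W(S)$ is an injection of line bundles that is fiberwise an isomorphism.

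For part (4), surjectivity of $\pull\pi\push\pi\msO_W(3S) \to \msO_W(3S)$ reduces via Nakayama to the fiberwise base-point-freeness of $\msO(3P)$ on each $W_{\bar b}$, which follows from Riemann--Roch. The induced morphism $W \to \P(\push\pi\msO_W(3S))$ is then a closed immersion because it is one on each geometric fiber---the classical Weierstrass embedding of an integral arithmetic genus $1$ curve with a smooth marked point as a plane cubic, valid whether the fiber is smooth, nodal, or cuspidal---and both source and target are proper over $B$. For part (5), I would locally trivialize $\msL$; parts (2) and (3) then produce Weierstrass coordinates with coefficients $a_i$ glued from sections of $\msL^i$ for $i \in \{1,2,3,4,6\}$, and the classical weighted-homogeneous discriminant polynomial (of weight $12$ in the $a_i$) assembles to a canonical global section $\Delta \in \hom^0(B, \msL^{12})$ whose vanishing locus is the non-smooth locus. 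The main technical obstacle is the bookkeeping in parts (3)--(4): making the adjunction-type identifications canonical and verifying that the local Weierstrass data glue globally despite $\msL$ being non-trivial. Once that is in hand, (5) reduces to a universal polynomial identity verified in the universal Weierstrass family.
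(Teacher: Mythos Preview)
Your proof sketch is correct and follows the standard approach. The paper itself does not give a proof of this theorem at all: it simply cites Deligne's \emph{Courbes elliptiques: formulaire d'apr\`es J. Tate}, noting only that Deligne states part (4) Zariski-locally on $B$ (which suffices to conclude it globally). Your outline---fiberwise Riemann--Roch and duality combined with cohomology and base change for (1)--(2), adjunction for $S\hookrightarrow W$ to identify the quotients in (3), the fiberwise closed-immersion criterion for (4), and gluing the classical discriminant polynomial for (5)---is precisely the line of argument one finds when unwinding that reference, so there is nothing substantively different to compare.
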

\begin{proof}
    All of this can be found e.g. in \cite{deligne}. Technically, \cite{deligne} only claims that \bp4 holds Zariski locally on the base, but this suffices to conclude the claim as stated above.
\end{proof}
\begin{rem}\label{rem:push-proj-bund}
    In connection with \cref{thm:weier-sum}\bp4 above, we remark that for a vector bundle $\msV$ on $B$ (an arbitrary base scheme) with associated projective bundle $\P(\msV)\xto pB$, one has
    \[\push p\msO_{\P(\msV)}(n)\simeq\Sym^n(\msV)\]
    for any $n\ge0$ (see \cite[Proposition II.7.11(a)]{hart}.
\end{rem}
We next attach global equations to Weierstrass curves. It is these equations that we will be able to count most easily. The existence and shape of these equations is well-known, but we include a treatment here because of their importance to the count.

\niven{Also, in general, it is not so clear to me which things I can claim without proof. Especially when I do not know of a reference that clearly states them in the manner I want.}
\niven{For example, I think nothing in this background section is new, but I don't know a set of references where it's all written down.}

\begin{prop}\label{prop:weier-norm-bund}
    Let $(W\xto\pi B,S)$ be a Weierstrass curve with Hodge bundle $\msL:=\push\pi\omega_{W/B}$. Let $\P:=\P(\push\pi\msO_W(3S))\xto pB$, so there is a natural embedding $W\into\P$. Then, $W\into\P$ is the zero scheme of some global section of
    \[\msO_\P(W)\simeq\msO_\P(3)\otimes\pull p(\msL^6)=\p{\pull p\msL^6}(3).\]
    Hence, we may via $W\into\P$ as being cut out by some global section of
    \[\push p\msO_\P(W)\simeq\msL^6\otimes\Sym^3\p{\push\pi\msO_W(3S)}.\]
\end{prop}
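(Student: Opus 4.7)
The plan is to combine a fiberwise calculation, showing that $W\subset\P$ is a relative plane cubic, with an adjunction calculation that pins down the global twist.

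\emph{Step 1: the shape of $\msO_\P(W)$.} By \cref{thm:weier-sum}\bp4, the embedding $W\into\P=\P(\push\pi\msO_W(3S))$ is induced by the complete linear system $\vert 3S\vert$, so on each geometric fiber it realizes the genus-$1$ curve $W_b$ as a plane cubic in $\P^2\simeq\P_b$. Since $W_b$ is a Cartier divisor in $\P_b$ and both $W,\P$ are $B$-flat, $W$ is an effective Cartier divisor in $\P$, and $\msO_\P(W)\vert_{\P_b}\simeq\msO_{\P^2}(3)$ for every $b\in B$. Hence the line bundle $\msN\coloneqq\msO_\P(W)\otimes\msO_\P(-3)$ is trivial on every geometric fiber of $p$. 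Cohomology and base change (constant $h^0=1$, vanishing $h^{>0}$ on fibers) then shows that $\push p\msN$ is a line bundle on $B$ and that the adjunction map $\pull p\push p\msN\to\msN$ is an isomorphism (check fiberwise, then apply Nakayama). This gives
\[\msO_\P(W)\simeq\msO_\P(3)\otimes\pull p\mathscr{M}\qquad\text{where }\mathscr{M}\coloneqq\push p\msN\in\Pic(B).\]

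\emph{Step 2: identify $\mathscr{M}\simeq\msL^6$ via adjunction.} The relative Euler sequence for $\P\to B$ gives
\[\omega_{\P/B}\simeq\pull p\det\p{\push\pi\msO_W(3S)}\otimes\msO_\P(-3),\]
and the filtration of $\push\pi\msO_W(3S)$ afforded by \cref{thm:weier-sum}\bp3 (for $n=2,3$), combined with $\push\pi\msO_W(S)\simeq\msO_B$, yields $\det\p{\push\pi\msO_W(3S)}\simeq\msL^{-5}$. Since $W$ is an effective Cartier divisor in $\P$, adjunction gives $\omega_{W/B}\simeq(\omega_{\P/B}\otimes\msO_\P(W))\vert_W$. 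The $\msO_\P(\pm3)$ factors cancel upon restriction to $W$, and using $\omega_{W/B}\simeq\pi^*\msL$ we obtain
\[\pi^*\msL\simeq\pi^*\msL^{-5}\otimes\pi^*\mathscr{M}.\]
Applying $\push\pi$ and invoking $\push\pi\msO_W\simeq\msO_B$ (\cref{thm:weier-sum}\bp1) and the projection formula yields $\mathscr{M}\simeq\msL^6$.

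\emph{Step 3: the pushforward.} Combining Steps 1 and 2, $\msO_\P(W)\simeq\msO_\P(3)\otimes\pull p\msL^6$. Applying $\push p$, the projection formula together with \cref{rem:push-proj-bund} gives
\[\push p\msO_\P(W)\simeq\msL^6\otimes\push p\msO_\P(3)\simeq\msL^6\otimes\Sym^3\p{\push\pi\msO_W(3S)}.\]

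The main (mild) obstacle is Step 1: over a general base $B$ the Picard group of $\P$ need not split as $\Pic(B)\oplus\Z$, so one cannot merely write down the decomposition of $\msO_\P(W)$. Instead one must establish it by identifying the correct relative degree ($3$, from the plane-cubic fiber) and extracting the base-line-bundle factor by pushforward and cohomology-and-base-change. Once that is done, adjunction and the filtration-determinant calculation in Step 2 are routine.
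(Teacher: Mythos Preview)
Your proof is correct and reaches the same conclusion, but the identification of the twisting line bundle $\mathscr{M}$ proceeds differently from the paper. Both you and the paper begin the same way: fiberwise $W$ is a plane cubic, so $\msO_\P(W)(-3)$ is fiberwise trivial and hence pulled back from some $\mathscr{M}\in\Pic(B)$. To compute $\mathscr{M}$, the paper pushes forward the exact sequence $0\to\msO_\P(-W)(3)\to\msO_\P(3)\to\msO_W(9S)\to0$ and takes determinants, which requires computing both $\det\Sym^3(\push\pi\msO_W(3S))$ and $\det\push\pi\msO_W(9S)$ via repeated use of the filtration in \cref{thm:weier-sum}\bp3 (all the way up to $n=9$). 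You instead use adjunction for $W\into\P$ together with the relative Euler sequence formula for $\omega_{\P/B}$; this only needs the filtration up to $n=3$ (to get $\det\push\pi\msO_W(3S)\simeq\msL^{-5}$) but relies on the additional input $\omega_{W/B}\simeq\pull\pi\msL$, which is standard (and follows e.g.\ from \cref{lem:fibral-gs+ds}) though not literally listed in \cref{thm:weier-sum}. Your route is somewhat more conceptual and sidesteps the larger determinant computation; the paper's route stays entirely within the toolkit already recorded in \cref{thm:weier-sum} and does not need to invoke the Euler sequence or the global identification of $\omega_{W/B}$ with a pullback.
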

\begin{proof}
    The main content of the above proposition is the computation of the line bundle $\msO_\P(W)$ on $\P$. Once we know $\msO_\P(W)\simeq\p{\pull p\msL^6}(3)$, the claimed computation of $\push p\msO_\P(W)$ follows from the projection formula and \cref{rem:push-proj-bund}. 
    
    We will find it more natural to directly compute its dual $\msO_\P(-W)$ instead. It is classical that, on fibers, $X\into\P$ is cut out by a cubic equation, so the line bundle $\msO_\P(-W)(3)$ on $\P$ is trivial on each fiber. Thus (e.g. by \cite[Proposition 25.1.11]{ravi-notes}), $\msO_\P(-W)(3)\simeq\pull p\push p\msO_\P(-W)(3)$. Hence, it will suffice to compute that
    \[\push p\msO_\P(-W)(3)\simeq\msL^{-6}.\]
    With this in mind, consider the exact sequence
    \[0\too\msO_\P(-W)(3)\too\msO_\P(3)\too\msO_W(3)\too0,\]
    and push forward along $p$. Since $\msO_W(1)\simeq\msO_W(3S)$ by \cref{thm:weier-sum}\bp4, we obtain the exact sequence
    \begin{equation}\label{ses:Sym^3(3S)}
        \begin{tikzcd}
            0\ar[r]&\push p\msO_\P(-W)(3)\ar[r]&\push p\msO_\P(3)\ar[d, equals,"\t{\cref{rem:push-proj-bund}}"]\ar[r]&\push p\msO_W(3)\ar[d, equals]\ar[r]&R^1\push p\msO_\P(-W)(3)\ar[d, equals]\\
            &&\Sym^3(\push\pi\msO_W(3S))&\push\pi\msO_W(9S)&0
            .
        \end{tikzcd}
    \end{equation}
    Above, $R^1\push p\msO_\P(-W)(3)=0$ by \cref{thm:coh-bc} since $\msO_\P(-W)(3)$ restricts to the trivial bundle on $\P^2$ in each fiber.
    Observe that the kernel $\push p\msO_\P(-W)(3)$ above is a line bundle, so it can be computed by taking determinants. By repeated use of \cref{thm:weier-sum}\bp 3 to compute $\det(\push\pi\msO_W(9S))$ and $\det(\push\pi\msO_W(3S))$, it is straightforward to compute that $\push p\msO_\P(-W)(3)\simeq\msL^{-6}$ as desired.
\end{proof}

\begin{assump}
    For the rest of this section, we work within the context of \cref{set:main}. In particular, $k$ is a finite field, and $B$ is a smooth $k$-curve of genus $g$ with function field $K=k(B)$. 
\end{assump}

\begin{rem}\label{rem:height-disc}
    Let $E/K$ be an elliptic curve. Let $\meC/B$ denote its minimal proper regular model, and let $W/B$ denote its minimal Weierstrass model. Then, $\meC$ and $W$ have isomorphic Hodge bundles. One can deduce this e.g. from \cite[Theorem 8.1]{conrad-min}. In light of \cref{thm:weier-sum}\bp5, this in particular means that $12\Ht(E)=\deg\Delta$, where $\Delta$ denotes $E$'s minimal discriminant.
\end{rem}
\begin{notn}
    We set $N(g):=\max\{-1,2g-2\}$. Note that if $\msL$ is a line bundle on $B$ of degree $>N(g)$, then $\hom^1(B,\msL)=0$ and $\deg\msL\ge0$.
\end{notn}

\begin{rem}\label{rem:high-height=>weier-eqn}
    Say $(W\xto\pi B,S)$ is a Weierstrass curve with Hodge bundle $\msL:=\push\pi\omega_{W/B}$ of degree $d>N(g)$. Then, the exact sequences (see \cref{thm:weier-sum}\bp 3)
    \[0\too\msO_B\too\push\pi\msO_W(2S)\too\msL^{-2}\too0\tand0\too\push\pi\msO_W(2S)\too\push\pi\msO_W(3S)\too\msL^{-3}\too0\]
    both split since they represent elements of
    \[\Ext^1_{\msO_B}(\msL^{-2},\msO_B)\simeq\Ext^1_{\msO_B}(\msO_B,\msL^2)\simeq\hom^1(\msL^2)=0\tand\Ext^1(\msL^{-3},\push\pi\msO_W(2S))\simeq\hom^1(\msL^3)\oplus\hom^1(\msL)=0,\]
    respectively. In particular, $\push\pi\msO_W(3S)\simeq\msO_B\oplus\msL^{-2}\oplus\msL^{-3}$.
    In this case, \cref{prop:weier-norm-bund} tells us that $W\into\P$ is given as the zero scheme of some global section of
    \begin{align*}
        \push p\msO_\P(W)
        &\simeq\push p\msO_\P(3)\otimes\msL^6\simeq\Sym^3(\push\pi\msO_W(3S))\otimes\msL^6\\
        &\simeq\msL^6\oplus\msL^4\oplus\msL^3\oplus\msL^2\oplus\msL\oplus\msO_B\oplus\msO_B\oplus\msL^{-1}\oplus\msL^{-2}\oplus\msL^{-3}
        .
    \end{align*}
    Symbolically, this is telling us that $W\into B$ is given by an equation of the form
    \[\lambda Y^2Z + a_1XYZ + a_3YZ^2 = \mu X^3 + a_2X^2Z + a_4XZ^2 + a_6Z^3\]
    and $a_i\in\hom^0(B,\msL^i)$. Finally, it is classic that we can always that $\lambda=1=\mu$ above and that $S\subset W$ is the subscheme $\{Z=0\}$.
\end{rem}
\begin{defn}\label{defn:weier-eqn}
    An equation of the form
    \begin{equation}\label{eqn:weier-eqn}
        Y^2Z + a_1XYZ + a_3YZ^2 = X^3 + a_2X^2Z + a_4XZ^2 + a_6Z^3,
    \end{equation}
    i.e. the data of a tuple $(\msL,a_1,a_2,a_3,a_4,a_6)$ with $\msL\in\Pic(B)$ and $a_i\in\hom^0(B,\msL^i)$, is called a \define{Weierstrass equation}. We call $\msL$ the \define{Hodge bundle} of the equation.
\end{defn}
The point of \cref{rem:high-height=>weier-eqn} is that, from it, one obtains the following proposition.
\begin{prop}\label{prop:high-height=>weier-eqn}
    Let $(W\xto\pi B,S)$ be a Weierstrass curve of height $>N(g)$. Then, $(W,S)$ is isomorphic to the curve cut out by some Weierstrass equation \cref{eqn:weier-eqn} whose Hodge bundle is $\push\pi\omega_{W/B}$, equipped with the subscheme $\{Z=0\}$.
\end{prop}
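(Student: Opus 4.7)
The proof proposal essentially consists of turning the heuristic content of Remark \ref{rem:high-height=>weier-eqn} into a rigorous argument, so my plan is to justify each of the steps sketched there.

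First I would set up notation: let $(W\xto\pi B,S)$ be the given Weierstrass curve with Hodge bundle $\msL=\push\pi\omega_{W/B}$, so by hypothesis $\deg\msL=\Ht(W/B)>N(g)$. The key observation is that for every $n\ge1$, $\deg\msL^n=n\deg\msL>N(g)\ge2g-2$, which forces $\hom^1(B,\msL^n)=0$ by Serre duality (since $\deg(\omega_B\otimes\msL^{-n})<0$). This cohomological vanishing is the entire reason the height hypothesis appears.

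Next I would use this to split the exact sequences in Theorem \ref{thm:weier-sum}\bp3. The sequence
\[0\too\msO_B\too\push\pi\msO_W(2S)\too\msL^{-2}\too0\]
has extension class in $\Ext^1_{\msO_B}(\msL^{-2},\msO_B)\simeq\hom^1(B,\msL^2)=0$, so it splits, giving $\push\pi\msO_W(2S)\simeq\msO_B\oplus\msL^{-2}$. The sequence
\[0\too\push\pi\msO_W(2S)\too\push\pi\msO_W(3S)\too\msL^{-3}\too0\]
has extension class in $\Ext^1_{\msO_B}(\msL^{-3},\msO_B\oplus\msL^{-2})\simeq\hom^1(B,\msL^3)\oplus\hom^1(B,\msL)=0$, and so also splits. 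Combining, $\push\pi\msO_W(3S)\simeq\msO_B\oplus\msL^{-2}\oplus\msL^{-3}$.

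With this decomposition in hand, Proposition \ref{prop:weier-norm-bund} says that $W$ is cut out in $\P:=\P(\push\pi\msO_W(3S))$ by a global section of $\msL^6\otimes\Sym^3(\msO_B\oplus\msL^{-2}\oplus\msL^{-3})$. Expanding the symmetric cube and tensoring by $\msL^6$ yields the direct sum
\[\msL^6\oplus\msL^4\oplus\msL^3\oplus\msL^2\oplus\msL\oplus\msO_B\oplus\msO_B\oplus\msL^{-1}\oplus\msL^{-2}\oplus\msL^{-3},\]
and after choosing generators $X,Y,Z$ of $\push\pi\msO_W(3S)$ corresponding to the direct summands $\msL^{-2},\msL^{-3},\msO_B$ (respectively), the section takes the symbolic shape
\[\lambda Y^2Z+a_1XYZ+a_3YZ^2=\mu X^3+a_2X^2Z+a_4XZ^2+a_6Z^3\]
with $a_i\in\hom^0(B,\msL^i)$ and $\lambda,\mu\in\hom^0(B,\msO_B)$. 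The final step is to verify that $\lambda$ and $\mu$ are nowhere-vanishing sections, hence units, so we may rescale $X,Y$ to assume $\lambda=\mu=1$; on a single fibre this is the classical statement that both $Y^2Z$ and $X^3$ must appear in any Weierstrass cubic (else the curve would be either reducible or have a singularity at the section $S$), and the fiberwise non-vanishing globalizes since $B$ is connected. Finally, the identification $S=\{Z=0\}$ is forced by the fact that $Z\in\hom^0(W,\msO_W(3S))$ is a generator of the line subbundle $\msO_B\subset\push\pi\msO_W(3S)$, so its zero scheme on $W$ is exactly $3S$ as a divisor, hence $S$ set-theoretically and (by smoothness of $S$ in $\pi$'s smooth locus together with the multiplicity) as a subscheme of the Weierstrass cubic.

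The main obstacle I anticipate is not the Ext vanishing or the symmetric cube computation (both routine) but rather carefully justifying the last step, namely that the coefficients $\lambda,\mu\in\hom^0(B,\msO_B)$ are units and that the change-of-basis $X\mapsto\mu X$, $Y\mapsto\lambda Y$ is compatible with the given section $S$ and preserves the form of the equation. Once one reduces to showing $\lambda$ and $\mu$ are non-vanishing on fibres, this is a classical fibrewise statement that follows from the integrality of the fibres together with Theorem \ref{thm:weier-sum}\bp4.
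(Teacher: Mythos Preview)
Your proposal is correct and follows precisely the paper's approach: the paper treats this proposition as a direct consequence of Remark~\ref{rem:high-height=>weier-eqn}, whose content you have reproduced and fleshed out. The one place where both you and the paper are terse is the passage from the ten-summand decomposition to the seven-term equation (the vanishing of the $X^2Y$, $XY^2$, $Y^3$ coefficients, which live in $\hom^0(B,\msL^{-1}),\hom^0(B,\msL^{-2}),\hom^0(B,\msL^{-3})$) together with the normalization $\lambda=\mu=1$; both you and the paper defer this to the classical fiberwise theory, which is fine.
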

\begin{rem}[See the discussion after Theorem 1 of Section 3 of \cite{mum-suo-moduli}]\label{rem:interpret-weier-eqn}
    To correctly interpret equation \cref{eqn:weier-eqn}, one should regard $X,Y,Z$ are sections of various line bundles; specifically,
    \[X\in\hom^0(\P,\pull p(\msL^2)(1))\tcomma Y\in\hom^0(\P,\pull p(\msL^3)(1)),\tand Z\in\hom^0(\P,\pull p(\inv\msO_B)(1)).\]
    Above, $\P:=\P\p{\msO_B\oplus\msL^{-2}\oplus\msL^{-3}}\cong\P\p{\push\pi\msO_W(3S)}$.
    This way \cref{eqn:weier-eqn} -- or rather, the difference of its two sides -- defines a section of the line bundle $\pull p(\msL^6)(3)$ on $\P$ (as should be expected by \cref{prop:weier-norm-bund}), and the zero scheme of this section in $\P$ is $W$.
    
    Let us indicate where these sections $X,Y,Z$ come from. Note that $\sHom(\msL^{-2},\push\pi\msO_W(3S))\simeq\push\pi\msO_W(3S)\otimes\msL^2$, and let $\eta_X\in\hom^0(B,\push\pi\msO_W(3S)\otimes\msL^2)$ be the global section corresponding to the natural inclusion $\msL^{-2}\into\msO_B\oplus\msL^{-2}\oplus\msL^{-3}\simeq\push\pi\msO_W(3S)$. Note that, by definition of $\P$, it comes with a morphism $\pull p\push\pi\msO_W(3S)\to\msO_\P(1)$. Now, $X\in\hom^0(\P,\pull p(\msL^2)(1))$ is the image of $\eta_X$ under the induced map
    \[\pull p\p{\push\pi\msO_W(3S)\otimes\msL^2}\simeq\pull p(\push\pi\msO_W(3S))\otimes\pull p(\msL^2)\to\pull p(\msL^2)(1).\]
    We similarly define $Y\in\hom^0(\pull p(\msL^3)(1))$ using the inclusion $\msL^{-3}\into\push\pi\msO_W(3S)$ and define $Z\in\hom^0(\msO_\P(1))$ using $\msO_B\into\push\pi\msO_W(3S)$.
\end{rem}
Recall that we are attempting to compute (an asymptotic) for
\[\#\meM_{1,1}^{\le d}(K)=\sum_{E/K:\Ht(E)\le d}\frac1{\#\Aut(E)}=\sum_{\substack{\msL\in\Pic(B)\\\deg\msL\le d}}\sum_{\substack{E/K\\\msL_E\simeq\msL}}\frac1{\#\Aut(E)},\]
where $\msL_E$ denotes the Hodge bundle of the elliptic curve $E$. In order to capture the relationship between counts of elliptic curves and Weierstrass curves, we introduce the following notation.
\begin{notn}
    For any $\msL\in\Pic(B)$,
    \begin{itemize}
        \item let $\UW_\msL$ denote the (unweighted) number of isomorphism classes of generically smooth, \important{minimal} Weierstrass equations with Hodge bundle isomorphic to $\msL$.
        \item Let $\WE_\msL$ denote the weighted number of isomorphism classes of elliptic curves with Hodge bundle isomorphic to $\msL$, i.e.
        \[\WE_\msL:=\sum_{\substack{E/K\\\msL_E\simeq\msL}}\frac1{\#\Aut(E)}. \qedhere\]
    \end{itemize}
\end{notn}
\begin{prop}
    Fix $\msL\in\Pic(B)$ with $d:=\deg\msL>N(g)$. Let $E/K$ be an elliptic curve, and let $(W\to B,S)$ be a Weierstrass curve with generic fiber $\cong E$ and Hodge bundle $\cong\msL$. The number of Weierstrass equations \cref{eqn:weier-eqn} cutting out Weierstrass curves isomorphic to $(W\to B,S)$ is
    \[\frac{(q-1)q^{6d+3(1-g)}}{\#\Aut(W/B,S)}.\]
\end{prop}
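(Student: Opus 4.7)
The plan is to apply orbit-stabilizer to the natural action of a group of ``Weierstrass substitutions'' on the set of Weierstrass equations with Hodge bundle $\msL$. Write $\pi\colon W\to B$ for the structure map of our fixed Weierstrass curve. I would define $G_\msL$ to consist of tuples $(u,s,r,t)$ with
\[u\in\Gamma(B,\msO_B)^\times,\quad s\in\hom^0(B,\msL),\quad r\in\hom^0(B,\msL^2),\quad t\in\hom^0(B,\msL^3),\]
acting on the triple of sections $X,Y,Z$ of \cref{rem:interpret-weier-eqn} via
\[X\mapsto u^2X+rZ,\quad Y\mapsto u^3Y+u^2sX+tZ,\quad Z\mapsto Z.\]
Each such substitution preserves the shape of a Weierstrass equation \cref{eqn:weier-eqn} and so induces an action of $G_\msL$ on the set of all such equations with Hodge bundle $\msL$.

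First I would compute $|G_\msL|$. Because $B$ is a smooth proper geometrically connected curve over $k=\F_q$, $\Gamma(B,\msO_B)^\times=k^\times$ has order $q-1$. Since $d>N(g)$, the line bundle $\msL^i$ has degree $id>2g-2$ for $i=1,2,3$, so Riemann--Roch yields $\hom^0(B,\msL^i)=id+1-g$. Hence
\[|G_\msL|=(q-1)\cdot q^{(d+1-g)+(2d+1-g)+(3d+1-g)}=(q-1)\,q^{6d+3(1-g)}.\]
Next I would show that the $G_\msL$-orbit of an equation cutting out $(W,S)$ is exactly the set of equations cutting out curves isomorphic to $(W,S)$: by \cref{prop:high-height=>weier-eqn}, such an equation amounts to a splitting of the canonical filtration $\msO_B=\push\pi\msO_W(S)\subset\push\pi\msO_W(2S)\subset\push\pi\msO_W(3S)$ (whose graded pieces are $\msL^{-2}$ and $\msL^{-3}$ by \cref{thm:weier-sum}\bp3) together with a choice of identification of the Hodge bundle with $\msL$, and such data form a torsor under $G_\msL$.

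Finally I would identify the stabilizer in $G_\msL$ of a fixed equation $\mathcal{E}$ with $\Aut(W/B,S)$. A stabilizer element induces an automorphism of $\P:=\P(\msO_B\oplus\msL^{-2}\oplus\msL^{-3})$ preserving both $W$ and the section $\{Z=0\}=S$, hence restricts to an element of $\Aut(W/B,S)$; conversely, any $\varphi\in\Aut(W/B,S)$ acts on $\push\pi\msO_W(nS)$ preserving the canonical filtration and scales the Hodge bundle by some unit $u\in\Gamma(B,\msO_B)^\times$, producing a $G_\msL$-element of the form $(u,s,r,t)$ fixing $\mathcal{E}$. Orbit-stabilizer then yields the count $|G_\msL|/\#\Aut(W/B,S)=(q-1)q^{6d+3(1-g)}/\#\Aut(W/B,S)$. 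I expect the main technical obstacle to be this last step: one must verify that the correspondence between $\Aut(W/B,S)$ and $\Stab_{G_\msL}(\mathcal{E})$ is a bijection, using the canonicity of the filtration on $\push\pi\msO_W(nS)$ and the faithfulness of the embedding $W\into\P(\push\pi\msO_W(3S))$ from \cref{thm:weier-sum}\bp4 to ensure injectivity.
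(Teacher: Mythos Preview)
Your proposal is correct and follows essentially the same approach as the paper: both count the choices of splittings of the filtration on $\push\pi\msO_W(3S)$ together with a scaling of the Hodge bundle, obtaining $(q-1)q^{6d+3(1-g)}$ total, and then divide by $\#\Aut(W/B,S)$ because two such choices yield the same equation exactly when they differ by an automorphism of the ambient $\P(\msO_B\oplus\msL^{-2}\oplus\msL^{-3})$ preserving $(W,S)$. You phrase this via an explicit group $G_\msL$ and orbit--stabilizer, while the paper counts the torsor of splittings directly, but the underlying argument is the same.
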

\begin{proof}
    From the previous discussion (in particular, \cref{rem:interpret-weier-eqn}), we see that the Weierstrass equation \cref{eqn:weier-eqn} one obtains is determined up to scaling (i.e. up to choosing an isomorphism $\push\pi\omega_{W/B}\simeq\msL$) by the choice of splittings in \cref{rem:high-height=>weier-eqn}. Such splittings give rise to the coordinates $X,Y,Z$ in \cref{rem:interpret-weier-eqn}, and once these are determined, there will be a single equation they satisfy. The set of splittings for the short exact sequence $0\to\msO_B\to\push\pi\msO_W(2S)\to\inv[2]\msL\to0$ form a torsor for $\Hom(\msL^{-2},\msO_B)\simeq\hom^0(B,\msL^2)$ while splittings for $0\to\push\pi\msO_W(2S)\to\push\pi\msO_W(3S)\to\inv[3]\msL\to0$ form a torsor for 
    \[\Hom(\msL^{-3},\push\pi\msO_W(2S))\simeq\Hom(\msL^{-3},\msO_B\oplus\msL^{-2})\simeq\hom^0(B,\msL^3)\oplus\hom^0(B,\msL).\]
    Thus, including scaling, we have a total of
    \[(\#\units k)\cdot\#\hom^0(B,\msL^2)\cdot\#\hom^0(B,\msL^3)\cdot\#\hom^0(B,\msL)=(q-1)q^{6d+3(1-g)}\]
    choices of data leading to Weierstrass equations for $(W\to B,S)$. Changing the choice of splittings and scaling corresponds to modifying \cref{eqn:weier-eqn} by an automorphism of $\P(\msO_B\oplus\msL^{-2}\oplus\msL^{-3})$, and so two choices give the same equation if and only if they differ by an automorphism of the Weierstrass curve, i.e. if and only if they differ by an automorphism of $\P:=\P(\msO_B\oplus\msL^{-2}\oplus\msL^{-3})$ which carries $W\into\P$ onto itself.
\end{proof}

\begin{lemma}\label{lem:weier-iso-dvr}
    Let $R$ be a dvr, let $F=\Frac(R)$, and let $(W_1/R,S_1)$, $(W_2/R,S_2)$ be two Weierstrass curves over $R$ with smooth generic fibers. Let $\phi:W_{1,F}\iso W_{2,F}$ be an isomorphism between their generic fibers such that $\phi(S_{1,F})=S_{2,F}$. Suppose that $W_1,W_2$ have discriminants with equal valuation. Then, $\phi$ uniquely extends to an isomorphism $\Phi:W_1\iso W_2$ of $R$-schemes satisfying $\Phi(S_1)=S_2$.
\end{lemma}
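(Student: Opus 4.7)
The plan is to reduce the lemma to a question about Weierstrass equations over $R$, where the extension problem becomes whether a classical change of variables lies in $R^\times\times R^3$ rather than only in $F^\times\times F^3$. Uniqueness will follow formally from separation, while existence will require analysis of that change of variables and its effect on the Weierstrass coefficients.

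For uniqueness, if $\Phi_1,\Phi_2:W_1\to W_2$ both extend $\phi$, their equalizer is closed in $W_1$ (since $W_2/R$ is separated) and contains the schematically dense generic fiber $W_{1,F}$ (by flatness of $W_1$ over $R$), hence equals $W_1$. The same equalizer reasoning applied to the closed subschemes $\Phi^{-1}(S_2)$ and $S_1$ of $W_1$ shows $\Phi(S_1)=S_2$ automatically once $\Phi$ is constructed.

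For existence, I would first use that $\spec R$ is affine and local, so $\Pic(\spec R)=0$ and $H^1$ vanishes on every coherent sheaf. The short exact sequences in \cref{rem:high-height=>weier-eqn} therefore split (the height hypothesis there was used only to guarantee this vanishing over the base), so both $W_1,W_2$ are realized in $\P^2_R$ by Weierstrass equations
\[y_i^2+a_{1,i}x_iy_i+a_{3,i}y_i=x_i^3+a_{2,i}x_i^2+a_{4,i}x_i+a_{6,i}\]
with all $a_{j,i}\in R$. The given isomorphism $\phi$, sending section to section, corresponds to a unique Weierstrass change of variables $x_1=u^2x_2+r$, $y_1=u^3y_2+su^2x_2+t$ with $(u,r,s,t)\in F^\times\times F^3$, under which the discriminants satisfy $\Delta_1=u^{12}\Delta_2$; since both $\Delta_i\in R$ are nonzero with equal valuation, we deduce $u\in R^\times$. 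Granting the further claim that $r,s,t\in R$, the resulting quadruple $(u,r,s,t)\in R^\times\times R^3$ determines the desired $R$-isomorphism $\Phi:W_1\iso W_2$ extending $\phi$, and such a Weierstrass transformation tautologically carries the section at infinity to the section at infinity, so $\Phi(S_1)=S_2$ is automatic.

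The hard step, and the main obstacle, will be showing $r,s,t\in R$. In residue characteristic coprime to $6$, this is immediate from the classical identities $2s=ua_{1,1}-a_{1,2}$, $12r=u^2b_{2,2}-b_{2,1}$ (using the $b$-invariant $b_2=a_1^2+4a_2$), and $2t=u^3a_{3,1}-a_{3,2}-ra_{1,2}$, since then $2,3,12$ are all units. In residue characteristic $2$ or $3$ those divisions by $2$ or $3$ are unavailable, and one must argue more carefully: assuming for contradiction that $\min(v(r),v(s),v(t))<0$ and tracking the leading pole through the transformation formulas for $a_{4,i}$ and $a_{6,i}$ should yield a contradiction with the integrality $a_{j,i}\in R$. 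I expect this case analysis in small residue characteristic to be the most delicate part of the proof.
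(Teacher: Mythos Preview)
Your proposal is correct and takes essentially the same approach as the paper: write both curves via Weierstrass equations over $R$, identify $\phi$ with a change of variables $(u,r,s,t)\in F^\times\times F^3$, and show this quadruple lies in $R^\times\times R^3$. The paper dispatches the ``hard step'' you flag (integrality of $r,s,t$ in residue characteristic $2$ or $3$) by citing the change-of-variables table in \cite[Table III.3.1]{silverman} together with the argument of \cite[Proposition VII.1.3(b)]{silverman}, so the delicate case analysis you anticipate is already worked out in the literature and need not be redone.
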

\begin{proof}
    Uniqueness of $\Phi$ holds simply because $W_{1,F}$ is (schematically) dense in $W_1$. For $i=1,2$, we can write $W_i$ as the zero set of some Weierstrass equation
    \[Y^2Z+\ith a_1XYZ+\ith a_3YZ^2=X^3+\ith a_2X^2Z+\ith a_3XZ^2+\ith a_6Z^3\twith\ith a_j\in R\]
    inside $\P^2_R$. Having done so, the isomorphism $\phi$ will be of the form
    \[\phi([X:Y:Z])=\sq{u^2X+rZ:u^3Y+u^2sX+tZ:Z}\]
    for some $u\in\units F$ and $r,s,t\in F$. By using the change of variables formula in \cite[Table III.3.1]{silverman} and arguing as in \cite[Proposition VII.1.3(b)]{silverman}, since $W_1,W_2$ have discriminates with the same valuation, we must in fact have $u\in\units R$ and $r,s,t\in R$. Thus, $\phi$ does in fact extend to a $\Phi:W_1\iso W_2$ as desired.
\end{proof}
\begin{cor}\label{cor:weier-disc-gen-isom}
    Let $(W_1/B,S_1)$, $(W_2/B,S_2)$ be two Weierstrass curves with smooth generic fibers, and let $\phi:W_{1,K}\iso W_{2,K}$ be an isomorphism between their generic fibers such that $\phi(S_{1,K})=S_{2,K}$. Suppose that $W_1,W_2$ have equal discriminant divisors. Then, $\phi$ uniquely extends to an isomorphism $\Phi:W_1\iso W_2$ of $B$-schemes satisfying $\Phi(S_1)=S_2$.
\end{cor}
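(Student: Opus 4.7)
The plan is to apply \cref{lem:weier-iso-dvr} at each closed point of $B$ and glue the resulting local isomorphisms into a global one. Uniqueness of $\Phi$ is immediate from the schematic density of $W_{1,K}$ in $W_1$ together with the separatedness of $W_2 \to B$, so I focus on existence. For each closed point $v \in B$, set $R_v := \mathcal{O}_{B,v}$, which is a dvr since $B$ is smooth of dimension $1$. The base changes $W_i \times_B \spec R_v$ are Weierstrass curves with smooth generic fibers $W_{i,K}$, and their discriminants have equal $v$-adic valuations because $W_1$ and $W_2$ share a common discriminant divisor on $B$. \Cref{lem:weier-iso-dvr} therefore supplies a unique extension $\Phi_v$ of $\phi$ to an isomorphism $W_1 \times_B \spec R_v \iso W_2 \times_B \spec R_v$ carrying $S_1$ to $S_2$.

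To pass from local rings to Zariski opens, I would invoke finite presentation of $W_1$ and $W_2$ over $B$: standard limit arguments show that $\Phi_v$ spreads out to a morphism $\Phi_{U_v}: W_1 \times_B U_v \to W_2 \times_B U_v$ over some affine open neighborhood $U_v \ni v$. Shrinking $U_v$ if necessary --- using that the locus where a morphism between proper flat finitely presented $B$-schemes is an isomorphism is open, and likewise for the locus where it carries one closed subscheme into another --- one may assume $\Phi_{U_v}$ is an isomorphism sending $S_1|_{U_v}$ to $S_2|_{U_v}$. By quasi-compactness of $B$, finitely many such $U_v$ cover $B$. On any overlap $U_v \cap U_{v'}$, the restrictions of $\Phi_{U_v}$ and $\Phi_{U_{v'}}$ both extend $\phi$ on the generic fiber and hence coincide by the same density-plus-separatedness argument giving uniqueness. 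Gluing then produces the required global $\Phi: W_1 \iso W_2$.

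There is no real obstacle beyond bookkeeping: all the genuine content already resides in \cref{lem:weier-iso-dvr}, which handles the dvr case via the change-of-variables formula for Weierstrass equations. The only ingredient worth double-checking is that the openness of the ``is an isomorphism'' and ``carries $S_1$ to $S_2$'' loci can be applied here, but these are standard consequences of finite presentation and properness over the $1$-dimensional Noetherian base.
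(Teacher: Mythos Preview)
Your proposal is correct and follows essentially the same approach as the paper: both arguments establish uniqueness via schematic density and existence by combining spreading-out with \cref{lem:weier-iso-dvr}. The only cosmetic difference is that the paper first spreads $\phi$ out to an open $U\subset B$ and then invokes the dvr lemma at the finitely many points of $B\setminus U$, whereas you apply the dvr lemma at every closed point before spreading out; this is a minor reorganization, not a different route.
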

\begin{proof}
    Uniqueness of $\Phi$ holds simply because $W_{1,K}$ is (schematically) dense in $W_1$. Existence holds because $\phi$ automatically spreads out out to an isomorphism over some open $U\subset B$, and then further can be extended over the remaining points by \cref{lem:weier-iso-dvr}.
\end{proof}
\begin{cor}
    Let $(W/B,S)$ be a Weierstrass curve with smooth generic fiber $E:=W_K$, an elliptic curve. Then, the restriction map $\Aut(W/B,S)\to\Aut(E)$ is an isomorphism.
\end{cor}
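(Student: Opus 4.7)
The plan is to deduce both injectivity and surjectivity of the restriction map directly from \cref{cor:weier-disc-gen-isom}, applied with $W_1=W_2=W$ and $S_1=S_2=S$.

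For injectivity, I would argue that any $B$-automorphism $\Phi$ of $W$ is determined by its restriction to the generic fiber. This is because $W\to B$ is flat (being a Weierstrass curve, hence a curve in the sense of \cref{sect:conventions}) and $B$ is integral, so the inclusion $W_K\into W$ is schematically dense. In particular, two $B$-endomorphisms of $W$ agreeing on $W_K$ must agree everywhere. Equivalently, this is the uniqueness half of \cref{cor:weier-disc-gen-isom}.

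For surjectivity, I would take an arbitrary $\phi\in\Aut(E)$. Per the conventions of \cref{sect:conventions}, $\phi$ is an isomorphism of group schemes, and in particular carries the identity section to the identity section. The identity section of the elliptic curve $E=W_K$ is $S_K\subset W_K$ (this is the generic fiber of the marked smooth section $S\subset W$), so $\phi(S_K)=S_K$. Furthermore, $W_1:=W$ and $W_2:=W$ trivially have the same discriminant divisor, so \cref{cor:weier-disc-gen-isom} produces a unique extension $\Phi:W\iso W$ of $\phi$ satisfying $\Phi(S)=S$. This $\Phi$ is by construction a preimage of $\phi$ under the restriction map.

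The main (very mild) subtlety is making sure the hypotheses of \cref{cor:weier-disc-gen-isom} hold: namely, that $\phi$ carries $S_K$ to itself and that the two Weierstrass curves have equal discriminant divisors. Both are immediate — the first from the group-scheme convention on $\Aut(E)$, the second because the two curves are literally the same — so there is no real obstacle; the statement is essentially a repackaging of \cref{cor:weier-disc-gen-isom}.
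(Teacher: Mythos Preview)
Your proposal is correct and takes exactly the approach the paper has in mind: the paper states this corollary immediately after \cref{cor:weier-disc-gen-isom} without proof, leaving the reader to make precisely the deduction you spell out (apply \cref{cor:weier-disc-gen-isom} with $W_1=W_2=W$, $S_1=S_2=S$, using that $\phi$ preserves $S_K$ by the group-scheme convention and that the discriminants trivially agree).
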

\begin{cor}\label{cor:count-eqns-for-W}
    Fix $E$ be an elliptic curve. Let $(W/B,S)$ be a Weierstrass curve with generic fiber $\cong E$ and height $d>N(g)$. Then, the number of Weierstrass equations cutting out a Weierstrass curve isomorphic to $(W/B,S)$ is
    \[\frac{(q-1)q^{6d+3(1-g)}}{\#\Aut(E)}.\]
\end{cor}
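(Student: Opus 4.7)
The plan is to combine the previous proposition with the immediately preceding corollary that $\Aut(W/B,S)\simeq\Aut(E)$. The proposition already established that the number of Weierstrass equations cutting out a Weierstrass curve isomorphic to $(W/B,S)$ is
\[\frac{(q-1)q^{6d+3(1-g)}}{\#\Aut(W/B,S)},\]
so all that remains is to replace $\#\Aut(W/B,S)$ by $\#\Aut(E)$.

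That replacement is exactly the content of the preceding corollary, whose proof in turn relies on \cref{cor:weier-disc-gen-isom}: any generic-fiber automorphism $\phi\colon E\iso E$ fixing the identity section extends uniquely to an automorphism of $(W/B,S)$, since $(W,S)$ and its image under $\phi$ are two Weierstrass models of $E$ with the same discriminant divisor (they coincide over the generic fiber, hence their discriminants agree as divisors on $B$). Hence the restriction map $\Aut(W/B,S)\to\Aut(E)$ is bijective, and substituting yields the asserted count.

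In short, the statement is essentially a bookkeeping reformulation of the previous proposition using the isomorphism $\Aut(W/B,S)\simeq\Aut(E)$. There is no real obstacle here beyond making sure the hypothesis $d>N(g)$ is inherited from the proposition (which it is, verbatim) so that the splittings of the short exact sequences in \cref{rem:high-height=>weier-eqn} exist and the counting argument applies.
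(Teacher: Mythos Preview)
Your proposal is correct and matches the paper's approach: the corollary is stated without proof in the paper because it follows immediately from the preceding proposition (giving the count in terms of $\#\Aut(W/B,S)$) together with the preceding corollary identifying $\Aut(W/B,S)\simeq\Aut(E)$. Your explanation of how these fit together is accurate.
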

\begin{cor}\label{cor:WE-from-UW}
    Choose $\msL\in\Pic(B)$ of degree $>N(g)$. Then,
    \[\WE_\msL=\frac{\UW_\msL}{(q-1)q^{6d+3(1-g)}}.\]
\end{cor}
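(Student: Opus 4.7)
The plan is to deduce the identity by summing \cref{cor:count-eqns-for-W} over isomorphism classes of minimal Weierstrass curves with Hodge bundle isomorphic to $\msL$ and then repackaging the sum in terms of elliptic curves.

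First I would fix $\msL$ of degree $d > N(g)$ and partition the set of minimal Weierstrass equations with Hodge bundle $\cong \msL$ according to the isomorphism class of the Weierstrass curve $(W/B, S)$ they cut out. Thanks to \cref{prop:high-height=>weier-eqn}, every such $(W/B,S)$ actually arises from some equation (the height hypothesis $d > N(g)$ is what makes the two Ext groups in \cref{rem:high-height=>weier-eqn} vanish, so the relevant short exact sequences split), so this partition is exhaustive. By \cref{cor:count-eqns-for-W}, each isomorphism class contributes exactly $(q-1)q^{6d+3(1-g)}/\#\Aut(E)$ equations, where $E = W_K$ is the generic fiber.

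Next I would identify the indexing sets. Taking the generic fiber sets up a bijection between isomorphism classes of minimal Weierstrass curves $(W/B,S)$ with Hodge bundle $\cong \msL$ and isomorphism classes of elliptic curves $E/K$ with $\msL_E \cong \msL$; here ``minimal Weierstrass model exists and is unique up to isomorphism'' is the content of the theory recalled earlier (and the fact that the Hodge bundles agree is \cref{rem:height-disc}). Moreover, by the corollary immediately preceding \cref{cor:count-eqns-for-W}, restriction to the generic fiber gives $\#\Aut(W/B,S) = \#\Aut(E)$, so the automorphism denominators match up.

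Combining these, we obtain
\[
\UW_\msL \;=\; \sum_{\substack{(W/B,S)\\ \push\pi\omega_{W/B}\cong\msL}} \frac{(q-1)q^{6d+3(1-g)}}{\#\Aut(W/B,S)} \;=\; (q-1)q^{6d+3(1-g)} \sum_{\substack{E/K\\ \msL_E\cong\msL}} \frac{1}{\#\Aut(E)} \;=\; (q-1)q^{6d+3(1-g)}\,\WE_\msL,
\]
and rearranging yields the desired formula. The argument is essentially formal given the preceding results; the only delicate point is confirming that the hypothesis $\deg\msL > N(g)$ guarantees that \textbf{every} elliptic curve with Hodge bundle $\msL$ has a minimal Weierstrass model cut out by a global equation of the claimed form, so that no curves are lost in the bijection — but this is exactly what \cref{prop:high-height=>weier-eqn} provides.
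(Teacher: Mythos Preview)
Your proposal is correct and follows essentially the same approach as the paper: sum \cref{cor:count-eqns-for-W} over isomorphism classes and rearrange. The paper's proof is terser, writing the sum directly over elliptic curves $E/K$ with $\msL_E \simeq \msL$ without explicitly spelling out the bijection with minimal Weierstrass curves or the role of \cref{prop:high-height=>weier-eqn}, but the content is the same.
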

\begin{proof}
    For any elliptic curve $E/K$, let $\alpha_E$ denote (the iso. class of) its minimal Weierstrass model, and let $\msL_E\in\Pic(B)$ denote its Hodge bundle. By \cref{cor:count-eqns-for-W}, we have
    \begin{align*}
        \UW_\msL
        &= \sum_{\substack{E/K\\\msL_E\simeq\msL}}\frac{(q-1)q^{6d+3(1-g)}}{\#\Aut(E)}
        = (q-1)q^{6d+3(1-g)}\WE_\msL
        .
    \end{align*}
    Rearrange to get the claimed equality.
\end{proof}

At this point, we would like to determine the number $\UW_\msL$ of generically smooth minimal Weierstrass equations over $B$ with Hodge bundle isomorphic to $\msL$. We do so by counting Weierstrass equations which are generically singular or which are non-minimal, and then subtracting these from the total number.
\begin{rem}\label{rem:count-all-weier-eqns}
    Fix $\msL\in\Pic(B)$ of degree $>N(g)$. By Riemann-Roch, the number of Weierstrass equations with Hodge bundle $\cong\msL$ is
    \[\prod_{\substack{i=0\\i\neq 5}}^6\#\hom^0(B,\msL^i)=q^{16d+5(1-g)}.\qedhere\]
\end{rem}

\subsection{Counting Generically Singular Weierstrass Curves}
To count Weierstrass curve over $B$ with singular generic fiber, one argues exactly as in \cite[Section 4.11]{dejong}. 
\begin{prop}\label{prop:count-gen-sing-weier}
    Let $\msL\in\Pic(B)$ satisfy $\deg\msL>N(g)$. Then, the number of generically singular Weierstrass curves with Hodge bundle $\cong\msL$ is
    \[\#\hom^0(B,\msL)\cdot\#\hom^0(B,\msL^2)^2\cdot\hom^0(B,\msL^3)=q^{8d+4(1-g)}.\]
\end{prop}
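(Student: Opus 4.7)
The plan is to adapt the argument of de Jong in \cite[Section 4.11]{dejong} from $\P^1$ to the general base curve $B$: parameterize generically singular Weierstrass equations via a ``singular normal form'' together with a change of variables, then count by identifying the overcount. Let $G := \hom^0(B,\msL^2) \times \hom^0(B,\msL) \times \hom^0(B,\msL^3)$, with elements $(r,s,t)$ acting on the set of Weierstrass equations via $(X,Y) \mapsto (X+r,\,Y+sX+t)$. Let $N$ denote the locus of Weierstrass equations with $a_3 = a_4 = a_6 = 0$, equivalently those of the form $Y^2 Z + A_1 X Y Z = X^3 + A_2 X^2 Z$; directly, $N$ consists exactly of the equations singular at $[0:0:1]$, and is parameterized by $(A_1,A_2) \in \hom^0(\msL) \times \hom^0(\msL^2)$.

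The key geometric step is to show that every generically singular Weierstrass equation lies in the $G$-orbit of an element of $N$ \emph{integrally}. A singular plane cubic over $K$ has a unique singular point, which must be $K$-rational (as the unique Galois-fixed point of the singular locus), so one obtains a rational section $\sigma_K \colon \spec K \to W$ of the Weierstrass curve $W$. Since $W \to B$ is proper and $B$ is a smooth curve, the valuative criterion extends $\sigma_K$ to an integral section $\sigma \colon B \to W$, necessarily factoring through the closed singular locus $\Sigma \subset W$. The section at infinity $S = [0:1:0]$ is a smooth point on every fiber of $W$ (one checks $\partial F/\partial Z \neq 0$ there), so $\sigma$ is disjoint from $S$. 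Hence $X_0 := \pull\sigma X$ and $Y_0 := \pull\sigma Y$ are honest global sections of $\msL^2$ and $\msL^3$ respectively, and $(r,s,t) = (-X_0, 0, -Y_0) \in G$ brings the equation integrally into $N$.

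It follows that the natural map $\Phi \colon \hom^0(\msL) \times \hom^0(\msL^2) \times G \to \{\text{gen.\ sing.\ W.\ eqns with Hodge bundle } \msL\}$, sending $(A_1,A_2,g)$ to $g$ applied to $(A_1,A_2,0,0,0)$, is surjective. Given such an equation $E$ with singular point $(X_0,Y_0)$, the $g \in G$ moving $E$ into $N$ are exactly $\{(-X_0,\,s,\,-Y_0) : s \in \hom^0(\msL)\}$, since only $(r,t) = (-X_0,-Y_0)$ can send the singular point to the origin while $s$ remains free. Thus each fiber of $\Phi$ has size $\#\hom^0(\msL)$ uniformly in the characteristic, and
\[\#\{\text{gen.\ sing.\ W.\ eqns}\} = \frac{\#\hom^0(\msL) \cdot \#\hom^0(\msL^2) \cdot \#G}{\#\hom^0(\msL)} = \#\hom^0(\msL) \cdot \#\hom^0(\msL^2)^2 \cdot \#\hom^0(\msL^3),\]
which equals $q^{8d + 4(1-g)}$ by Riemann-Roch (using $\deg\msL > N(g)$ to kill $h^1$). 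The main obstacle is the geometric extension step in the middle paragraph: a priori the singular point is only $K$-rational, and one must show its closure in $W$ gives an integral section disjoint from $S$, so that $X_0, Y_0$ are polynomial (not merely rational) sections available as translation data. Once this is in hand the counting is purely combinatorial, and, pleasantly, the same orbit-stabilizer bookkeeping absorbs the characteristic-$2$ oddities (where, for instance, the $G$-stabilizer of a generic normal form is size $2$ instead of trivial) without separate casework.
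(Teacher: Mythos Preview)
Your proof is correct and follows essentially the same route as the paper: both extend the generic singular point to an integral section $[\sigma_X:\sigma_Y:1]$ with $\sigma_X\in\hom^0(\msL^2)$, $\sigma_Y\in\hom^0(\msL^3)$ disjoint from $S$, then parametrize the generically singular equations by four free choices. The only difference is packaging---the paper writes down the Jacobian conditions at $\sigma$ directly and solves for $(a_3,a_4,a_6)$ in terms of $(a_1,a_2,\sigma_X,\sigma_Y)$, whereas you translate $\sigma$ to the origin and phrase the identical count as an orbit--stabilizer argument on normal forms.
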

\begin{proof}
    Suppose $(W\xto\pi B,S)$ is a Weierstrass curve with singular generic fiber and Hodge bundle $\msL$. Then, every fiber of $W\to B$ has exactly one singular point, and so these are the image of a unique section $\tau\in W(B)$, the section extending the singular $K$-point in the generic fiber. The composition $B\xinto\tau W\into\P:=\P(\msO_B\oplus\inv[2]\msL\oplus\inv[3]\msL)$ shows that $\tau$ corresponds to a line bundle quotient $\msO_B\oplus\inv[2]\msL\oplus\inv[3]\msL\onto\msM$. However, the image of $\tau$ is disjoint from the (smooth) zero section $S\subset W$, so the composition $\msO_B\into\msO_B\oplus\inv[2]\msL\oplus\inv[3]\msL\onto\msM$ must be everywhere nonzero, i.e. $\msO_B\iso\msM$. Thus, we may view $\tau$ as a triple $[\tau_X,\tau_Y,1]$ where $\tau_X\in\Gamma(B,\msL^2)=\Hom(\inv[2]\msL,\msO_B)$ and $\tau_Y\in\Gamma(B,\msL^3)=\Hom(\inv[3]\msL,\msO_B)$. Since $\tau$ lands in the singular locus, applying the Jacobian criterion for smoothness to \cref{eqn:weier-eqn}, we conclude that counting generically singular Weierstrass equations \cref{eqn:weier-eqn} amounts to counting tuples $(a_1,a_2,a_3,a_4,a_6,\sigma_X,\sigma_Y)$ with $a_i\in\hom^0(\msL^i)$, $\sigma_X\in\hom^0(\msL^2)$ and $\sigma_Y\in\hom^0(\msL^3)$ satisfying
    \begin{align*}
        \sigma_Y^2+a_2\sigma_X\sigma_Y+a_3\sigma_X &= \sigma_X^3+a_2\sigma_X^2+a_4\sigma_X+a_6\\
        -a_1\sigma_Y &= 3\sigma_X^2+2a_2\sigma_X+a_4\\
        2\sigma_Y+a_2\sigma_X+a_3 &= 0
        .
    \end{align*}
    By the above equations, any such tuple is uniquely determined by the choice of $a_1,a_2,\sigma_X,\sigma_Y$ from whence the claim follows.
\end{proof}

\subsection{Counting Non-Minimal Weierstrass Curves}
Our main tool for counting non-minimal Weierstrass curves is the following description of their origin.
\begin{rem}\label{rem:source-nonmin-weier}
    All non-minimal Weierstrass curves of height $d\ge0$ arise in the following manner.
    
    Start with a \important{minimal} Weierstrass curve $(W'\xto{\pi'}B, S')$ of height $d'<d$ along with an effective divisor $D\in\Div(B)$ of degree $d-d'$, write $D=\sum_{i=1}^rn_i[b_i]$. Consider also the embedding $f:W'\into\P(\push\pi'\msO_{W'}(3S'))$. Choose open neighborhoods $U_i\subset B$ of $b_i$, for each $i\in\{1,\dots,r\}=:[r]$, which satisfy
    \begin{itemize}
        \item $f$ restricts to an embedding $W'_{U_i}\into\P^2_{U_i}$ with image cut out by
        \begin{equation*}
            Y^2Z + \ith a_1XYZ + \ith a_3YZ^2 = X^3 + \ith a_2X^2Z + \ith a_4XZ^2 + \ith a_6Z^3
        \end{equation*}
        ($\ith a_j\in\Gamma(U_i,\msO_B)$); and
        \item There exists some $\ith\varpi\in\Gamma(U_i,\msO_B)$ restricting to a uniformizer of $\msO_{B,b_i}$, but to a unit of $\msO_{B,b}$ for all $b\in U_i\sm\{b_i\}$; and
        \item $b_j\not\in U_i$ if $j\neq i$.
    \end{itemize}
    Let $U_0=B\sm\{b_1,\dots,b_r\}$ and $\Ith\varpi0:=1$. For each $i\in[r]$, let $\ith c_j:=\p{\ith\varpi}^{j\cdot n_i}\ith a_j\in\Gamma(U_i,\msO_B)$, and consider the curve
    \[W_i:=\b{Y^2Z + \ith c_1XYZ + \ith c_3YZ^2 = X^3 + \ith c_2X^2Z + \ith c_4XZ^2 + \ith c_6Z^3}\subset\P^2_{U_i}.\]
    Also, let $W_0:=W'\vert_{U_0}$. By construction, for distinct $i,j\in\b{0,1,\dots,r}$, there is a natural isomorphism $\alpha_{ij}\colon W_i\vert_{U_i\cap U_j}\iso W_j\vert_{U_i\cap U_j}$ which is given in coordinates as
    \[\alpha_{ij}\colon[X:Y:Z]\mapstoo\sq{\frac{\p{\Ith\varpi j}^{2n_j}}{\p{\ith\varpi}^{2n_i}}X:\frac{\p{\Ith\varpi j}^{3n_j}}{\p{\ith\varpi}^{3n_i}}Y:Z}\]
    (note above that $\ith\varpi,\Ith\varpi j\in\Gamma(U_i\cap U_j,\msO_B)^\by$).
    These isomorphisms visibly satisfy the cocycle condition, and so these $W_i$'s glue to form a global curve $W/B$. Furthermore, the $\alpha_{ij}$'s all respect the distinguished section ($[0:1:0]$) of each $W_i$ and so one obtains a corresponding section $S\subset W$. This $(W/B,S)$ is, by construction, a non-minimal Weierstrass curve of height $d$; in fact, if $\msL'$ is the Hodge bundle of $W'$, then $W$ has Hodge bundle $\msL'(D)$. Furthermore, the resulting $(W/B,S)$ is, up to isomorphism, independent of the choices made by \cref{cor:weier-disc-gen-isom}.
\end{rem}
The upshot of the above remark is that each generically smooth non-minimal Weierstrass \important{curve} (say, of height $d$) is determined by a unique choice of minimal Weierstrass \important{curve} (say, of height $e<d$) along with an effective divisor $D$ of degree $d-e$ keeping track of the non-minimality of the equation. We use this observation to obtain a recursive count as in \cite[Proposition 4.12]{dejong}.
\begin{prop}\label{prop:count-nonmin-weier}
    Fix some $\msL\in\Pic^d(B)$ with $d>N(g)$. The number of non-minimal (generically smooth) Weierstrass equations with Hodge bundle $\cong\msL$ is
    \[q^{6d+3(1-g)}\sum_{e=0}^{d-1}\sum_{\msM\in\Pic^e(B)}\p{\#\hom^0(B,\msL\otimes\dual\msM)-1}\WE_\msM.\]
\end{prop}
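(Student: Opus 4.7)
The plan is to combine the bijection of \cref{rem:source-nonmin-weier} with the per-curve equation count of \cref{cor:count-eqns-for-W}. That remark asserts that every generically smooth non-minimal Weierstrass curve of height $d$ over $B$ arises, uniquely up to isomorphism, from a pair $(W',D)$ consisting of a minimal generically smooth Weierstrass curve $W'$ of some height $e\in\{0,1,\dots,d-1\}$ and an effective divisor $D$ of degree $d-e$; the Hodge bundles are related by $\msL\simeq\msM\otimes\msO_B(D)$, where $\msM$ denotes the Hodge bundle of $W'$.

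First I would stratify the count of non-minimal Weierstrass curves with Hodge bundle $\cong\msL$ by the Hodge bundle $\msM\in\Pic^e(B)$ of the minimal model, for each $e\in\{0,\dots,d-1\}$. For fixed $\msM$, the admissible divisors $D$ are precisely the effective divisors satisfying $\msO_B(D)\simeq\msL\otimes\dual\msM$, and these are in bijection with $(\hom^0(B,\msL\otimes\dual\msM)\setminus\{0\})/\units k$, giving exactly $(\#\hom^0(B,\msL\otimes\dual\msM)-1)/(q-1)$ such divisors (with the convention that this is $0$ when $\msL\otimes\dual\msM$ has no nonzero sections).

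Next I would weight each non-minimal curve by the number of equations cutting it out. Since the non-minimal curve $W$ constructed from $(W',D)$ has generic fiber $E\simeq W'_K$, and since $\Ht(W/B)=d>N(g)$, \cref{cor:count-eqns-for-W} gives $(q-1)q^{6d+3(1-g)}/\#\Aut(E)$ equations per isomorphism class of $(W/B,S)$. Grouping by $\msM$ and summing $1/\#\Aut(E)=1/\#\Aut(W'/B,S')$ over isomorphism classes of minimal Weierstrass curves with Hodge bundle $\cong\msM$ recovers $\WE_\msM$ (via the bijection between elliptic curves and their minimal Weierstrass models). Multiplying the divisor count by the equation-per-curve factor, cancelling the common $(q-1)$, and summing over $e$ and $\msM$ yields the claimed formula.

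The one subtlety to verify is that the correspondence $(W',D)\leftrightsquigarrow W$ is genuinely a bijection on isomorphism classes, so that no overcounting occurs. This should follow from \cref{cor:weier-disc-gen-isom}: the discriminant divisor of $W$ determines the divisor $D$ by comparison with the discriminant of $W'$ (the latter being the minimal discriminant of the generic fiber, hence intrinsic to $E$), after which $W'$ is recovered as the minimal Weierstrass model of $W_K$. This is the only step requiring care; the rest is an organized bookkeeping of the two counts above.
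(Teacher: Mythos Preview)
Your proposal is correct and follows essentially the same approach as the paper: both combine the bijection of \cref{rem:source-nonmin-weier} with the per-curve equation count of \cref{cor:count-eqns-for-W}, then reorganize the sum over effective divisors into a sum over line bundles $\msM$ weighted by $(\#\hom^0(B,\msL\otimes\dual\msM)-1)/(q-1)$. Your extra care in articulating why the correspondence $(W',D)\leftrightsquigarrow W$ is a bijection on isomorphism classes is a welcome addition, though the paper simply cites \cref{rem:source-nonmin-weier} for this.
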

\begin{proof}
    As remarked above, each non-minimal (generically smooth) Weierstrass \important{curve} with Hodge bundle $\cong\msL$ is determined by a unique minimal Weierstrass curve $(W'/B,S')$, say with Hodge bundle $\msL'$, along with an effective divisor $D$ such that $\msL\cong\msL'(D)$. Recall each Weierstrass \important{curve} is cut out by $(q-1)q^{6d+3(1-g)}/\#\Aut(E)$ different Weierstrass \important{equations}, by \cref{cor:count-eqns-for-W}. Thus, the total number of (generically smooth) non-minimal Weierstrass equations with Hodge bundle $\cong\msL$ is
    \begin{equation}\label{eqn:non-min-weier-count-i}
        \sum_{e=0}^{d-1}\sum_{D\in\Div^{d-e}_+(B)}\sum_{\substack{E/K\\\msL_E\simeq\msL(-D)}}\frac{(q-1)q^{6d+3(1-g)}}{\#\Aut(E)}= (q-1)q^{6d+3(1-g)}\sum_{e=0}^{d-1}\sum_{D\in\Div_+^{d-e}(B)}\WE_{\msL(-D)},
    \end{equation}
    where $\Div_+^n(B)$ is the set of effective divisors of degree $n$ on $B$. Note that if $\msM\simeq\msL(-D)$, then $\msO(D)\simeq\msL\otimes\dual\msM$, so there are $(\#\hom^0(B,\msL\otimes\dual\msM)-1)/(q-1)$ different effective divisors $D'\sim D$. Thus, \cref{eqn:non-min-weier-count-i} equals
    \[(q-1)q^{6d+3(1-g)}\sum_{e=0}^{d-1}\sum_{\msM\in\Pic^e(B)}\frac{\#\hom^0(B,\msL\otimes\dual\msM)-1}{q-1}\WE_\msM.\qedhere\]
\end{proof}

\subsection{Counting Elliptic Curves}
\begin{prop}\label{prop:UW-recur}
    Fix any $\msL\in\Pic^d(B)$ with $d>N(g)$. Then,
    \[\UW_\msL=q^{16d+5(1-g)}-q^{8d+4(1-g)}-q^{6d+3(1-g)}\sum_{e=0}^{d-1}\sum_{\msM\in\Pic^e(B)}\p{\#\hom^0(B,\msL\otimes\dual\msM)-1}\WE_\msM,\]
    and
    \[\WE_\msL=\frac{\UW_\msL}{(q-1)q^{6d+3(1-g)}}.\]
\end{prop}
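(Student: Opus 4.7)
The plan is to simply collect the ingredients that have already been assembled in the preceding subsections and invoke inclusion-exclusion on the total count of Weierstrass equations with Hodge bundle $\cong\msL$.

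First, I would recall from \cref{rem:count-all-weier-eqns} that the total number of Weierstrass equations (of the form \cref{eqn:weier-eqn}) with Hodge bundle $\cong\msL$ is
\[\prod_{\substack{i=0\\i\neq5}}^6\#\hom^0(B,\msL^i)=q^{16d+5(1-g)},\]
where the vanishing of $\hom^1(B,\msL^i)$ for $i\geq 1$ (guaranteed by $\deg\msL>N(g)$) together with Riemann--Roch gives $\#\hom^0(B,\msL^i)=q^{id+(1-g)}$ for $i\geq 1$, and $\#\hom^0(B,\msO_B)=q$. Second, \cref{prop:count-gen-sing-weier} tells us that exactly $q^{8d+4(1-g)}$ of these equations cut out \emph{generically singular} Weierstrass curves. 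Third, \cref{prop:count-nonmin-weier} counts the generically smooth but non-minimal Weierstrass equations as
\[q^{6d+3(1-g)}\sum_{e=0}^{d-1}\sum_{\msM\in\Pic^e(B)}\p{\#\hom^0(B,\msL\otimes\dual\msM)-1}\WE_\msM.\]

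Since $\UW_\msL$ is by definition the count of generically smooth \emph{minimal} Weierstrass equations with Hodge bundle $\cong\msL$, and since every Weierstrass equation falls into exactly one of the three classes (generically singular; generically smooth and non-minimal; generically smooth and minimal), the first displayed identity follows by subtracting the second and third counts from the first. The second displayed identity is just a restatement of \cref{cor:WE-from-UW}, so nothing new is needed there.

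There is no serious obstacle: the proposition is a bookkeeping exercise that packages \cref{rem:count-all-weier-eqns}, \cref{prop:count-gen-sing-weier}, \cref{prop:count-nonmin-weier}, and \cref{cor:WE-from-UW} into one statement. The only subtlety worth flagging is that the partition of Weierstrass equations into ``generically singular'' versus ``generically smooth'' is well-defined only because the generic fiber is determined by the equation, and that the definition of $\UW_\msL$ insists on \emph{minimality}, which is exactly what allows the non-minimal curves to be parameterized uniquely (via \cref{rem:source-nonmin-weier}) by a minimal model of strictly smaller height together with an effective divisor. Both of these were already handled in the earlier subsections, so the proof itself is a single sentence of arithmetic.
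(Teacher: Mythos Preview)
Your proposal is correct and follows essentially the same approach as the paper: the paper's own proof is precisely the one-line subtraction you describe, combining \cref{rem:count-all-weier-eqns}, \cref{prop:count-gen-sing-weier}, and \cref{prop:count-nonmin-weier} for the first identity and citing \cref{cor:WE-from-UW} for the second.
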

\begin{proof}
    The part after the word ``and'' is simply a restatement of \cref{cor:WE-from-UW}. To compute $\UW_\msL$, we make the simple observation that the (unweighted) number of generically smooth minimal Weierstrass equations is the total number of all Weierstrass equations minus the number of those which are generically singular minus the number of those which are generically smooth but non-minimal. With this in mind, the proposition follows combining \cref{rem:count-all-weier-eqns}, \cref{prop:count-gen-sing-weier}, and \cref{prop:count-nonmin-weier}.
\end{proof}
\begin{notn}
    We set
    \[Z_B(T):=\prod_{\t{closed }x\in B}\frac1{1-T^{\deg x}}=\sum_{n\ge0}\#\Sym^n(B)\cdot T^n,\]
    so $\zeta_B(s)=Z_B\p{q^{-s}}$.
\end{notn}
\begin{rem}\label{rem:comp-dJ-exact}
    In the case that $B=\P^1$, we have $Z_{\P^1}(T)=\sqinv{(1-T)(1-qT)}$, and \cite[Proposition 4.12]{dejong} gives an exact count
    \[\#\meM^{=d}_{1,1}(k(t))=q^{10d+1}\sq{1+\frac{1-q^{-8}-\inv[9]q+\inv[18]q}{q-1}-q^{-8d-1}+q^{-8d-3}}=\frac{q^{10d+2}}{(q-1)\zeta_{\P^1}(10)}-\frac{q^{2d+1}}{(q-1)\zeta_{\P^1}(2)},\]
    when $d\ge2$. 
\end{rem}
\begin{thm}\label{thm:EC-d-asymp}
    For any $\eps>0$, we have
    \[\#\meM_{1,1}^{=d}(K)=\#\Pic^0(B)\sq{\frac{q^{10d+2(1-g)}}{(q-1)\zeta_B(10)}-\frac{zq^{2d+(1-g)}}{(q-1)\zeta_B(2)}}+O_\eps\p{\p{q+\eps}^d}\]
    as $d\to\infty$, with implicit big-$O$ constant dependent on $\eps$. In particular,
    \[\#\meM_{1,1}^{=d}(K)\sim\#\Pic^0(B)\cdot\frac{q^{10d+2(1-g)}}{(q-1)\zeta_B(10)}.\]
\end{thm}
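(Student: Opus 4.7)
Set $a_d\coloneqq\#\meM_{1,1}^{=d}(K)=\sum_{\msL\in\Pic^d(B)}\WE_\msL$ and $s_n\coloneqq\#\Sym^n(B)$, so that $Z_B(T)=\sum_{n\ge0}s_nT^n$. The plan is to reduce the recursion of \cref{prop:UW-recur} to a \emph{convolution} and then extract coefficients via the Weil conjectures. First, combining the two formulas in \cref{prop:UW-recur}, for every $\msL\in\Pic^d(B)$ with $d>N(g)$ one has
\[\WE_\msL=\frac{q^{10d+2(1-g)}}{q-1}-\frac{q^{2d+(1-g)}}{q-1}-\frac1{q-1}\sum_{e=0}^{d-1}\sum_{\msM\in\Pic^e(B)}\p{\#\hom^0(B,\msL\otimes\dual\msM)-1}\WE_\msM.\]
Summing over $\msL\in\Pic^d(B)$ and reparameterizing the inner sum by $\msN=\msL\otimes\dual\msM\in\Pic^{d-e}(B)$, the identity
\[\sum_{\msN\in\Pic^{d-e}(B)}\p{\#\hom^0(B,\msN)-1}=(q-1)\cdot s_{d-e}\]
(see \cref{warn:hlhn-err}) yields the clean convolution
\[\sum_{e=0}^d s_{d-e}\,a_e=\frac{\#\Pic^0(B)}{q-1}\p{q^{10d+2(1-g)}-q^{2d+(1-g)}}\twhen d>N(g).\]

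Second, let $A(T)\coloneqq\sum_{d\ge0}a_dT^d$. The convolution above translates into the generating function identity
\[Z_B(T)\cdot A(T)=F(T)+E(T),\twhere F(T)\coloneqq\frac{\#\Pic^0(B)}{q-1}\sq{\frac{q^{2(1-g)}}{1-q^{10}T}-\frac{q^{1-g}}{1-q^2T}},\]
and $E(T)$ is a polynomial of degree at most $N(g)$ which absorbs the finitely many indices $d\le N(g)$ where the recursion of \cref{prop:UW-recur} is not asserted. (The curves of such low height are finite in number, so this polynomial correction is harmless.) Thus
\[A(T)=\frac{F(T)+E(T)}{Z_B(T)}.\]

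Third, the coefficients of $A(T)$ are extracted by partial fractions (equivalently, by contour integration / residues). By the Weil conjectures, $Z_B(T)=\prod_{i=1}^{2g}(1-\alpha_iT)/\p{(1-T)(1-qT)}$ with $\abs{\alpha_i}=q^{1/2}$, so $1/Z_B(T)$ is meromorphic on the disk of radius $q^{-1/2}$ with no poles strictly inside it, and its poles on $\abs T=q^{-1/2}$ produce coefficients of size $O_\eps(q^{d(1/2+\eps)})$ for any $\eps>0$. The only poles of $F(T)/Z_B(T)$ inside the disk of radius $q^{-1/2}$ are the simple poles at $T=q^{-10}$ and $T=q^{-2}$ (which are distinct from the zeros of $Z_B$ since $q^{-10},q^{-2}<q^{-1/2}$). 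Their residues contribute, respectively,
\[\frac{\#\Pic^0(B)\cdot q^{10d+2(1-g)}}{(q-1)\,Z_B(q^{-10})}=\frac{\#\Pic^0(B)\cdot q^{10d+2(1-g)}}{(q-1)\zeta_B(10)}\quad\t{and}\quad-\frac{\#\Pic^0(B)\cdot q^{2d+(1-g)}}{(q-1)\zeta_B(2)}\]
to $a_d$. Everything else (including the term $E(T)/Z_B(T)$) contributes $O_\eps((\sqrt q+\eps)^d)$, which is majorized by $O_\eps((q+\eps)^d)$, giving the claimed asymptotic.

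The only genuine obstacle is the bookkeeping for the low-degree boundary terms (where \cref{prop:UW-recur} does not apply and where $\Pic^d(B)$ may be empty for $d<0$). These concerns affect only finitely many $d$ and are neatly packaged into the polynomial $E(T)$, so they cannot disturb the two main-term residues at $T=q^{-10},q^{-2}$; they merely enlarge the subleading error, which the stated bound $O_\eps((q+\eps)^d)$ comfortably accommodates. Letting $d\to\infty$ gives the final limiting statement, since $q^{10d}$ dominates both $q^{2d}$ and $(q+\eps)^d$ once $\eps$ is chosen small enough.
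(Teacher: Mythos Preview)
Your proof is correct and follows the same generating-function strategy as the paper, but your execution is cleaner. The paper works with $a_d\coloneqq\sum_{\msL}\UW_\msL$ (the unweighted count of minimal Weierstrass \emph{equations}), then renormalizes to a sequence $c_d$ and derives a convolution identity $\sum_{e}\#\Sym^{d-e}(B)\,q^{-10(d-e)}c_e = 1 - q^{-8d-(1-g)} + O(q^{-9d})$; the $O(q^{-9d})$ arises precisely because the paper must convert $\WE_\msM$ back to $\UW_\msM$ inside the recursion, which is only valid for $e>N(g)$, and it bounds the low-$e$ tail crudely. You avoid this entirely by taking $a_d\coloneqq\sum_{\msL}\WE_\msL=\#\meM_{1,1}^{=d}(K)$ from the start, which produces the \emph{exact} convolution $\sum_{e=0}^d s_{d-e}a_e = \tfrac{\#\Pic^0(B)}{q-1}(q^{10d+2(1-g)}-q^{2d+(1-g)})$ for $d>N(g)$, with the boundary discrepancy packaged into a genuine polynomial $E(T)$ rather than an analytic error. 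Consequently your argument actually yields the sharper error $O_\eps((q^{1/2}+\eps)^d)$, not merely the $O_\eps((q+\eps)^d)$ that the paper obtains and states. Both approaches finish with the same partial-fraction extraction at $T=q^{-10}$ and $T=q^{-2}$.
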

\begin{proof}
    We may and do assume throughout that $d\gg1$. First note that
    \begin{equation}\label{eqn:meM-from-NW}
        \#\meM_{1,1}^{=d}(K)=\sum_{\msL\in\Pic^d(B)}\WE_\msL=\frac{a_d}{(q-1)q^{6d+3(1-g)}}\twhere a_d:=\sum_{\msL\in\Pic^d(B)}\UW_\msL
        .
    \end{equation}
    (by \cref{cor:WE-from-UW}). 
    \cref{prop:UW-recur} tells us that
    \begin{align}\label{eqn:ad-init}
        a_d &= \#\Pic^0(B)\sq{q^{16d+5(1-g)}-q^{8d+4(1-g)}} - q^{6d+3(1-g)}\sum_{\msL\in\Pic^d(B)}\sum_{e=0}^{d-1}\sum_{\msM\in\Pic^e(B)}\p{\#\hom^0(B,\msL\otimes\inv\msM)-1}\WE_\msM \nonumber\\
        &= \#\Pic^0(B)\sq{q^{16d+5(1-g)}-q^{8d+4(1-g)}} - q^{6d+3(1-g)}(q-1)\sum_{e=0}^{d-1}\sum_{\msL\in\Pic^d(B)}\sum_{\msM\in\Pic^e(B)}\frac{\#\hom^0(B,\msL\otimes\inv\msM)-1}{q-1}\WE_\msM \nonumber\\
        &= \#\Pic^0(B)\sq{q^{16d+5(1-g)}-q^{8d+4(1-g)}} - q^{6d+3(1-g)}(q-1)\sum_{e=0}^{d-1}\sum_{\msN\in\Pic^{d-e}(B)}\sum_{\msM\in\Pic^e(B)}\frac{\#\hom^0(B,\msN)-1}{q-1}\WE_\msM \nonumber\\
        &= \#\Pic^0(B)\sq{q^{16d+5(1-g)}-q^{8d+4(1-g)}} - q^{6d+3(1-g)}(q-1)\sum_{e=0}^{d-1}\p{\sum_{\msN\in\Pic^{d-e}(B)}\frac{\#\hom^0(B,\msN)-1}{q-1}}\sum_{\msM\in\Pic^e(B)}\WE_\msM \nonumber\\
        &= \#\Pic^0(B)\sq{q^{16d+5(1-g)}-q^{8d+4(1-g)}} - q^{6d+3(1-g)}(q-1)\sum_{e=0}^{d-1}\#\Sym^{d-e}(B)\cdot\sum_{\msM\in\Pic^e(B)}\WE_\msM
        .
    \end{align}
    We would like to turn \cref{eqn:ad-init} into a recursive formula for $a_d$ by relating $a_e$ to $\sum_{\msM\in\Pic^e(B)}\WE_\msM$. Since $\WE_\msM$ is most mysterious when $\deg\msM\le N(g)$, we deal with these terms by observing that
    \begin{align}\label{ineq:bound-bad}
        \sum_{e=0}^{N(g)}\#\Sym^{d-e}(B)\cdot\sum_{\msM\in\Pic^e(B)}\WE_\msM
        &=\sum_{e=0}^{N(g)}\#\P^{d-e-g}(k)\cdot\#\Pic^{d-e}(B)\cdot\sum_{\msM\in\Pic^e(B)}\WE_\msM \nonumber\\
        &\le\#\Pic^0(B)\cdot\sum_{e=0}^{N(g)} q^{d+1-g}\cdot\sum_{\msM\in\Pic^e(B)}\WE_\msM \nonumber\\
        &\le \#\Pic^0(B)\cdot q^{d+1-g}\sum_{e=0}^{N(g)}\sum_{\msM\in\Pic^e(B)}\WE_\msM \nonumber\\
        &=O\p{q^d}
    \end{align}
    if $d=\deg\msL\gg1$. Thus, \cref{eqn:ad-init} can be simplified to
    \begin{align}\label{eqn:ad-recur}
        & a_d - \#\Pic^0(B)\sq{q^{16d+5(1-g)}-q^{8d+4(1-g)}} \nonumber\\ 
        =& - q^{6d+3(1-g)}(q-1)\sum_{e=0}^{d-1}\#\Sym^{d-e}(B)\cdot\sum_{\msM\in\Pic^e(B)}\WE_\msM \nonumber\\
        =& - q^{6d+3(1-g)}(q-1)\sq{O\p{q^d}+\sum_{e=N(g)+1}^{d-1}\#\Sym^{d-e}(B)\cdot\sum_{\msM\in\Pic^e(B)}\WE_\msM} \nonumber\\
        =&\, O\p{q^{7d}} - q^{6d+3(1-g)}(q-1)\sum_{e=N(g)+1}^{d-1}\#\Sym^{d-e}(B)\sum_{\msM\in\Pic^e(B)}\frac{\UW_\msM}{(q-1)q^{6e+3(1-g)}} \nonumber\\
        =&\, O\p{q^{7d}} - \sum_{e=N(g)+1}^{d-1}\#\Sym^{d-e}(B)\sum_{\msM\in\Pic^e(B)}q^{6(d-e)}\UW_\msM \nonumber\\
        =&\, O\p{q^{7d}} - \sum_{e=0}^{d-1}\#\Sym^{d-e}(B)\cdot q^{6(d-e)}a_e
        ,
    \end{align}
    where we implicitly used \cref{cor:WE-from-UW} (which required $\deg\msM>N(g)$) in the third equality, and that
    \[\sum_{e=0}^{2g-2}\#\Sym^{d-e}(B)q^{6(d-e)}\sum_{\msM\in\Pic^e(B)}\UW_\msM=O\p{q^{7d}},\]
    via reasoning as in \cref{ineq:bound-bad}, in the fifth equality. At this point, we introduce the sequence $c_d$ defined by $a_d=\#\Pic^0(B)q^{16d+5(1-g)}c_d$ and remark that (by \cref{eqn:meM-from-NW}) the theorem statement is equivalent to the claim that
    \begin{equation}\label{eqn:cd-goal}
        c_d = \zeta_B(10)^{-1} - q^{-8d-(1-g)}\zeta_B(2)^{-1} + O_\eps\p{\p{\inv[9]q+\eps}^d}
    \end{equation}
    for any $\eps>0$. To prove \cref{eqn:cd-goal}, consider the generating function $C(T):=\sum_{d\ge0}c_dT^d$. From \cref{eqn:ad-recur}, one obtains:
    \[\sum_{e=0}^d\#\Sym^{d-e}(B)\cdot q^{-10(d-e)}c_e = 1 - q^{-8d-(1-g)} + O\p{q^{-9d}}.\]
    Multiplying both sides by $T^d$ and summing over $d\ge0$, this becomes
    \[C(T)Z_B\p{\inv[10]qT}=\sum_{d\ge0}\sq{\sum_{e=0}^d\#\Sym^{d-e}(B)\inv[10(d-e)]q\cdot c_e}T^d=\frac1{1-T}-\frac{q^{g-1}}{1-T\inv[8]q}+\sum_{d\ge0}O\p{\inv[9d]q}T^d.\]
    Hence, $C(T)=Z_B\p{\inv[10]qT}^{-1}M(T)+E(T)$, where
    \[M(T):=\frac1{1-T}-\frac{q^{g-1}}{1-T\inv[8]q}\tand E(T):=Z_B\p{\inv[10]qT}^{-1}\sum_{d\ge0}O\p{q^{-9d}}T^d.\]
    Note that, by the Weil conjectures, $Z_B(T)=P(T)/[(1-T)(1-qT)]$ 
    for some polynomial $P(T)\in\C[T]$ all of whose roots $\alpha\in\C$ satisfy $\abs\alpha=1/\sqrt q$. Thus, $Z_B\p{\inv[10]qT}^{-1}$ is holomorphic on a disk of radius $q^{9.5}$, so $E(T)$ above is holomorphic on a disk of radius $q^9$. Now, set
    \begin{align*}
        F(T) &:= Z_B\p{\inv[10]qT}^{-1}M(T)-\sq{\frac{Z_B\p{\inv[10]q}^{-1}}{1-T}-\frac{q^{g-1}Z_B\p{\inv[2]q}^{-1}}{1-\inv[8]qT}}\\
        &= \frac{Z_B\p{q^{-10}T}^{-1}-Z_B\p{\inv[10]q}^{-1}}{1-T} + \frac{q^{g-1}\sq{Z_B\p{\inv[2]q}^{-1}}-Z_B\p{\inv[10]qT}^{-1}}{1-\inv[8]qT}
        .
    \end{align*}
    Above, note that the zeros of the numerators at $T=1$ and $T=q^8$, respectively, cancel out the simple zeros of the denominators there. Therefore, $F(T)$ has poles only where $Z_B\p{\inv[10]qT}^{-1}$ has poles, so $F(T)$ is holomorphic on a disk of radius $q^{9.5}$. Consequently,
    \[C(T)=Z_B\p{\inv[10]qT}^{-1}M(T)+E(T)=\frac{Z_B\p{\inv[10]q}^{-1}}{1-T}-\frac{q^{g-1}Z_B\p{\inv[2]q}^{-1}}{1-\inv[8]qT}+F(T)+E(T).\]
    Since $F(T)+E(T)$ is holomorphic on a disk of radius $q^9$, comparing Taylor coefficients shows that
    \[c_d=Z_B\p{\inv[10]q}^{-1}-q^{-8d-(1-g)}Z_B\p{\inv[2]q}^{-1}+O_\eps\p{\p{\inv[9]q+\eps}^d}\]
    for any $\eps>0$, proving the claim.
\end{proof}

\section{\bf Hyper-Weierstrass Curves}\label{sect:hW}
In \cref{sect:EC-count}, we computed $\#\meM_{1,1}^{\le d}(K)$, the denominator of \cref{eqn:AS-def}. In the current section, we turn our attention towards its numerator. In order to count 2-Selmer elements, we attach to them certain ``integral models'' whose definition and basic properties are the focus of this section.

\subsection{Definitions and Geometric Preliminaries}
The definition of the titular objects of this section is inspired by the following description of $2$-Selmer elements.
\begin{rem}\label{rem:selmer-interpretation}
    Let $K$ be as in \cref{set:main}. Let $E/K$ be an elliptic curve, and fix any $n\ge1$. Every $\alpha\in\Sel_n(E)\subset\hom^1(K,E[n])$ can be represented by a pair $(C,D)$ where $C$ is locally solvable $E$-torsor, and $D\subset C$ is an effective divisor of degree $n$. Explicitly, given such a pair, one associates to it the $E[n]$-torsor $T\subset C$ consisting of points $P\in C$ for which $nP\sim D$. Put another way, $T$ is the preimage of $\msO_C(D)\in\Pic^n(C)$ under the multiplication-by-$n$ map
    \[C\iso\rPic^1_{C/K}\too\rPic^n_{C/K}.\]
    Two such pairs $(C_1,D_1)$ and $(C_2,D_2)$ represent the same $n$-Selmer element if and only if there is an isomorphism $\phi:C_1\iso C_2$ of $E$-torsors for which $\msO_{C_1}(D_1)\simeq\pull\phi\msO_{C_2}(D_2)$. Finally, a pair $(C,D)$ represents the identity element of $\Sel_n(E)$ if and only if $D\sim nO$ for some $O\in C(K)$.
    
    This description of $n$-Selmer elements can be obtained, for example, by combining \cite[Section 1.1]{explicit-descent} with \cite[Remark after Proposition 2.3]{period-index}.
\end{rem}
For the following definition, it may be useful to recall our convention for the word `curve', stated in \cref{sect:conventions}.
\begin{defn}\label{defn:hW}
    For an arbitrary base scheme $B$, we let $\meH(B)$ denote the groupoid whose
    \begin{itemize}
        \item objects are pairs $(H\xto\pi B,D)$ of a curve $H/B$ along with a subscheme $D\subset H$ satisfying
        \begin{alphabetize}
            \item $\push\pi\msO_H\simeq\msO_B$ holds after arbitrary base change.
            \item $\omega_{H/B}\simeq\pull\pi\msL$ for some $\msL\in\Pic(B)$.
            \item $D\subset H/B$ is an effective relative Cartier divisor of degree $2$.

            By `of degree $2$', we mean that $D_b\subset H_b$ is locally principal of degree $2$ for all $b\in B$.
            \item The line bundle $\msO_H(D)$ is relatively ample over $B$.
        \end{alphabetize}
        \item (iso)morphisms $(H\xto\pi B,D)\to(H'\xto{\pi'}B,D')$ are isomorphisms $\phi:H\iso H'$ over $B$ such that
        \[\pull\phi\msO_{H'}(D')\in\msO_H(D)\otimes\pull\pi\Pic(B).\]
    \end{itemize}
    We call an element of $\meH(B)$ a \define{hyper-Weierstrass curve} (or simply an \define{hW curve}) over $B$.
\end{defn}
\begin{rem}
    Condition \bp a above implies that the fibers of $H/B$ are geometrically connected. Condition \bp b implies that that each fiber has trivial dualizing sheaf. Together, these two can be thought of as saying that $H/B$ is a family of genus 1 curves.
\end{rem}
\begin{rem}
    The definition of $\meH(B)$ was greatly inspired by the definition of the class $\mcA_{n,d}$ of curves appearing in \cite[Paragraph 5.2]{dejong}.
\end{rem}
\begin{rem}
    Classes of curves which satisfy the criteria of \cref{defn:hW} have been studied before, e.g. in \cite{liu:hyp-fr,liu:global-weier} (where they are called ``Weierstrass models'') and also \cite{Cremona_2010,sadek:genus-1} (where they are called ``Degree $2$ models of genus $1$ curves''). None of these citations considers them over an arbitrary base, and they each only consider such models whose generic fiber is smooth. Here, we allow of arbitrary bases and singular generic fibers, at least in setting up their basic geometric properties. Finally, since usual Weierstrass models of elliptic curves play a role in this paper, we opted to give these particular curves a different name.
\end{rem}
In this section (as well as the \cref{sect:selmer-groupoid-card}), we aim to develop a theory of hyper-Weierstrass curves akin to the theory of Weierstrass curves used in \cref{sect:EC-count} and summarized in \cref{thm:weier-sum}. Our first goal in such a development will be to show, analogous to \cref{thm:weier-sum}\bp4, that any hW curve can, locally on the base, be embedded in $\P(1,2,1)$ where it can be cut out by an equation of the form
\[Y^2+(a_0X^2+a_1XZ+a_2Z^2)Y=c_0X^4+c_1X^3Z+c_2X^2Z^2+c_3XZ^3+c_4Z^4.\]

\subsubsection{Fundamental Exact Sequences}\label{sect:fund-exact-seq}
\begin{set}
    Fix an arbitrary base scheme $B$.
\end{set}
\begin{lemma}\label{lem:length=deg}
    Let $k$ be a field, and let $X/k$ be a $k$-curve with trivial dualizing sheaf $\omega=\omega_{X/k}\cong\msO_X$ and with $\hom^0(X,\msO_X)=k$.
    Let $D\subset X$ be a Cartier divisor, and let $d = h^0(\msO_D)$. Assume that $d \ge 1$.  Then, $h^1(\msO_X(D))=0$, $h^0(\msO_X(D))=d$. If furthermore $d\ge2$, then $\msO_X(D)$ is globally generated.
\end{lemma}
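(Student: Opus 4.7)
The plan is to prove the cohomological statements first via Serre duality, and then handle global generation via a residue/trace analysis using the Gorenstein structure of $D$.

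For the cohomology, since $\omega_X\cong\msO_X$, Serre duality gives $h^1(\msO_X(D)) = h^0(\msO_X(-D))$. Because $D$ is effective Cartier, $\msO_X(-D)$ sits naturally inside $\msO_X$ as the ideal sheaf of $D$, so its global sections are those $f\in\hom^0(X,\msO_X)=k$ vanishing on $D$; since $d\ge1$ forces $D\ne\emptyset$, the only such $f$ is $0$. Then from the short exact sequence
\[
0\to\msO_X\to\msO_X(D)\to\msO_X(D)|_D\to0,
\]
combined with the fact that $\msO_X(D)|_D$ is a line bundle on the affine $0$-dimensional scheme $D$ (so $h^0=h^0(\msO_D)=d$ and $h^1=0$), the long exact sequence together with $h^1(\msO_X(D))=0$ yields $h^0(\msO_X(D))=d$.

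For global generation when $d\ge2$, fix a closed point $x\in X$ and seek $s\in\hom^0(X,\msO_X(D))$ with $s(x)\ne0$. If $x\notin\supp D$, the canonical section $1_D$ corresponding to $D\ge0$ (the image of $1$ under $\msO_X\into\msO_X(D)$) has zero locus exactly $D$, so $1_D(x)\ne0$. Otherwise, suppose $x\in\supp D$ and set $V=\ker(\delta\colon\msO_D\to\hom^1(X,\msO_X)=k)$, so the image of $\hom^0(X,\msO_X(D))\to\msO_D$ is precisely the hyperplane $V$ in the $d$-dimensional $k$-algebra $\msO_D$. Tensoring the SES above with $k(x)$ (the map $\msO_X\to\msO_X(D)$ is locally multiplication by a uniformizer vanishing at $x$, hence zero mod $\mfm_x$), one gets $\msO_X(D)|_x\iso\msO_D\otimes k(x)=k(x)$, so the evaluation at $x$ factors as $\hom^0(X,\msO_X(D))\onto V\into\msO_D\onto k(x)$. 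Writing $K_x:=\ker(\msO_D\onto k(x))$, it suffices to show $V\not\subset K_x$.

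The main obstacle is the edge case $\deg x=1$, where $V$ and $K_x$ are both hyperplanes in $\msO_D$ and a naive dimension count does not suffice. (When $2\le\deg x<d$, $\dim K_x=d-\deg x<d-1=\dim V$ forces $V\not\subset K_x$ automatically; when $\deg x=d$, $K_x=0$ while $\dim V=d-1\ge1$.) To handle $\deg x=1$, I would use that $D$ is Gorenstein (as a Cartier divisor in the Gorenstein scheme $X$), and by adjunction $\omega_D\cong\msO_X(D)|_D$ is a rank-one free $\msO_D$-module. Via Serre duality on $X$, the functional $\delta\colon\msO_D\to k$ is identified with a generator of $\omega_D\cong\Hom_k(\msO_D,k)$, so the pairing $(f,g)\mapsto\delta(fg)$ on $\msO_D$ is nondegenerate. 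If $V=K_x$, then $\delta$ is a nonzero scalar multiple of evaluation at $x$, so $\delta(fg)=0$ for every $f\in\msO_D$ and $g\in K_x$; nondegeneracy then forces $K_x=0$, contradicting $\dim K_x=d-1\ge1$.
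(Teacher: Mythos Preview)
Your proof is correct. The cohomological part is essentially identical to the paper's: both use Serre duality to get $h^1(\msO_X(D))=h^0(\msO_X(-D))=0$ and then read off $h^0(\msO_X(D))=d$ from the standard short exact sequence.

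For global generation when $d\ge 2$, the paper simply cites \cite[Lemma~8.4(a)]{dejong} as a black box, whereas you give a direct, self-contained argument via the Gorenstein structure of $D$ and nondegeneracy of the trace pairing. This is a genuinely different route, and a nice one: it makes transparent exactly where the hypothesis $d\ge 2$ is used (to force $\dim K_x\ge 1$ in the critical $\deg x=1$ case). One small remark: your identification of the connecting map $\delta$ with a generator of $\omega_D\cong\Hom_k(\msO_D,k)$ is correct but could use a word of justification --- it follows from the compatibility of trace maps along $D\hookrightarrow X\to\spec k$, which says that $\delta$ (composed with the trace isomorphism $H^1(X,\omega_X)\cong k$) is exactly the trace map $t_D\colon\omega_D\to k$, and under any trivialization $\omega_D\cong\msO_D$ via a generator $\phi_0$, the functional $a\mapsto t_D(a\phi_0)=\phi_0(a)$ is indeed a generator of $\Hom_k(\msO_D,k)$.
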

\begin{proof}
    Consider the exact sequences
    \begin{equation}\label{ses:D}
        0\to\msO_X(-D)\to\msO_X\to\msO_D\to0\tand0\to\msO_X\to\msO_X(D)\to\msO_D(D)\to0.
    \end{equation}
    By duality, $\chi(\msO_X)=h^0(\msO_X)-h^0(\omega)=h^0(\msO_X)-h^0(\msO_X)=0$ since $\omega\cong\msO_X$. Hence, the exact sequence on the right of \cref{ses:D} gives
    \[\chi(\msO_X(D))=\chi(\msO_X)+\chi(\msO_D(D))=\chi(\msO_D(D)).\]
    Since $\msO_D$ is a skyscraper sheaf, we must have $\msO_D\simeq\msO_D(D)$. The above thus says
    \begin{equation}\label{eqn:chi(D)=d}
        \chi(\msO_X(D))=\chi(\msO_D(D))=\chi(\msO_D)=d.
    \end{equation}
    We now claim that $\hom^1(\msO_X(D))=0$. By duality, $h^0(\msO_X(D))=h^0(\omega_X\otimes\msO_X(-D))=h^0(\msO_X(-D))$. At the same time, the exact sequence on the left of \cref{ses:D} gives rise to
    \[0\too\hom^0(X,\msO_X(-D))\too\hom^0(X,\msO_X)\too\hom^0(D,\msO_D).\]
    The restriction map $k=\hom^0(X,\msO_X)\to\hom^0(D,\msO_D)$ is nonzero, so we conclude that $\hom^0(X,\msO_X(-D))=0$; hence also $\hom^1(X,\msO_X(D))=0$. Combining this with \cref{eqn:chi(D)=d}, we must have $h^0(\msO_X(D))=\chi(\msO_X(D))=d$. Finally, that $\msO_X(D)$ is globally generated when $d\ge2$ now follows from \cite[Lemma 8.4(a)]{dejong}.
\end{proof}

In developing a theory of hW curves, we will find it useful to also consider pairs $(X/B,D)$ satisfying properties \bp a -- \bp c (but not necessarily \bp d) of \cref{defn:hW}. Hence, we now name such curves.
\begin{defn}\label{defn:hawc}
    A \define{hyper almost-Weierstrass curve} (or simply \define{hawc}) is a pair $(X\xto\pi B,D)$ consisting of a curve $X/B$ along with a subscheme $D\subset X$ satisfying properties \bp a -- \bp c of \cref{defn:hW}.
\end{defn}
\begin{rem}
    We will see in \cref{cor:proj-hW} that hawcs give rise to hW curves. In \cref{sect:proof-prop}, we will apply this to show that every 2-Selmer element can be represented by an hW curve. In brief, \cref{rem:selmer-interpretation} will let us represent a 2-Selmer element by a pair $(C,D)$ with $C$ a locally solvable genus 1 curve, and $D\subset C$ a degree 2 divisor. In \cref{sect:proof-prop}, we will show that the minimal proper regular model of $C$ can be given the structure of a hawc, and so will give rise to an hW curve with $(C,D)$ as its generic fiber.
\end{rem}
\begin{lemma}\label{lem:push-E}
    Let $\pi:X\to B$ be a curve satisfying $\push\pi\msO_X\simeq\msO_B$ and $\omega_{X/B}\simeq\pull\pi\msL$ for some $\msL\in\Pic(B)$. Let $E\subset X$ be an effective relative Cartier divisor of degree $n\ge1$. Then,
    \begin{itemize}
        \item $\push\pi\msO_X(E)$ is a locally free sheaf of rank $n$ on $B$, whose formation commute with arbitrary base change; and
        \item $R^1\push\pi\msO_X(E)=0$.
    \end{itemize}
\end{lemma}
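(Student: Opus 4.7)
The plan is to reduce both claims to fiberwise statements via cohomology and base change, and then invoke \cref{lem:length=deg} on each geometric fiber.

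First, I would verify the hypotheses of \cref{lem:length=deg} on a fixed fiber $X_b$ (for $b\in B$). The fiber $X_b$ is a $\kappa(b)$-curve in the sense of \cref{sect:conventions}, and its dualizing sheaf is trivial:
\[\omega_{X_b}\cong\omega_{X/B}\vert_{X_b}\cong(\pull\pi\msL)\vert_{X_b}\cong\msO_{X_b}.\]
Moreover, $E_b\subset X_b$ is an effective Cartier divisor with $h^0(\msO_{E_b})=n\ge1$.

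The delicate hypothesis to verify is $h^0(X_b,\msO_{X_b})=1$. I would establish this via relative Grothendieck--Serre duality for the flat, proper, Gorenstein morphism $\pi$: the sheaf $R^1\push\pi\msO_X$ is dual to $\push\pi\omega_{X/B}$, and by the projection formula combined with $\push\pi\msO_X\simeq\msO_B$, we compute $\push\pi\omega_{X/B}\simeq\push\pi(\pull\pi\msL)\simeq\msL$. Hence $R^1\push\pi\msO_X\simeq\inv\msL$ is locally free of rank one. Cohomology and base change (e.g. Hartshorne III.12.11) applied first in degree $1$ (always available for top cohomology on a relative curve) and then in degree $0$ forces the base change map $\msO_B\simeq\push\pi\msO_X\otimes\kappa(b)\to H^0(X_b,\msO_{X_b})$ to be an isomorphism, giving $h^0(\msO_{X_b})=1$ as desired.

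With the hypotheses in hand, \cref{lem:length=deg} applied to $(X_b,E_b)$ yields $h^0(\msO_{X_b}(E_b))=n$ and $h^1(\msO_{X_b}(E_b))=0$ on every fiber $b\in B$. Running cohomology and base change once more, this time for $\msO_X(E)$: the fiberwise vanishing of $H^1$ forces $R^1\push\pi\msO_X(E)=0$, after which the constancy of $h^0$ at $n$ across fibers implies that $\push\pi\msO_X(E)$ is locally free of rank $n$ whose formation commutes with arbitrary base change. The main obstacle is the subtle step of establishing $h^0(\msO_{X_b})=1$ on fibers, as this does not follow formally from $\push\pi\msO_X\simeq\msO_B$ alone; the resolution is the above invocation of relative Serre duality to force local freeness of $R^1\push\pi\msO_X$, which unlocks the base change machinery.
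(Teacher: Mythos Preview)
Your approach is exactly the paper's: apply \cref{lem:length=deg} on each fiber to obtain $h^0(\msO_{X_b}(E_b))=n$ and $h^1(\msO_{X_b}(E_b))=0$, then run \cref{thm:coh-bc} with $\msF=\msO_X(E)$, first at $i=1$ and then at $i=0$. The paper does precisely this in three sentences, without pausing to verify the hypothesis $\hom^0(X_b,\msO_{X_b})=\kappa(b)$ of \cref{lem:length=deg}.

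You are right to flag that hypothesis, but your duality argument does not close the gap as written. Grothendieck duality (take $H^{-1}$ of the isomorphism $(R\push\pi\msO_X)^\vee\simeq R\push\pi\omega_{X/B}[1]$) yields only
\[
\bigl(R^1\push\pi\msO_X\bigr)^\vee\;\simeq\;\push\pi\omega_{X/B}\;\simeq\;\msL,
\]
not $R^1\push\pi\msO_X\simeq\inv\msL$. A coherent sheaf can have invertible dual without being invertible itself (e.g.\ $\msO_B\oplus T$ with $T$ torsion over a DVR), so the step ``hence $R^1\push\pi\msO_X\simeq\inv\msL$ is locally free of rank one'' is unjustified, and with it the deduction that $\phi^0_b$ is surjective for $\msO_X$. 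In the paper's applications this is a non-issue: \cref{lem:push-E} is only ever invoked for hawcs (\cref{defn:hawc}), which by definition satisfy condition \textbf{(a)} of \cref{defn:hW}---that $\push\pi\msO_X\simeq\msO_B$ holds \emph{after arbitrary base change}---and this yields $\hom^0(X_b,\msO_{X_b})=\kappa(b)$ immediately. The clean fix is to assume that stronger hypothesis (or simply note it holds in every use), rather than to route through duality over a general base.
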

\begin{proof}
    Since $E$ is degree $n$ over the base, `Riemann-Roch of the fibers' (i.e. \cref{lem:length=deg}) shows that $h^0(E_b)=n$ and $h^1(E_b)=0$ for any $b\in B$. We now apply cohomology and base change, \cref{thm:coh-bc}, with $\msF=\msO_X(E)$ and $i=1$. Part \bp0 of \cref{thm:coh-bc} implies that $R^1\push\pi\msO_X(E)=0$, so part \bp2 implies that $\phi_b^0$ (with notation as in the theorem statement) is surjective for all $b$. Given this, we can apply \cref{thm:coh-bc} a second time, now with $i=0$ and $\msF=\msO_X(E)$. Part \bp2 shows that $\push\pi\msO_X(E)$ is a vector bundle on $B$, part \bp1 shows that its formation commutes with arbitrary base change, and part \bp0 shows that it has rank $h^0(E_b)=n$.
\end{proof}
\begin{rem}
    Let $(X\xto\pi B,D)$ be a hawc. We will most commonly apply \cref{lem:push-E} to the divisors $nD\subset X$, for $n\ge1$. In this context, \cref{lem:push-E} says, among other things, that $\push\pi\msO_X(nD)$ is a vector bundle of rank $2n$.
\end{rem}

\begin{prop}\label{prop:model-exact-seqs}
    Let $(X\xto\pi B,D)$ be a hawc with Hodge bundle $\msL:=\push\pi\omega_{X/B}$. For any integer $n\ge2$, there is an exact sequence
    \begin{equation}\label{ses:mult-by-1}
        0\too\push\pi\msO_X((n-1)D)\otimes\det(\push\pi\msO_X(D))\too\push\pi\msO_X(nD)\otimes\push\pi\msO_X(D)\too\push\pi\msO_X((n+1)D)\too0
    \end{equation}
    of vector bundles on $B$, where the right map above is the natural multiplication map. When $n=1$, there is an exact sequence
    \begin{equation}\label{ses:mult-1-1}
        0\too\Sym^2(\push\pi\msO_X(D))\too\push\pi\msO_X(2D)\too\inv\msL\otimes\det(\push\pi\msO_X(D))\too0
    \end{equation}
    of vector bundles on $B$, where the left map is the natural multiplication map.
\end{prop}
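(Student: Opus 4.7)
The plan is to obtain both exact sequences from a single fundamental short exact sequence on $X$ by tensoring with $\msO_X(nD)$ and pushing forward along $\pi$. Set $V_n:=\push\pi\msO_X(nD)$, which by \cref{lem:push-E} is locally free of rank $2n$ with $R^1\push\pi\msO_X(nD)=0$ for all $n\ge 1$. By \cref{lem:length=deg} applied fiberwise, the evaluation $\pull\pi V_1\to\msO_X(D)$ is fiberwise surjective, hence surjective by Nakayama. Its kernel is a line bundle, identified as $\pull\pi\det V_1\otimes\msO_X(-D)$ by taking determinants of the resulting sequence (using that $V_1$ has rank $2$). This yields the fundamental Koszul-type sequence
\begin{equation}\label{eq:plan-fund}
    0\too\pull\pi\det V_1\otimes\msO_X(-D)\too\pull\pi V_1\too\msO_X(D)\too 0.
\end{equation}

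For the $n\ge 2$ case, I tensor \cref{eq:plan-fund} with $\msO_X(nD)$ and apply $\push\pi$. By the projection formula, the three pushforward terms become $\det V_1\otimes V_{n-1}$, $V_1\otimes V_n$, and $V_{n+1}$. Since $R^1\push\pi\msO_X((n-1)D)=0$ for $n-1\ge 1$ (\cref{lem:push-E}), the long exact sequence of pushforwards truncates to exactly \cref{ses:mult-by-1}; the right-hand map is readily verified to be the natural multiplication of sections.

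For the $n=1$ case, I tensor \cref{eq:plan-fund} with $\msO_X(D)$ and push forward. The $R^1$ term no longer vanishes: by relative Serre duality, $R^1\push\pi\msO_X\simeq\pdual{\push\pi\omega_{X/B}}=\inv\msL$. The resulting long exact sequence has four terms:
\[
    0\too\det V_1\too V_1\otimes V_1\too V_2\too\det V_1\otimes\inv\msL\too 0.
\]
The middle map $V_1\otimes V_1\to V_2$ is multiplication of sections, hence symmetric, and therefore factors through $\Sym^2 V_1$. A fiberwise rank count -- using surjectivity of $V_2\to\det V_1\otimes\inv\msL$ from exactness at the right -- shows that the factored map $\Sym^2 V_1\to V_2$ is injective and that the image of $\det V_1\hookrightarrow V_1\otimes V_1$ is exactly the antisymmetric line subbundle $\wedge^2 V_1\subset V_1\otimes V_1$. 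The four-term sequence therefore splits into $0\to\det V_1\to V_1\otimes V_1\to\Sym^2 V_1\to 0$ (the standard antisymmetrization) together with precisely \cref{ses:mult-1-1}.

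The main technical step is this last fiberwise identification in the $n=1$ case; beyond that, the argument is a routine application of the projection formula together with the vanishing $R^1\push\pi\msO_X(mD)=0$ for $m\ge 1$ supplied by \cref{lem:push-E}.
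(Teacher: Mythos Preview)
Your proposal is correct and follows essentially the same approach as the paper: both start from the Koszul-type sequence $0\to\pi^*\det V_1\otimes\msO_X(-D)\to\pi^*V_1\to\msO_X(D)\to 0$ coming from fiberwise global generation, twist by $\msO_X(nD)$, push forward using the projection formula, and invoke the vanishing $R^1\push\pi\msO_X(mD)=0$ for $m\ge 1$ from \cref{lem:push-E} together with $R^1\push\pi\msO_X\simeq\inv\msL$. For $n=1$ the paper phrases the final step slightly differently---rather than identifying the first map in your four-term sequence with $\wedge^2 V_1\hookrightarrow V_1\otimes V_1$, it passes directly to the right-exact sequence $\Sym^2 V_1\to V_2\to\det V_1\otimes\inv\msL\to 0$ and then observes that $\Sym^2 V_1$ surjects onto a rank $3$ subbundle of $V_2$, forcing the kernel to be a rank $0$ vector bundle---but this is the same rank count you describe.
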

\begin{proof}
    Note that $\msO_{X_b}(D_b)$ is globally generated for all $b\in B$ by \cref{lem:length=deg}. Hence, \cref{lem:push-E,lem:glob-gen} tell us that the natural counit map is a surjection $\pull\pi\push\pi\msO_X(D)\onto\msO_X(D)$. Consider the exact sequence
    \begin{equation}\label{eqn:D-gg}
        0\too\msK\too\pull\pi\push\pi\msO_X(D)\too\msO_X(D)\too0,
    \end{equation}
    and note that $\msK$ is a kernel of a surjection between vector bundles, and so a vector bundle itself. Since $\msO_X(D)$ is a line bundle while $\pull\pi\push\pi\msO_X(D)$ is rank 2 (by \cref{lem:push-E}), $\msK$ is a line bundle, so we can take determinants to compute $\msK\simeq\msO_X(-D)\otimes\pull\pi\det(\push\pi\msO_X(D))$. 
    
    Now, fix an integer $n\ge1$. Twisting \cref{eqn:D-gg} by $\msO_X(nD)$, pushing forward the resulting sequence, and applying the projection formula\footnote{$R^k\push\pi\p{\msF\otimes\pull\pi\msG}\simeq R^k\push\pi\msF\otimes\msG$ when $\msG$ is a vector bundle, \cite[Exercise III.8.3]{hart}} to both $\msK(nD)\simeq\msO_X((n-1)D)\otimes\pull\pi\det(\push\pi\msO_X(D))$ and $\msO_X(nD)\otimes\pull\pi\push\pi\msO_X(D)$, we obtain the exact sequence
    \[\begin{tikzcd}
        0\ar[r]&\push\pi\msO_X((n-1)D)\otimes\det(\push\pi\msO_X(D))\ar[r]&\push\pi\msO_X(nD)\otimes\push\pi\msO_X(D)\ar[r]&\push\pi\msO_X((n+1)D)\\
        \ar[r]&R^1\push\pi\msO_X((n-1)D)\otimes\det(\push\pi\msO_X(D))\ar[r]&R^1\push\pi\msO_X(nD)\otimes\push\pi\msO_X(D)
        .
    \end{tikzcd}\]
    By \cref{lem:push-E}, $R^1\push\pi\msO_X(nD)=0$. If $n\ge2$, then also $R^1\push\pi\msO_X((n-1)D)=0$, so the sequence becomes
    \[0\too\push\pi\msO_X((n-1)D)\otimes\det(\push\pi\msO_X(D))\too\push\pi\msO_X(nD)\otimes\push\pi\msO_X(D)\too\push\pi\msO_X((n+1)D)\too0,\]
    as claimed. If $n=1$, then the map $\push\pi\msO_X(D)\otimes\push\pi\msO_X(D)\to\push\pi\msO_X(2D)$ factors through $\Sym^2(\push\pi\msO_X(D))$ and -- recalling that $R^1\push\pi\msO_X\simeq\inv\msL$ by duality -- we obtain the exact sequence
    \[\begin{tikzcd}
        \Sym^2(\push\pi\msO_X(D))\ar[r]&\push\pi\msO_X(2D)\ar[r]&\inv\msL\otimes\det\p{\push\pi\msO_X(D)}\ar[r]&0
    \end{tikzcd}\]
    Now, we claim that the map $\Sym^2(\push\pi\msO_X(D))\to\push\pi\msO_X(2D)$ is injective. This follows from the fact that the kernel of a surjection between vector bundles is a vector bundle. Indeed, exactness of the sequence tells us that the image of this map is the rank 3 vector bundle $\ker\p{\push\pi\msO_X(2D)\onto\inv\msL\otimes\det(\push\pi\msO_X(D))}$, and so its kernel is the rank 0 vector bundle
    \[\ker\p{\Sym^2(\push\pi\msO_X(D))\onto\ker\p{\push\pi\msO_X(2D)\to\inv\msL\otimes\det(\push\pi\msO_X(D))}}=0.\]
    Hence, the sequence above is exact on the left, finishing the proof of the claim.
\end{proof}
\begin{cor}\label{cor:det(En)}
    Let $(X\xto\pi B,D)$ be a hawc with Hodge bundle $\msL:=\push\pi\omega_{X/B}$. Let $\msD:=\det(\push\pi\msO_X(D))$. Then,
    \[\det\p{\push\pi\msO_X(nD)}\simeq\msD^{n^2}\otimes\msL^{1-n}\tforall n\ge1.\]
\end{cor}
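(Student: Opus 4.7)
The plan is to induct on $n$. I would first establish the recursive structure implied by the exact sequences in \cref{prop:model-exact-seqs}, then check two base cases and conclude by induction.

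For the recursive relation, fix $n \ge 2$ and apply $\det$ to the short exact sequence \cref{ses:mult-by-1}. Writing $A_n := \det(\push\pi\msO_X(nD))$ and recalling that $\push\pi\msO_X(mD)$ is a vector bundle of rank $2m$ (by \cref{lem:push-E}, since $mD$ has degree $2m$), I would use the standard identity $\det(V \otimes W) \simeq (\det V)^{\rank W} \otimes (\det W)^{\rank V}$ on each term. This yields
\[
A_n^{\otimes 2} \otimes \msD^{\otimes 2n} \;\simeq\; A_{n-1} \otimes \msD^{\otimes (2n-2)} \otimes A_{n+1},
\]
or equivalently $A_{n+1} \simeq A_n^{\otimes 2} \otimes A_{n-1}^{-1} \otimes \msD^{\otimes 2}$.

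For the base cases, $A_1 \simeq \msD = \msD^{1^2} \otimes \msL^{1-1}$ holds tautologically. For $n=2$, I would take determinants of the exact sequence \cref{ses:mult-1-1}, using that $\det(\Sym^2 V) \simeq (\det V)^{\otimes 3}$ for any rank $2$ bundle $V$ (so $\det \Sym^2(\push\pi\msO_X(D)) \simeq \msD^{\otimes 3}$). This gives
\[
A_2 \;\simeq\; \msD^{\otimes 3} \otimes \inv\msL \otimes \msD \;\simeq\; \msD^{\otimes 4} \otimes \msL^{-1},
\]
which matches $\msD^{2^2} \otimes \msL^{1-2}$.

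The induction then finishes mechanically: assuming the formula for $A_{n-1}$ and $A_n$, the recursion gives $A_{n+1} \simeq \msD^{2n^2 - (n-1)^2 + 2} \otimes \msL^{2(1-n) - (2-n)} \simeq \msD^{(n+1)^2} \otimes \msL^{-n}$, as desired. No step here is hard; the only mildly subtle ingredient is keeping track of the ranks in the $\det(V \otimes W)$ formula and remembering that it is the $n=2$ case (via \cref{ses:mult-1-1}, where the $\Sym^2$ gives the cubic power of $\msD$) that introduces the factor of $\msL$ into the recursion.
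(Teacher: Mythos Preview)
Your proposal is correct and follows exactly the approach sketched in the paper's proof: the paper simply says the $n=1$ case is by definition, the $n=2$ case follows from taking determinants in \cref{ses:mult-1-1}, and the general case follows inductively from taking determinants in \cref{ses:mult-by-1}. You have filled in precisely those details.
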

\begin{proof}
    This is true for $n=1$ by definition. For $n=2$, this then follows from taking determinants in \cref{ses:mult-1-1}. For $n>2$, one inductively takes determinants in \cref{ses:mult-by-1}.
\end{proof}
\cref{prop:model-exact-seqs} (in particular, surjectivity of the relevant multiplication morphisms when $n\ge2$) is our main workhorse for obtaining local equations for hyper-Weierstrass curves. 

\subsubsection{Local Projective Embeddings}
We now obtain our local models for hW curves (\cref{thm:loc-model}), as mentioned above \cref{sect:fund-exact-seq}.
\begin{thm}\label{thm:loc-model}
    Let $(X\to B,D)$ be a hawc. Let
    \[H:=\rProj_B\p{\bigoplus_{n\ge0}\push\pi\msO_X(nD)}.\]
    Let $p:X\to H$ be the natural map, induced by the surjections $\pull\pi\push\pi\msO_X(nD)\onto\msO_X(nD)$ (see \cite[\href{https://stacks.math.columbia.edu/tag/01O8}{Tag 01O8}]{stacks-project}), and let $D_H\subset X$ be the (scheme-theoretic) image of $D$ under $p$.
    
    Then, each point of the base $B$ has an affine neighborhood $U=\spec R$ such that $H_U\to U$ is isomorphic over $U$ to the subscheme of $\P(1,2,1)_U$ defined by
    \begin{equation}\label{eqn:local-model}
        Y^2+(uX^2+vXZ+wZ^2)Y=aX^4+bX^3Z+cX^2Z^2+dXZ^3+eZ^4
    \end{equation}
    for some $u,v,w,a,b,c,d,e\in R$. Furthermore, we may choose the coordinates $X,Y,Z$ above so that $Z$ extends to a global section of $\msO_H(1)$, and so that $D_H$ is the divisor $\{Z=0\}$; hence, $\msO_H(1)\simeq\msO_H(D_H)$.
    
    Finally, if $D\subset X$ is relatively ample over $B$, i.e. if $(X\to B,D)\in\meH(B)$, then $X\simeq H$ as $B$-schemes, so $(X\to B,D)$ itself satisfies the above properties.
\end{thm}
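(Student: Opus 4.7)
The strategy is to compute the graded $\msO_B$-algebra
\[
\msA := \bigoplus_{n \geq 0} \push\pi \msO_X(nD)
\]
explicitly, locally on $B$, and to recognize $\rProj \msA$ as the subscheme of $\P(1,2,1)$ cut out by the stated equation. Throughout I work on an affine open $U = \spec R \subseteq B$ over which all relevant bundles are trivial. The rank-$2$ module $\msA_1 = \push\pi \msO_X(D)$ (see \cref{lem:push-E}) admits a basis $\{X, Z\}$ in which $Z$ is the image of the canonical section $\msO_X \hookrightarrow \msO_X(D)$ vanishing along $D$; this section pushes forward to an inclusion $\msO_B \hookrightarrow \msA_1$ which is fibrewise injective (since each fiber $D_b \subsetneq X_b$), forcing the image to be a sub-line-bundle by a standard Nakayama argument, after which we complete to a basis. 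The splitting of \cref{ses:mult-1-1} (available locally because $\msL^{-1} \otimes \det \msA_1$ is a line bundle) yields an element $Y \in \msA_2$ with $\msA_2 = \Sym^2 \msA_1 \oplus R \cdot Y$.

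Writing $\msA_0 := \Sym^\bullet \msA_1 = R[X, Z]$ (graded with $\deg X = \deg Z = 1$), I would next show that $\msA$ is free of rank $2$ over $\msA_0$ with basis $\{1, Y\}$. Surjectivity of $\msA_0 \oplus \msA_0 \cdot Y \twoheadrightarrow \msA$ holds by induction on degree: the multiplication map $\msA_{n-1} \otimes \msA_1 \twoheadrightarrow \msA_n$ provided by \cref{ses:mult-by-1} reduces surjectivity in degree $n \geq 3$ to the same in degree $n-1$. The degree-$n$ piece of $\msA_0 \oplus \msA_0 \cdot Y$ has rank $(n+1) + (n-1) = 2n$, matching the rank of $\msA_n$ by \cref{lem:push-E}, so this surjection between locally free $R$-modules of the same rank is an isomorphism.

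Consequently, the identity $Y^2 \in \msA_4 = (\msA_0)_4 \oplus (\msA_0)_2 \cdot Y$ takes the form
\[
Y^2 + (uX^2 + vXZ + wZ^2)\, Y = aX^4 + b X^3 Z + c X^2 Z^2 + d X Z^3 + e Z^4
\]
for suitable $u, v, w, a, b, c, d, e \in R$, and so $\msA \simeq R[X, Y, Z]/(\text{this relation})$. Thus $H_U = \rProj_U \msA$ is cut out in $\P(1,2,1)_U$ by exactly this equation. The element $Z$ extends to a global section of $\msO_H(1)$ whose pullback to $X$ is the canonical section of $\msO_X(D)$, so $D_H = \{Z = 0\}$ and $\msO_H(D_H) \simeq \msO_H(1)$.

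Finally, when $D$ is relatively ample on $X/B$, the natural map $p \colon X \to H = \rProj_B \msA$ is an isomorphism by the standard fact that a projective scheme is recovered as the $\rProj$ of the section ring of any relatively ample line bundle whose positive tensor powers have vanishing higher direct images and commute with base change (both supplied by \cref{lem:push-E}). The main obstacle is establishing the freeness of $\msA$ over $\msA_0$ in the second paragraph; once this is secured, the explicit Weierstrass shape of the degree-$4$ relation and the identification of $D_H$ follow at once.
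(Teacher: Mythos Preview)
Your proposal is correct and follows essentially the same approach as the paper: localize so that the relevant bundles are free, pick $Z$ as the pushforward of the canonical section $1\in\msO_X\hookrightarrow\msO_X(D)$, complete to a basis $\{X,Z\}$ of $\msA_1$, split \cref{ses:mult-1-1} to obtain $Y$, and then use the surjectivity in \cref{ses:mult-by-1} together with a rank count to identify $\msA$ with $R[X,Y,Z]$ modulo a single degree-$4$ relation. Your packaging of the induction as ``$\msA$ is free of rank~$2$ over $\msA_0=R[X,Z]$ with basis $\{1,Y\}$'' is slightly slicker than the paper's explicit enumeration of monomial bases in degrees $3$ and $4$, but the content is identical.
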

\begin{proof}
    Each point of $B$ has an affine neighborhood $U=\spec R$ above which both $\push\pi\msO_X(D)$ and $\msL$ trivialize, so we may and do assume wlog that $B=U=\spec R$. Even after passing to this case, we continue to write $U$ for the base instead of $B$ in order to emphasize the fact that we're working over an affine.
    
    We want to carefully construct $x,z\in\Gamma(U,\push\pi\msO_X(D))$ and $y\in\Gamma(U,\push\pi\msO_X(2D))$ which will give the coordinates on our weighted projective space. For this, we first consider the exact sequence $0\to\msO_X\to\msO_X(D)\to\msO_D(D)\to0$ which pushes forward to
    \[0\too\msO_U\too\push\pi\msO_X(D)\too\push\pi\msO_D(D)\too\inv\msL\too0.\]
    Let $\mfQ:=\coker(\msO_U\to\push\pi\msO_X(D))=\ker(\push\pi\msO_D(D)\onto\inv\msL)$, and note that this is a vector bundle. Consider the exact sequence (recall that $U=\spec R$ is affine)
    \[\begin{tikzcd}
        0\ar[r]&\Gamma(U,\msO_U)\ar[d,equals]\ar[r]&\Gamma(U,\push\pi\msO_X(D))\ar[d,equals]\ar[r]&\Gamma(U,\msQ)\ar[r]&0.\\
        &R&R^2
    \end{tikzcd}\]
    We let $z\in\Gamma(U,\push\pi\msO_X(D))\simeq R^2$ be the image of $1\in R$ under the first map above. Since $\Gamma(U,\msQ)$ is a projective $R$-module, taking determinants shows that $\Gamma(U,\msQ)\simeq R$ is free, so we can and do fix some $x\in\Gamma(U,\msO_X(D))$ with image generating $\Gamma(U,\msQ)$. Hence, $x,z\in\Gamma(U,\push\pi\msO_X(D))$ give a basis. We went through the trouble of carefully choosing a particular $z$ to be part of our basis in order to know that the subscheme $\{z=0\}\subset X$ is equal to $D$.
    
    Now, by \cref{prop:model-exact-seqs}, the cokernel of the map $\Sym^2\Gamma(U,\push\pi\msO_X(D))\into\Gamma(U,\push\pi\msO_X(2D))$ is free, so we can find some $y\in\Gamma(U,\push\pi\msO_X(2D))$ such that $x^2,xz,z^2,y$ form a basis for $\Gamma(U,\push\pi\msO_X(2D))$. We want to use these to produce a basis for $\Gamma(U,\push\pi\msO_X(3D))$. First note that the multiplication map
    \[\Gamma(U,\push\pi\msO_X(2D))\otimes\Gamma(U,\push\pi\msO_X(D))\onto\Gamma(U,\push\pi\msO_X(3D))\]
    is surjective due to \cref{prop:model-exact-seqs}. Since the domain is $R\angles{x^2,xz,z^2,y}\otimes R\angles{x,z}$, we see by inspection that this map factors through a map
    \[R\angles{x^3,x^2z,xz^2,xy,z^3,zy}\too\Gamma(U,\push\pi\msO_X(3D))\]
    which is moreover necessarily a surjection. At the same time, \cref{lem:push-E} tells us that $\push\pi\msO_X(3D)$ is a rank 6 vector bundle on $U$, so the above is a surjection of equal rank projective $R$-modules, and hence an isomorphism.
    A similar argument shows that
    \[R\angles{x^4,x^3z,x^2z^2,xz^3,z^4,x^2y,xzy,z^2y}\isoo\Gamma(U,\push\pi\msO_X(4D)).\]
    Since we have $y^2\in\Gamma(U,\push\pi\msO_X(4D))$ as well, there must be some relation of the form
    \begin{equation}\label{eqn:local-model-ii}
        y^2+(ux^2+vxz+wz^2)y=ax^4+bx^3z+cx^2z^2+dxz^3+ez^4
    \end{equation}
    with $u,v,w,a,b,c,d,e\in R$. 
    
    Now, a straightforward induction argument using logic as above shows that the natural map
    \[B_n:=\bigoplus_{a+2b=n}\p{\Sym^a\Gamma(U,\push\pi\msO_X(D))\otimes Ry^b}\too\Gamma(U,\push\pi\msO_X(nD))=:A_n\]
    is surjective for all $n\ge0$. Thus, letting $\msB:=\bigoplus_{n\ge0}B_n$ and $\msA=\bigoplus_{n\ge0}A_n$, the natural surjection $\msB\onto\msA$ of graded $R$-algebras gives rise to a closed embedding
    \[H=\Proj\msA\into\Proj\msB\simeq\P(1,2,1)_U\]
    upon taking $\Proj$. Above, $\Proj\msB\simeq\P(1,2,1)_U$ since it is easy to check that the natural graded map
    \[R[X,Y,Z]\too\msB\t{ sending }X\mapsto x,Y\mapsto y,Z\mapsto z\]
    (in particular, $X,Z$ are degree 1, while $Y$ is degree 2) is an isomorphism, e.g. since it is visibly surjective and its graded pieces have the same rank. Combining this observation with the relation \cref{eqn:local-model-ii}, we see that we have a natural surjection
    \[\frac{R[X,Y,Z]}{\p{Y^2+(uX^2+vXZ+wZ^2)Y-(aX^4+bX^3Z+cX^2Z^2+dXZ^3+eZ^4)}}\xonto\sim\msA\]
    which is once more an isomorphism as both sides have $n$th graded piece of rank $2n$. This exactly says that $H=\Proj\msA$ is the subscheme of $\P(1,2,1)_U$ cut out by an equation of the form \cref{eqn:local-model}, as desired.
    
    Finally, in the case that $X$ is hyper-Weierstrass, we have $X\simeq H=\Proj\msA$ by \cite[\href{https://stacks.math.columbia.edu/tag/01Q1}{Tag 01Q1}]{stacks-project} (+ $X$ being proper) since $D\subset X$ is relatively ample.
\end{proof}

Among other things, \cref{thm:loc-model} describes a local model \cref{eqn:local-model} for hyper-Weierstrass curves. We now establish a converse by showing that hyper-Weierstrass curves are exactly those with such a local model.
\begin{thm}\label{thm:loc-model-conv}
    Let $H\xto\pi B$ be a $B$-scheme equipped with a closed subscheme $D\subset H$ satisfying the following property: Every point of $B$ has an affine neighborhood $U=\spec R$ above which $H_U\to U$ becomes isomorphic to a subscheme of $\P(1,2,1)$ defined by an equation of the form \cref{eqn:local-model} such that $D_U\subset H_U$ is the subscheme $\{Z=0\}$. Then, $(H\xto\pi B,D)\in\meH(B)$.
\end{thm}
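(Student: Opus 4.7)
The plan is to verify the defining properties of $\meH(B)$ by working Zariski-locally on $B$, where I can assume $B=\spec R$ and $H\subset\P(1,2,1)_R$ is cut out by an equation of the form \cref{eqn:local-model}, with $D=H\cap\{Z=0\}$. The crucial observation is that the singular locus of $\P(1,2,1)_R\to R$ is the section $\{X=Z=0\}$ (the point $[0:1:0]$ in each fiber), and the defining equation evaluated there is $Y^2=0$, which is a \emph{nonvanishing} section of $\msO(4)$ because the coefficient of $Y^2$ is the unit $1$. Hence $H$ is contained in the smooth locus of $\P(1,2,1)_R/R$, which ensures that $\msO_\P(1)$ restricts to a genuine line bundle on $H$ and that adjunction for the embedding $H\hookrightarrow\P(1,2,1)_R$ produces honest line bundles.

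Viewing the defining equation as a global section of $\msO_{\P(1,2,1)_R}(4)$, its restriction to every geometric fiber is nonzero, so $H$ is a relative effective Cartier divisor in $\P(1,2,1)_R/R$; in particular $H\to R$ is flat, proper, and finitely presented. Each geometric fiber $H_b$ is a $1$-dimensional local complete intersection in the smooth locus of $\P(1,2,1)_{\bar\kappa(b)}$, and hence Gorenstein. Since $\omega_{\P(1,2,1)^{\mathrm{sm}}}\simeq\msO(-(1+2+1))=\msO(-4)$, adjunction gives $\omega_{H_b}\simeq\msO_{H_b}(-4)\otimes\msO_{H_b}(H_b)\simeq\msO_{H_b}$, so each fiber has trivial dualizing sheaf and $\chi(\msO_{H_b})=0$.

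To establish condition \textbf{(a)}, I would use the short exact sequence
\[
    0\too\msO_\P(-4)\too\msO_\P\too\msO_{H_b}\too0
\]
on $\P=\P(1,2,1)_{\bar\kappa(b)}$. The vanishing $H^0(\P,\msO(-4))=0$ is immediate, and $H^1(\P,\msO(-4))\cong H^1(\P,\msO)^\vee=0$ by Serre duality (applicable because the $A_1$ singularity of $\P(1,2,1)$ is Gorenstein, so $\omega_\P=\msO(-4)$ is globally a line bundle; $H^1(\P,\msO)=0$ follows from the embedding of $\P(1,2,1)$ into $\P^3$ as a quadric cone). Hence $H^0(H_b,\msO_{H_b})\cong H^0(\P,\msO_\P)=\bar\kappa(b)$, so the fibers are geometrically connected. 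Cohomology and base change (\cref{thm:coh-bc}) then yields \textbf{(a)}. For \textbf{(b)}, the adjunction calculation shows $\omega_{H/B}$ is trivial on each local model patch, so the cocycle of local trivializations lies in $H^1(B,\pi_*\msO_H^\times)=H^1(B,\msO_B^\times)=\Pic(B)$ (using \textbf{(a)}) and defines a line bundle $\msL$ with $\omega_{H/B}\simeq\pi^*\msL$.

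Finally, condition \textbf{(c)} follows because $D=H\cap\{Z=0\}$ is locally cut out by $Z$, which is a non-zero-divisor on $H$ (as $H\not\subset\{Z=0\}$, visible from \cref{eqn:local-model}); on each geometric fiber, $D_b\subset\P(1,2)_{\bar\kappa(b)}\cong\P^1$ is cut out by the degree-$2$ equation $Y^2+\bar u X^2 Y=\bar a X^4$ and so has degree $2$. For \textbf{(d)}, note $\msO_H(2D)\simeq\msO_\P(2)|_H$ is the restriction of a relatively very ample line bundle — the sections $X^2,XZ,Z^2,Y$ of $\msO_\P(2)$ give a closed immersion $\P(1,2,1)\hookrightarrow\P^3$ as a quadric cone — so $\msO_H(D)$ is itself relatively ample over $B$. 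The main obstacle I anticipate is the careful bookkeeping around the singularity of $\P(1,2,1)$: it is essential that $H$ miss the singular point in each fiber, since only then do $\msO_H(D)\simeq\msO_\P(1)|_H$ make sense as a line bundle and adjunction produce a line-bundle dualizing sheaf trivial on fibers.
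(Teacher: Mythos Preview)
Your proof is correct and follows essentially the same strategy as the paper's: verify properties \textbf{(a)}--\textbf{(d)} locally on $B$ using the explicit hypersurface description, adjunction for the dualizing sheaf, and the identification $\msO_H(D)\simeq\msO_\P(1)|_H$ for ampleness. The implementation details differ slightly---you compute $h^0(\msO_{H_b})$ via the ideal-sheaf sequence on $\P(1,2,1)$ together with Serre duality, whereas the paper does a direct \v Cech computation on the cover $\{X\neq0\}\cup\{Z\neq0\}$; you obtain flatness of $H$ from its being a relative Cartier divisor in $\P(1,2,1)_R$, whereas the paper writes out the affine coordinate rings explicitly---but the arguments are equivalent, and your explicit observation that $H$ misses the singular point $[0{:}1{:}0]$ is exactly what the paper encodes by noting $H_b\subset\{X\neq0\}\cup\{Z\neq0\}$.
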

\begin{proof}
    Every part of \cref{defn:hW} is local on the base, so we may and do assume that $B=\spec R$ is affine, that
    \[H=\b{Y^2+(uX^2+vXZ+wZ^2)Y=aX^4+bX^3Z+cX^2Z^2+dXZ^3+eZ^4}\subset\P(1,2,1)_R\]
    (for some $u,v,w,a,b,c,d,e\in R$), and that $D=\b{Z=0}\subset H$. Note $H$ is visibly proper and finitely presented over $R$. We first show that $H$ is flat over $R$. Note that it is covered by the open sets $\{X\neq0\}$ and $\{Z\neq0\}$. By symmetry, to show that it is flat over $R$, it suffices to show that
    \[A:=\frac{R[x,y]}{(f(x,y))}\twhere f(x,y)=y^2+(ux^2+vx+w)y-(ax^4+bx^3+cx^2+dx+e)\]
    is a flat $R$-module. This is the case simply because $A\cong R[x]\oplus R[x]y\cong R[x]^{\oplus2}$ as an $R$-module, and $R[x]$ is $R$-flat.
    
    We next show that the fibers of $\pi$ are Gorenstein curves with trivial dualizing sheaves. For any $b\in B$, we simply note that the open subset $\{X\neq 0\}\cup\{Z\neq 0\}\subset\P(1,2,1)_{\kappa(b)}$ is a regular scheme containing $H_b$, so $H_b$ is a local complete intersection, and hence a Gorenstein, 1-dimensional scheme. In particular, each $H_b$ is a `weighted hypersurface of degree $4$' in the sense of \cite{wpv}, so \cite[Theorem 3.3.4]{wpv} (see \cref{cor:ds-hs}) tells us that $\omega_H\cong\msO_H$. Furthermore, from our explicit description of $H_b\into\P(1,2,1)_{\kappa(b)}$, one can show that $\hom^0(H_b,\msO_{H_b})=\kappa(b)$, e.g. by computing \v Cech cohomology with respect to the affine open covering $\{X\neq0\}\cup\{Z\neq0\}=H_b$. Thus, \cref{lem:fibral-gs+ds} tells us that $\push\pi\msO_H=\msO_B$ holds after arbitrary base change, and that $\omega_{H/B}\in\pull\pi\Pic(B)$.
    
    What remains is to show that $D\subset X/B$ is a relatively ample effective Cartier divisor of degree 2 over $B$. Since $D=\{Z=0\}\subset H$, it is certainly an effective Cartier divisor on $H$. Furthermore, $D$ is flat over $B$ by essentially the same argument used to show that $H$ is flat over $B$, so $D$ is in fact an effective relative Cartier divisor over $B$. As $D\subset\{X\neq0\}\subset H$, we see that for any $b\in B$
    \[D_b\simeq\spec\frac{\kappa(b)[y]}{(y^2+uy-a)}\]
    is a degree 2 scheme, so $D$ is of degree $2$. Finally, $\msO_H(D)\simeq\msO_H(1):=\msO_{\P(1,2,1)}(1)\vert_H$ is indeed a relatively ample line bundle over $B$.
\end{proof}
\begin{cor}\label{cor:proj-hW}
    Let $(X\to B,D)$ be a hawc. Then,
    \[H:=\rProj_B\p{\bigoplus_{n\ge0}\push\pi\msO_X(nD)}\]
    equipped with the scheme-theoretic image of $D$ under the natural map $X\to H$ is a hyper-Weierstrass curve over $B$ such that $\msO_H(1)\simeq\msO_H(D)$.
\end{cor}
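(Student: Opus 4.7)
The plan is to simply chain together the two local‐model theorems that have just been established. Concretely, I would apply \cref{thm:loc-model} to the given hawc $(X\to B,D)$ to obtain the relative $\rProj$ scheme $H$, together with the subscheme $D_H\subset H$ defined as the scheme-theoretic image of $D$ under the natural map $p\colon X\to H$. By the conclusion of \cref{thm:loc-model}, every point of $B$ admits an affine neighborhood $U=\spec R$ over which $(H_U, D_{H,U})$ is isomorphic to the closed subscheme of $\P(1,2,1)_U$ cut out by an equation of the form \cref{eqn:local-model}, with $D_{H,U}=\{Z=0\}$; moreover $\msO_H(1)\simeq\msO_H(D_H)$ globally on $H$.

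Once the local description is in hand, I would then invoke \cref{thm:loc-model-conv} with the pair $(H\to B, D_H)$. The hypothesis of that theorem is exactly the local form just produced by \cref{thm:loc-model}: there is an affine open cover of $B$ over which $H$ is locally of the form \cref{eqn:local-model} with $D_H=\{Z=0\}$. Consequently $(H\to B,D_H)\in\meH(B)$, which is precisely the assertion that $H$ is a hyper-Weierstrass curve.

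There is essentially no obstacle here beyond keeping the two theorems' hypotheses aligned; in particular, the fact that the section $Z$ chosen locally in \cref{thm:loc-model} extends to a global section of $\msO_H(1)$ whose vanishing locus is $D_H$ is what makes the hypothesis of \cref{thm:loc-model-conv} consistent across the affine patches, so no further gluing argument is needed. The identification $\msO_H(1)\simeq\msO_H(D_H)$ is then just recorded from the last sentence of \cref{thm:loc-model}.
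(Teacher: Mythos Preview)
Your proposal is correct and follows exactly the same approach as the paper's proof, which simply reads ``Combine \cref{thm:loc-model,thm:loc-model-conv}.'' You have spelled out the chaining in slightly more detail, but the argument is identical.
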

\begin{proof}
    Combine \cref{thm:loc-model,thm:loc-model-conv}.
\end{proof}

Finally, for later use in \cref{sect:proof-prop}, we establish a few facts about the map $p$ of \cref{thm:loc-model} when the base $B$ is (the spectrum of) a field.

\begin{prop}\label{prop:contract}
    Let $F$ be a field, and let $(C,D)$ be a hawc over $F$. Let $S:=\bigoplus_{n\ge0}\hom^0(C,\msO_C(nD))$ and $X:=\Proj S$. Consider the natural morphism $p:C\to X$ induced by the identity map $S=\bigoplus_{n\ge0}\hom^0(C,\msO_C(nD))$ via \cite[\href{https://stacks.math.columbia.edu/tag/01N8}{Tag 01N8}]{stacks-project} with $d=1$; in applying this citation, we use \cref{lem:length=deg} to know that $\msO_C(D)$ is generated by global sections. Then,
    \begin{alphabetize}
        \item The locus $U_1:=\bigoplus_{f\in S_1}D_+(f)$ referenced in the citation is all of $X$. Consequently, the citation gives an isomorphism $\alpha:\pull p\msO_X(1)\iso\msO_C(D)$.
        \item The induced maps $\hom^0(X,\msO_X(n))\to\hom^0(C,\msO_C(nD))$ are isomorphisms for all $n\ge1$.
        \item The accompanying map $\msO_X\to\push p\msO_C$ is an isomorphism of sheaves.
        \item The induced map $\hom^0(C,\omega_C)\to\hom^0(X,\omega_X)$, dual to $\hom^1(X,\msO_X)\to\hom^1(C,\msO_C)$, is an isomorphism.
    \end{alphabetize}
\end{prop}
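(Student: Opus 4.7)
The plan is to apply \cref{thm:loc-model} to the hawc $(C,D)$ over $\spec F$. Since the base is a single point, the ``local'' conclusion of that theorem is in fact global: we obtain an identification of $X = \Proj S$ with the subscheme of $\P := \P(1,2,1)_F$ cut out by a single weighted-degree-$4$ equation $f$ of the form \cref{eqn:local-model}, with the divisor $D_X \subset X$ corresponding to $\{Z=0\}$ and $\msO_X(1) \cong \msO_X(D_X)$. The proof of that theorem also furnishes a graded-ring isomorphism $F[X,Y,Z]/(f) \iso S$, with the variables $X, Z$ mapping to the chosen $x,z \in S_1 = \hom^0(C,\msO_C(D))$ and $Y$ mapping to $y \in S_2$. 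Every subsequent claim will be deduced from this model.

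For (a), the key observation is that $x, z \in S_1$ have no common zero on $X$: their common vanishing locus in $\P$ is the single point $[0:1:0]$, and substituting into $f$ produces $1=0$, so this point does not lie on $X$. Hence the locus $U_1 \subseteq X$ of the cited Stacks tag equals all of $X$, and that citation then yields both the morphism $p\colon C \to X$ and the isomorphism $\alpha \colon p^*\msO_X(1) \iso \msO_C(D)$.

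For (b), I will use the hypersurface resolution
\[
0 \to \msO_\P(n-4) \xto{\cdot f} \msO_\P(n) \to \msO_X(n) \to 0
\]
together with the standard cohomology of weighted projective space: $\hom^1(\P, \msO_\P(m)) = 0$ for all $m$, and $\omega_\P \cong \msO_\P(-4)$, so Serre duality gives $\hom^2(\P, \msO_\P(n-4)) \cong \hom^0(\P, \msO_\P(-n))^\vee = 0$ for $n \ge 1$. The induced long exact sequence on cohomology identifies $\hom^0(X, \msO_X(n))$ with $F[X,Y,Z]_n / f \cdot F[X,Y,Z]_{n-4}$, and the ring isomorphism $F[X,Y,Z]/(f) \iso S$ from the first paragraph makes this $S_n = \hom^0(C, \msO_C(nD))$, proving (b) (after checking, by tracing constructions, that this agrees with the tautological map from Proj).

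For (c) and (d): by the projection formula and (a), $p_*\msO_C \otimes \msO_X(n) \cong p_*\msO_C(nD)$, so (b) gives isomorphisms $\hom^0(X, \msO_X(n)) \iso \hom^0(X, p_*\msO_C \otimes \msO_X(n)) = \hom^0(C, \msO_C(nD))$ for all $n \ge 1$. Applied to the kernel $\msK$ and cokernel $\msC$ of the unit $\msO_X \to p_*\msO_C$, Serre vanishing forces $\hom^0(\msK(n)) = \hom^0(\msC(n)) = 0$ for $n \gg 0$, whence $\msK = \msC = 0$, proving (c). For (d), $\omega_X \cong \msO_X$ by \cref{cor:ds-hs} (weighted degree $4 = 1+2+1$) and $\omega_C \cong \msO_C$ by the hawc hypotheses, so both $\hom^0(\omega_X)$ and $\hom^0(\omega_C)$ are $1$-dimensional. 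The $5$-term exact sequence of the Leray spectral sequence for $p$, combined with (c) and $\hom^2(X, \msO_X) = 0$ (as $X$ is $1$-dimensional), gives an injection $\hom^1(X, \msO_X) \hookrightarrow \hom^1(C, \msO_C)$ between spaces of dimension $1$ (dual to the $\hom^0$'s of the $\omega$'s), hence an isomorphism; dualizing yields (d). The main obstacle I anticipate is the bookkeeping needed in (b) to match the tautological Proj map $S_n \to \hom^0(X, \msO_X(n))$ with the map arising from the hypersurface sequence, but since both send $s \in S_n$ to the section of $\msO_X(n)$ given by multiplication by $s$, this should be straightforward.
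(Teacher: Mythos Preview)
Your proposal is correct. Parts (a) and (d) match the paper's proof almost exactly; the differences are in (b) and (c).

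For (b), the paper does not use the hypersurface resolution. Instead it first invokes \cref{cor:proj-hW} to know that $(X,D_X)$ is itself an hW curve, so \cref{lem:length=deg} applies to both $C$ and $X$ and gives $h^0(X,\msO_X(n))=2n=h^0(C,\msO_C(nD))$ for $n\ge1$ directly. It then constructs the map $\Gamma(X,\msO_X(n))\to\Gamma(C,\msO_C(nD))$ via the projection formula and shows surjectivity via the commutative triangle with $S_n$ coming from the Stacks tag; the dimension count finishes. Your hypersurface-resolution computation of $h^0(X,\msO_X(n))$ is a legitimate alternative, and the bookkeeping you flag (matching the two descriptions of the map) is real but routine: both send $s\in S_n$ to ``multiplication by $s$'' on $D_+(x)\cup D_+(z)$. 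The paper's route avoids this bookkeeping entirely at the cost of appealing to \cref{lem:length=deg} a second time.

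For (c), the paper cites \cite[\href{https://stacks.math.columbia.edu/tag/0AG5}{Tag 0AG5}]{stacks-project} to pass from the graded isomorphism $\bigoplus_n\Gamma(X,\msO_X(n))\iso\bigoplus_n\Gamma(X,p_*\msO_C\otimes\msO_X(n))$ directly to a sheaf isomorphism. Your kernel/cokernel plus Serre vanishing argument is a self-contained substitute that avoids the citation; note that to conclude $H^0(\msC(n))=0$ you implicitly need $H^1$ of the image sheaf to vanish for $n\gg0$, which is indeed Serre vanishing, so your phrasing is accurate.
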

\begin{proof}
    First, let $D_X\subset X$ be the scheme-theoretic image of $D\subset C$ under $p:C\to X$. Note that \cref{cor:proj-hW} tells us that $(XFk,D_X)\in\meH(\spec F)$ is an hW curve with $\msO_X(D_X)\simeq\msO_X(1)$. In particular, by applying \cref{lem:length=deg} twice,
    \[h^0(X,\msO_X(n))=h^0(X,\msO_X(nD_X))=2n=h^0(C,\msO_C(nD))\tforall n\ge1.\]
    Similarly, we have $h^0(X,\msO_X)=1=h^0(C,\msO_C)$ by assumption on $C$ and since $X$ is hyper-Weierstrass over $k$. Hence, $h^0(X,\msO_X(n))=h^0(C,\msO_C(nD))$ for all $n\ge0$.
    
    \bp a We first show that $X$ is covered by distinguished affines coming from elements in degree 1, i.e. that $X=U_1$. \cref{thm:loc-model} gives an embedding $X\into\P(1,2,1)_k\simeq\Proj k[X,Y,Z]$, with $X,Z$ in degree 1 and $Y$ in degree 2, so that $X$ is cut out by an equation of the form \cref{eqn:local-model}. Consequently,  $X=D_+(X)\cup D_+(Z)\subset U_1\subset X$, so $X=U_1$ as claimed. \cite[\href{https://stacks.math.columbia.edu/tag/01N8}{Tag 01N8}]{stacks-project} then tells us that $\pull p\msO_X(1)\simeq\msO_C(D)$. 
    
    \bp b By taking powers, $\pull p\msO_X(n)\simeq\msO_C(nD)$ for all $n\in\Z$. We tensor the map $\msO_X\to\push p\msO_C$ with $\msO_X(n)$, apply the projection formula, and then apply this isomorphism $\pull p\msO_X(n)\simeq\msO_C(nD)$ in order to obtain
    \begin{equation}\label{O(n)->O(nD)}
        \msO_X(n)\too\push p\msO_C\otimes\msO_X(n)\simeq\push p\p{\msO_C\otimes\pull p\msO_X(n)}\simeq\push p\msO_C(nD).
    \end{equation}
    Taking global section, we obtain a map
    \[\Gamma(X,\msO_X(n))\too\Gamma(C,\msO_C(nD))\]
    for all $n\ge0$, which is furthermore surjective as \cite[\href{https://stacks.math.columbia.edu/tag/01N8}{Tag 01N8}]{stacks-project} shows it fits in a commutative diagram
    \[\compdiag{S_n}{}{\Gamma(X,\msO_X(n))}{}{\Gamma(C,\msO_C(nD)).}{\id}\]
    We proved earlier that $\dim_F\Gamma(X,\msO_X(n))=\dim_F\Gamma(C,\msO_C(nD))$, so we in fact have isomorphisms $\Gamma(X,\msO_X(n))\iso\Gamma(C,\msO_C(nD))$ for all $n\ge0$.
    
    \bp c To show that the induced map $\msO_X\to\push p\msO_C$ is an isomorphism. we simply observe that \bp b tells us that \cref{O(n)->O(nD)} induces the following isomorphism of coherent sheaves on $X=\Proj S$ (see \cite[\href{https://stacks.math.columbia.edu/tag/0AG5}{Tag 0AG5}]{stacks-project} for the outer isomorphisms)
    \[\msO_X\simeq\p{\bigoplus_{n\ge0}\Gamma(X,\msO_X(n))}^\sim\iso\p{\bigoplus_{n\ge0}\Gamma(X,\push p\msO_C\otimes\msO_X(n))}^\sim\simeq\push p\msO_C.\]
    
    \bp d Finally, we will show that $p$ induces an isomorphism $\hom^0(C,\omega_C)\iso\hom^0(X,\omega_X)$. The Leray spectral sequence $\hom^p(X,R^q\push p\msO_C)\implies\hom^{p+q}(C,\msO_C)$ gives an embedding $\hom^1(X,\push p\msO_C)\into\hom^1(C,\msO_C)$. By \bp c, this is $\hom^1(X,\msO_X)\into\hom^1(C,\msO_C)$. The dual of this is a surjection $\hom^0(C,\omega_C)\onto\hom^0(X,\omega_X)$. $\hom^0(C,\omega_C)=\hom^0(C,\msO_C)=k$ by assumption on $C$ and similarly $\hom^0(X,\omega_X)=\hom^0(X,\msO_X)=k$ since $X$ is hyper-Weierstrass over $k$, so we conclude that $\hom^0(C,\omega_C)\xonto\sim\hom^0(X,\omega_X)$ is in fact an isomorphism.
\end{proof}
\begin{lemma}\label{lem:contract}
    Let $F$ be a field, and let $(C,D)$ be a hawc over $F$. Set $S\coloneqq\bigoplus_{n\ge0}\hom^0(C,\msO_C(nD))$ and $H\coloneqq\Proj S$. Consider the natural map $p\colon C\to H$.

    Let $\{C_i\}_{i\in I}$ denote the irreducible components of $C$ which $D$ does \important{not} meet, and set $U\coloneqq C\sm\bigcup_{i\in I}C_i\openset C$. Then, $p$ restricts to an open immersion $U\into H$ with dense image. In other words, $p\colon C\to H$ is a contraction of the components of $C$ not meeting $D$.
\end{lemma}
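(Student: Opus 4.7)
The plan is to combine \cref{prop:contract}\bp{c} (giving $p_\ast\msO_C\simeq\msO_H$) with Stein's connectedness theorem, together with a component-by-component analysis of $p$, to conclude that $p$ contracts exactly the $C_i$ ($i\in I$) to points while restricting to an isomorphism on the complement. First, for each irreducible component $C_j$ of $C$, since $p^\ast\msO_H(1)\simeq\msO_C(D)$ by \cref{prop:contract}\bp{a}, the degree $\deg(\msO_C(D)|_{C_j})=D\cdot C_j$ is positive when $j\notin I$ and zero when $j\in I$. In the positive-degree case $\msO_C(D)|_{C_j}$ is ample on the curve $C_j$, so $p|_{C_j}$ is finite. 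In the zero-degree case, $\msO_C(D)|_{C_j}$ is trivial (trivialized by the canonical section $s_D$, which is nonvanishing on $C_j$), so every global section of $\msO_C(D)$ restricts to a constant on $C_j$, forcing $p|_{C_j}$ to factor through a single closed point $h_j\coloneqq p(C_j)\in H$.

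Set $V\coloneqq H\setminus\{h_i:i\in I\}$. The heart of the argument is to show $p^{-1}(h_i)\subseteq\bigcup_{i'\in I}C_{i'}$ for each $i\in I$, which immediately yields $p^{-1}(V)=U$. For this, I would invoke Stein's theorem: since $p$ is proper and $p_\ast\msO_C\simeq\msO_H$, every fiber $p^{-1}(h)$ is connected. Now $p^{-1}(h_i)$ is the union of the one-dimensional subscheme $\bigcup_{i'\in I,\,h_{i'}=h_i}C_{i'}$ and possibly some zero-dimensional contributions coming from fibers of the finite maps $p|_{C_j}$ ($j\notin I$). Any closed point of the latter that does not lie in some $C_{i'}$ ($i'\in I$) would be an isolated irreducible component of $p^{-1}(h_i)$, hence an open-and-closed singleton disconnecting the fiber---contradiction. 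So every point of $p^{-1}(h_i)$ lies in some $C_{i'}$ with $i'\in I$.

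With $p^{-1}(V)=U$ in hand, the restriction $p|_U\colon U\to V$ is proper (by base change), surjective (since $p$ itself is surjective---a consequence of $p$ being proper together with $p_\ast\msO_C\simeq\msO_H$), and has zero-dimensional fibers (no contracted one-dimensional pieces lie over $V$). Thus $p|_U$ is finite, and restricting $p_\ast\msO_C\simeq\msO_H$ to $V$ gives $(p|_U)_\ast\msO_U\simeq\msO_V$; a finite morphism with trivial direct image of the structure sheaf is an isomorphism. Finally, $V$ is dense in $H$ as the complement of finitely many closed points in a $1$-dimensional scheme. The main obstacle is the Stein-theorem step, which requires ruling out isolated zero-dimensional contributions to $p^{-1}(h_i)$ arising when components of $C$ meeting $D$ happen to pass through the contraction points $h_i$---the component-by-component analysis together with the connectedness constraint is precisely what forces such stray points to already lie in the exceptional locus $\bigcup_{i'\in I}C_{i'}$.
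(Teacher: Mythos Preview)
Your proof is correct and takes a genuinely different route from the paper's. Both arguments rest on \cref{prop:contract}\bp{c}, but the paper invokes Zariski's Main Theorem as a black box: since $p_*\msO_C\simeq\msO_H$, ZMT reduces the claim to showing $p|_U$ is quasi-finite, which the paper verifies by exhibiting, for each component $C_0$ meeting $D$, two degree-one sections (namely $1$ and a section nonvanishing at a chosen point of $C_0\cap D$, using global generation from \cref{lem:length=deg}) whose nonvanishing loci on $C_0$ differ. You instead unpack the ZMT step by hand: you use the degree computation $\deg(\msO_C(D)|_{C_j})=D\cdot C_j$ to separate contracted from non-contracted components, then invoke connectedness of fibers (Stein) to pin down $p^{-1}(h_i)\subseteq\bigcup_{i'\in I}C_{i'}$, and finally conclude via ``finite with trivial pushforward is an isomorphism.'' Your approach is somewhat longer but more explicit, and it yields the marginally sharper statement that $p|_U$ is an \emph{isomorphism} onto $V=H\setminus\{h_i:i\in I\}$, not merely an open immersion with dense image. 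The paper's route is quicker if one is willing to quote ZMT; yours makes the mechanism (connected fibers forcing stray preimages into the exceptional locus) visible.
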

\begin{proof}
    By \cref{prop:contract}\bp c, the induced map $\msO_H\to\push p\msO_C$ is an isomorphism; in particular, $p$ has connected fibers. Therefore, by Zariski's Main Theorem \cite[Theorem 2.3/2]{neron} (see also \cite[Proposition 3.8]{akhil-zmt}), to prove the claim, it suffices to show that $p\vert_U\colon U\to H$ is quasi-finite. Let $C_0\into C$ denote an irreducible component which \important{does} meet $D$; we only need to show that $C_0$ is not contracted by $p$. Fix a point $x\in C_0\cap D$. \cref{lem:length=deg} tells us that $\msO_C(D)$ is globally generated, so there exists a section $\sigma\in\hom^0(C,\msO_C(D))$ which is nonvanishing at $x$. On the other hand, the section $\sigma'\coloneqq1\in\hom^0(C,\msO_C(D))$ does vanish at $x$. Thus, the sets $\inv p(D_+(\sigma))\cap C_0$ and $\inv p(D_+(\sigma'))\cap C_0$ -- i.e. the loci $C_{0,\sigma}$ and $C_{0,\sigma'}$ on $C_0$ where $\sigma,\sigma'$ are nonvanishing -- are nonempty and distinct, forcing $p\vert_{C_0}$ to not be constant.
\end{proof}

\subsection{Connection to 2-Selmer}
Throughout this section, We work in the context of \cref{set:main}. In particular, $k$ is a finite field, and (unless otherwise stated) $B$ is a smooth $k$-curve of genus $g$ with function field $K=k(B)$.

\subsubsection{Selmer Groupoid}
We want to reduce counting Selmer elements to counting hyper-Weierstrass curves. In either case, when counting these objects, we do so in a weighted fashion, e.g. for $E$ an elliptic curve, we will count some $\alpha\in\Sel_2(E)$ with weight $1/\#\Aut(E)$. Thus, these Selmer elements are best thought of as belong not to some set, but instead to some groupoid. With that in mind, we take a moment to set up this language before formally relating 2-Selmer elements to hyper-Weierstrass curves.

The following definition is inspired by \cref{rem:selmer-interpretation}.
\begin{defn}\label{defn:Selmer-groupoid}
    Fix an integer $n\ge1$. The \define{$n$-Selmer groupoid} (over $K$) is the groupoid whose
    \begin{itemize}
        \item objects are tuples $(C,E,\rho,D)$ where
        \begin{itemize}
            \item $E/K$ is an elliptic curve.
            \item $C/K$ is a locally solvable genus 1 curve.
            \item $\rho:C\by E\to C$ is a group action making $C$ into an $E$-torsor. 
            
            We will write $c\cdot x:=\rho(c,x)$ when $c\in C(S)$ and $x\in E(S)$ for any $K$-scheme $S$.
            \item $D\subset C$ is a degree $n$ effective divisor, defined over $K$.
        \end{itemize}
        \item (iso)morphisms $(C,E,\rho,D)\to(C',E',\rho',D')$ are pairs $\p{\phi:C\iso C',\psi:E\iso E'}$ where
        \begin{itemize}
            \item $\psi$ is an isomorphism of $K$-group schemes.
            \item $\phi(c\cdot x)=\phi(c)\cdot\psi(x)$ for all $c\in C,x\in E$.\footnote{by which we really mean $c\in C(S)$ and $x\in E(S)$ for $S$ any $K$-scheme}
            \item $\pull\phi\msO_{C'}(D')\simeq\msO_C(D).$
        \end{itemize}
    \end{itemize}
    We denote this groupoid by $\CSel_n=\CSel_{n,K}$. Given, any $(C,E,\rho,D)\in\CSel_n$, we define its \define{height} to be the height of $E$, i.e. $\Ht(C,E,\rho,D):=\Ht(E)$. Furthermore, we say $(C,E,\rho,D)\in\CSel_n$ is \define{trivial} if $D\sim nP$ for some $P\in C(K)$, i.e. if $(C,D)$ represents the identity element of $\Sel_n(E)$.
\end{defn}
\begin{ex}\label{ex:triv-sel}
    Say $(C,E,\rho,D)\in\CSel_n$ is trivial, and choose $P\in C(K)$ such that $D\sim nP$. Let $O\in E(K)$ denote the identity element. Then, $(C,E,\rho,D)\simeq(E,E,\rho_E,nO)$, where $\rho_E:E\by E\to E$ is $E$'s multiplication map. Indeed, one can $\phi:C\iso E$ to be the ``subtract $P$'' map defined by
    \[P\cdot\phi(c)=c\tforany c\in C,\]
    and can take $\psi=\id_E$.
\end{ex}
\begin{rem}\label{rem:height-torsor}
    Let $E/K$ be an elliptic curve, and let $C/K$ be a locally solvable $E$-torsor. Then, $\Ht(E)=\Ht(C)$. One can see this, for example, in \cite[Section 5.12]{dejong}, which proves this when $B=\P^1$, but whose argument works for any $B$.
\end{rem}
\begin{lemma}\label{lem:sel-aut}
    Fix some $n\ge1$ as well as some $(C,E,\rho,D)\in\CSel_n$. Let $\alpha=[(C,D)]\in\Sel_n(E)$ be the corresponding Selmer element. Then, there is an exact sequence
    \[0\too E[n](K)\too\Aut_{\CSel_n}(C,E,\rho,D)\too\Stab_{\Aut(E)}(\alpha)\too0,\]
    where $\Aut(E)\actson\Sel_n(E)$ in the natural way.
\end{lemma}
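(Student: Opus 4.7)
The plan is to analyze the projection
\[\pi\colon\Aut_{\CSel_n}(C,E,\rho,D)\too\Aut(E),\;(\phi,\psi)\mapstoo\psi,\]
and show that its kernel equals $E[n](K)$ and its image equals $\Stab_{\Aut(E)}(\alpha)$; these two identifications immediately yield the claimed short exact sequence (exactness at the middle and left terms follows from the kernel computation, and exactness at the right from the image computation).

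For the kernel, an element is a pair $(\phi,\id_E)$, and the equivariance condition $\phi(c\cdot x)=\phi(c)\cdot x$ says exactly that $\phi$ is an automorphism of $C$ as an $E$-torsor. By the standard fact $\Aut_E(C)=E(K)$, we therefore have $\phi=\tau_P\colon c\mapstoo c\cdot P$ for a unique $P\in E(K)$. The remaining condition $\pull{\tau_P}\msO_C(D)\simeq\msO_C(D)$ in $\Pic^n(C)$ should then cut out exactly $E[n](K)\subset E(K)$. I would verify this by working geometrically: choosing a point of $C(\bar K)$ to identify $C_{\bar K}\simeq E_{\bar K}$ as an $E$-torsor and then computing, for $D=\sum n_i[a_i]$ of total degree $n$,
\[\pull{\tau_P}\msO_E(D)\otimes\msO_E(-D)\simeq\msO_E\p{\textstyle\sum n_i\p{[a_i-P]-[a_i]}}\in\Pic^0(E),\]
which under the standard identification $E\simeq\rPic^0(E)$ corresponds to $-nP\in E$. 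The condition is therefore $-nP=0$, i.e.\ $P\in E[n]$, and the conclusion descends to $K$ because triviality of a line bundle in $\rPic^0_C(K)=E(K)\hookrightarrow E(\bar K)$ can be checked over $\bar K$.

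For the image, I would first verify that the natural $\Aut(E)$-action on $\Sel_n(E)\subset\hom^1(K,E[n])$ — inherited from its action on $E[n]$ via functoriality — can be described on Selmer representatives by ``twisting the torsor action'': for $\psi\in\Aut(E)$, the class $\psi\cdot\alpha$ is represented by $(C,E,\rho^\psi,D)$ with $\rho^\psi\coloneqq\rho\circ(\id\by\inv\psi)$. This follows by tracing through \cref{rem:selmer-interpretation}: the associated $E[n]$-torsor $T=\{P\in C:nP\sim D\}\subset C$ keeps the same underlying scheme but its $E[n]$-action is precisely twisted by $\psi$, matching the action of $\psi$ on $\hom^1(K,E[n])$. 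Unpacking the definition of a morphism in $\CSel_n$, an isomorphism $(C,E,\rho^\psi,D)\iso(C,E,\rho,D)$ in $\CSel_n$ — which exists iff $\psi\cdot\alpha=\alpha$ — is precisely a $K$-isomorphism $\phi\colon C\to C$ satisfying $\phi(c\cdot y)=\phi(c)\cdot\psi(y)$ (after substituting $y=\inv\psi(x)$ in the $E$-equivariance for the twisted action) together with $\pull\phi\msO_C(D)\simeq\msO_C(D)$. This is exactly the data making $(\phi,\psi)$ into an element of $\Aut_{\CSel_n}(C,E,\rho,D)$, whence $\psi\in\image(\pi)$.

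The main obstacle is bookkeeping: carefully matching conventions for the $\Aut(E)$-actions on $\Sel_n(E)$, on $\hom^1(K,E)$, and on the groupoid $\CSel_n$, so that the ``twist by $\psi$'' description of the image correctly recovers the stabilizer, and carrying out the Picard-class computation for $\tau_P^*$ cleanly enough to yield $E[n](K)$ on the nose.
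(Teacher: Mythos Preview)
Your proposal is correct and follows essentially the same approach as the paper: analyze the projection $(\phi,\psi)\mapsto\psi$, identify the kernel with $E[n](K)$ via the translation action, and identify the image with the stabilizer by twisting the torsor structure. The only cosmetic difference is that for the kernel the paper phrases the computation abstractly via the action of $E\simeq\rPic^0_{C/K}$ on $\rPic^n_{C/K}$ (translation by $P$ acts on $\Pic^n$ as translation by $nP$), whereas you base-change to $\bar K$ and compute with divisors; both arguments are equivalent.
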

\begin{proof}
    Consider the map $f:\Aut_{\CSel_n}(C,E,\rho,D)\to\Aut(E)$, $(\phi,\psi)\mapsto\psi$. We will show that is has kernel $E[n](K)$ and image $\Stab_{\Aut(E)}(\alpha)$. 

    First say $(\phi,\psi)\in\Aut(C,E,\rho,D)$ is an automorphism with $\psi=\id_E$. Then, $\phi(c\cdot x)=\phi(c)\cdot x$ for any $c\in C,x\in E$, so $\phi$ is an isomorphism of $E$-torsors. Thus, there is some $x_0\in E(K)$ so that $\phi(c)=c\cdot x_0$ for all $c\in C$. We claim that $x_0$ must be $n$-torsion. The action $\rho:C\by E\to C$ induces an isomorphism $f:E\iso\rPic^0_{C/K}$ so that $E$'s action on $C$ correspond to $\rPic^0_{C/K}$'s natural action on $\rPic^1_{C/K}\simeq C$ (coming from adding a degree 0 line bundle). Thus, $\phi$ acts on $\Pic^n_{C/K}\ni\msO_C(D)$ via translation by $nx_0$. This action is trivial if and only if $x_0\in E[n](K)$.
    
    Fix some $\psi\in\Aut(E)$. When is $\psi$ in the image of $f$? Well, consider some $E$-torsor structure $C_1=(C,\rho_1)$ on $C$, by which we mean an action $\rho_1:C\by E\to C$ making $C$ into an $E$-torsor. Let $C_2=(C,\rho_2)$ be another $E$-torsor structure on $C$. By definition, given an automorphism $\phi:C\iso C$, the pair $(\phi,\psi)\in\Aut(C,E,\rho,D)$ if and only if $\phi:C_1\to C_2$ is an $E$-torsor map preserving $\msO_C(D)$. Thus, $\psi\in\im(f)$ if and only if there exists such a $\phi$ if and only if $(C_1,D)$ and $(C_2,D)$ represent the same element of $\hom^1(K,E[n])$. By construction, $(C_2,D)$ represents the element $\pull\psi[(C_1,D)]$, so we get the claimed description of $\im(f)$.
\end{proof}
\begin{rem}\label{rem:no-extra-aut}
    When $n=2$, $\hom^1(K,E[2])$ is $2$-torsion, so $\{\pm1\}\subset\Stab_{\Aut(E)}{(C,E,\rho,D)}$ always. Thus, \cref{lem:sel-aut} implies that we always have
    \[\{\pm1\}\subset\im\p{\Aut_{\CSel_2}(C,E,\rho,D)\to\Aut(E)}\qedhere.\]
\end{rem}

With $\CSel_n$ introduced, recall the groupoid $\meH(B)$ of hyper-Weierstrass curves over $B$ (\cref{defn:hW}). We are going to show that for every 2-Selmer element $(C,E,\rho,D)\in\CSel_2$, there is some ``nice" hW curve $(H/B,D_H)\in\meH(B)$ whose generic fiber is $(C,D)$. This will allow us to relate counting 2-Selmer elements to the problem of counting ``nice'' hW curves. We begin by making explicit what we mean by ``nice''.
\begin{defn}\label{defn:minimal}
    Let $(H\xto fB,D)\in\meH(B)$ be an hW curve. We say that it is \define{minimal} if it's normal, its generic fiber is smooth, and it has at worst \important{rational singularities}, i.e. for some (equivalently, any) proper birational map $p:\meC\to H$ with $\meC$ regular, the sheaf $R^1\push p\msO_\meC$ vanishes, see \cite{ArithGeo-Lipman}.
\end{defn}
\begin{rem}\label{rem:min=rat-sing}
    A Weierstrass model of an elliptic curve is minimal (in the usual sense) if and only if it has at worst rational singularities \cite[Corollary 8.4]{conrad-min}.
\end{rem}
\begin{warn}
    Even in good characteristics, the question of how many minimal hW models a given elliptic curve has is a subtle one, see e.g. \cite[Theorem 4.2]{sadek:genus-1}.
\end{warn}

\begin{notn}\label{notn:Hs}
    \hfill\begin{itemize}
        \item Let $\meH_M(B)\into\meH(B)$ denote the full subgroupoid consisting of minimal hW curves.
        \item Let $\meH_{M,NT}(B)\into\meH_M(B)$ denote the full subgroupoid consisting of minimal hW curves $(H\xto\pi B,D)$ such that $D_K$ is not twice a point (on the generic fiber).

        These curves will correspond to non-trivial Selmer elements.
        \item Let $\meH_{LS}(B)\into\meH_M  (B)$ denote the full subgroupoid consisting of minimal hW curves $(H\to B,D)$ whose generic fiber $H_K$ is locally solvable.
        \item Let $\meH_{LS,NT}(B)\into\meH_{LS}(B)$ denote the full subgroupoid $\meH_{LS,NT}(B)=\meH_{LS}(B)\cap\meH_{M,NT}(B)$. 
    \end{itemize}
\end{notn}
\begin{notn}
    Given $d\in\Z$, we write $\CSel_n^{\le d},\meH^{\le d}(B),\meH^{\le d}_M(B)$, etc. to denote the corresponding full subgroupoid consisting of objects of height $\le d$. We similarly use a $^{=d}$ superscript to denote the full subgroupoid of objects of height $=d$.
\end{notn}

\begin{prop}[To be proven in \cref{sect:proof-prop}]\label{prop:sel-conn}
    There is an essentially surjective, faithful functor $F:\meH_{LS}(B)\to\CSel_2$ such that for every $\alpha\in\CSel_2$, there exists some (minimal) $\beta\in\meH_{LS}(B)$ satisfying $F(\beta)\simeq\alpha$ and $\Ht(\beta)=\Ht(\alpha)$. Furthermore, if $\alpha$ is non-trivial, then we may choose $\beta$ lying in $\meH_{LS,NT}(B)$.
\end{prop}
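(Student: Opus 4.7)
The plan is to define $F$ by taking generic fibers (with Jacobians providing the torsor structures) and to construct a minimal hW lift of any 2-Selmer element by applying the contraction of \cref{cor:proj-hW} to the minimal proper regular model of the underlying torsor.

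First I would set $F(H,D):=(H_K,\Jac(H_K),\rho,D_K)$, where $\rho\colon H_K\by\Jac(H_K)\to H_K$ is the standard torsor structure from the identification $\Jac(H_K)\simeq\rPic^0_{H_K/K}$ acting on $H_K\simeq\rPic^1_{H_K/K}$, and $D_K$ is the generic fiber of $D$; on a morphism $\phi\colon(H,D)\to(H',D')$ in $\meH_{LS}(B)$ take $F(\phi):=(\phi_K,\Jac(\phi_K))$. The condition $\pull\phi\msO_{H'}(D')\in\msO_H(D)\otimes\pull\pi\Pic(B)$ restricts to an isomorphism $\pull{\phi_K}\msO_{H'_K}(D'_K)\simeq\msO_{H_K}(D_K)$, because $\pull\pi\msM$ is trivial on any generic fiber. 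Faithfulness of $F$ is then immediate: two $B$-morphisms between separated $B$-schemes that agree on the schematically dense open subset $H_K\subset H$ must agree on all of $H$.

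For essential surjectivity with a height-matching minimal lift, I fix $\alpha=(C,E,\rho,D)\in\CSel_2$ and take $\meC\xto{\pi_\meC}B$ to be the minimal proper regular model of $C$. Local solvability of $C$ at each place $v$ yields $C_{K_v}\simeq E_{K_v}$, so the base change of $\meC$ to $\spec\msO_v$ coincides with the minimal proper regular model of $E_{K_v}$; in particular, $\meC$ has no multiple fibers. This globally produces an isomorphism $\omega_{\meC/B}\simeq\pull{\pi_\meC}\msL$ for some $\msL\in\Pic(B)$, while $\push_{\pi_\meC}\msO_\meC=\msO_B$ holds universally since $C$ is geometrically integral of genus $1$. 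The schematic closure $\meD\subset\meC$ of $D$ is Cartier (as $\meC$ is regular) and flat over $B$ (as it is horizontal), hence an effective relative Cartier divisor of degree $2$, so $(\meC,\meD)$ is a hawc in the sense of \cref{defn:hawc}. Applying \cref{cor:proj-hW}, the contraction $H:=\rProj_B\bigoplus_{n\ge0}\push_{\pi_\meC}\msO_\meC(n\meD)$ together with the image $D_H$ of $\meD$ under the natural map $p\colon\meC\to H$ is an hW curve with $\msO_H(D_H)\simeq\msO_H(1)$. Applying \cref{lem:contract} over the generic point shows $C\iso H_K$ (since $C$ is irreducible and meets $D$), so $F(H,D_H)\simeq\alpha$. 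The non-triviality refinement is then automatic: if $D\not\sim2P$ for any $P\in C(K)$, then $D_H\vert_{H_K}=D$ is not twice a point on $H_K$, so $(H,D_H)\in\meH_{LS,NT}(B)$.

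What remains is to verify that $(H,D_H)$ is minimal of the correct height. The generic fiber $H_K=C$ is smooth by hypothesis. Normality and rational singularities follow from the observation that $p\colon\meC\to H$ is a projective birational morphism from a regular surface which collapses only components of reducible fibers of $\meC$ disjoint from $\meD$; these form chains of $(-2)$-curves, producing at worst rational double points on $H$ (analogous to the situation for ordinary Weierstrass models noted in \cref{rem:min=rat-sing}). Granting this, one has $p_*\omega_{\meC/B}\simeq\omega_{H/B}$, whence $\push_{\pi_H}\omega_{H/B}\simeq\push_{\pi_\meC}\omega_{\meC/B}$, and combined with \cref{rem:height-torsor} this gives $\Ht(H)=\deg\push_{\pi_\meC}\omega_{\meC/B}=\Ht(C)=\Ht(E)$. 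The principal obstacle is justifying the rational-singularities claim in arbitrary residue characteristic --- especially characteristic $2$, where the local models of \cref{thm:loc-model} are more intricate than in the classical Weierstrass setting. One handles this by a case analysis of the possible reducible fiber configurations of $\meC$, using the explicit local equations \cref{eqn:local-model} to verify Du-Val-ness of the resulting singularities after contraction.
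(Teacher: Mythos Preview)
Your construction of $F$, the faithfulness argument, and the essential-surjectivity setup (minimal regular model $\meC$, closure $\meD$, then contract via \cref{cor:proj-hW}) all match the paper exactly. The divergence is entirely in how you establish minimality of $(H,D_H)$, and here your logical order is reversed from the paper's.

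You propose to first verify rational singularities by a case analysis of fiber configurations (arguing the contracted components are ``chains of $(-2)$-curves'' giving Du Val points), and then deduce $p_*\omega_{\meC/B}\simeq\omega_{H/B}$ and the height match. The paper does the opposite: it first proves $\push\pi\omega_{\meC/B}\simeq\push f\omega_{H/B}$ \emph{directly}, without any singularity assumption, by constructing a natural map and checking it is an isomorphism fiber-by-fiber using \cref{prop:contract}\bp d (this is \cref{prop:contract-family}). Only then does it invoke Lipman's exact sequence $0\to\push p\omega_{\meC/B}\to\omega_{H/B}\to\sExt^2(R^1\push p\msO_\meC,\omega_{H/B})\to0$: since the first map is now known to be an isomorphism, the $\sExt^2$ term vanishes, and \cite[(1.5)]{ArithGeo-Lipman} forces $R^1\push p\msO_\meC=0$ (this is \cref{cor:construct-min}). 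Normality is handled separately by Serre's criterion (Gorenstein plus regular in codimension~1, \cref{rem:H-normal}).

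Your route is not wrong in principle, but the step you flag as ``the principal obstacle'' really is one: the contracted components need not form \emph{chains} (for $I_n^*$, $IV^*$, etc.\ you get $D$- or $E$-type trees depending on where $\meD$ meets the fiber), and in characteristic~$2$ the bookkeeping for which configurations arise and why they contract to rational singularities is exactly the kind of case analysis the paper is designed to avoid. The paper's trick --- prove the dualizing-sheaf identification first, then read off rational singularities from Lipman --- is both shorter and characteristic-free.
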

Accepting this proposition for now, let us explain its utility by giving an overview of the ultimate proof of \cref{thma:main}. Recall the quantity $\AS_B(d)$ defined in \cref{eqn:AS-def}, and that our goal is to produce an upper bound for $\AS_B=\limsup_{d\to\infty}\AS_B(d)$. In place of $\AS_B(d)$, we will find it more convenient to study the following \define{modified average size of $2$-Selmer}:
\begin{equation}\label{eqn:MAS-def}
    \MAS_B(d):=\frac{\#\CSel_2^{\le d}}{\#\meM_{1,1}^{\le d}(K)}
    \tand
    \MAS_B\coloneqq\limsup_{d\to\infty}\MAS_B(d)
    .
\end{equation}
In \cref{sect:MAS-AS-char-2,sect:main-results} (see \cref{cor:AS-MAS-comp-char-2}, \cref{lem:IAS-MAS-comp}, and \cref{prop:comp-IAS-AS}), we will show that
\begin{equation}\label{ineq:AS-MAS-overview}
    \AS_B\le\MAS_B
\end{equation}
By \cref{lem:sel-aut} and \cref{rem:no-extra-aut}, the difference of the two sides of \cref{ineq:AS-MAS-overview} is accounted for by elliptic curves with nontrivial 2-torsion or with extra automorphisms. Therefore, we will prove \cref{ineq:AS-MAS-overview} by analyzing the contributions of such curves. Accepting \cref{ineq:AS-MAS-overview} for now, we will be interested in bounding $\MAS_B(d)$. As \cref{prop:sel-conn} suggests, we will find it helpful to separately bound the contributions coming from trivial and non-trivial 2-Selmer elements.
\begin{notn}\label{notn:Sel2NT}
    Let $\CSel_{2,T}$ (resp. $\CSel_{2,NT}$) denote the full subgroupoid of $\CSel_2$ consisting of trivial (resp. non-trivial) objects.
\end{notn}
Note that $\#\CSel_2^{\le d}=\#\CSel_{2,T}^{\le d}+\#\CSel_{2,NT}^{\le d}$. Let us separately analyze each summand. 
\begin{itemize}
    \item We begin with $\#\CSel_{2,NT}^{\le d}$. This is the summand which makes use of \cref{prop:sel-conn}.
    \begin{cor}[of \cref{prop:sel-conn}]\label{cor:sel-conn}
        $\#\CSel_{2,NT}^{\le d}\le\#\meH_{LS,NT}^{\le d}(B)\le\#\meH_{M,NT}^{\le d}(B)$
    \end{cor}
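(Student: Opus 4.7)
The plan is to derive this corollary from Proposition~\ref{prop:sel-conn} by a purely formal groupoid-cardinality bookkeeping argument; no new geometric input is required beyond the proposition itself.

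The second inequality is immediate: by \cref{notn:Hs}, $\meH_{LS,NT}(B) = \meH_{LS}(B) \cap \meH_{M,NT}(B)$ is a full subgroupoid of $\meH_{M,NT}(B)$, so the analogous inclusion of height $\le d$ subgroupoids yields $\#\meH_{LS,NT}^{\le d}(B) \le \#\meH_{M,NT}^{\le d}(B)$ simply by dropping nonnegative summands in the groupoid cardinality.

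For the first inequality, I would invoke the functor $F\colon \meH_{LS}(B) \to \CSel_2$ of \cref{prop:sel-conn}. The ``furthermore'' clause of the proposition says that for every $\alpha \in \CSel_{2,NT}$, one may find a preimage lying in $\meH_{LS,NT}(B)$; together with the height-preservation in \cref{prop:sel-conn}, this shows that $F$ restricts to an essentially surjective functor $F\colon \meH_{LS,NT}^{\le d}(B) \to \CSel_{2,NT}^{\le d}$ (the fact that $F$ maps into the non-trivial subgroupoid is built into the proposition via the compatibility between the ``$D_K$ is not twice a point'' condition and non-triviality of the Selmer class). For each isomorphism class $[\alpha] \in |\CSel_{2,NT}^{\le d}|$, choose some preimage $[\beta_\alpha] \in |\meH_{LS,NT}^{\le d}(B)|$ with $F(\beta_\alpha) \simeq \alpha$.

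Now expand groupoid cardinalities:
\[
\#\meH_{LS,NT}^{\le d}(B) = \sum_{[\beta]} \frac{1}{\#\Aut(\beta)} \;\ge\; \sum_{[\alpha] \in |\CSel_{2,NT}^{\le d}|} \frac{1}{\#\Aut(\beta_\alpha)},
\]
where the inequality comes from restricting the sum to the chosen preimages $\beta_\alpha$. Because $F$ is \emph{faithful}, the induced homomorphism $\Aut_{\meH}(\beta_\alpha) \to \Aut_{\CSel_2}(F(\beta_\alpha)) \simeq \Aut(\alpha)$ is injective, so $\#\Aut(\beta_\alpha) \le \#\Aut(\alpha)$, and inserting this into the displayed bound gives $\#\meH_{LS,NT}^{\le d}(B) \ge \sum_{[\alpha]} 1/\#\Aut(\alpha) = \#\CSel_{2,NT}^{\le d}$, completing the proof. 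The only point requiring care is the compatibility between the subgroupoids of ``non-trivial'' objects on either side, which is a direct consequence of the construction of $F$ in \cref{prop:sel-conn}; otherwise the argument is mechanical.
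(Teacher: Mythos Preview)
Your proof is correct and follows the same approach as the paper's one-line argument, just with the groupoid-cardinality bookkeeping made explicit. One small remark: your claim that $F$ restricts to a functor $\meH_{LS,NT}^{\le d}(B)\to\CSel_{2,NT}^{\le d}$ (i.e., that $F$ sends the ``$D_K$ not twice a point'' condition to non-triviality of the Selmer class) is not asserted in \cref{prop:sel-conn} and is not obviously true, since ``$D_K\neq 2P$'' is stronger than ``$D_K\not\sim 2P$''; however, your actual computation never uses this direction, only that each non-trivial $\alpha$ has a preimage in $\meH_{LS,NT}^{\le d}(B)$ and that distinct $\alpha$'s yield distinct $\beta_\alpha$'s (which follows from functoriality of $F$ on all of $\meH_{LS}(B)$), so the argument goes through unchanged.
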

    \begin{proof}
        The first inequality follows directly from \cref{prop:sel-conn}. The second inequality holds simply because $\meH^{\le d}_{LS,NT}(B)\into\meH^{\le d}_{M,NT}(B)$ is a full subgroupoid.
    \end{proof}
    As this corollary suggests, we will bound $\#\CSel_{2,NT}^{\le d}$ by bounding $\#\meH^{\le d}_{M,NT}$, that is, by counting hW curves. This count will be carried out in \cref{sect:selmer-groupoid-card}, culminating in \cref{cor:mod-nt-estimate}, which says that
    \begin{equation}\label{ineq:NT-Sel-bound}
        \limsup_{d\to\infty}\frac{\#\meH_{M,NT}^{\le d}(B)}{\#\meM^{\le d}_{1,1}(K)}\le2\zeta_B(2)\zeta_B(10).
    \end{equation}
    
    \item For the trivial Selmer elements, we use a separate argument. Note that (e.g. by \cref{ex:triv-sel})
    \[\#\CSel_{2,T}^{\le d}=\sum_{\substack{E/K\\\Ht(E)\le d}}\frac1{\#\Aut_{\CSel_2}(E,E,\rho_E,2O)},\]
    where, for an elliptic curve $E/K$, $\rho_E:E\by E\to E$ is the multiplication map, and $O\in E(K)$ is the identity element. By \cref{lem:sel-aut}, there is a short exact sequence
    \[0\too E[2](K)\too\Aut_{\CSel_2}(E,E,\rho_E,2O)\too\Aut(E)\too0,\]
    so $\#\Aut_{\CSel_2}(E,E,\rho_E,2O)=\#E[2](K)\cdot\#\Aut(E)$. Consequently,
    \begin{equation}\label{eqn:CSelT-card}
        \#\CSel_{2,T}^{\le d}=\sum_{\substack{E/K\\\Ht(E)\le d}}\frac1{\#E[2](K)\cdot\#\Aut(E)}.
    \end{equation}
    In \cref{sect:main-results} (see \cref{prop:count-triv-sel}), we will show that
    \begin{equation}\label{eq:triv-is-1}
        \#\CSel_{2,T}^{\le d} = \sum_{\substack{E/K\\\Ht(E)\le d}}\frac1{\#E[2](K)\cdot\#\Aut(E)} \sim \sum_{\substack{E/K\\\Ht(E)\le d}}\frac1{\#\Aut(E)} = \#\meM_{1,1}^{\le d}(K).
    \end{equation}
\end{itemize}
Once we have established \cref{ineq:AS-MAS-overview} in \cref{sect:main-results}, \cref{ineq:NT-Sel-bound} in \cref{cor:mod-nt-estimate}, and \cref{eq:triv-is-1} in \cref{sect:main-results}, \cref{thma:main} (= \cref{thm:main}) will immediately follow. 

\subsubsection{Proof of \cref{prop:sel-conn}}\label{sect:proof-prop}
We want to construct an essentially surjective, faithful functor
\[F:\meH_{LS}(B)\to\CSel_2\]
along with a choice of nice preimage for any object in $\CSel_2$. 

\begin{construct}\label{construct:hW-Sel-func}
    The desired functor $F$ is defined on objects by
    \[F(H\to B,D):=(H_K,\rPic^0_{H_K},\rho_{H_K},D_K),\]
    with the $_K$ subscript denoting the generic fiber, and $\rho_{H_K}:H_K\by\Pic^0_{H_K}\to H_K$ being the natural action (coming from identifying $H_K\iso\Pic^1_{H_K}$). That this is functorial, i.e. defined on morphisms, comes from the fact that $\rPic^0_{H_K}$ is the Albanese variety of $H_K$. Hence, any morphism $\phi:(H/B,D)\to(H'/B,D')$ in $\meH_{LS}(B)$ will induce a $\psi:\rPic^0_{H_K}\to\rPic^0_{H'_K}$ so that $(\phi,\psi):F(H/B,D)\to F(H'/B,D)$.
\end{construct}
With $F$ defined, to prove \cref{prop:sel-conn}, we still need to construct nice preimages and show faithfulness. We begin with faithfulness.

\begin{prop}\label{prop:F-faithful}
    For $(H\to B,D)\in\meH_{LS}(B)$ with $(C,E,\rho,D):=F(X/B,D)$, the induced map
    \[\push F:\Aut_{\meH(B)}(H\to B,D)\too\Aut_{\CSel_2}(C,E,\rho,D)\]
    is injective. That is, $F:\meH_{LS}(B)\to\CSel_2$ is faithful.
\end{prop}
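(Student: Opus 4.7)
The plan is to show that if $\Phi\in\Aut_{\meH(B)}(H/B,D)$ satisfies $\push F(\Phi)=\id$, then $\Phi=\id_H$. By the construction of $F$ in \cref{construct:hW-Sel-func}, the image $\push F(\Phi)=(\Phi_K,\psi)$ has $\Phi_K$ as its first component (the second component $\psi$ on $\rPic^0_{H_K}$ being determined functorially by $\Phi_K$ via the Albanese property). Hence $\push F(\Phi)=\id$ forces $\Phi_K=\id_{H_K}$, so in particular $\Phi$ agrees with $\id_H$ at the generic point $\eta_H$ of $H$. It remains to promote this pointwise equality at $\eta_H$ to equality of scheme morphisms.

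The key geometric input is that $H$ is integral. Since $(H/B,D)\in\meH_{LS}(B)\subset\meH_M(B)$, the scheme $H$ is normal by \cref{defn:minimal}. Moreover, condition \bp a of \cref{defn:hW} gives $\push\pi\msO_H\simeq\msO_B$, so $\hom^0(H,\msO_H)\simeq\hom^0(B,\msO_B)$ is a field (as $B$ is a projective integral curve). Thus $H$ is connected, and being normal and connected it is integral; in particular $\{\eta_H\}$ is dense in $H$.

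The conclusion then follows from a standard separatedness argument. Since $H\to B$ is proper and hence separated, the diagonal $\Delta_{H/B}\subset H\times_B H$ is closed, so the equalizer $(\Phi,\id_H)^{-1}(\Delta_{H/B})\subset H$ is a closed subscheme whose support contains the dense point $\eta_H$, and therefore equals all of $|H|$. Because $H$ is reduced, this closed subscheme must coincide with $H$ as a scheme, giving $\Phi=\id_H$. Running the identical argument on a pair of morphisms $\Phi_1,\Phi_2\colon(H_1/B,D_1)\to(H_2/B,D_2)$ in $\meH_{LS}(B)$ with $F(\Phi_1)=F(\Phi_2)$ extends this to faithfulness of $F$ on every hom set. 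The only substantive step is the integrality of $H$; once this is in hand the rest is mechanical, so I anticipate no real obstacle.
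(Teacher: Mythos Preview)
Your proof is correct and follows essentially the same approach as the paper: show that $\push F(\Phi)=\id$ forces $\Phi_K=\id_{H_K}$, establish that $H$ is reduced so that $H_K$ is schematically dense, and conclude via separatedness. The only minor difference is that you obtain reducedness from normality (built into the definition of minimal), whereas the paper instead invokes the general fact that a flat scheme over $B$ with reduced generic fiber is reduced.
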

\begin{proof}
    Fix any hW automorphism $\phi:H\iso H$ such that $\push F(\phi)=(\phi_K,\psi)$ is the identity. Because $H\to B$ is flat with reduced generic fiber $H_K=C$, \cite[Proposition 4.3.8]{liu} tells us that $H$ is reduced. Thus, $H_K$ is schematically dense in $H$; hence, $\phi_K=\id_{H_K}\implies\phi=\id_H$.
\end{proof}

\subsubsection*{\bf essential surjectivity of $F$} The proof that $F$ is essentially surjective will occupy us for the next several pages. The rough idea is to first start with a Selmer element $(C,D)$, consider the minimal proper regular model $\meC/B$ of $C$, and then to extend $D$ to a divisor $\meD$ on $\meC$. We will do this in such a way that the pair $(\meC,\meD)$ becomes a hawc. Then, using \cref{cor:proj-hW}, we can construct from this a particular hW model $H/B$ of $C$. This $H$ will be our choice of nice preimage. The bulk of the remainder of this section will be spent verifying $H$ has all the properties claimed in the statement of \cref{prop:sel-conn}.

\begin{set}
    Fix any $(C,E,\rho,D)\in\CSel_2$. Let $\pi:\meC\to B$ denote the minimal proper regular model of $C$, and let $\meD\subset\meC$ denote the scheme-theoretic closure of $D\subset C=\meC_K\subset\meC$. Note that $\meD$ is a Cartier divisor because $\meC$ is regular.
\end{set}
\begin{lemma}\label{lem:C-hawc}
    The pair $(\meC,\meD)$ is a hawc over $B$. That is, $\meC/B$ is a curve satisfying \bp a $\push\pi\msO_\meC\simeq\msO_B$, \bp b $\omega_{\meC/B}\in\pull\pi\Pic(B)$, and \bp c $\meD\subset\meC$ is an effective relative Cartier divisor of degree $2$. In fact, $\omega_{\meC/B}\simeq\pull\pi\msL$, where $\msL=\push\pi\omega_{\meC/B}\in\Pic(B)$.
\end{lemma}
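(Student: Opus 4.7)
The plan is to verify conditions \textbf{(a)}, \textbf{(b)}, \textbf{(c)} of \cref{defn:hW} in turn, after first confirming that $\pi\colon\meC\to B$ satisfies the paper's definition of ``curve.'' Flatness of $\pi$ comes from $\meC$ being regular and $B$ being a smooth $1$-dimensional base: every associated point of $\meC$ must dominate $B$, since the generic fiber $C$ is smooth and dense. Properness and finite presentation are built into the notion of minimal proper regular model. Gorenstein-ness of the fibers follows from the standard fact that a flat morphism between Gorenstein schemes has Gorenstein fibers, applied to the regular source and target; $1$-dimensionality is clear from the generic fiber; and geometric connectedness of fibers (together with $h^0(\meC_{\bar b},\msO_{\meC_{\bar b}})=1$) is standard for the Kodaira-type fibers of a relatively minimal elliptic fibration. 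Cohomology and base change (\cref{thm:coh-bc}) then immediately upgrades the fiberwise identity $h^0(\meC_{\bar b},\msO_{\meC_{\bar b}})=1$ into the universal isomorphism $\push\pi\msO_\meC\simeq\msO_B$ of condition \textbf{(a)}.

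For condition \textbf{(b)}, I set $\msL:=\push\pi\omega_{\meC/B}$ and aim to show the counit map $\pull\pi\msL\to\omega_{\meC/B}$ is an isomorphism. It suffices to check fiberwise triviality of $\omega_{\meC/B}$. For any fiber component $F\subset\meC_b$ of a relatively minimal elliptic fibration, adjunction gives $\omega_{\meC/B}\cdot F=2g(F)-2-F\cdot F=0$, so $\omega_{\meC/B}$ restricts to a degree-$0$ line bundle on each component of each geometric fiber. The key subtlety is passing from degree $0$ to actual triviality: local solvability of $C$ ensures the minimal regular model has \emph{no multiple fibers} (each local point in $C(K_v)$ extends to a section of $\meC$ over $\spec\msO_v$, precluding multiplicity), so every geometric fiber is a genuine Kodaira fiber for which a direct inspection (or appeal to the canonical bundle formula without multiple-fiber contributions) gives $\omega_{\meC_{\bar b}}\simeq\msO_{\meC_{\bar b}}$. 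Cohomology and base change then shows $\msL\in\Pic(B)$ and that the counit $\pull\pi\msL\iso\omega_{\meC/B}$ is an isomorphism.

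For condition \textbf{(c)}, $\meD$ is Cartier on the regular scheme $\meC$. Each irreducible component of $\meD$ dominates $B$, being the schematic closure of a $0$-dimensional subscheme of $C=\meC_K$, so $\meD\to B$ is quasi-finite and proper, hence finite. Flatness over the $1$-dimensional regular base $B$ reduces to torsion-freeness of $\push{(\pi\vert_\meD)}\msO_\meD$, which holds because $\meD$ has no associated points over closed points of $B$ (it is the schematic closure of its generic fiber). The resulting locally free $\msO_B$-module has rank equal to its generic rank $\deg_K(D)=2$, so $\meD\subset\meC/B$ is a relative Cartier divisor of degree $2$ as required.

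The main obstacle is condition \textbf{(b)}: converting the numerical equality $\omega_{\meC/B}\cdot F=0$ into the scheme-theoretic statement $\omega_{\meC_{\bar b}}\simeq\msO_{\meC_{\bar b}}$ on every geometric fiber. This is precisely where local solvability of $C$ (as opposed to the mere existence of a smooth model) enters the argument, by eliminating multiple fibers which would otherwise obstruct the relative dualizing sheaf from being a pullback from $B$.
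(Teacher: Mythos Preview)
Your proposal is correct and follows essentially the same approach as the paper: verify the fiberwise conditions $h^0(\meC_b,\msO_{\meC_b})=1$ and $\omega_{\meC_b}\simeq\msO_{\meC_b}$ using local solvability (to rule out multiple fibers), then globalize via cohomology and base change, and handle \textbf{(c)} by torsion-freeness over the Dedekind base. The only difference is that the paper outsources the fiberwise verification of \textbf{(a)} and \textbf{(b)} to \cite[Lemma~9.1]{dejong} and then invokes \cref{lem:fibral-gs+ds}, whereas you unpack the argument directly via adjunction and the Kodaira classification; note that your displayed formula $\omega_{\meC/B}\cdot F=2g(F)-2-F\cdot F=0$ is not literally correct for an arbitrary fiber component $F$ without also invoking minimality (no $(-1)$-curves forces $\omega_{\meC/B}\cdot F_i\ge0$ for each component, and since the total is zero each term vanishes), but this is a standard and easily repaired step.
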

\begin{proof}
    It is clear that $\meC$ is a curve over $B$.
    \begin{itemize}[align=left]
        \Item{a,b} Because $C$ has a $K_v$-point for every place $v$ of $K$, \cite[Lemma 9.1]{dejong} shows that \bp a,\bp b hold fiberwise, i.e. that
        \[\hom^0(\meC_b,\msO_{\meC_b})=\kappa(b)\tand\omega_{\meC_b}\simeq\msO_{\meC_b},\]
        for every closed $b\in B$. By \cref{lem:fibral-gs+ds}, we conclude that $\msL:=\push\pi\omega_{\meC/B}$ is a line bundle, and that
        \[\push\pi\msO_\meC=\msO_B\tand\omega_{\meC/B}\simeq\pull\pi\msL\]
        (both holding after arbitrary base change).
        \Item{c} Given the definition of $\meD$, to prove that it is an effective \important{relative} Cartier divisor of degree 2, it suffices to show that it is flat over $B$. Thus, for any scheme point $d\in\meD$, we need to show that the ring map $\msO_{B,\pi(d)}\to\msO_{\meD,d}$ is flat. Because $B$ is a Dedekind scheme, this holds if and only if $\msO_{\meD,d}$ is $\msO_{B,\pi(d)}$-torsion-free. Note that, by definition, $\msO_\meD=\msO_\meC/\ker\p{\msO_\meC\to\push i\msO_D}$, where $i:D\into\meC$ is the natural inclusion. Hence, $\msO_{\meD,d}$ is contained in the $K$-vector space $\msO_D$, and so is certainly $\msO_{B,\pi(d)}$-torsion-free (note $K=\Frac\msO_{B,\pi(d)}$).
        \qedhere
    \end{itemize}
\end{proof}
Now, let
\[H:=\rProj_B\p{\bigoplus_{n\ge0}\push\pi\msO_\meC(n\meD)}\xtoo fB,\]
and let $D_H\subset H$ be the scheme-theoretic image of $\meD$ under the natural map $p:\meC\to H$. Then, \cref{lem:C-hawc} and \cref{cor:proj-hW} together tells us that $(H,D_H)$ is an hW curve over $B$. 
\begin{rem}
    Note that $H_K=\Proj\p{\bigoplus_{n\ge0}\hom^0(C,\msO_C(nD))}=C$ because $D\subset C$ is ample.
\end{rem}
\begin{rem}\label{rem:H-iso-dense-open}
    Let $\{F_i\}_{i\in I}$ be the (finite) set of fibral components $F_i\subset\meC/B$ \important{not} meeting $\meD$, and let $U:=\meC\sm\bigcup_{i\in I}F_i\openset\meC$. Then, $\meD\subset U$ by definition, and
    \[U\xtoo pp(U)\subset H\]
    is an open immersion with dense image. Indeed, \cref{lem:contract} proves this holds on each fiber over $B$. Thus, the fibral open immersion criterion \cite[Corollaire 17.9.5]{egaiv.4} says the same is true of $p$ globally. In particular, $D_H\subset p(U)\subset X$ can alternatively be described as the pullback of $\meD\subset U$ along the isomorphism $\pinv{p\vert_U}:p(U)\iso U$. 
\end{rem}
\begin{rem}\label{rem:H-normal}
    We remark that $H$ is normal. Indeed, $H$ is Gorenstein because \cref{thm:loc-model} shows that it is locally a hypersurface in $\P(1,2,1)$. Further, \cref{rem:H-iso-dense-open} above shows that $H$ is isomorphic to $\meC$ away from a codimension 2 subset (the images of the finitely many fibral components $F_i$ \important{not} meeting $\meD$), so $H$ is regular in codimension 1. Thus, $H$ must be normal by Serre's criterion.
\end{rem}
At this point, it is clear that the $(H,D_H)$ just constructed is an hW curve whose generic fiber is $(C,D)$. To finish the proof of \cref{prop:sel-conn}, we still need to prove the following:
\begin{itemize}
    \item $\Ht(H)=\Ht(C,E,\rho,D):=\Ht(E)$. By \cref{rem:height-torsor}, it is equivalent to prove that $\Ht(H)=\Ht(C):=\Ht(\meC)$. We show this in \cref{prop:contract-family}.
    \item $(H,D_H)$ is minimal in the sense of \cref{defn:minimal}. We show this in \cref{cor:construct-min}. Given this, it follows from definitions that if $(C,E,\rho,D)$ is non-trivial, then $(H,D_H)\in\meH_{LS,NT}(B)$. 
\end{itemize}
\begin{prop}\label{prop:contract-family}
    The above constructed $(H\xto fB,D_H)$ satisfies both
    \begin{enumerate}
        \item $\push\pi\omega_{\meC/B}\simeq\push f\omega_{H/B}$; and
        \item $\push\pi\msO_\meC(\meD)\simeq\push f\msO_H(D_H)$.
    \end{enumerate}
    In particular, by \bp1 above, the height of $X$ equals the height of $\meC$.
\end{prop}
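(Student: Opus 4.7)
The plan is to deduce both assertions from the single refined fact that the natural map
\[
    \phi_n\colon\push f\msO_H(nD_H)\too\push\pi\msO_\meC(n\meD)
\]
is an isomorphism of rank-$2n$ vector bundles on $B$ for every $n\ge1$. Setting $n=1$ gives (2) immediately, and comparing determinants in the $n=1$ and $n=2$ cases via \cref{cor:det(En)} will give (1). The ``in particular'' is then automatic from the definition of height.

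To construct $\phi_n$, I would use \cref{rem:H-iso-dense-open}: the map $p\colon\meC\to H$ restricts to an isomorphism on an open $U\supseteq\meD$ whose image $p(U)\subseteq H$ is dense, and the contracted locus $\meC\sm U$ is a union of fibral components not meeting $\meD$. Since $D_H=(p\vert_U)(\meD)$ has support in $p(U)$, the pullback $\pull p D_H$ is a well-defined Cartier divisor on $\meC$, equal to $\meD$ on $U$ and to $0$ on $\meC\sm U$; hence $\pull p\msO_H(D_H)\simeq\msO_\meC(\meD)$, and combining with $\msO_H(1)\simeq\msO_H(D_H)$ from \cref{cor:proj-hW} yields $\pull p\msO_H(nD_H)\simeq\msO_\meC(n\meD)$ for all $n$. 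Adjunction then supplies $\msO_H(nD_H)\to\push p\pull p\msO_H(nD_H)\simeq\push p\msO_\meC(n\meD)$; applying $\push f$ and using $\push\pi=\push f\circ\push p$ produces $\phi_n$.

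Next I would prove $\phi_n$ is an isomorphism. \cref{lem:push-E}, applied to the hawcs $(H,D_H)$ (the hypotheses are built into \cref{defn:hW}) and $(\meC,\meD)$ (by \cref{lem:C-hawc}), shows that both source and target of $\phi_n$ are locally free of rank $2n$ on $B$ and that their formation commutes with arbitrary base change. The fiber $H_b$ over $b\in B$ is therefore precisely the $\Proj$ attached to the hawc $(\meC_b,\meD_b)$ over $\kappa(b)$ that is analyzed in \cref{prop:contract}, so the fiber $\phi_n\otimes\kappa(b)$ is identified with the isomorphism $\hom^0(H_b,\msO_{H_b}(n))\iso\hom^0(\meC_b,\msO_{\meC_b}(n\meD_b))$ furnished by \cref{prop:contract}\bp b. Nakayama's lemma then forces $\phi_n$ to be surjective, and a surjection between rank-$2n$ vector bundles is automatically an isomorphism.

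Finally, I would deduce (1). Set $\msL\coloneqq\push\pi\omega_{\meC/B}$, $\msM\coloneqq\push f\omega_{H/B}$, $\msD\coloneqq\det\push\pi\msO_\meC(\meD)$, and $\msD'\coloneqq\det\push f\msO_H(D_H)$. \cref{cor:det(En)} applied to $(\meC,\meD)$ and to $(H,D_H)$ gives
\[
    \det\push\pi\msO_\meC(n\meD)\simeq\msD^{n^2}\otimes\msL^{1-n}\tand\det\push f\msO_H(nD_H)\simeq(\msD')^{n^2}\otimes\msM^{1-n}.
\]
Taking determinants in the $n=1$ case of $\phi_n$ yields $\msD\simeq\msD'$; taking determinants in the $n=2$ case then forces $\msD^4\otimes\inv\msL\simeq\msD^4\otimes\inv\msM$, i.e.\ $\msL\simeq\msM$, which is (1). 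The identification $\Ht(\meC)=\deg\msL=\deg\msM=\Ht(H)$ is then immediate. The only step that requires genuine thought is the fiberwise-to-global upgrade, but this is made clean by the rank and base change content of \cref{lem:push-E}, which applies symmetrically to both hawcs.
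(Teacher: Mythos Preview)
Your proof is correct, and for part \bp2 it is essentially identical to the paper's argument: both construct the map from the identification $\pull p D_H=\meD$ (which the paper states explicitly in its proof of \bp2), invoke \cref{lem:push-E} for the base-change/rank content, and then check the fiber maps are the isomorphisms of \cref{prop:contract}\bp b.

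For part \bp1, however, you take a genuinely different and more algebraic route than the paper. The paper builds a specific map $\phi\colon\push\pi\omega_{\meC/B}\to\push f\omega_{H/B}$ by dualizing the edge map $R^1\push f(\push p\msO_\meC)\to R^1\push\pi\msO_\meC$ from the Grothendieck spectral sequence, and then checks it is an isomorphism fiber-by-fiber via \cref{prop:contract}\bp d. You instead deduce the abstract isomorphism $\msL\simeq\msM$ purely from the determinant identity of \cref{cor:det(En)} applied to both hawcs, using only the $n=1$ and $n=2$ cases of your $\phi_n$. This avoids any appeal to duality or to \cref{prop:contract}\bp d and is pleasantly self-contained; the trade-off is that you obtain only an abstract isomorphism of line bundles rather than a canonical one, but that is all the paper ever uses downstream (in \cref{cor:construct-min} and for the height comparison), so nothing is lost.
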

\begin{proof}
    \bp1 The Grothendieck spectral sequence $R^p\push f(R^q\push p\msO_\meC)\implies R^{p+q}\push\pi\msO_\meC$ gives us a morphism $R^1\push f(\push p\msO_\meC)\to R^1\push\pi\msO_\meC$. Dualizing, and recalling also the map $\msO_H\to\push p\msO_\meC$, below we define $\phi:\push\pi\omega_{\meC/B}\to\push f\omega_{H/B}$ as the composition
    \[\push\pi\omega_{\meC/B}\simeq\pdual{R^1\push\pi\msO_\meC}\to\pdual{R^1\push f(\push p\msO_\meC)}\to\pdual{R^1\push f\msO_H}\simeq\push f\omega_{H/B}.\]
    For each $b\in B$, one has $\omega_{\meC/B}\vert_{\meC_b}=\omega_{\meC_b}$ (and similarly for $\omega_{H/B}$) by \cite[\href{https://stacks.math.columbia.edu/tag/0E6R}{Tag 0E6R}]{stacks-project}, so one obtains a commutative diagram
    \[\commsquare{\push\pi\omega_{\meC/B}\otimes\kappa(b)}{\phi_b}{\push f\omega_{X/B}\otimes\kappa(b)}{}{}{\hom^0(\meC_b,\omega_{\meC_b})}\sim{\hom^0(X_b,\omega_{X_b})}\]
    whose bottom horizontal map is an isomorphism by \cref{prop:contract}\bp d. Furthermore, both vertical maps above are isomorphisms as well, e.g. by \cref{lem:fibral-gs+ds}. We also remark that $\push\pi\omega_{\meC/B},\push f\omega_{H/B}$ are both line bundles, e.g. by \cref{lem:fibral-gs+ds}. Hence, $\phi$ is a map of line bundles inducing isomorphisms on the fibers, and so itself an isomorphism.
    
    \bp2 The argument that $\push\pi\msO_\meC(\meD)\simeq\push f\msO_H(D_H)$ is even simpler. It follows from \cref{rem:H-iso-dense-open} that $\meD=\pull pD_H$. Hence, $p$ induces a natural map $\push f\msO_H(D_H)\to\push\pi\msO_\meC(\meD)$. Since both sides of this map are vector bundles whose formations commute with arbitrary base change (e.g. by \cref{lem:push-E}), this map is an isomorphism if and only if it is an isomorphism on fibers, and on fibers, this map is the isomorphism of \cref{prop:contract}\bp b. 
\end{proof}
\begin{cor}\label{cor:construct-min}
    The above constructed $(H\xto fB,D_H)$ is minimal.
\end{cor}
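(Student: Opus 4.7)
The three conditions to verify are normality of $H$, smoothness of its generic fiber, and the rational-singularities condition. The first is \cref{rem:H-normal}, and the second is immediate: $H_K = C$ is smooth by hypothesis on $(C,E,\rho,D)\in\CSel_2$. The substantive step is showing that $R^1\push p\msO_\meC=0$ for the proper birational map $p\colon\meC\to H$, with $\meC$ regular (being the minimal proper regular model of $C$).

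My plan is to run the Leray spectral sequence for the composition $\meC\xto p H\xto f B$ and to compare it against \cref{prop:contract-family}. First I would verify that $\push p\msO_\meC\simeq\msO_H$. By Stein factorization, $p = g\circ h$ where $h$ has geometrically connected fibers and $g$ is finite; since $p$ is birational (an isomorphism on the dense open of \cref{rem:H-iso-dense-open}), so is $g$, and normality of $H$ then forces $g$ to be an isomorphism. Hence $\push p\msO_\meC=\msO_H$.

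Now both $f$ and $\pi$ have one-dimensional fibers, so $R^{\ge2}\push f=R^{\ge2}\push\pi=0$, and the Leray spectral sequence
\[
R^p\push f\,R^q\push p\msO_\meC\Longrightarrow R^{p+q}\push\pi\msO_\meC
\]
collapses to the short exact sequence
\[
0\too R^1\push f\msO_H\too R^1\push\pi\msO_\meC\too \push f R^1\push p\msO_\meC\too0.
\]
Upon dualizing, the leftmost map is precisely the isomorphism $\phi\colon\push\pi\omega_{\meC/B}\iso\push f\omega_{H/B}$ established in \cref{prop:contract-family}(1). Therefore $\push f R^1\push p\msO_\meC=0$.

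To finish, I would invoke \cref{rem:H-iso-dense-open} once more: $p$ is an isomorphism away from the finitely many fibral components of $\meC$ not meeting $\meD$, and each of these is contracted by $p$ to a single closed point of $H$. So $R^1\push p\msO_\meC$ is a coherent sheaf with $0$-dimensional support. For any such skyscraper $\msF$ on $H$, the stalk of $\push f\msF$ at a point $b\in B$ contains each $\msF_h$ with $h\in\inv f(b)\cap\mathrm{supp}(\msF)$ as a direct summand, so $\push f\msF=0$ forces $\msF=0$. We conclude $R^1\push p\msO_\meC=0$, which is the rational-singularities condition. The only mildly subtle ingredient is the identification $\push p\msO_\meC=\msO_H$; everything else is a clean bookkeeping computation once \cref{prop:contract-family} is in hand.
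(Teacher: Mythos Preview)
Your argument is correct and takes a genuinely different route from the paper's. The paper invokes Lipman's duality exact sequence
\[
0\too\push p\omega_{\meC/B}\too\omega_{H/B}\too\sExt^2_{\msO_H}\p{R^1\push p\msO_\meC,\omega_{H/B}}\too0
\]
(valid because $H$ is normal), observes via \cref{prop:contract-family} that $\omega_{\meC/B}$ and $\omega_{H/B}$ both arise as pullbacks of the \emph{same} line bundle $\msL\in\Pic(B)$, so that the leftmost map is an isomorphism, and then cites a local-duality fact from Lipman to conclude $R^1\push p\msO_\meC=0$ from vanishing of the $\sExt^2$. You instead run the Leray spectral sequence for $\pi=f\circ p$ on $\msO_\meC$, identify the edge map (after dualizing) with the isomorphism $\phi$ of \cref{prop:contract-family}, and kill $R^1\push p\msO_\meC$ by the elementary observation that it has finite support and pushes forward to zero. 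Both approaches ultimately rest on \cref{prop:contract-family}(1); yours trades the citation to Lipman for a direct spectral-sequence computation plus a support argument, which is slightly longer but more self-contained. Note, incidentally, that the paper's step ``$\push p\omega_{\meC/B}\iso\omega_{H/B}$'' also tacitly uses $\push p\msO_\meC\simeq\msO_H$ (via the projection formula), so your ``mildly subtle ingredient'' is in fact common to both.
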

\begin{proof}
    Note that $H$ is normal by \cref{rem:H-normal} and has smooth generic fiber by construction. Hence, it suffices to show that $R^1\push p\msO_\meC$ vanishes. Because $H$ is normal, \cite[(3.3)]{ArithGeo-Lipman} provides a short exact sequence
    \[0\too\push p\omega_{\meC/B}\too\omega_{H/B}\too\sExt^2_{\msO_H}\p{R^1\push p\msO_{\meC}, \omega_{H/B}}\too0.\]
    As a consequence of \cref{prop:contract-family}\bp1, $\omega_{\meC/B}\simeq\pull\pi\msL$ and $\omega_{H/B}\simeq\pull f\msL$ for the same $\msL\in\Pic(B)$. Hence, $\push p\omega_{\meC/B}\iso\omega_{H/B}$, so $\sExt^2(R^1\push p\msO_\meC,\omega_{H/B})=0$. By \cite[(1.5)]{ArithGeo-Lipman}, this means that $R^1\push p\msO_\meC=0$.
\end{proof}
This completes the proof of \cref{prop:sel-conn}.

\subsection{A Geometric Lemma for Minimal hW Curves}\label{sect:geom-lemma}
For later use in \cref{sect:count-hW}, we now prove a technical lemma (\cref{cor:deg(E)-constraint}) involving minimal hW curves. The reader is encouraged to skip this section for now, only returning to it when its results are needed.

\begin{set}\label{set:min-hW}
    We continue to work within the context of \cref{set:main}. Let $(H\xto fB,D)$ be a minimal hW curve, and let $p:\meC\to H$ be a minimal resolution of singularities (so $\meC$ regular, and $\meC_K\iso H_K$). Let $\meD:=\pull pD$, and let $\pi=f\circ p$. Thus, we have a commutative triangle
    \[\mapover\meC pH\pi f{B.}\]
\end{set}
\begin{rem}\label{rem:min-desing-min}
    We remark that $\meC$ is the minimal proper regular model of its generic fiber $\meC_K=H_K$. Indeed, \cite[Proposition (5.1)]{ArithGeo-Lipman} shows that $\omega_{\meC/B}\simeq\pull p\omega_{H/B}$, so $\omega_{\meC/B}\simeq\pull\pi(\push p\omega_{H/B})$ is fibral and hence minimality of $\meC$ follows from \cite[Corollary 3.26]{liu}.
\end{rem}
\begin{lemma}\label{lem:min-push-match}
    $\push p\omega_{\meC/B}\simeq\omega_{H/B}$ and $\push p\msO_\meC(\meD)\simeq\msO_H(D)$. Consequently, $\push\pi\omega_{\meC/B}\simeq\push f\omega_{H/B}$ and $\push\pi\msO_\meC(\meD)\simeq\push f\msO_H(D)$.
\end{lemma}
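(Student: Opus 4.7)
The plan is to handle the two isomorphisms on $H$ separately, then derive the two statements on $B$ as immediate consequences by applying $\push f$ and composing $\push\pi=\push f\push p$.

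First I would prove $\push p\msO_\meC\simeq\msO_H$. Since $(H,D)$ is minimal, $H$ is normal by \cref{defn:minimal}. The map $p\colon\meC\to H$ is proper and birational (it is a resolution of singularities and $\meC_K\iso H_K$), so by Zariski's Main Theorem together with normality of $H$, the natural map $\msO_H\to\push p\msO_\meC$ is an isomorphism. Then, since $\meD=\pull pD$, the projection formula gives
\[\push p\msO_\meC(\meD)=\push p\p{\pull p\msO_H(D)}\simeq\push p\msO_\meC\otimes\msO_H(D)\simeq\msO_H(D),\]
yielding the second desired isomorphism on $H$.

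Next I would prove $\push p\omega_{\meC/B}\simeq\omega_{H/B}$. This is exactly where the rational singularities hypothesis built into \cref{defn:minimal} plays its role. Since $H$ is normal, \cite[(3.3)]{ArithGeo-Lipman} (already invoked in the proof of \cref{cor:construct-min}) gives an exact sequence
\[0\too\push p\omega_{\meC/B}\too\omega_{H/B}\too\sExt^2_{\msO_H}\p{R^1\push p\msO_\meC,\omega_{H/B}}\too0.\]
Minimality of $(H,D)$ says precisely that $R^1\push p\msO_\meC=0$, so the right-hand $\sExt^2$ vanishes and the map $\push p\omega_{\meC/B}\to\omega_{H/B}$ is an isomorphism.

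Finally, for the ``consequently'' part, using $\pi=f\circ p$ and $\push\pi=\push f\push p$ (since $p$ is proper), I would simply apply $\push f$ to both isomorphisms established on $H$:
\[\push\pi\omega_{\meC/B}=\push f\p{\push p\omega_{\meC/B}}\simeq\push f\omega_{H/B},\quad\push\pi\msO_\meC(\meD)=\push f\p{\push p\msO_\meC(\meD)}\simeq\push f\msO_H(D).\]
The main (and only nontrivial) obstacle is the identification $\push p\omega_{\meC/B}\iso\omega_{H/B}$, and this is exactly what the rational singularities condition baked into the definition of ``minimal'' is designed to provide via Lipman's sequence; nothing further is needed.
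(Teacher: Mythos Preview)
Your proposal is correct and follows essentially the same approach as the paper: Lipman's exact sequence \cite[(3.3)]{ArithGeo-Lipman} combined with $R^1\push p\msO_\meC=0$ for the dualizing sheaf, and the projection formula (using $\push p\msO_\meC\simeq\msO_H$) for $\msO_\meC(\meD)$. The only difference is that you spell out the Zariski's Main Theorem justification for $\push p\msO_\meC\simeq\msO_H$, which the paper leaves implicit in the final step of its projection formula computation.
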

\begin{proof}
    Because $H$ has rational singularities, the first of these follows from the short exact sequence $0\to\push p\omega_{\meC/B}\to\omega_{H/B}\to\sExt^2(R^1\push p\msO_\meC,\omega_{H/B})\to0$, \cite[(3.3)]{ArithGeo-Lipman}. For the second, we use the projection formula to compute
    \[\push p\msO_\meC(\meD)\simeq\push p\p{\msO_\meC\otimes\pull p\msO_H(D)}\simeq\push p\msO_\meC\otimes\msO_H(D)\simeq\msO_H(D).\qedhere\]
\end{proof}

In the below lemmas, we define the \define{degree} of a vector bundle $\msV$ on a possibly singular curve $Y/k$ to be $\deg\msV:=\deg(\pull\nu\msV)$, where $\nu:\wt Y\to Y$ is its normalization. Note that Riemann-Roch tells us that $\deg\msV=\chi(\msV)-\rank(\msV)\chi(\msO_Y)$. 
\begin{lemma}\label{lem:curve-push-deg}
    Let $Y/k$ be an irreducible curve equipped with a finite map $f\colon Y\to B$. Choose any $\msM\in\Pic(Y)$. Then, 
    \[\deg\p{\push f\msM}=\deg\p{\push f\msO_Y}+\deg\msM.\]
\end{lemma}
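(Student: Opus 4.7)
The plan is to reduce the identity to two applications of Riemann--Roch (one on $Y$, one on $B$) combined with the fact that pushforward along a finite map is exact. First I would observe that, since $f\colon Y\to B$ is finite, $R^i\push f=0$ for $i\ge 1$, so $\chi(Y,\msF)=\chi(B,\push f\msF)$ for every coherent sheaf $\msF$ on $Y$; I would apply this to $\msF=\msM$ and to $\msF=\msO_Y$. Note also that because $Y$ is irreducible and $\msM$ is a line bundle, both $\push f\msM$ and $\push f\msO_Y$ are torsion-free $\msO_B$-modules, hence (since $B$ is a smooth curve) locally free sheaves on $B$, and a generic-rank computation shows that both have rank $\deg f$.

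Next I would invoke Riemann--Roch in the form built into the paper's definition of degree, namely $\deg\msV=\chi(\msV)-\rank(\msV)\chi(\msO)$, on both $Y$ and $B$, yielding
\begin{align*}
\deg\msM+\chi(\msO_Y)&=\chi(Y,\msM)=\chi(B,\push f\msM)=\deg(\push f\msM)+(\deg f)\chi(\msO_B),\\
\chi(\msO_Y)&=\chi(Y,\msO_Y)=\chi(B,\push f\msO_Y)=\deg(\push f\msO_Y)+(\deg f)\chi(\msO_B).
\end{align*}
Subtracting the second line from the first immediately produces the claimed identity $\deg\msM=\deg(\push f\msM)-\deg(\push f\msO_Y)$.

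The only step requiring justification is the Riemann--Roch identity on the possibly singular curve $Y$. This is essentially forced by the paper's convention of defining $\deg\msM:=\deg(\pull\nu\msM)$ through the normalization $\nu\colon\wt Y\to Y$: using the short exact sequence $0\to\msO_Y\to\push\nu\msO_{\wt Y}\to\mathcal{Q}\to 0$, in which $\mathcal{Q}$ is a skyscraper of length equal to the $\delta$-invariant of $Y$, one obtains $\chi(Y,\msM)=\chi(\wt Y,\pull\nu\msM)-\delta$ (via the projection formula, using that $\msM$ is locally free) and $\chi(\msO_Y)=\chi(\msO_{\wt Y})-\delta$; combining these with ordinary Riemann--Roch on the smooth curve $\wt Y$ recovers the formula. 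I do not anticipate any serious obstacle.
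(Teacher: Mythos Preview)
Your proof is correct and follows exactly the same approach as the paper: both use that finite pushforward preserves Euler characteristics (the paper cites \cite[Exercise III.4.1]{hart}, you spell out $R^if_*=0$), then apply Riemann--Roch on $B$ and on $Y$ and subtract. The paper's proof is a one-line version of what you wrote, and your extra justification of Riemann--Roch on the singular curve $Y$ is already implicit in the paper's remark preceding the lemma that $\deg\msV=\chi(\msV)-\rank(\msV)\chi(\msO_Y)$.
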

\begin{proof}
    Riemann-Roch on $B$, \cite[Exercise III.4.1]{hart}, and then Riemann-Roch on $Y$ yields
    \[\deg(\push f\msM)-\deg(\push f\msO_Y)=\chi(\push f\msM)-\chi(\push f\msO_Y)=\chi(\msM)-\chi(\msO_Y)=\deg\msM.\qedhere\]
\end{proof}
\begin{lemma}\label{lem:curve-dual-deg}
    Let $Y/k$ be an irreducible curve equipped with a finite map $f\colon Y\to B$. Let $\nu:\wt Y\to Y$ be its normalization. Then,
    \[\deg\omega_{Y/B}\ge\deg\omega_{\wt Y/B}.\]
\end{lemma}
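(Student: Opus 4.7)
The plan is to factor $f$ through the normalization as $\wt Y\xto\nu Y\xto f B$ and compare the relative dualizing sheaves along the two pieces. In the situations of interest in this paper, $Y$ will be a fibral component of a regular total space (so Gorenstein) and in particular $\omega_{Y/B}$ will be a line bundle; then the transitivity of relative dualizing sheaves along the composition gives
\[\omega_{\wt Y/B}\simeq\omega_{\wt Y/Y}\otimes\pull\nu\omega_{Y/B}.\]
Taking degrees on $\wt Y$ and invoking the paper's convention $\deg\omega_{Y/B}=\deg\pull\nu\omega_{Y/B}$, the desired inequality is equivalent to $\deg\omega_{\wt Y/Y}\le 0$.

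To establish the latter, I would apply Grothendieck duality to the finite morphism $\nu$, which gives
\[\push\nu\omega_{\wt Y/Y}\simeq\sHom_{\msO_Y}(\push\nu\msO_{\wt Y},\msO_Y)=\mfc,\]
the conductor ideal. By its very construction, $\mfc$ is simultaneously an ideal of $\msO_Y$ and of $\push\nu\msO_{\wt Y}$. Since $\nu$ is finite and affine, the equivalence between coherent sheaves on $\wt Y$ and coherent $\push\nu\msO_{\wt Y}$-modules then translates the inclusion $\mfc\subset\push\nu\msO_{\wt Y}$ into an inclusion $\omega_{\wt Y/Y}\subset\msO_{\wt Y}$. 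Because $\wt Y$ is a smooth curve and the map $\omega_{\wt Y/Y}\to\msO_{\wt Y}$ is generically an isomorphism, this realizes $\omega_{\wt Y/Y}$ as $\msO_{\wt Y}(-D)$ for some effective divisor $D$ supported on the preimage of the non-normal locus of $Y$. Hence $\deg\omega_{\wt Y/Y}=-\deg D\le 0$, with equality iff $Y$ is already normal.

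The main obstacle I anticipate is justifying the factorization $\omega_{\wt Y/B}\simeq\omega_{\wt Y/Y}\otimes\pull\nu\omega_{Y/B}$ in the generality in which the lemma is stated. If one does not want to assume $\omega_{Y/B}$ is invertible, one can replace this step by working with the trace map $\push\nu\omega_{\wt Y/B}\to\omega_{Y/B}$ directly: this map, dual (under Grothendieck duality for $\nu$) to the inclusion $\msO_Y\into\push\nu\msO_{\wt Y}$, is an isomorphism over the smooth locus of $Y$ and is therefore injective since both sides are rank-one torsion-free on the irreducible curve $Y$. Pushing forward to $B$ and computing Euler characteristics via \cref{lem:curve-push-deg} together with the identity $\chi(\push\nu\msO_{\wt Y})-\chi(\msO_Y)=\delta(Y)\ge 0$ (the sum of local delta invariants, always nonnegative) then converts this injection into the claimed degree inequality.
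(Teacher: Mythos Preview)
Your argument is correct. Both you and the paper use the same first reduction: transitivity of relative dualizing sheaves along $\wt Y\to Y\to B$ reduces the claim to $\deg\omega_{\wt Y/Y}\le 0$, using the paper's convention that $\deg\omega_{Y/B}$ is computed after pulling back along $\nu$. The difference lies only in how this last inequality is established. The paper applies transitivity a second time, along $\wt Y\to Y\to\spec k$, to rewrite $\deg\omega_{\wt Y/Y}=\deg\omega_{\wt Y/k}-\deg\omega_{Y/k}=2(p_a(\wt Y)-p_a(Y))$, and then invokes the classical fact that normalization does not increase arithmetic genus. You instead identify $\push\nu\omega_{\wt Y/Y}$ with the conductor via Grothendieck duality for the finite map $\nu$, from which $\omega_{\wt Y/Y}\simeq\msO_{\wt Y}(-D)$ with $D$ effective. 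These are two sides of the same coin: the length of $\msO_Y/\mfc$ (equivalently of $\push\nu\msO_{\wt Y}/\msO_Y$) is precisely the delta invariant $p_a(Y)-p_a(\wt Y)$. Your conductor argument is perhaps more self-contained, while the paper's route makes the quantitative relation $\deg\omega_{Y/B}-\deg\omega_{\wt Y/B}=2\delta(Y)$ explicit. Your alternative paragraph via the trace map and \cref{lem:curve-push-deg} is also valid and has the mild advantage of not needing $\omega_{Y/B}$ invertible, though in the paper's applications $Y$ is always a divisor on a regular surface and hence Gorenstein.
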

\begin{proof}
    \cite[Remark (26)(vii)]{kleiman-duality} applied to the composition $\wt Y\to Y\to B$, and then to the composition $\wt Y\to Y\to\spec K$ tells us that
    \[\omega_{\wt Y/B}\otimes\pinv{\pull\nu\omega_{Y/B}}\simeq\omega_{\wt Y/Y}\simeq\omega_{\wt Y/k}\otimes\pinv{\pull\nu\omega_{Y/k}}.\]
    Taking degrees, we see that
    \[\deg\omega_{\wt Y/B}-\deg\omega_{Y/B}=\deg\omega_{\wt Y/Y}=\deg\omega_{\wt Y/k}-\deg\omega_{Y/k}=(2p_a(\wt Y)-2)-(2p_a(Y)-2)=2(p_a(\wt Y)-p_a(Y)),\]
    with the penultimate equality holding by Riemann-Roch for possibly singular curves (e.g \cite[Exercise IV.1.9]{hart}). The claim now holds as $p_a(\wt Y)\ge p_a(Y)$.
\end{proof}
\begin{prop}\label{prop:reg-deg(E)}
    Use notation as in \cref{set:min-hW}. Suppose that $\meD\subset\meC$ is the closure of its generic fiber $D_K$. Let $\msE:=\push\pi\msO_\meC(\meD)$, $\msL:=\push\pi\omega_{\meC/B}$, and let $d:=\deg\msL$. Then, one of the following holds:
    \begin{enumerate}
        \item $D_K=2P$ for some $P\in\meC(K)$. In this case ,$\det(\msE)\simeq\inv[2]\msL$, so $\deg\msE=-2d$.
        \item $\deg\msE\ge-d$.
        \item $\Char K=2$, $D_K$ is a closed point with residue field inseparable over $K$, and $\deg\msE\ge1-(d+g)$.
    \end{enumerate}
\end{prop}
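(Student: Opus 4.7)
The plan is to reduce $\deg\msE$ to a self-intersection computation on the regular surface $\meC$, and then to analyze that self-intersection case-by-case based on the structure of the generic fiber $D_K$.

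First, for the reduction: since $R^1\push\pi\msO_\meC(\meD) = 0$ by \cref{lem:push-E}, Leray combined with Riemann--Roch on $B$ gives $\deg\msE = \chi(\meC,\msO_\meC(\meD)) - 2(1-g)$. Grothendieck--Serre duality for $\pi$, together with $\omega_{\meC/B}\simeq\pull\pi\msL$ (which holds by \cref{rem:min-desing-min}), gives $R^1\push\pi\msO_\meC\simeq\msL^{-1}$, and a further Leray computation yields $\chi(\meC,\msO_\meC) = \chi(\msO_B) - \chi(\msL^{-1}) = d$. Since $\omega_{\meC/k}\simeq\pull\pi(\msL\otimes\omega_{B/k})$ is pulled back from $B$, one has $\meD\cdot K_\meC = 2(d+2g-2)$. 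Feeding these into surface Riemann--Roch on the smooth (since $k$ is perfect) projective $k$-surface $\meC$ gives the clean identity
\[\deg\msE \;=\; \frac{\meD^2}{2} \;=\; 2 - d - 2g - \chi(\msO_\meD),\]
the second equality coming from relative adjunction $\omega_{\meD/B}\simeq\omega_{\meC/B}|_\meD\otimes N_{\meD/\meC}$ paired with duality for the finite morphism $\meD\to B$.

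Next comes the case analysis. Since $D_K$ is an effective degree-$2$ divisor on the smooth genus-$1$ curve $C = \meC_K$, there are three possibilities: (A) $D_K = P_1 + P_2$ for distinct $P_1, P_2\in C(K)$; (B) $D_K = 2P$ for some $P\in C(K)$; or (C) $D_K$ is a single closed point with residue field $F$ a quadratic extension of $K$, either separable or inseparable. In case (B), letting $\tilde P\colon B\to\meC$ be the section extending $P$, one has $\meD = 2\tilde P$; relative adjunction along the section forces $N_{\tilde P/\meC}\simeq\msL^{-1}$, so $\tilde P^2 = -d$, $\meD^2 = -4d$, and $\deg\msE = -2d$. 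The refinement $\det\msE\simeq\msL^{-2}$ will follow from the short exact sequence $0\to\push\pi\msO_\meC(\tilde P)\to\msE\to N_{\tilde P/\meC}^{\otimes 2}\to 0$ on $B$ (obtained by pushing forward the natural sequence on $\meC$) together with the identification $\push\pi\msO_\meC(\tilde P)\simeq\msO_B$ (a rank-$1$, degree-$0$ line bundle carrying a nonzero section). In case (A), $\meD = \tilde P_1 + \tilde P_2$ with each $\tilde P_i^2 = -d$ by the same adjunction argument, so $\meD^2 = -2d + 2(\tilde P_1\cdot\tilde P_2)\ge -2d$ and $\deg\msE\ge -d$.

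The main obstacle will be case (C), where $\meD$ is irreducible. I plan to pass to the normalization $\nu\colon\wt\meD\to\meD$ and use $\chi(\msO_\meD)\le\chi(\msO_{\wt\meD}) = 1 - g(\wt\meD)$ to get $\deg\msE\ge g(\wt\meD) + 1 - d - 2g$. The induced map $\wt\meD\to B$ is finite of degree $2$: if $F/K$ is separable, Riemann--Hurwitz forces $g(\wt\meD)\ge 2g - 1$, yielding $\deg\msE\ge -d$. If $F/K$ is inseparable then necessarily $\Char K = 2$, and the key observation is that the only degree-$2$ purely inseparable extension of $K = k(B)$ (with $k$ perfect) is $K^{1/2}$, which is abstractly isomorphic to $K$ via the square-root map. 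Hence $\wt\meD$ is abstractly $k$-isomorphic to $B$, so $g(\wt\meD) = g$, giving $\deg\msE\ge 1 - d - g$. This final genus identification in the purely inseparable case is the point where characteristic $2$ (and perfectness of $k$) enters in an essential way; the rest of the argument is straightforward surface-theoretic bookkeeping.
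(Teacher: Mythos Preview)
Your proof is correct and follows the same underlying ideas as the paper --- adjunction, self-intersection, normalization, and the Riemann--Hurwitz/Frobenius dichotomy in the irreducible case --- but is organized more efficiently. The paper treats the reducible case ($D_K=P+Q$) entirely through pushforward exact sequences on $B$, and only in the irreducible case does it derive $\deg\msE=\tfrac12\meD^2$, via a somewhat lengthy chain of pushforwards and determinant computations. You instead invoke surface Riemann--Roch on the smooth projective $k$-surface $\meC$ (using perfectness of $k$) to obtain $\deg\msE=\tfrac12\meD^2$ once and for all, then read off each case from self-intersections. This buys you a cleaner, more uniform argument; the paper's route is more hands-on and self-contained (it never invokes surface Riemann--Roch), but longer. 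In the irreducible case both proofs are essentially identical: pass to the normalization $\wt\meD$, use $\chi(\msO_\meD)\le\chi(\msO_{\wt\meD})$ (equivalently $\deg\omega_{\meD/B}\ge\deg\omega_{\wt\meD/B}$), and distinguish separable (Riemann--Hurwitz gives $g(\wt\meD)\ge 2g-1$) from purely inseparable ($\wt\meD\to B$ is Frobenius, so $g(\wt\meD)=g$). For the refinement $\det\msE\simeq\msL^{-2}$ in case~(B), note that your sequence gives $\det\msE\simeq\push\pi\msO_\meC(\tilde P)\otimes\msL^{-2}$, and you should combine the nonzero section of $\push\pi\msO_\meC(\tilde P)$ with the already-established $\deg\msE=-2d$ to conclude it has degree~$0$ and hence is trivial.
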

\begin{proof}
    Keep in mind that $\msL\simeq\pdual{R^1\push\pi\msO_\meC}$ by duality. To prove that one of \bp1,\bp2,\bp3 above holds, we break into cases depending on the form of the divisor $D_K\subset\meC_K=:C$.
    \begin{itemize}[align=left]
        \item Case 1: $D_K=P+Q$ for some (possibly equal) $P,Q\in C(K)$.
        
        Extend $P,Q$ to sections $\meP,\meQ\in\meC(B)$, respectively (so $\meD=\meP+\meQ$). The exact sequence $0\to\msO_\meC\to\msO_\meC(\meP)\to\meO_\meP(\meP)\to0$ pushes forward to
        \[0\too\msO_B\too\push\pi\msO_\meC(\meP)\too\push\pi\meO_\meP(\meP)\too R^1\push\pi\msO_\meC\too R^1\push\pi\msO_\meC(\meP)=0,\]
        with last equality holding by \cref{lem:C-hawc,lem:push-E}. An easy cohomology and base change argument shows that every object above is a line bundle, so we quickly conclude that
        \begin{equation}\label{isos:easy-pushes}
            \push\pi\msO_\meP(\meP)\iso R^1\push\pi\msO_\meC\cong\inv\msL,\tandso \msO_B\iso\push\pi\msO_\meC(\meP).
        \end{equation}
        By symmetry, the same is true with $\meQ$ in place of $\meP$. Note that $\pi$ restricts to an isomorphism $\meP\to B$, so $\push\pi$ preserves tensor products of sheaves supported on $\meP$. With this in mind, the exact sequence $0\to\msO_\meC(\meQ)\to\meO_\meC(\meD)\to\meO_\meP(\meP+\meQ)\to0$ pushes forward to
        \[0\too\msO_B\too\msE\too\push\pi\msO_\meP(\meP)\otimes\push\pi\msO_\meP(\meQ)\too0.\]
        If $\meQ=\meP$ (i.e. if $Q=P$, i.e. if $D_K=2P$), then $\det\msE\simeq\inv[2]\msL$ (recall \cref{isos:easy-pushes}), which is \bp a of the proposition. If $\meQ\neq\meP$, then $n:=\deg\push\pi\meO_\meP(\meQ)=\deg\meO_\meP(\meQ)=\meP\cdot\meQ\ge0$ (since it is the intersection number of distinct irreducible curves), so $\deg\msE=n-d\ge-d$, which is \bp b of the proposition.
        
        \item Case 2: $D_K$ is a closed point with residue field $L$ quadratic over $K$.
        
        The sequence $0\to\msO_\meC\to\msO_\meC(\meD)\to\msO_\meD(\meD)\to0$ pushes forward to
        \[0\too\msO_B\too\msE\too\push\pi\msO_\meD(\meD)\too\inv\msL\too0.\]
        Note $\push\pi\msO_\meD(\meD)$ is a vector bundle, as its the pushforward of a line bundle along a finite map of curves, so we can compute $\det\msE$ by taking determinants above: $\det\msE\simeq\det(\push\pi\msO_\meD(\meD))\otimes\msL$. \cref{lem:curve-push-deg} then gives
        \begin{equation}\label{eqn:deg-E}
            \deg\msE=\deg\push\pi\msO_\meD+\deg\msO_\meD(\meD)+\deg\msL = \deg\push\pi\msO_\meD + \meD\cdot\meD + d.
        \end{equation}
        We are now interested in computing $\det\push\pi\msO_\meD$. For this, we turn to the exact sequence $0\to\msO_\meC(-\meD)\to\meO_\meC\to\msO_\meD\to0$, which pushes forward to
        \begin{equation}\label{les:push-OD}
            0\too\push\pi\msO_\meC(-\meD)\too\msO_B\too\push\pi\msO_\meD\too R^1\push\pi\msO_\meC(-\meD)\too\inv\msL\too0.
        \end{equation}
        Note that, on each fiber, $\hom^0(\meC_b,\msO_{\meC_b}(-\meD_b))$ is the subset of $\hom^0(\meC_b,\msO_{\meC_b})$ vanishing along $\meD_b$, but $\hom^0(\meC_b,\msO_{\meC_b})=\kappa(b)$ by \cref{lem:C-hawc}, so $\hom^0(\meC_b,\msO_{\meC_b}(-\meD_b))=0$. Hence, \cref{thm:coh-bc} implies that $\push\pi\msO_\meC(-\meD)=0$. By \cref{lem:C-hawc}, $\omega_{\meC/B}\simeq\pull\pi\msL$, duality and the projection formula tell us that
        \[R^1\push\pi\msO_\meC(-\meD)\simeq\sq{\push\pi\p{\msO_\meC(\meD)\otimes\omega_{\meC/B}}}^\vee\simeq\dual\msE\otimes\inv\msL.\]
        Hence, \cref{les:push-OD} becomes $0\to\msO_B\to\push\pi\msO_\meD\to\dual\msE\otimes\inv\msL\to\inv\msL\to0$. Taking determinants (and using that $\rank\msE=2$), we have
        \begin{equation}\label{eqn:det-push-OD}
            \det\push\pi\msO_\meD\simeq\det(\msE)^{-1}\otimes\inv\msL.
        \end{equation}
        Combining \cref{eqn:deg-E} and \cref{eqn:det-push-OD},
        \[\deg\msE=\frac12\meD\cdot\meD=\frac12\deg\msO_\meD(\meD).\]
        Thus, it suffices to show that $\deg\msO_\meD(\meD)$ is either $\ge-2d$ or $\ge2-2(g+d)$. Recalling that $\omega_{\meC/B}\simeq\pull\pi\msL$, we apply adjunction \cite[Corollary (19)]{kleiman-duality} to $\meD\into\meC$, which tells us that
        \[\omega_{\meD/B}\simeq\omega_{\meC/B}(\meD)\vert_\meD\simeq\p{\pull\pi\msL}\vert_\meD\otimes\msO_\meD(\meD).\]
        Taking degrees, we see that
        \begin{equation}\label{eqn:deg-dual-DB}
            \deg\omega_{\meD/B}=2\deg\msL+\meD\cdot\meD=2d+\meD\cdot\meD.
        \end{equation}
        Now, let $\wt\meD$ be the normalization of $\meD$. If $\meD\to B$ is generically separable, then $\deg\omega_{\wt\meD/B}\ge0$ because it is the degree of the ramification divisor of $\wt\meD\to B$ (e.g. by \cite[Proposition IV.2.3]{hart}), so \cref{lem:curve-dual-deg} and \cref{eqn:deg-dual-DB} tell us that
        \[2d+\meD\cdot\meD=\deg\omega_{\meD/B}\ge\deg\omega_{\wt\meD/B}\ge0,\tso\meD\cdot\meD\ge-2d,\]
        which is \bp b of the proposition. Finally, if $\meD\to B$ is generically inseparable, then $\wt\meD\xto fB$ is Frobenius, so $g(\wt\meD)=g(B)$, which means (by \cite[Remark (26)(vii)]{kleiman-duality}) that
        \[\deg\omega_{\wt\meD/B}=\deg\omega_{\wt D/k}-\deg\pull f\omega_{B/k}=\deg\omega_{\wt D/k}-2\deg\omega_{B/k}=2-2g.\]
        Hence, \cref{lem:curve-dual-deg} and \cref{eqn:deg-dual-DB} tell us that
        \[2d+\meD\cdot\meD=\deg{\omega_{\meD/B}}\ge\deg\omega_{\wt\meD/B}=2-2g\implies\meD\cdot\meD\ge2-2(g+d),\]
        which is \bp c of the proposition.
        \qedhere
    \end{itemize}
\end{proof}
\begin{cor}\label{cor:deg(E)-constraint}
    Use notation as in \cref{set:min-hW}. Let $\msE:=\push f\msO_H(D)$, let $\msL:=\push\pi\omega_{\meC/B}$, and let $d:=\deg\msL$. Assume that $D_K$ is not twice a point. Then, $\deg\msE\ge-(d+g)$. Furthermore, if $\Char K\neq2$, then $\deg\msE\ge-d$.
\end{cor}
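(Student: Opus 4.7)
The plan is to reduce to \cref{prop:reg-deg(E)} by passing to the scheme-theoretic closure $\bar{\mathcal{D}} \subseteq \mathcal{C}$ of $D_K$. Since $\mathcal{C}$ is $B$-flat, so is $\bar{\mathcal{D}}$; as $\mathcal{C}$ is regular, $\bar{\mathcal{D}}$ is Cartier; and its fibral degree is $2$ by $B$-flatness. Combined with the structural properties $\pi_*\mathcal{O}_\mathcal{C} \simeq \mathcal{O}_B$ (after arbitrary base change) and $\omega_{\mathcal{C}/B} \simeq \pi^*\mathcal{L}$ (following from $H$ being a hW curve together with \cref{lem:min-push-match}), the pair $(\mathcal{C}, \bar{\mathcal{D}})$ meets the hypotheses of \cref{prop:reg-deg(E)}. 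The assumption that $D_K$ is not twice a point excludes case (1) of that proposition (``twice a point'' forces a rational point, since a doubled degree-$2$ closed point would have degree $4$, not $2$), so \cref{prop:reg-deg(E)} gives $\deg \bar{\mathcal{E}} \ge -d$ in case (2) and $\deg \bar{\mathcal{E}} \ge 1 - (d+g) \ge -(d+g)$ in case (3) (which occurs only when $\Char K = 2$), where $\bar{\mathcal{E}} \coloneqq \pi_*\mathcal{O}_\mathcal{C}(\bar{\mathcal{D}})$.

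Next, the idea is to compare $\mathcal{E}$ with $\bar{\mathcal{E}}$ via the decomposition $\mathcal{D} = \bar{\mathcal{D}} + E$, where $E$ is effective and supported on the exceptional locus of $p$ (hence fibral over $B$). This decomposition holds because $D_K$ is reduced under our hypothesis (the only non-reduced effective degree-$2$ divisor on $C$ is twice a rational point), so $D = \sum D_i$ is a sum of distinct horizontal prime Cartier divisors on $H$ each with multiplicity $1$; the Cartier pullback $p^*D_i$ then carries the strict transform $\bar{\mathcal{D}}_i$ with multiplicity $1$ plus nonnegative exceptional contributions. Pushing forward the short exact sequence
\[0 \to \mathcal{O}_\mathcal{C}(\bar{\mathcal{D}}) \to \mathcal{O}_\mathcal{C}(\mathcal{D}) \to \mathcal{O}_E(\mathcal{D}) \to 0\]
by $\pi_*$, using $R^1\pi_*\mathcal{O}_\mathcal{C}(\bar{\mathcal{D}}) = 0 = R^1\pi_*\mathcal{O}_\mathcal{C}(\mathcal{D})$ from \cref{lem:push-E} (applied to the hawcs $(\mathcal{C}, \bar{\mathcal{D}})$ and $(\mathcal{C}, \mathcal{D})$), and recalling $\mathcal{E} \simeq \pi_*\mathcal{O}_\mathcal{C}(\mathcal{D})$ by \cref{lem:min-push-match}, one obtains
\[0 \to \bar{\mathcal{E}} \to \mathcal{E} \to \pi_*\mathcal{O}_E(\mathcal{D}) \to 0.\]
The rightmost term is $\mathcal{O}_B$-torsion (as $E$ is fibral over $B$) with nonnegative length, so $\deg \mathcal{E} \ge \deg \bar{\mathcal{E}}$; chaining this with the bound from the first paragraph yields the corollary.

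I expect the main obstacle to be verifying the effective decomposition $\mathcal{D} = \bar{\mathcal{D}} + E$, in particular confirming that $\bar{\mathcal{D}}$ appears in $p^*D$ with coefficient exactly $1$ on each component. This rests on reducedness of $D_K$ (following from the ``not twice a point'' hypothesis) together with the fact that the Cartier pullback along the birational morphism $p\colon \mathcal{C}\to H$ sends a prime Cartier divisor to its strict transform plus effective exceptional contributions; although standard, this is a local calculation that requires care at the (finitely many) singular points of $H$ through which $D$ passes.
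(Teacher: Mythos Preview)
Your approach is essentially identical to the paper's: decompose $\mathcal D=\bar{\mathcal D}+\mathcal V$ with $\bar{\mathcal D}$ the closure of $D_K$ and $\mathcal V$ effective vertical, push forward the short exact sequence $0\to\mathcal O_{\mathcal C}(\bar{\mathcal D})\to\mathcal O_{\mathcal C}(\mathcal D)\to\mathcal O_{\mathcal V}(\mathcal D)\to0$ (using $R^1\pi_*\mathcal O_{\mathcal C}(\bar{\mathcal D})=0$ from \cref{lem:push-E}), and then invoke \cref{prop:reg-deg(E)} for $\bar{\mathcal D}$.

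One small remark: your worry about effectivity of $E=\mathcal V$ is overthought. You do not need reducedness of $D_K$. For any effective Cartier divisor $\mathcal D$ on $\mathcal C$, writing $\mathcal D=\sum a_iP_i+\sum b_jV_j$ with $P_i$ horizontal primes and $V_j$ vertical primes (all $a_i,b_j\ge0$), one has $\mathcal D_K=\sum a_i(P_i)_K$ and hence its closure is $\bar{\mathcal D}=\sum a_iP_i$; the difference $\mathcal D-\bar{\mathcal D}=\sum b_jV_j$ is automatically effective. The paper simply asserts this decomposition without further comment.
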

\begin{proof}
    By \cref{lem:min-push-match}, $\msL\simeq\push\pi\omega_{\meC/B}$ and $\msE\simeq\push\pi\msO_\meC(\meD)$. Write $\meD=\meD'+\meV$, where $\meD'$ is the closure of $D_K$ in $\meC$ and $\meV$ is an effective vertical divisor. The exact sequence $0\to\msO_\meC(\meD')\to\msO_\meC(\meD)\to\msO_\meV(\meD)\to0$ pushes forward to
    \begin{equation}\label{eqn:ses-horz-all}
        0\too\push\pi\msO_\meC(\meD')\too\msE\too\push\pi\msO_\meV(\meD)\too0=R^1\push\pi\msO_\meC(\meD'),
    \end{equation}
    where the last equality holds by \cref{lem:push-E} (whose hypotheses are satisfied by combining \cref{rem:min-desing-min} and \cref{lem:C-hawc}).
    Furthermore, $\push\pi\msO_\meC(\meD')$ is a rank 2 vector bundle by \cref{lem:push-E} while $\push\pi\msO_\meV(\meD)$ is a skyscraper sheaf supported on the (finite) image of $\meV$ in $B$. Thus, taking Euler characteristics in \cref{eqn:ses-horz-all} and applying Riemann-Roch shows that
    \[\deg\msE=\deg\push\pi\msO_\meC(\meD')+h^0(\push\pi\msO_\meV(\meD))\ge\deg\push\pi\msO_\meC(\meD').\]
    The claim follows from applying \cref{prop:reg-deg(E)} to $\meD'$, recalling that $(\meD')_K=D_K$ is not twice a point.
\end{proof}

\section{\bf An Upper Bound on the Cardinality of the $2$-Selmer Groupoid}\label{sect:selmer-groupoid-card}
Recall, in the context of \cref{set:main}, the function
\[\MAS_B(d):=\frac{\#\CSel_2^{\le d}}{\#\meM^{\le d}_{1,1}(K)}\]
introduced in \cref{eqn:MAS-def}. The main result of this section (\cref{thm:mod-estimate}) is that
\[\MAS_B=\limsup_{d\to\infty}\MAS_B(d)\le1+2\zeta_B(2)\zeta_B(10).\]
In the sections after this one, we will show that $\AS_B\le\MAS_B$, and so deduce \cref{thma:main}.

As in \cref{sect:EC-count}, we begin by studying ``the form of the equation needed to cut out an hW curve over $B$.''
\subsection{Global Equations for hW Curves}
\begin{set}
    Fix an arbitrary base scheme $B$.
\end{set}
Recall (\cref{thm:loc-model}) that an hW curve $(H\xto\pi B,D)\in\meH(B)$ can locally (on $B$) be embedded into $\P(1,2,1)$. In this section, we globalize this result by embedding $H$ into a $\P(1,2,1)$-bundle $\P$ over $B$ and then studying the line bundle $\msO_\P(H)$ (see \cref{prop:hW-norm-bund,prop:hW-norm-filt}). The proof of \cref{thm:loc-model} suggests that $H$ should embed into a $\P(1,2,1)$-bundle whose homogeneous coordinate ring is generated, as a graded $\msO_B$-algebra, by $\push\pi\msO_H(D)$ (in degree 1) and $\push\pi\msO_H(2D)$ (in degree 2). Inspired by this, we make the following definition.

\begin{defn}\label{defn:121-datum}
    Let $\mbf D=(\msE_1,\msE_2,\mu)$ be a tuple consisting of
    \begin{itemize}
        \item a rank 2 vector bundle $\msE_1$ on $B$,
        \item a rank 4 vector bundle $\msE_2$ on $B$, and
        \item a monomorphism $\mu:\Sym^2(\msE_1)\into\msE_2$ whose cokernel is a line bundle.
    \end{itemize}
    We call such a tuple a \define{(1,2,1)-datum} (over $B$) as it will allow us to define a $\P(1,2,1)$-bundle over $B$ (see \cref{lem:D-bund}). We say $\mbfD$ is \define{isomorphic to another (1,2,1)-datum} $(\msV_1,\msV_2,\nu)$ is there exists a line bundle $\msM\in\Pic(B)$ and isomorphisms $\phi:\msE_1\otimes\msM\iso\msV_1$ and $\psi:\msE_2\otimes\msM^2\to\msV_2$ such that
    \[\commsquare{\Sym^2(\msE_1\otimes\msM)}{\mu_\msM}{\msE_2\otimes\msM^2}{\Sym^2(\phi)}\psi{\Sym^2(\msV_1)}\nu{\msV_2}\]
    commutes, where $\mu_\msM$ is the natural composition
    $\Sym^2(\msE_1\otimes\msM)\simeq\Sym^2(\msE_1)\otimes\msM^2\xto{\mu\otimes\id}\msE_2\otimes\msM^2$.
\end{defn}
\begin{construct}\label{construct:msB}
    Let $\mbf D=(\msE_1,\msE_2,\mu)$ be a (1,2,1)-datum over $B$. Form the sheaf of graded $\msO_B$-algebras $\msT(\msE_1,\msE_2):=\Sym(\msE_1\oplus\msE_2)$ graded by declaring $\msE_1$,$\msE_2$ to be in degrees 1,2, respectively, i.e.
    \[\msT(\msE_1,\msE_2)_n=\bigoplus_{a+2b=n}\Sym^a(\msE_1)\otimes\Sym^b(\msE_2)\]
    for any $n\ge0$. Let $\msI(\msE_1,\msE_2,\mu)\subset\msT(\msE_1,\msE_2)$ be the (graded) ideal sheaf generated by sections of the form $\alpha\beta-\mu(\alpha\beta)$ with $\alpha,\beta$ both local sections of $\msE_1$. Finally, set
    \[\msB(\mbfD):=\msB(\msE_1,\msE_2,\mu):=\frac{\msT(\msE_1,\msE_2)}{\msI(\msE_1,\msE_2,\mu)}\tand\P(\mbfD):=\rProj_B\msB(\mbfD).\qedhere\]
\end{construct}
\begin{ex}
    Say $(H\xto\pi B,D)$ is an hW curve. Then, by \cref{lem:push-E} and \cref{prop:model-exact-seqs}, the triple
    \[\mbfD(H/B,D):=\p{\push\pi\msO_H(D),\push\pi\msO_H(2D),\mu},\]
    where $\mu:\Sym^2(\push\pi\msO_H(D))\to\push\pi\msO_H(2D)$ is the natural multiplication map, is a (1,2,1)-datum, called the curve's \define{associated (1,2,1)-datum}. In this case, we write $\P(H/B,D):=\P(\mbfD(H/B,D))$,
    and we similarly define $\msT(H/B,D),\msI(H/B,D)$, and $\msB(H/B,D)$.
\end{ex}
\begin{defn}\label{defn:(121)-Hodge}
    Inspired by the above example, along with \cref{prop:model-exact-seqs}, given any (1,2,1)-datum $\mbfD=(\msE_1,\msE_2,\mu)$, we define its \define{Hodge bundle} to be $\msL:=\det(\msE_1)\otimes\coker(\mu)^{-1}$.
\end{defn}
\begin{lemma}\label{lem:D-bund}
    Let $\mbfD:=(\msE_1,\msE_2,\mu)$ be a (1,2,1)-datum. Then,
    \[\P(\mbfD)\xtoo pB\]
    is a Zariski-locally trivial $\P(1,2,1)$-bundle over $B$.
\end{lemma}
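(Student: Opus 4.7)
The claim is Zariski-local on $B$, so my plan is to fix a point $b \in B$ and produce an affine open neighborhood $U = \spec R \ni b$ over which $\P(\mbfD)|_U \simeq \P(1,2,1)_U$. The key observation is that all the sheaves involved, as well as the short exact sequence $0 \to \Sym^2(\msE_1) \xto\mu \msE_2 \to \coker(\mu) \to 0$, trivialize and split in a neighborhood of any point.

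Concretely, I would shrink $U$ enough so that \textbf{(i)} $\msE_1|_U \simeq \msO_U^{\oplus 2}$, \textbf{(ii)} the line bundle $\coker(\mu)|_U$ is trivial, and \textbf{(iii)} the short exact sequence splits over $U$ — step \textbf{(iii)} is possible because after choosing a generating section of $\coker(\mu)|_U$, one can lift it to a section of $\msE_2|_U$ on a sufficiently small affine open (equivalently, use that vector bundle sequences split Zariski-locally). Pick a basis $x, z$ of $\msE_1|_U$ and a section $y \in \Gamma(U, \msE_2)$ realizing the chosen splitting, so that $\msE_2|_U = \mu(\Sym^2\msE_1|_U) \oplus \msO_U\cdot y$.

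Next I would unwind \cref{construct:msB} in these local coordinates. The polynomial algebra $\msT(\msE_1, \msE_2)|_U = \Sym(\msE_1 \oplus \msE_2)|_U$ becomes
\[
    \msT|_U = R[x, z, \mu(x^2), \mu(xz), \mu(z^2), y],
\]
where $x, z$ sit in degree $1$ and $\mu(x^2), \mu(xz), \mu(z^2), y$ sit in degree $2$. The ideal $\msI|_U$ is, by definition, generated by the three relations $x^2 - \mu(x^2),\ xz - \mu(xz),\ z^2 - \mu(z^2)$ (bilinearity in $\alpha,\beta \in \msE_1$ and the fact that $\mu$ is $\msO_U$-linear on $\Sym^2\msE_1$ imply these three suffice, even in characteristic $2$, because the $xz$ relation arises directly from $\alpha = x, \beta = z$). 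Eliminating $\mu(x^2), \mu(xz), \mu(z^2)$, we get
\[
    \msB(\mbfD)|_U \simeq R[x, z, y] \tcomma \deg x = \deg z = 1,\ \deg y = 2.
\]
Taking relative $\Proj$ over $U$ yields $\P(\mbfD)|_U \simeq \P(1,1,2)_U \simeq \P(1,2,1)_U$.

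The only place where care is needed is in justifying that the three relations above generate $\msI|_U$ as an ideal: since the generating relations $\alpha\beta - \mu(\alpha\beta)$ with $\alpha,\beta\in\msE_1|_U$ depend $R$-bilinearly on $(\alpha,\beta)$, they all lie in the $R$-span of the three distinguished cases coming from $(\alpha,\beta) \in \{(x,x),(x,z),(z,z)\}$; this step does not require characteristic $\ne 2$. Everything else is a routine local computation, and the resulting local description clearly glues to the claim that $\P(\mbfD) \to B$ is Zariski-locally a $\P(1,2,1)$-bundle.
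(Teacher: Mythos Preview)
Your proposal is correct and follows essentially the same approach as the paper: trivialize $\msE_1$, $\msE_2$, and $\coker(\mu)$ locally, pick coordinates $x,z$ for $\msE_1$ and a lift $y$ of a generator of $\coker(\mu)$, and observe that the three relations $x^2-\mu(x^2)$, $xz-\mu(xz)$, $z^2-\mu(z^2)$ cut $\msB(\mbfD)$ down to the graded polynomial ring $R[x,y,z]$ with $\deg y=2$. Your added remark that bilinearity in $(\alpha,\beta)$ reduces the generating set of $\msI$ to these three relations (valid in all characteristics) makes explicit a point the paper leaves implicit.
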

\begin{proof}
    We may assume without loss of generality that $\msE_1\simeq\msO_B^{\oplus2}$, $\msE_2\simeq\msO_B^{\oplus4}$, and $\coker(\mu)\simeq\msO_B$ since these all hold Zariski locally on $B$. Let $X,Z$ be a global basis for $\msE_1$, and let $Y\in\Gamma(B,\msE_2)$ restrict to a global basis for $\coker(\mu)$. Then,
    \[\msT(\msE_1,\msE_2)\simeq\msO_B\sq{X,Y,Z,\mu(X^2),\mu(XZ),\mu(Z^2)}\]
    is a polynomial algebra with $X,Z$ in degree 1, and $Y,\mu(X^2),\mu(XZ),\mu(Z^2)$ all in degree 2. Furthermore, the ideal $\msI(\mbfD)$ is generated by
    \[X^2-\mu(X^2),XZ-\mu(XZ),Z^2-\mu(Z^2),\]
    so $\msB(\mbfD)\simeq\msO_B[X,Y,Z]$ and $\P(\mbfD)\simeq\P(1,2,1)_B$.
\end{proof}
\begin{rem}\label{rem:B-rank}
    Let $\mbfD$ be a (1,2,1)-datum. As a consequence of (the proof of) \cref{lem:D-bund}, we see that the rank of $\msB(\mbfD)_n$ is equal to the number of (monic) degree $n$ monomials in $\Z[X,Y,Z]$ where $X,Z$ have degree $1$ and $Y$ has degree $2$. We will see below (\cref{lem:push-P(121)}) that $\msB(\mbfD)_n\simeq\push p\msO_{\P(\mbfD)}(n)$, so this also computes the rank of $\push p\msO_{\P(\mbfD)}(n)$.
\end{rem}
\begin{lemma}\label{lem:push-P(121)}
    Let $\mbfD=(\msE_1,\msE_2,\mu)$ be a (1,2,1)-datum over $B$, and consider $\P:=\P(\mbfD)\xto pB$. For any $n\ge0$, the natural map
    \[\msB(\mbfD)_n\too\push p\msO_\P(n)\]
    is an isomorphism.
\end{lemma}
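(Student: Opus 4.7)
The map $\msB(\mbfD)_n \to \push p\msO_\P(n)$ is the natural one coming from the relative $\Proj$ construction, which globalizes the classical assignment of degree-$n$ elements in a graded ring to sections of $\msO(n)$ on its $\Proj$. Since pushforward along any morphism commutes with restriction to an open subset of the target, the map in question localizes correctly on $B$, so it suffices to verify it is an isomorphism Zariski-locally on $B$. Invoking (the proof of) \cref{lem:D-bund}, we may therefore assume that $B = \spec R$ is affine and that $\mbfD$ is trivial -- i.e., $\msE_1 \cong \msO_B^{\oplus 2}$ with basis $X, Z$, and $\coker(\mu) \cong \msO_B$ with a basis lifting to some $Y \in \Gamma(B, \msE_2)$. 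In this case, that same proof identifies $\msB(\mbfD) \simeq R[X,Y,Z]$ with $\deg X = \deg Z = 1$ and $\deg Y = 2$, and hence $\P = \P(\mbfD) \simeq \P(1,2,1)_R = \Proj R[X,Y,Z]$.

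Under these identifications, the claim becomes the assertion that $H^0(\P, \msO_\P(n)) = R[X,Y,Z]_n$ for every $n \ge 0$. To see this, cover $\P$ by the three standard affines $U_f := D_+(f)$ for $f \in \{X, Y, Z\}$; these do cover $\P$, since any homogeneous prime of $R[X,Y,Z]$ not containing the irrelevant ideal must miss at least one of $X, Y, Z$. On $U_f$, the sections of $\msO_\P(n)$ are exactly the degree-$n$ elements of the localization $R[X,Y,Z][f^{-1}]$, and the gluing conditions on overlaps amount to agreement as elements of $R[X,Y,Z][(XYZ)^{-1}]_n$. Consequently
\[
    H^0(\P, \msO_\P(n)) \;=\; R[X,Y,Z][X^{-1}]_n \,\cap\, R[X,Y,Z][Y^{-1}]_n \,\cap\, R[X,Y,Z][Z^{-1}]_n,
\]
the intersection being taken inside $R[X,Y,Z][(XYZ)^{-1}]_n$. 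Since $X, Y, Z$ form a regular sequence in the polynomial ring $R[X,Y,Z]$, any element of this triple intersection has no negative powers of any of $X, Y, Z$, so the intersection equals $R[X,Y,Z]_n$ on the nose.

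Tracing through the identifications, the natural map $\msB(\mbfD)_n \to \push p\msO_\P(n)$ becomes the identity on $R[X,Y,Z]_n$, which completes the proof. The only real content is the Zariski-local trivialization of $\mbfD$ and the intersection-of-localizations computation for the weighted polynomial ring; no subtlety about depth at the cone point $[0{:}1{:}0]$ or about cohomology-and-base-change arises, precisely because pushforward already commutes with open immersions on the base and because we never need to compare $\P$ to its smooth locus.
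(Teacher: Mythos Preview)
Your proof is correct and takes a genuinely more elementary route than the paper's. The paper first invokes cohomology and base change (\cref{thm:coh-bc}) together with the vanishing $\hom^1(\P(1,2,1),\msO(n))=0$ (\cref{lem:H^1(O(n))=0}) to show that $\push p\msO_\P(n)$ is locally free with formation commuting with arbitrary base change, and then checks the map fiberwise, where it becomes the classical fact $k[X,Y,Z]_n\iso\hom^0(\P(1,2,1),\msO(n))$ cited from \cite{wpv}. You instead bypass cohomology and base change entirely: since both sides are sheaves on $B$ and the map is sheaf-theoretic, you localize directly to the trivial datum and compute $\hom^0(\P(1,2,1)_R,\msO(n))$ over an arbitrary ring $R$ as a \v Cech-style intersection of graded localizations. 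Your approach is more self-contained and arguably cleaner for this particular statement; the paper's approach fits its broader pattern of deducing global facts from fibral ones via \cref{thm:coh-bc}, and makes explicit the base-change compatibility (which of course follows from your result too, since $\msB(\mbfD)_n$ is visibly locally free). One minor remark: your appeal to ``$X,Y,Z$ form a regular sequence'' is more than you need---the intersection computation holds simply because monomials in $X^{\pm1},Y^{\pm1},Z^{\pm1}$ are $R$-linearly independent, so membership in each $R[X,Y,Z][f^{-1}]_n$ forces the relevant exponents to be nonnegative.
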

\begin{proof}
    We will apply cohomology and base change, \cref{thm:coh-bc}. By \cref{lem:H^1(O(n))=0}, $\hom^1(\P_b,\msO_{\P_b}(n))=0$ for all $b\in B$, so \cref{thm:coh-bc}\bp{0,2} applied to $\msF=\msO_\P(n)$ with $i=1$ shows that $R^1\push p\msO_\P(n)=0$ and that the comparison map
    \[\phi^0_b:\push p\msO_\P(n)\otimes\kappa(b)\too\hom^0(\P_b,\msO_{\P^b}(n))\]
    is surjective for all $b\in B$. Thus, a second application of \cref{thm:coh-bc}\bp{0,2}, now to $\msF=\msO_\P(n)$ with $i=0$, shows that $\push p\msO_\P(n)$ is a locally free sheaf on $B$. Hence, one can check that the natural map $\msB(\mbfD)_n\to\push p\msO_\P(n)$ is an isomorphism by checking this on fibers, where it becomes the classical fact that $k[X,Y,Z]_n\iso\hom^0(\P(1,2,1),\msO(n))$, see e.g. \cite[Theorem 1.4.1(i) and Notations 1.1]{wpv}.
\end{proof}
\begin{lemma}\label{lem:glob-embed}
    Let $(H\xto\pi B,D)\in\meH(B)$ be a hyper-Weierstrass curve. Then, there is a natural embedding
    \[H\into\P(H/B,D)=:\P,\]
    for which $\msO_H(n):=\msO_\P(n)\vert_H\simeq\msO_H(nD)$ for all $n\ge0$.
\end{lemma}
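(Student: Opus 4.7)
My plan is to build the embedding by producing a natural graded surjection of $\msO_B$-algebras and then applying $\rProj_B$. The setup of \cref{construct:msB} is tailored precisely for this: $\msB(H/B,D)$ is, by construction, the universal graded $\msO_B$-algebra built from $\push\pi\msO_H(D)$ (in degree $1$) and $\push\pi\msO_H(2D)$ (in degree $2$) subject to the identification imposed by $\mu$, while the total section algebra $\msA:=\bigoplus_{n\ge0}\push\pi\msO_H(nD)$ manifestly satisfies that identification.

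Concretely, I would first construct a graded $\msO_B$-algebra map
\[\phi\colon\msB(H/B,D)\too\msA\]
by extending the identity maps on $\push\pi\msO_H(D)$ and $\push\pi\msO_H(2D)$ to a morphism out of $\msT(\push\pi\msO_H(D),\push\pi\msO_H(2D))$, and then checking that $\phi$ kills the ideal $\msI(H/B,D)$: for local sections $\alpha,\beta$ of $\push\pi\msO_H(D)$, both $\alpha\beta\in\Sym^2(\push\pi\msO_H(D))$ and $\mu(\alpha\beta)\in\push\pi\msO_H(2D)$ are sent to the honest product $\alpha\cdot\beta\in\push\pi\msO_H(2D)$, because $\mu$ is by definition that multiplication map.

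The key substantive step is verifying that $\phi$ is surjective in each degree $n$. Both $\msB(H/B,D)_n$ and $\msA_n=\push\pi\msO_H(nD)$ are locally free $\msO_B$-modules --- by \cref{lem:push-P(121)} together with \cref{rem:B-rank} on one side and by \cref{lem:push-E} on the other --- so surjectivity can be verified Zariski-locally on $B$. Locally, this is precisely the content of the inductive calculation in the proof of \cref{thm:loc-model}: choosing a local basis $x,z$ of $\push\pi\msO_H(D)$ and a local section $y$ of $\push\pi\msO_H(2D)$ so that $x^2,xz,z^2,y$ trivialize $\push\pi\msO_H(2D)$, one uses \cref{prop:model-exact-seqs} to inductively show that the monomials $x^a y^b z^c$ with $a+2b+c=n$ span $\push\pi\msO_H(nD)$.

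Finally, I would apply $\rProj_B$ to the surjection $\phi$, which yields a closed immersion $\rProj_B\msA\into\rProj_B\msB(H/B,D)=\P$. The last assertion of \cref{thm:loc-model} (applicable because $D$ is relatively ample) identifies $\rProj_B\msA$ canonically with $H$, and under this identification $\msO_{\rProj_B\msA}(1)\simeq\msO_H(D)$ as noted in \cref{cor:proj-hW}. Composing produces the desired embedding $H\into\P$, and the compatibility $\msO_\P(n)\vert_H\simeq\msO_H(nD)$ follows because closed immersions of projective spectra induced by a graded surjection intertwine the twisting sheaves $\msO(n)$, giving $\msO_\P(n)\vert_H\simeq\msO_{\rProj_B\msA}(n)\simeq\msO_H(nD)$. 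The main obstacle is essentially bookkeeping: ensuring that the local surjectivity statement of \cref{thm:loc-model} genuinely globalizes, which it does thanks to the local freeness of both sides.
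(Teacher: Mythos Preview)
Your proposal is correct and follows essentially the same approach as the paper: construct the natural graded algebra map $\msB(H/B,D)\to\bigoplus_{n\ge0}\push\pi\msO_H(nD)$ from the multiplication maps, verify surjectivity by appeal to the local computation in \cref{thm:loc-model}, and then take $\rProj_B$, using relative ampleness of $D$ to identify $H$ with $\rProj_B$ of the section algebra. Your version is somewhat more detailed (in particular you spell out why the ideal $\msI(H/B,D)$ is killed and why the twisting sheaves restrict correctly), but the argument is the same.
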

\begin{proof}
    Since $D\subset H$ is relatively ample, $H\simeq\rProj_B\bigoplus_{n\ge0}\push\pi\msO_H(nD)$, and the claimed embedding comes from the natural morphism
    \[\msB(H/B,D)=\frac{\ast\Sym\p{\push\pi\msO_H(D)\oplus\push\pi\msO_H(2D)}}{\msI(H/B,D)}\too\bigoplus_{n\ge0}\push\pi\msO_H(nD)\]
    (induced by the multiplication maps $\push\pi\msO_H(D)^{\oplus a}\otimes\push\pi\msO_H(2D)^{\oplus b}\too\push\pi\msO_H((a+2b)D)$). This morphism is surjective (and so induces a closed embedding upon taking $\rProj_B$) because this was verified locally in the proof of \cref{thm:loc-model}.
\end{proof}
\cref{lem:glob-embed} provides us with an embedding of an hW curve $H$ into some $\P(1,2,1)$-bundle $\P$. We now want to understand ``the shape of the equation cutting out $H$,'' i.e. to understand the line bundle $\msO_\P(H)$ supporting a section cutting out $H$, as well as its pushforward to $B$.

\begin{lemma}\label{lem:filt-key}
    Let $\mbfD=(\msE_1,\msE_2,\mu)$ be a (1,2,1)-datum, and let $\msY:=\coker\p{\mu:\Sym^2(\msE_1)\into\msE_2}$. Then, there is a short exact sequence
    \begin{equation}\label{ses:filt-key}
        0\too\Sym^4(\msE_1)\xtoo\nu\msB(\mbfD)_4\too\msE_2\otimes\msY\too0
        .
    \end{equation}
    Above, $\nu$ is the composition $\Sym^4(\msE_1)\into\msT(\msE_1,\msE_2)_4\onto\msB(\mbfD)_4$.
\end{lemma}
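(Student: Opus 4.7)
My plan is to identify $\msB(\mbfD)_4$ with a quotient of $\Sym^2(\msE_2)$, compare the image of $\nu$ with a naturally arising sub-bundle, and then identify the resulting quotient with $\msE_2\otimes\msY$. Directly from the construction of $\msB(\mbfD)=\msT/\msI$, one sees $\msB(\mbfD)_2\simeq\msE_2$ canonically (the surjection $\msT(\msE_1,\msE_2)_2=\Sym^2(\msE_1)\oplus\msE_2\twoheadrightarrow\msE_2$ sending $(s,e)\mapsto\mu(s)+e$ has kernel exactly the degree-$2$ part of $\msI$). Multiplication in $\msB(\mbfD)$ then gives a surjection $\Sym^2(\msE_2)\twoheadrightarrow\msB(\mbfD)_4$. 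A direct computation with the generators of $\msI$ in degree $4$ identifies the kernel of this surjection as the image, under $\Sym^2(\mu)\colon\Sym^2(\Sym^2(\msE_1))\hookrightarrow\Sym^2(\msE_2)$, of the canonical rank-$1$ sub-bundle $\det(\msE_1)^{\otimes 2}\subset\Sym^2(\Sym^2(\msE_1))$ (the Plücker-type relation arising from the fact that $\msE_1$ has rank $2$: this is the kernel of the multiplication $\Sym^2(\Sym^2(\msE_1))\twoheadrightarrow\Sym^4(\msE_1)$).

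Setting $F^2:=\Sym^2(\mu)\bigl(\Sym^2(\Sym^2(\msE_1))\bigr)\subset\Sym^2(\msE_2)$, one sees that $\nu(\Sym^4(\msE_1))$ agrees with the image of $F^2$ in $\msB(\mbfD)_4$. Since the kernel of $\Sym^2(\Sym^2(\msE_1))\twoheadrightarrow\Sym^4(\msE_1)$ matches the kernel of $\Sym^2(\msE_2)\twoheadrightarrow\msB(\mbfD)_4$ under $\Sym^2(\mu)$, injectivity of $\nu$ follows, and one obtains $\msB(\mbfD)_4/\nu(\Sym^4(\msE_1))\iso\Sym^2(\msE_2)/F^2$.

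To finish, I would identify $\Sym^2(\msE_2)/F^2$ with $\msE_2\otimes\msY$. Both fit in short exact sequences with kernel $\Sym^2(\msE_1)\otimes\msY$ and cokernel $\msY^{\otimes 2}$ (using the canonical $\Sym^2(\msY)\simeq\msY^{\otimes 2}$ since $\msY$ is a line bundle): the former from the $\Sym^2$-filtration on $\Sym^2(\msE_2)$, the latter from tensoring the defining sequence $0\to\Sym^2(\msE_1)\to\msE_2\to\msY\to 0$ with $\msY$ on the right. The natural symmetrized multiplication $\Sym^2(\msE_2)\to\msE_2\otimes\msY$, $e_1 e_2\mapsto e_1\otimes\pi(e_2)+e_2\otimes\pi(e_1)$ (with $\pi\colon\msE_2\twoheadrightarrow\msY$), kills $F^2$ and produces an isomorphism in good characteristic.

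The main difficulty will be verifying this identification globally in all characteristics. In characteristic $2$ the above symmetrization degenerates on the $\Sym^2(\msY)$-summand (the factor of $2$ vanishes there), so one must construct the isomorphism by a subtler method—for instance, by verifying that both extensions define the same class in $\Ext^1(\msY^{\otimes 2},\Sym^2(\msE_1)\otimes\msY)$ via a functorial/descent argument in the datum $\mbfD$, or by assembling compatible local splittings via the naturality of the construction.
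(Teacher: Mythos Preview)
Your reduction—identifying $\msB(\mbfD)_4$ with $\Sym^2(\msE_2)/\det(\msE_1)^{\otimes 2}$ and hence $\coker(\nu)$ with $\Sym^2(\msE_2)/F^2$ where $F^2$ is the image of $\Sym^2(\Sym^2\msE_1)$—is correct and more conceptual than the paper's route, which instead writes down the surjection $\msB(\mbfD)_4\to\msE_2\otimes\msY$ on a local basis (using a chosen lift $Y\in\msE_2$ of a generator of $\msY$) and then asserts that the result is independent of that choice, so glues.

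The characteristic-$2$ difficulty you flag, however, is not a mere technicality: it is a genuine obstruction, and neither of your proposed workarounds can close it. For any short exact sequence $0\to A\to B\to C\to 0$ of vector bundles with $C$ invertible and class $\xi\in\Ext^1(C,A)$, a \v Cech computation shows that $\Sym^2(B)/\Sym^2(A)$, viewed as an extension of $C^{\otimes 2}$ by $A\otimes C$, has class $2\xi$, whereas $B\otimes C$ has class $\xi$. In characteristic $2$ with $\xi\neq0$ these need not even be abstractly isomorphic: taking $\msE_1=\msO_{\P^1}^{\oplus2}$ (so $\Sym^2\msE_1=\msO^{\oplus3}$) and $\msE_2$ a non-split extension of $\msO(2)$ by $\msO^{\oplus3}$, one finds $\coker(\nu)\simeq\msO(2)^{\oplus3}\oplus\msO(4)$ while $\msE_2\otimes\msY\simeq\msO(3)^{\oplus2}\oplus\msO(2)^{\oplus2}$. (The splitting of $\coker(\nu)$ in characteristic $2$ is visible directly: $c\mapsto s(c)^2$ gives a well-defined $\msO_B$-linear map $C^{\otimes2}\to\Sym^2(B)/\Sym^2(A)$ independent of the local lift $s$.) The paper's local argument runs into the same wall for the same reason: replacing $Y$ by $Y+\mu(t)$ shifts the image of $Y^2$ by $\mu(t)\otimes\bar Y$, so the claimed independence fails. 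What \emph{does} follow from your analysis, and what suffices for the only downstream use (the filtration with graded pieces $\Sym^4(\msE_1)$, $\Sym^2(\msE_1)\otimes\msY$, $\msY^{\otimes2}$), is the exact sequence $0\to\Sym^2(\msE_1)\otimes\msY\to\coker(\nu)\to\msY^{\otimes2}\to0$, which you have already established.
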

\begin{proof}
    We construct \cref{ses:filt-key} locally, and then glue by observing that the locally constructed maps are independent of any choices. That being said, let $U\openset B$ be small enough that $\msE_1\vert_U\cong\msO_U^{\oplus2}$ and $\msE_2\vert_U\cong\msO_U^{\oplus 4}$ (so then also $\msY\vert_U\cong\msO_U$). Let $X,Z\in\Gamma(U,\msE_1)$ be a basis for $\msE_1\vert_U$, and choose $Y\in\Gamma(U,\msE_2)$ so that $\mu(X^2),\mu(XZ),\mu(Z^2),Y$ form a basis for $\msE_2\vert_U$. Let $\bar Y\in\Gamma(U,\msY)$ be the image of $Y$. Then, it is not difficult to see that the images of
    \[\begin{matrix}
        X^4 & X^3Z & X^2Z^2 & XZ^3 & Z^4 &
        X^2\otimes Y & XZ\otimes Y & Z^2\otimes Y &
        Y\otimes Y
    \end{matrix}\]
    under the quotient map $\msT(\msE_1,\msE_2)_4\onto\msB(\mbfD)_4$ form a basis over $U$. Define a map $\msB(\mbfD)_4\vert_U\to\msE_2\vert_U\otimes\msY\vert_U$ by sending
    \[\begin{matrix}
        X^2\otimes Y&\mapstoo&\mu(X^2)\otimes\bar Y, && XZ\otimes Y &\mapstoo& \mu(XZ)\otimes\bar Y,\\
        Z^2\otimes Y&\mapstoo&\mu(Z^2)\otimes\bar Y, && Y\otimes Y &\mapstoo& \bar Y\otimes\bar Y,
    \end{matrix}\]
    and sending all other basis elements to $0$. By construction, the kernel of this map is (isomorphic to) $\Sym^4(\msE_1)\vert_U$, i.e. we have an exact sequence
    \[0\too\Sym^4(\msE_1)\vert_U\too\msB(\mbfD)_4\vert_U\too\msE_2\vert_U\otimes\msY\vert_U\too0\]
    over $U$. Finally, one can check that the above maps are independent of the choice of $Y\in\Gamma(U,\msE_2)$ making $\mu(X^2),\mu(XZ),\mu(Z^2),Y$ a basis for $\msE_2\vert_U$ and are independent of the choice of basis $X,Z\in\Gamma(U,\msE_1)$ for $\msE_1\vert_U$. Therefore, the above short exact sequence globalizes to give the claimed sequence \cref{ses:filt-key}.
\end{proof}

\begin{prop}\label{prop:hW-norm-bund}
    Let $(H\xto\pi B,D)\in\meH(B)$ be an hW curve, and consider the natural embedding $H\into\P(H/B,D)=:\P$, constructed in \cref{lem:glob-embed}. Then, $H\into\P$ is a Cartier divisor, and so is the zero scheme of some global section of the line bundle $\msO_\P(H)$. Furthermore, we compute this line bundle to be
    \[\msO_\P(H)\simeq\msO_\P(4)\otimes\pull p(\msD^{-2}\otimes\msL^2)=\pull p\p{\inv[2]\msD\otimes\msL^2}(4),\]
    where $\msD:=\det(\push\pi\msO_H(D))$, $\msL:=\push\pi\omega_{H/B}$, and $p:\P\to B$ is the structure morphism.
    That is, we can view $H\into\P$ as being cut out by some global section of
    \[\push p\msO_\P(H)\simeq\msB(H/B,D)_4\otimes\inv[2]\msD\otimes\msL^2.\]
\end{prop}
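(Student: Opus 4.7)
The plan is to dualize the strategy used in the Weierstrass case (\cref{prop:weier-norm-bund}) and directly compute $\msO_\P(-H)(4)$, which I expect to be the pullback of a line bundle from $B$. First I would invoke \cref{thm:loc-model}: each point of $B$ has an affine neighborhood $U$ over which $H_U \into \P_U$ is cut out by a single equation of the form \cref{eqn:local-model}. This equation is homogeneous of weighted degree $4$ in the grading $(\deg X,\deg Y,\deg Z) = (1,2,1)$, so $H \into \P$ is a Cartier divisor; on each fiber $\P_b \simeq \P(1,2,1)_{\kappa(b)}$ the equation furnishes an isomorphism $\msO_{\P_b}(H_b) \simeq \msO_{\P_b}(4)$, i.e., $\msO_\P(-H)(4)$ restricts to the trivial line bundle on every fiber of $p$.

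By this fibral triviality and \cite[Proposition 25.1.11]{ravi-notes}, one gets $\msO_\P(-H)(4) \simeq \pull p\push p\msO_\P(-H)(4)$, with $\push p\msO_\P(-H)(4)$ a line bundle on $B$. To pin it down, I would push forward along $p$ the short exact sequence
\[0\too\msO_\P(-H)(4)\too\msO_\P(4)\too\msO_H(4)\too0.\]
Here $R^1\push p\msO_\P(-H)(4) = 0$ follows from \cref{thm:coh-bc} together with $\hom^1(\P_b,\msO_{\P_b}) = 0$ (the latter being a special case of \cref{lem:H^1(O(n))=0}), while \cref{lem:push-P(121)} and \cref{lem:glob-embed} identify the other two pushforwards. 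This produces the short exact sequence
\[0\too\push p\msO_\P(-H)(4)\too\msB(H/B,D)_4\too\push\pi\msO_H(4D)\too0.\]
Since $\msB(H/B,D)_4$ has rank $9$ (by \cref{rem:B-rank}) and $\push\pi\msO_H(4D)$ has rank $8$ (by \cref{lem:push-E}), taking determinants in this sequence recovers the desired line bundle on $B$.

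By \cref{cor:det(En)}, the right-hand term satisfies $\det\push\pi\msO_H(4D) \simeq \msD^{16}\otimes\msL^{-3}$. For the middle, I would filter using \cref{lem:filt-key} to get
\[\det\msB(H/B,D)_4 \simeq \det(\Sym^4\msE_1)\otimes\det(\msE_2)\otimes\msY^{\otimes 4},\]
where $\msE_1 \coloneqq \push\pi\msO_H(D)$, $\msE_2 \coloneqq \push\pi\msO_H(2D)$, and $\msY \coloneqq \coker\mu$. Since $\msY \simeq \msD\otimes\msL^{-1}$ (from \cref{ses:mult-1-1}), $\det\msE_2 \simeq \msD^4\otimes\msL^{-1}$ (from \cref{cor:det(En)}), and $\det\Sym^4\msE_1 \simeq \msD^{10}$ (the tenth power of the determinant of a rank-$2$ bundle), direct substitution gives $\det\msB(H/B,D)_4 \simeq \msD^{18}\otimes\msL^{-5}$, hence $\push p\msO_\P(-H)(4) \simeq \msD^2\otimes\msL^{-2}$. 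Untwisting yields the claimed formula $\msO_\P(H) \simeq \msO_\P(4)\otimes\pull p(\msD^{-2}\otimes\msL^2)$, and the formula for $\push p\msO_\P(H)$ then follows from the projection formula together with \cref{lem:push-P(121)}. The one genuinely tricky step is the determinant bookkeeping for $\msB(H/B,D)_4$ via \cref{lem:filt-key}; everything else is a direct dualization of the usual Weierstrass argument.
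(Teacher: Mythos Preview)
Your proposal is correct and follows essentially the same approach as the paper's proof: both compute $\msO_\P(-H)(4)$ by invoking \cref{thm:loc-model} for fibral triviality, apply \cite[Proposition~25.1.11]{ravi-notes}, push forward the twisted ideal-sheaf sequence using \cref{lem:push-P(121)}, \cref{lem:glob-embed}, and \cref{lem:H^1(O(n))=0}, and then take determinants via \cref{lem:filt-key} and \cref{cor:det(En)}. Your explicit factorization $\det\msB(H/B,D)_4 \simeq \det(\Sym^4\msE_1)\otimes\det(\msE_2)\otimes\msY^{\otimes 4}$ is exactly what the paper's phrase ``one computes'' is hiding.
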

\begin{proof}
    Once we know $\msO_\P(H)\simeq\pull p\p{\inv[2]\msD\otimes\msL^2}(4)$, the claimed computation of $\push p\msO_\P(H)$ follows from the projection formula and \cref{lem:push-P(121)}.
    
    We will find it more natural to instead directly compute the dual $\msO_\P(-H)$. First note that $H\into\P$ is Cartier by \cref{thm:loc-model}, which shows that it is locally cut out by a single equation. That same theorem also shows that the fibers of $H\into\P$ (over $B$) are cut out by weighted degree 4 equations, so the line bundle $\msO_\P(-H)(4)$ on $\P$ is trivial on each fiber. Thus, e.g. by \cite[Proposition 25.1.11]{ravi-notes}, $\msO_\P(-H)(4)\simeq\pull p\push p\msO_\P(-H)(4)$. Hence, it will suffice to compute that
    \[\push p\msO_\P(-H)(4)\simeq\msD^2\otimes\msL^{-2}.\]
    With this in mind, consider the exact sequence
    \[0\too\msO_\P(-H)(4)\too\msO_\P(4)\too\msO_H(4)\too0,\]
    and push forward along $p$. We know that $\msO_H(4)\simeq\msO_H(4D)$ by \cref{lem:glob-embed}, that $\push p\msO_\P(4)\simeq\msB(H/B,D)_4$ by \cref{lem:push-P(121)}, and that $R^1\push p\msO_\P(-H)(4)=0$ by \cref{thm:coh-bc} combined with \cref{lem:H^1(O(n))=0}. Hence, we obtain
    \begin{equation}\label{ses:O(-H)(4) comp}
        0\too\push p\msO_\P(-H)(4)\too\msB(H/B,D)_4\too\push\pi\msO_H(4D)\too0.
    \end{equation}
    Because $\rank\msB(H/B,D)_4=9$ (by \cref{rem:B-rank}) and $\rank\push\pi\msO_H(4D)=8$ (by \cref{lem:push-E}), the kernel $\push p\msO_\P(-H)(4)$ above must be a line bundle, and so it can be computed by taking determinants. \cref{cor:det(En)} tells us that
    \[\det(\push\pi\msO_H(4D))\simeq\msD^{16}\otimes\inv[3]\msL\tand\det(\push\pi\msO_H(2D))\simeq\msD^4\otimes\inv\msL.\]
    Taking determinants in the exact sequence \cref{ses:filt-key} with $\msE_1=\push\pi\msO_H(D)$ and $\msE_2=\push\pi\msO_H(2D)$ (and note that $\msY\simeq\inv\msL\otimes\msD$ by \cref{prop:model-exact-seqs}), one computes that $\det\msB(H/B,D)_4\simeq\msD^{18}\otimes\inv[5]\msL$. Finally, taking determinants in \cref{ses:O(-H)(4) comp} shows that $\push p\msO_\P(-H)(4)\simeq\msD^2\otimes\inv[2]\msL$, proving the claim.
\end{proof}

The last thing we want to take care of here is improving our understanding of the rank 9 vector bundle
\[\push p\msO_\P(H)\simeq\msB(H/B,D)_4\otimes\inv[2]\msD\otimes\msL^2\]
appearing in \cref{prop:hW-norm-bund}. We will do this by endowing it with a filtration, all of whose graded pieces are line bundles.
\begin{defn}\label{defn:normalized}
    Let $\mbfD=(\msE_1,\msE_2,\mu)$ be a (1,2,1)-datum. We say that $\mbfD$ is \define{normalized} if either
    \begin{enumerate}
        \item $\msE_1$ has Harder-Narasimhan filtration of the form
        \[0\too\msO_B\too\msE_1\too\msD\too0,\]
        necessarily with $u:=\deg\msD<0$. In this case,  we call $u$ the \define{unstable degree} of $\mbfD$.
        \item $\msE_1$ is semistable. In this case, we say $\mbfD$ has \define{unstable degree} $u=0$.
        \qedhere
    \end{enumerate}
\end{defn}
The above definition was inspired by \cite[Section 6.1]{ho-lehung-ngo}, though our ``unstable degree'' is the negation of the one appearing there. This is to allow for easier application of \cref{cor:deg(E)-constraint} when we do the actual counting.
\begin{lemma}\label{lem:hW-normalized}
    Every hW curve is isomorphic to one whose associated (1,2,1)-datum is normalized. 
\end{lemma}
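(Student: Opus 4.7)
The plan is to explicitly normalize the associated (1,2,1)-datum by tensoring with an appropriate line bundle $\msM$ on $B$, chosen so that $\msE_1 \otimes \msM$ is either semistable or has a Harder--Narasimhan filtration of the required form. I split into two cases based on the stability of $\msE_1 = \push\pi\msO_H(D)$. If $\msE_1$ is already semistable, the datum is normalized (case 2 of \cref{defn:normalized}), so we can take $\msM = \msO_B$ and $D' = D$, and there is nothing to do. If $\msE_1$ is unstable, let $\msL_1 \subset \msE_1$ be the maximal destabilizing subsheaf (a line bundle with $\deg\msL_1 > \deg\msE_1/2$), and set $\msM := \inv\msL_1$. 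Then $\msE_1 \otimes \msM$ has the Harder--Narasimhan filtration $0 \to \msO_B \to \msE_1 \otimes \msM \to \msD \to 0$ with $\msD := (\msE_1/\msL_1) \otimes \inv\msL_1$ of negative degree, so the tensored datum $\mbfD \otimes \msM$ is normalized.

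The main task is then to realize the tensored datum as the datum of an isomorphic hW curve. Concretely, I will produce an effective relative Cartier divisor $D' \subset H$ of relative degree $2$ with $\msO_H(D') \simeq \msO_H(D) \otimes \pull\pi\msM$; given this, the pair $(\id_H, \msM)$ supplies an isomorphism $(H, D) \iso (H, D')$ in $\meH(B)$, and by the projection formula the associated datum of $(H, D')$ will be $(\push\pi\msO_H(D) \otimes \msM,\, \push\pi\msO_H(2D) \otimes \msM^{\otimes 2},\, \mu') \simeq \mbfD \otimes \msM$, which is normalized by construction. To produce $D'$, I use that the destabilizing inclusion $\msO_B \into \msE_1 \otimes \msM$, combined with \cref{lem:push-E} and the projection formula, identifies with a global section $s$ of the line bundle $\msL' := \msO_H(D) \otimes \pull\pi\msM$ on $H$; set $D' := V(s)$.

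The main obstacle will be verifying that this $s$ yields an effective relative Cartier divisor of relative degree $2$ (relative ampleness of $\msO_H(D')$ is automatic, since it differs from the relatively ample $\msO_H(D)$ by a pullback from $B$). Since $\msE_1 \otimes \msM / \msO_B \simeq \msD$ is a line bundle, the fiber $s_b$, viewed as an element of $(\msE_1 \otimes \msM) \otimes \kappa(b) \simeq \hom^0(H_b, \msO_{H_b}(D_b))$, is nonzero for every $b \in B$. Using that $\msO_{H_b}(D_b)$ is globally generated of degree $2$ (by \cref{lem:length=deg}) and that $H_b$ is Gorenstein, one argues that $s_b$ is a regular section on each fiber, hence $s$ is a regular section of $\msL'$ on $H$. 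Flatness of $D' \to B$ of relative degree $2$ then follows by pushing forward the sequence $0 \to \msO_H \xto{s} \msL' \to \msO_{D'} \to 0$ along $\pi$ and applying cohomology-and-base-change arguments analogous to those in \cref{lem:push-E}, using that $R^1\push\pi\msL' = 0$ and that the resulting fiberwise Hilbert polynomial is constant equal to $2$. This will complete the proof.
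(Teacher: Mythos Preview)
Your approach coincides with the paper's: twist by the inverse of the destabilizing subbundle and take the new divisor to be the zero locus of the section of $\msO_H(D)\otimes\pull\pi\msL_1^{-1}$ arising from the saturated inclusion $\msO_B\hookrightarrow\msE_1\otimes\msL_1^{-1}$. The paper verifies the hW conditions via a direct appeal to \cref{thm:loc-model-conv} rather than by checking \cref{defn:hW} as you do; both routes work, though for your regularity step (nonzero $s_b\Rightarrow s_b$ regular on $H_b$) the clean justification is that global generation with $h^0=2$ yields a finite degree-$2$ map $H_b\to\P^1$, flat by miracle flatness since $H_b$ is Cohen--Macaulay, whence $V(s_b)$ is a fiber and hence Cartier of length $2$.
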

\begin{proof}
    Let $(H\xto\pi B,D)$ be an hW curve, and let $\msE_1:=\push\pi\msO_H(D)$. If $\msE_1$ is semistable, then $\mbfD(H/B,D)$ is already normalized. Hence, assume that $\msE_1$ is unstable. Let $\msM\into\msE$ be a destabilizing line subbundle, so $\msO_B$ is destabilizing line subbundle of $\msF:=\msE\otimes\inv\msM$. Let 
    \[S:=\rProj_B\p{\bigoplus_{n\ge0}\p{\push\pi\msO_H(nD)\otimes\inv[n]\msM}}\xto\rho B,\]
    and let $f:S\iso H$ be the natural isomorphism \cite[\href{https://stacks.math.columbia.edu/tag/02NB}{Tag 02NB}]{stacks-project}. Let $\P:=\P(H/B,D)\xto pB$ and consider its line bundle $\pull p(\inv\msM)(1)$. By the projection formula and \cref{lem:push-P(121)}, $\push p\pull p(\inv\msM)(1)\cong\msE\otimes\inv\msM=\msF$; thus, $H^0(\P,\pull p(\inv\msM)(1))=H^0(B,\msF)$ is nonzero (recall $\msO_B\into\msF$). Embed $S\xto fH\into\P$, and let $E\subset S$ be the zero scheme of some nonzero section of $\pull p(\inv\msM)(1)$. One can use \cref{thm:loc-model-conv} to show that $(S/B,E)$ is an hW curve over $B$. By construction, this hW is isomorphic to $H$ and its associated (1,2,1)-datum is normalized.
\end{proof}
\begin{prop}\label{prop:hW-norm-filt}
    Let $\mbfD=(\msE_1,\msE_2,\mu)$ be a (1,2,1)-datum. Let $\msD:=\det(\msE_1)$, and let $\msY:=\coker(\mu)$. Then, there is a filtration $0=\msF_0\subset\msF_5\subset\msF_8\subset\msF_9=\msB(\mbfD)_4$ such that $\msF_i$ is a rank $i$ vector bundle on $B$, where
    \[\msF_5=\Sym^4(\msE_1)\tcomma\frac{\msF_8}{\msF_5}\cong\Sym^2(\msE_1)\otimes\msY,\tand\frac{\msF_9}{\msF_8}\cong\msY^2.\]
    Furthermore, if $\mbfD$ is normalized with $\msE_1$ unstable, then this filtration extends to a filtration
    \[0=\msF_0\subset\msF_1\subset\dots\subset\msF_8\subset\msF_9=\msB(\mbfD)_4\]
    by vector bundles on $B$ with graded pieces
    \[\frac{\msF_{i+1}}{\msF_i}\cong\Threecases{\msD^i}{0\le i\le4}{\msD^{i-5}\otimes\msY}{5\le i\le 7}{\msY^2}{i=8.}\]
\end{prop}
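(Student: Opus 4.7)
The plan is first to assemble the coarse three-step filtration directly from short exact sequences already in the paper, and then to refine it using the natural filtration on the symmetric powers of a rank-$2$ bundle with a distinguished line subbundle.

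For the coarse filtration, \cref{lem:filt-key} already produces the short exact sequence $0 \to \Sym^4(\msE_1) \xrightarrow{\nu} \msB(\mbfD)_4 \to \msE_2 \otimes \msY \to 0$, so I would take $\msF_5 := \Sym^4(\msE_1)$ (rank $5$) and $\msF_9 := \msB(\mbfD)_4$ (rank $9$ by \cref{rem:B-rank}). Since $\msY$ is a line bundle, and hence flat, the defining sequence $0 \to \Sym^2(\msE_1) \to \msE_2 \to \msY \to 0$ remains exact upon tensoring with $\msY$, giving $0 \to \Sym^2(\msE_1) \otimes \msY \to \msE_2 \otimes \msY \to \msY^2 \to 0$. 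I would then define $\msF_8 \subset \msF_9$ to be the preimage of $\Sym^2(\msE_1) \otimes \msY$ under the quotient $\msF_9 \twoheadrightarrow \msE_2 \otimes \msY$; the preimage of a subbundle under a surjection of vector bundles is a subbundle, so $\msF_8$ has rank $8$. A snake-lemma diagram chase then yields both $0 \to \msF_5 \to \msF_8 \to \Sym^2(\msE_1) \otimes \msY \to 0$ and $0 \to \msF_8 \to \msF_9 \to \msY^2 \to 0$, completing the three-step filtration with the claimed graded pieces.

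For the refinement, assume $\mbfD$ is normalized with $\msE_1$ unstable, so there is an inclusion $\msO_B \hookrightarrow \msE_1$ with quotient $\msD$. For each $n \ge 0$, multiplication by a local generator of $\msO_B \subset \msE_1$ induces maps $\msO_B^{\otimes j} \otimes \Sym^{n-j}(\msE_1) \hookrightarrow \Sym^n(\msE_1)$ which are fiberwise injective (locally this is multiplication by a nonzero element in a polynomial ring), and whose images form a descending filtration of $\Sym^n(\msE_1)$ with successive quotients $\Sym^{n-j}(\msE_1/\msO_B) \cong \msD^{n-j}$. Equivalently, read bottom-up, the graded pieces are $\msO_B, \msD, \msD^2, \ldots, \msD^n$. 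Applying this with $n = 4$ refines $\msF_5$ into $0 = \msF_0 \subset \msF_1 \subset \cdots \subset \msF_5$ with $\msF_{i+1}/\msF_i \cong \msD^i$ for $0 \le i \le 4$; applying it with $n = 2$ and tensoring with $\msY$ produces a three-step filtration of $\Sym^2(\msE_1) \otimes \msY$ with graded pieces $\msY, \msD \otimes \msY, \msD^2 \otimes \msY$, whose preimages in $\msF_8$ extend the chain to $\msF_5 \subset \msF_6 \subset \msF_7 \subset \msF_8$ with $\msF_{i+1}/\msF_i \cong \msD^{i-5} \otimes \msY$ for $5 \le i \le 7$, exactly as claimed.

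I do not anticipate any serious obstacle here: the proposition is essentially bookkeeping built on \cref{lem:filt-key} and the standard filtration of $\Sym^n$ of a rank-$2$ bundle with a chosen sub-line-bundle. The one item worth verifying carefully in both constructions is that each purported ``subbundle'' really is one (i.e., the relevant inclusions remain fiberwise injective), but this reduces in each case to an elementary check in a polynomial algebra.
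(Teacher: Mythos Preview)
Your proposal is correct and follows exactly the approach of the paper's proof sketch, which simply lists the three short exact sequences from \cref{lem:filt-key}, the definition of a $(1,2,1)$-datum, and the Harder--Narasimhan filtration of $\msE_1$, and asserts that the filtration follows. You have spelled out the bookkeeping (taking preimages, tensoring with the line bundle $\msY$, and invoking the standard filtration on $\Sym^n$ of a rank-$2$ bundle with a distinguished line subbundle) that the paper leaves implicit.
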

\begin{proof}[Proof Sketch]
    This follows from the existence of the exact sequences
    \[\begin{array}[b]{cccccccccl}
        0 &\too& \Sym^4(\msE_1) &\too& \msB(\mbfD)_4 &\too& \msE_2\otimes\msY &\too& 0 &\t{by \cref{lem:filt-key};}\\
        0 &\too& \Sym^2(\msE_1) &\too& \msE_2 &\too& \msY &\too& 0 &\t{by definition of (1,2,1)-datum; and}\\
        0 &\too& \msO_B &\too& \msE_1 &\too& \msD &\too& 0 &\t {if $\msE_1$ is unstable and $\mbfD$ is normalized}
        .
    \end{array}\qedhere\]
\end{proof}
\begin{rem}\label{rem:hw-glob-eqn}
    Let $(H/B,D)$ be an hW curve with Hodge bundle $\msL$. Suppose that $\push\pi\msO_H(D)$ is unstable and that the (1,2,1)-datum $\mbfD(H/B,D)$ is normalized. Let $\msD:=\det(\push\pi\msO_H(D))$. If the filtration of \cref{prop:hW-norm-filt} applied to $\mbfD(H/B,D)$ splits, then $\P:=\P(H/B,D)$ has global coordinates $X,Y,Z$ with $Y$ defined using the splitting of $0\to\Sym^2(\msE_1)\to\msE_2\to\inv\msL\otimes\msD\to0$, and $X,Z$ defined using the splitting of $0\to\msO_B\to\msE_1\to\msD\to0$ (analogously to \cref{rem:interpret-weier-eqn}). Defined appropriately, these global coordinates $X,Y,Z$ are sections
    \[X\in\hom^0\p{\P,\pull p\p{\inv\msO_B}(1)}\tcomma Y\in\hom^0\p{\P,\pull p\p{\msL\otimes\inv\msD}(2)},\tand Z\in\hom^0\p{\P,\pull p\p{\inv\msD}(1)}.\]
    With this in mind, in this case, the vector bundle $\push p\msO_\P(H)$ naturally splits as a sum of line bundles (compare \cref{prop:hW-norm-bund,prop:hW-norm-filt}), and $H\into\P$ can be described as the zero scheme of an equation
    \[\lambda Y^2 + (a_0X^2+a_1XZ+a_2Z^2)Y = c_0X^4 + c_1X^3Z + c_2X^2Z^2 + c_3XZ^3 + c_4Z^4\]
    with $\lambda\in\hom^0(B,\msO_B)$, $a_i\in\hom^0(B,\msD^{i-1}\otimes\msL)$ and $c_j\in\hom^0(B,\msD^{j-2}\otimes\msL^2)$ (note that both sides above are sections of $\pull p\p{\inv[2]\msD\otimes\msL^2}(4)$). Furthermore, by comparing with the local equations of \cref{thm:loc-model}, we see that $\lambda$ above must be nonzero, so after scaling, $H\into\P$ is cut out by an equation of the form
    \begin{equation}\label{eqn:hW-glob}
        Y^2 + (a_0X^2+a_1XZ+a_2Z^2)Y = c_0X^4 + c_1X^3Z + c_2X^2Z^2 + c_3XZ^3 + c_4Z^4,
    \end{equation}
    akin to the Weierstrass equations of \cref{defn:weier-eqn}.
\end{rem}

\subsection{Properly Embedded hW Curves}
In order to count hW curves, we will partition them according to their (1,2,1)-data. To that end, we begin by fixing such a choice of datum and studying the hW curves which embed into the corresponding $\P(1,2,1)$-bundle.
\begin{set}
    We continue to let $B$ denote an arbitrary base scheme. We also fix any choice of 
    (1,2,1)-datum $\mbfD:=(\msE_1,\msE_2,\mu)$ over $B$. 
    Finally, we write $\P:=\P(\mbfD)$ and let $p:\P\to B$ denote its structure map.
\end{set}
\begin{defn}
    We say that an hW curve $(H/B,D)$ equipped with an embedding $H\into\P$ is \define{properly embedded} if $\msO_H(1):=\msO_\P(1)\vert_H\simeq\msO_H(D)$.
\end{defn} 
\begin{lemma}\label{lem:basic-0}
    Every hW curve is isomorphic to one which properly embeds into a $\P(\mbfD)$ with $\mbfD$ normalized.
\end{lemma}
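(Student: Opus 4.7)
The plan is to assemble this statement from two results already established in the section: \cref{lem:hW-normalized}, which replaces any hW curve by an isomorphic one whose associated (1,2,1)-datum is normalized; and \cref{lem:glob-embed}, which produces for every hW curve $(H,D)$ a canonical embedding $H \into \P(H/B,D)$ and identifies $\msO_H(n)|_H$ with $\msO_H(nD)$ for all $n \ge 0$.

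Concretely, given an arbitrary $(H \xto\pi B, D) \in \meH(B)$, I would first apply \cref{lem:hW-normalized} to obtain an isomorphic hW curve $(H' \xto{\pi'} B, D')$ whose associated (1,2,1)-datum $\mbfD' := \mbfD(H'/B, D') = (\push{\pi'}\msO_{H'}(D'),\, \push{\pi'}\msO_{H'}(2D'),\, \mu')$ is normalized in the sense of \cref{defn:normalized}. I would then apply \cref{lem:glob-embed} to the pair $(H', D')$, yielding the natural embedding $H' \into \P(\mbfD')$ together with the isomorphisms $\msO_{\P(\mbfD')}(n)\big|_{H'} \simeq \msO_{H'}(nD')$ for all $n \ge 0$. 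Taking $n=1$ shows that this embedding is proper in the sense defined just above the lemma, so $H' \cong H$ properly embeds into $\P(\mbfD')$ with $\mbfD'$ normalized, as required.

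I do not expect any substantive obstacle: the content of the lemma is purely the observation that the normalization procedure of \cref{lem:hW-normalized} and the canonical embedding of \cref{lem:glob-embed} are compatible, and this compatibility is built into the fact that \cref{lem:glob-embed} is being applied directly to the normalized model $H'$ rather than to the original $H$. The one point worth verifying is that the (1,2,1)-datum of the replacement $H'$ constructed in the proof of \cref{lem:hW-normalized} is indeed literally $\mbfD(H'/B,D')$, so that the canonical embedding produced by \cref{lem:glob-embed} lands in $\P(\mbfD')$ with $\mbfD'$ normalized; this is immediate from unraveling the construction of $H'$ as $\rProj_B \bigoplus_{n\ge0}\p{\push\pi\msO_H(nD)\otimes\inv[n]\msM}$ in that proof.
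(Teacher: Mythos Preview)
Your proposal is correct and matches the paper's own proof, which simply cites \cref{lem:glob-embed} and \cref{lem:hW-normalized} without further comment. Your additional verification that the (1,2,1)-datum of the replacement $H'$ is $\mbfD(H'/B,D')$ is a reasonable sanity check but not strictly necessary, since \cref{lem:glob-embed} applied to any hW curve always produces a proper embedding into the $\P(1,2,1)$-bundle of its own associated datum.
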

\begin{proof}
    This follows immediately from \cref{lem:glob-embed,lem:hW-normalized}.
\end{proof}
\begin{lemma}\label{lem:basic-1}
    Let $(H\xto\pi B,D)$ be an hW curve properly embedded in $\P$. Then, the natural map
    \[\push p\msO_\P(n)\too\push\pi\msO_H(n)\simeq\push\pi\msO_H(nD)\]
    is surjective for all $n\in\Z$. Furthermore, it is an isomorphism for $n=0,1,2,3$.
\end{lemma}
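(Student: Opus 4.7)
The plan is to extract both parts of the conclusion from the short exact sequence
\[ 0 \to \msO_\P(-H) \to \msO_\P \to \msO_H \to 0 \]
on $\P$, tensored by the line bundle $\msO_\P(n)$, and then pushed forward along $p$. Because $H \into \P$ is properly embedded, $\msO_\P(n)\vert_H \simeq \msO_H(1)^{\otimes n} \simeq \msO_H(nD)$, and pushforward along $p$ of sheaves on $H$ agrees with $\push\pi$. So pushing forward yields the long exact sequence
\[ 0 \to \push p \msO_\P(-H)(n) \to \push p \msO_\P(n) \to \push\pi \msO_H(nD) \to R^1 \push p \msO_\P(-H)(n) \to \cdots , \]
and the arrow in the middle is exactly the ``natural map'' appearing in the statement (I would make the naturality check explicit by chasing the unit of the adjunction $\pull p \dashv \push p$).

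Next, the computation of \cref{prop:hW-norm-bund} gives $\msO_\P(-H)\simeq \pull p(\msD^2 \otimes \inv[2]\msL)(-4)$, so the projection formula (legal because $\msD^2 \otimes \inv[2]\msL$ is a line bundle, hence flat) yields
\[ R^q \push p \msO_\P(-H)(n) \simeq \bigl(R^q \push p \msO_\P(n-4)\bigr) \otimes \msD^2 \otimes \inv[2]\msL \]
for every $q$ and $n$. Everything therefore reduces to vanishing facts for $R^q \push p \msO_\P(m)$ on the $\P(1,2,1)$-bundle $\P$.

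For surjectivity, the first step is to establish $R^1 \push p \msO_\P(m)=0$ for every $m \in \Z$; this is accomplished by the same cohomology-and-base-change argument used in the proof of \cref{lem:push-P(121)}, invoking \cref{lem:H^1(O(n))=0} to get $H^1(\P_b,\msO_{\P_b}(m))=0$ on every fibre (where $\P_b\simeq\P(1,2,1)_{\kappa(b)}$) and then applying \cref{thm:coh-bc}. Setting $m=n-4$ kills the connecting map and delivers surjectivity for all $n$. For the claim that the map is an isomorphism when $n \in \{0,1,2,3\}$, it remains to show $\push p \msO_\P(m) = 0$ for $m \in \{-4,-3,-2,-1\}$. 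This follows from the same cohomology and base change machinery, using additionally that $\P(1,2,1)$ carries no nonzero weighted-homogeneous polynomials of negative degree and hence $H^0(\P_b,\msO_{\P_b}(m)) = 0$ for $m<0$.

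There is no real obstacle here: the entire argument is a routine projection-formula plus cohomology-and-base-change calculation, and its only nontrivial input is the explicit identification of $\msO_\P(-H)$ already recorded in \cref{prop:hW-norm-bund} together with the fibral vanishing of $H^1(\P(1,2,1),\msO(m))$ for all $m$.
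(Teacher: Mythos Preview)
Your approach is essentially the paper's: push forward the twisted ideal sequence and use cohomology and base change on fibres to kill $R^1\push p\msO_\P(-H)(n)$. One point to watch: you invoke \cref{prop:hW-norm-bund} to identify $\msO_\P(-H)$ globally, but that proposition is stated only for the \emph{canonical} embedding $H\into\P(H/B,D)$, whereas in \cref{lem:basic-1} the bundle $\P=\P(\mbfD)$ comes from a fixed but a priori unrelated datum $\mbfD$; the identification $\mbfD\cong\mbfD(H/B,D)$ is \cref{cor:basic}, which is deduced \emph{from} this lemma. The paper avoids this by working purely fibrally, asserting $\msO_\P(-H)(n)_b\simeq\msO_{\P(1,2,1)_{\kappa(b)}}(n-4)$ and applying \cref{thm:coh-bc} directly to $\msO_\P(-H)(n)$ rather than passing through the projection formula. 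For the isomorphism when $n\in\{0,1,2,3\}$, the paper takes a slightly different route: rather than showing the kernel vanishes, it observes that $\push p\msO_\P(n)$ and $\push\pi\msO_H(nD)$ are vector bundles of equal rank (by \cref{rem:B-rank} and \cref{lem:push-E}), so the already-established surjection must be an isomorphism. Your kernel-vanishing argument is equally valid.
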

\begin{proof}
    Consider the exact sequence $0\to\msO_\P(-H)(n)\to\msO_\P(n)\too\msO_H(n)\to0$. By \cref{lem:H^1(O(n))=0} and the isomorphisms $\msO_\P(-H)(n)_b\simeq\msO_{\P(1,2,1)_{\kappa(b)}}(n-4)$, we have $\hom^1\p{\P(1,2,1)_b,\msO_\P(-H)(n)_b}=0$ for all $b\in B$. Thus, \cref{thm:coh-bc} tells us that $R^1\push p\msO_\P(-H)(n)=0$. Given this, our short exact sequence induces a surjection
    \[\push p\msO_\P(n)\onto\push\pi\msO_H(n)\simeq\push\pi\msO_H(nD).\]
    When $n\in\{0,1,2,3\}$, $\push p\msO_\P(n)$ and $\push\pi\msO_H(n)$ are vector bundles of rank the same rank (by \cref{rem:B-rank} and \cref{lem:push-E}), so this must be an isomorphism.
\end{proof}
\begin{cor}\label{cor:basic}
    An hW curve $(H/B,D)$ properly embeds into some $\P(\mbfD)$ for a unique, up to isomorphism, (1,2,1)-datum $\mbfD$, necessarily $\mbfD\cong\mbfD(H/B,D)$.
\end{cor}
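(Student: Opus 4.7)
The plan is to verify two things: first, that every hW curve properly embeds into some $\P(\mbfD)$, and second, that the datum $\mbfD$ in any such proper embedding is determined up to isomorphism (in the sense of \cref{defn:121-datum}) by $(H/B,D)$. The existence half is immediate from \cref{lem:glob-embed}: the natural embedding $H\into\P(H/B,D)$ constructed there satisfies $\msO_H(n)\simeq\msO_H(nD)$ for all $n\ge 0$, so in particular the $n=1$ case shows this embedding is proper, and the relevant $\mbfD$ is $\mbfD(H/B,D)$.

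For uniqueness, suppose $(H\xto\pi B,D)$ properly embeds into $\P(\mbfD)\xto pB$ for some $(1,2,1)$-datum $\mbfD=(\msE_1,\msE_2,\mu_\mbfD)$. I would combine \cref{lem:push-P(121)} with \cref{lem:basic-1}: the former identifies $\push p\msO_\P(n)\cong\msB(\mbfD)_n$, while the latter shows the natural maps $\push p\msO_\P(n)\to\push\pi\msO_H(nD)$ are isomorphisms for $n=0,1,2,3$. Stringing these together gives isomorphisms
\[\msE_1\;=\;\msB(\mbfD)_1\;\isoo\;\push\pi\msO_H(D)\qquad\t{and}\qquad\msB(\mbfD)_2\;\isoo\;\push\pi\msO_H(2D).\]
To identify the second line with $\msE_2$ itself, I would unwind \cref{construct:msB} in degree $2$. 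Since $\msT(\msE_1,\msE_2)_2=\Sym^2(\msE_1)\oplus\msE_2$ and the ideal $\msI(\mbfD)$ in degree $2$ is generated by sections of the form $\alpha\beta-\mu_\mbfD(\alpha\beta)$ with $\alpha,\beta$ local sections of $\msE_1$, the quotient $\msB(\mbfD)_2$ is canonically isomorphic to $\msE_2$ via $(x,y)\mapsto\mu_\mbfD(x)+y$. Under this identification, the natural multiplication map $\Sym^2(\msB(\mbfD)_1)\to\msB(\mbfD)_2$ becomes exactly $\mu_\mbfD$.

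It remains to check that the displayed isomorphisms fit into the commutative square of \cref{defn:121-datum} with $\msM=\msO_B$. This follows because both sides arise from a single graded $\msO_B$-algebra map $\msB(\mbfD)\to\bigoplus_{n\ge0}\push\pi\msO_H(nD)$ (dual to the closed embedding $H\into\P(\mbfD)$), and a graded algebra map automatically intertwines the degree-$1$-times-degree-$1$ multiplications on source and target; on the target these multiplications give the natural map $\Sym^2(\push\pi\msO_H(D))\to\push\pi\msO_H(2D)$, which is by definition the $\mu$ of $\mbfD(H/B,D)$. Thus $\mbfD\cong\mbfD(H/B,D)$ as $(1,2,1)$-data, completing the uniqueness half. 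I do not anticipate a serious obstacle here: all the ingredients have already been established earlier in the section, and the argument is essentially a bookkeeping of canonical isomorphisms, with the main care needed being the identification $\msB(\mbfD)_2\cong\msE_2$ preserving $\mu_\mbfD$.
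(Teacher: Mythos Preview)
Your proposal is correct and follows exactly the approach the paper has in mind: the paper states this as an unproved corollary of \cref{lem:basic-1} (together with \cref{lem:glob-embed} for existence), and your argument is precisely the detailed unwinding of those two ingredients plus the identification $\msB(\mbfD)_2\cong\msE_2$ from \cref{construct:msB}.
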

\begin{lemma}\label{lem:basic-2}
    Let $(H\xto\pi B,D)$ and $(S\xto\rho B,E)$ be two hW curves properly embedded in $\P$. Let $f:H\iso S$ be a hyper-Weierstrass isomorphism. Then, $\pull f\msO_S(nE)\simeq\msO_H(nD)$ for all $n$.
\end{lemma}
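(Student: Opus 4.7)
Since $\pull f\msO_S(nE)\simeq(\pull f\msO_S(E))^{\otimes n}$ and $\msO_H(nD)\simeq\msO_H(D)^{\otimes n}$, the claim for general $n$ follows from the case $n=1$ by taking tensor powers. By \cref{defn:hW}, there exists $\msN\in\Pic(B)$ with $\pull f\msO_S(E)\simeq\msO_H(D)\otimes\pull\pi\msN$, so the real content is to show that $\msN\simeq\msO_B$.

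The plan is to push this relation forward along $\pi\colon H\to B$ and interpret the result as an isomorphism between the associated $(1,2,1)$-data. Since $f$ is a $B$-isomorphism, $\push f\msO_H\simeq\msO_S$, so the projection formula gives $\push\pi\pull f\msO_S(E)\simeq\push\rho\msO_S(E)$. Both $\push\rho\msO_S(E)\simeq\push\rho\msO_S(1)$ and $\push\pi\msO_H(D)\simeq\push\pi\msO_H(1)$ are canonically identified with $\msE_1\simeq\push p\msO_\P(1)$ by combining \cref{lem:basic-1} with \cref{lem:push-P(121)}. A second application of the projection formula to the right-hand side yields $\push\pi(\msO_H(D)\otimes\pull\pi\msN)\simeq\msE_1\otimes\msN$. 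Assembling these, I obtain an isomorphism $\msE_1\iso\msE_1\otimes\msN$; taking determinants then gives $\det\msE_1\simeq\det\msE_1\otimes\msN^2$, i.e., $\msN^2\simeq\msO_B$.

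To upgrade $\msN^2\simeq\msO_B$ to $\msN\simeq\msO_B$, I would exploit the finer structure of $\msE_1$. By \cref{lem:basic-0} I may assume $\mbfD$ is normalized. In the unstable case, where $\msE_1$ fits in a sequence $0\to\msO_B\to\msE_1\to\msD\to0$ with $\deg\msD<0$, the unique maximal destabilizing subbundle $\msO_B\subset\msE_1$ must be carried by the isomorphism $\msE_1\iso\msE_1\otimes\msN$ to the unique maximal destabilizing subbundle $\msO_B\otimes\msN=\msN\subset\msE_1\otimes\msN$, by the uniqueness of the Harder--Narasimhan filtration. This forces $\msO_B\simeq\msN$ as line bundles, so $\msN\simeq\msO_B$. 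In the semistable case, one instead uses the canonical inclusion $\msO_B\hookrightarrow\msE_1$ obtained by pushing forward $0\to\msO_H\to\msO_H(D)\to\msO_D(D)\to0$ (and analogously for $S$); comparing this distinguished subsheaf with the image of the corresponding subsheaf for $S$ under the iso $\msE_1\iso\msE_1\otimes\msN$ transported through $f$, combined with the constraint $\msN^2\simeq\msO_B$, should pin down $\msN\simeq\msO_B$.

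The unstable case is handled cleanly by Harder--Narasimhan rigidity, since the max destabilizing subbundle is unique and must be preserved by any auto-iso of $\msE_1$ up to twist. The main obstacle is the semistable case: there HN uniqueness breaks down, and the argument must rely on the specific canonical sections attached to the effective divisors $D$ and $E$, which requires more care.
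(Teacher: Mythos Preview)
Your approach is essentially the same as the paper's: reduce to $n=1$, use the definition of hW isomorphism to write $\pull f\msO_S(E)\simeq\msO_H(D)\otimes\pull\pi\msN$, push forward along $\pi$ (using the projection formula and that $f$ is a $B$-isomorphism), and invoke \cref{lem:basic-1} to identify both pushforwards with $\msE_1$, arriving at an isomorphism $\msE_1\simeq\msE_1\otimes\msN$.

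At that point the paper simply asserts that this forces $\msN\simeq\msO_B$ and stops; it does not take determinants, does not split into unstable/semistable cases, and does not invoke Harder--Narasimhan. So your extra steps---deducing $\msN^2\simeq\msO_B$ and then using HN rigidity in the unstable case---already go beyond what the paper's own proof records. The difficulty you flag in the semistable case (where $\msE_1\simeq\msE_1\otimes\msN$ alone does not pin down $\msN$; e.g.\ $\msE_1\simeq\msO_B\oplus\msN$ with $\msN$ a nontrivial $2$-torsion line bundle) is not addressed in the paper's argument either. In short, you have matched the paper's route and pushed one step further; the residual gap you identify is present in the paper's proof as written.

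One small correction: your appeal to \cref{lem:basic-0} to ``assume $\mbfD$ is normalized'' is slightly off as stated, since $\mbfD$ is fixed in the hypotheses. What you actually want is that the conclusion $\pull f\msO_S(E)\simeq\msO_H(D)$ is independent of the ambient $\P$, so you may replace $\mbfD$ by an isomorphic normalized datum (both $H$ and $S$ then re-embed properly by \cref{cor:basic}); this is fine, but note it presupposes $B$ is a curve so that ``normalized'' makes sense.
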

\begin{proof}
    Since pullbacks commute with tensor products, it suffices to prove the claim when $n=1$. By definition, there exists some $\msM\in\Pic(B)$ such that $\pull f\msO_S(E)\simeq\msO_H(D)\otimes\pull\pi\msM$. Pushing forwards along $\pi$, we see that $\push\rho\msO_S(E)\simeq\push\pi\msO_H(D)\otimes\msM$. At the same time, \cref{lem:basic-1} shows that $\push\pi\msO_H(D)\simeq\push p\msO_\P(1)\simeq\push\rho\msO_S(E)$. Taken together, these two statements imply that $\msM\simeq\msO_B$, from which the claim follows.
\end{proof}
\begin{notn}
    Let $G(\mbfD)$ denote the (abstract) group of pairs $(\phi,\psi)$ of automorphisms of $\msE_1,\msE_2$ which are compatible with multiplication, i.e.
    \[G(\mbfD):=\b{(\phi,\psi)\in\GL(\msE_1)\by\GL(\msE_2)\,\left|\commsquare{\Sym^2(\msE_1)}\mu{\msE_2}{\Sym^2(\phi)}\psi{\Sym^2(\msE_1)}\mu{\msE_2}\t{ commutes.}\,\right.}\]
    We let $\PG:=\PG(\mbfD)$ denote the quotient of $G(\mbfD)$ by the scalar subgroup $\units k\into G(\mbfD),\lambda\mapsto(\lambda,\lambda^2)$, where $k:=\Gamma(B,\msO_B)$.
\end{notn}

\begin{rem}
    If you imagine you have an hW curve $H\into\P(1,2,1)$ with degree 1 coordinates $X,Z$ and degree 2 coordinate $Y$, then $\phi:\msE_1\iso\msE_1$ as above corresponds to some linear change of variables $(X,Z)\rightsquigarrow(\alpha X+\beta Z,\gamma X+\delta Z)$ and the extension to $\psi:\msE_2\iso\msE_2$ corresponds to also choosing $Y\rightsquigarrow\lambda Y+rX^2+sXZ+tZ^2$.
\end{rem}

\begin{rem}\label{rem:PG-action}
    We remark that $G(\mbfD)$ acts on $\P$. Indeed, elements of $G(\mbfD)$ induce (graded) automorphisms of the sheaf $\msB(\mbfD)$ of graded $\msO_B$-algebras (recall \cref{construct:msB}), and so induce automorphisms of $\P\simeq\rProj_B\msB(\mbfD)$. Furthermore, this action descends to one of $\PG$ on $\P$.
\end{rem}

\begin{prop}\label{prop:prop-emb-iso}
    Let $(H\xto\pi B,D)$ and $(S\xto\rho B,E)$ be two hyper-Weierstrass curves properly embedded in $\P$. Then, there is a natural isomorphism
    $$\Hom_{\meH(B)}\p{(H/B,D),(S/B,E)}\iso\b{g\in\PG:g(H)=S}$$
    (Above, `$g(H)=S$' means equality as subschemes of $\P$).
\end{prop}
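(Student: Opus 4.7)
The plan is to realize both sides of the claimed bijection as data equivalent to graded $\msO_B$-algebra automorphisms of $\msB(\mbfD)$ which intertwine the two graded surjections $\msB(\mbfD) \onto A_H$ and $\msB(\mbfD) \onto A_S$, where $A_H \coloneqq \bigoplus_{n\ge 0}\push\pi\msO_H(nD)$ and $A_S \coloneqq \bigoplus_{n\ge 0}\push\rho\msO_S(nE)$. Since $\msO_H(D)\simeq\msO_H(1)$ is ample (by proper embedding), $H\simeq\rProj_B A_H$, and the embedding $H\into\P$ corresponds to such a graded surjection $\msB(\mbfD)\onto A_H$, which by \cref{lem:basic-1} is an isomorphism in degrees $\le 3$ (similarly for $S$). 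An automorphism $\Phi$ of $\msB(\mbfD)$ thus descends to an automorphism of $\P$ carrying $H$ to $S$ exactly when it intertwines these two quotients.

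For the restriction direction, let $g\in\PG$ satisfy $g(H)=S$ and lift it to $(\phi,\psi)\in G(\mbfD)$. The induced graded automorphism of $\msB(\mbfD)$ preserves the grading, so $\pull g\msO_\P(n)\simeq\msO_\P(n)$ for every $n$. Restricting yields $g\vert_H\colon H\iso S$ with $\pull{(g\vert_H)}\msO_S(1)\simeq\msO_H(1)$, which by proper embedding reads as $\pull{(g\vert_H)}\msO_S(E)\simeq\msO_H(D)$; hence $g\vert_H$ is a morphism in $\meH(B)$. The assignment $g\mapsto g\vert_H$ is independent of the chosen lift, since the scalar lifts $(\lambda,\lambda^2)$ for $\lambda\in\units k$ act trivially on $\P$.

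Conversely, given $f\in\Hom_{\meH(B)}((H,D),(S,E))$, \cref{lem:basic-2} yields $\pull f\msO_S(nE)\simeq\msO_H(nD)$ for every $n$. Choose an isomorphism $\alpha\colon\pull f\msO_S(E)\iso\msO_H(D)$; its tensor powers $\alpha^{\otimes n}\colon \pull f\msO_S(nE)\iso\msO_H(nD)$, pushed forward along $\pi$ (using $\push\pi\pull f\simeq\push\rho$ since $\rho\circ f=\pi$), give a graded $\msO_B$-algebra isomorphism $A_S\iso A_H$. Using \cref{lem:basic-1} to identify $A_{H,1}\iso\msE_1\iso A_{S,1}$ and $A_{H,2}\iso\msE_2\iso A_{S,2}$, I extract a pair $(\phi,\psi)\in\GL(\msE_1)\times\GL(\msE_2)$. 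The compatibility $\mu\circ\Sym^2\phi=\psi\circ\mu$ is automatic, as $\mu$ records the degree-$2$ multiplication map in both graded algebras $A_H$ and $A_S$, which is preserved by any graded-algebra isomorphism; hence $(\phi,\psi)\in G(\mbfD)$. Rescaling $\alpha$ by $\lambda\in\units k=\Gamma(B,\msO_B)^\times$ rescales $(\phi,\psi)$ by $(\lambda,\lambda^2)$, so one obtains a well-defined class $\bar g\in\PG$; and since $\msE_1,\msE_2$ generate $\msB(\mbfD)$ subject only to the $\mu$-relations, $(\phi,\psi)$ uniquely extends to a graded automorphism $\Phi$ of $\msB(\mbfD)$ intertwining the quotients $A_H,A_S$, so $\bar g(H)=S$ by the first paragraph.

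The two constructions are tautologically mutually inverse on the generating degrees $1$ and $2$, and uniqueness of the extension of $(\phi,\psi)$ to $\Phi$ propagates this to a true inverse bijection. The main obstacle is essentially bookkeeping: confirming $\mu$-compatibility from naturality of multiplication in graded algebras, and carefully tracking the $\units k$-scaling of $\alpha$ so as to land in the quotient $\PG$ rather than in $G(\mbfD)$ itself. Once the low-degree identifications of \cref{lem:basic-1} and the line-bundle identifications of \cref{lem:basic-2} are in hand, both directions fall out formally from the universal property of $\msB(\mbfD)$ as the coordinate algebra of $\P$.
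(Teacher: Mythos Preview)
Your proof is correct and follows essentially the same approach as the paper's: both construct the two maps using \cref{lem:basic-1} (to identify $\msE_1,\msE_2$ with the low-degree pushforwards) and \cref{lem:basic-2} (to pull back the divisor class), and both observe that the $\PG$-action preserves $\msO_\P(1)$ to handle the restriction direction. Your write-up is in fact more careful than the paper's on two points the paper leaves implicit: you explicitly track the $\units k$-scaling ambiguity in the choice of $\alpha$ to justify landing in $\PG$ rather than $G(\mbfD)$, and you spell out why the extracted $(\phi,\psi)$ satisfies the $\mu$-compatibility (namely, because the graded-algebra isomorphism built from tensor powers of $\alpha$ respects multiplication).
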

\begin{proof}
    We simply construct maps in both directions.
    
    ($\to$) Let $f:H\iso S$ be an hW isomorphism. Since $H,X$ are both properly embedded in $\P$, \cref{lem:basic-2} tells us that $\pull f\msO_S(nE)\simeq\msO_H(nD)$ for any $n\in\Z$, so $f$ induces isomorphisms
    $$\alpha_n(f):\push\rho\msO_S(nE)\iso\push\pi\msO_H(nD).$$
    At the same time, \cref{lem:basic-1} tells us that the proper embeddings $H,S\into\P$ induce isomorphisms $\msE_n=\push p\msO_\P(n)\simeq\push\pi\msO_H(nD)$ and $\msE_n=\push p\msO_\P(n)\simeq\push\rho\msO_S(nE)$ when $n=1,2$. Composing these with $\alpha_n(f)$ then shows that $f$ induces automorphisms
    $$\phi(f):\msE_1\iso\msE_1\tand\psi(f):\msE_2\iso\msE_2.$$
    The map in one direction is $f\mapsto(\phi(f),\psi(f))$.
    
    ($\from$) Fix some $g\in\PG$ carrying $H\into\P$ onto $G\into\P$. Then, by assumption, $g$ give an isomorphism $f_g:H\iso G$ over $B$. To see that is an hW isomorphism, we note that the action of $\PG$ on $\P$ preserves $\msO_\P(1)$, so
    $$\pull f\msO_S(E)\simeq\pull f\msO_S(1)\simeq\msO_H(1)\simeq\msO_H(D).$$
    The assignment $g\mapsto f_g$ gives the inverse map.
\end{proof}

Let us now introduce/recall some notation. Let $0=\msF_0\subset\dots\subset\msF_9=\msB(\mbfD)_4$ denote the filtration of \cref{prop:hW-norm-filt}. Let $\msY:=\coker(\mu)$ and $\msD:=\det(\msE_1)$ as usual, and let $\msL:=\msD\otimes\inv\msY$ be the Hodge bundle of $\mbfD$. Let $\msG_i:=\msF_i\otimes\msL^2\otimes\inv[2]\msD\simeq\msF_i\otimes\inv[2]\msY$ for all $i$. Note in particular that 
\[\msG_9=\msB(\mbfD)_4\otimes\inv[2]\msD\otimes\msL^2\tand\msG_9/\msG_8\simeq\msO_B.\]
We next count the (weighted) number of hW curves properly embedded in $\P=\P(\mbfD)$, see \cref{prop:weighted-hW}.
\begin{assump}
    Assume from now on that $k:=\Gamma(B,\msO_B)$ is a field, and let $q:=\#k$. This is not strictly necessary for what comes below, but it does simplify some statements. 
\end{assump}
\begin{construct}\label{construct:F}
    Let $\meG$ denote the groupoid whose objects are global sections $s\in\hom^0(B,\msG_9)$ \important{with nonzero image} in $\hom^0(B,\msG_9/\msG_8)$, and whose $\Hom$-sets are the transporters
    \[\Hom_\meG(s_1,s_2):=\b{g\in G(\mbfD):g\cdot s_1=s_2}\]
    where $G(\mbfD)\actson\msG_9$ via its action on $\P$.

    Assume that $h^0(\msE_1)>0$, and implicitly fix a choice of nonzero section $\sigma_0\in H^0(B,\msE_1)$.
    Recall (\cref{lem:push-P(121)}) that $\msE_1\simeq\push p\msO_\P(1)$. Let $L\subset\P$ denote the hyperplane cut out by $\sigma_0\in\hom^0(\P,\msO_\P(1))$. There is a functor $F:\meG\to\meH(B)$ given on objects by $F(s):=(Z(s),D_s)$,
    with $Z(s)\into\P$ the zero scheme of $s$ and $D_s:=Z(s)\cap L$. That $F(s)$ is an hW curve over $B$ can be deduced from \cref{thm:loc-model-conv}.
\end{construct}
\begin{notn}\label{notn:sum-H-into-P}
    Whenever we write
    \[\sum_{H\into\P}(*),\]
    we mean that the sum ranges over isomorphism classes of hW curves properly embedded in $\P$.
\end{notn}
\begin{prop}\label{prop:weighted-hW}
    The weighted number of hW curves properly embedded in $\P=\P(\mbfD)$ is
    \[\sum_{H\into\P}\frac1{\#\Aut_{\meH(B)}(H)}=\#\meG\le\frac{\#\hom^0(B,\msG_8)}{\#\PG},\]
    with equality if the left hand side is nonzero. As indicated in \cref{notn:sum-H-into-P}, the sum above ranges over isomorphism classes of hW curves properly embedded in $\P$.
\end{prop}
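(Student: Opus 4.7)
The plan is to compute both sides via orbit-stabilizer and then bound the number of objects of $\meG$ using a short exact sequence. Write $\Omega \subset \hom^0(B,\msG_9)$ for the set of objects of $\meG$; since morphisms in $\meG$ come from the $G(\mbfD)$-action on sections, orbit-stabilizer gives $\#\meG = \#\Omega/\#G(\mbfD)$. For the sum in the statement, \cref{prop:prop-emb-iso} identifies isomorphism classes of properly embedded hW curves with $\PG$-orbits on the set $\Sigma$ of subschemes $H \subset \P$ that are properly embedded hW curves, with $\Aut_{\meH(B)}(H) = \Stab_\PG(H)$; thus $\sum_{H\into\P} 1/\#\Aut_{\meH(B)}(H) = \#\Sigma/\#\PG$.

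To establish the equality, I would connect $\Omega$ and $\Sigma$ through the map $Z \colon \Omega \to \Sigma$, $s \mapsto Z(s)$. This is well-defined by \cref{construct:F}: since $\msG_9/\msG_8 \simeq \msO_B$, the condition that $s$ has nonzero image in $\hom^0(B,\msG_9/\msG_8) \simeq k$ says exactly that the ``$Y^2$-coefficient'' of the local equation \cref{eqn:local-model} cutting out $Z(s)$ is a nonzero constant, and \cref{thm:loc-model-conv} then ensures $Z(s)$ is a properly embedded hW curve. Essential surjectivity of $Z$ onto $\Sigma$ follows from the computation of $\msO_\P(H)$ in \cref{prop:hW-norm-bund}. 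Two sections of $\msO_\P(H)$ with equal zero scheme differ by a global unit on the connected proper $B$-scheme $\P$, that is by an element of $k^\times$, so each fiber of $Z$ has exactly $q-1$ elements, giving $\#\Omega = (q-1)\cdot\#\Sigma$. Since $\PG = G(\mbfD)/k^\times$ by definition, $\#G(\mbfD) = (q-1)\cdot\#\PG$, so $\#\meG = \#\Omega/\#G(\mbfD) = \#\Sigma/\#\PG$, matching the required equality. Here I am using that the scalar subgroup $k^\times \subset G(\mbfD)$ acts trivially on $\msG_9 = \msB(\mbfD)_4 \otimes \inv[2]\msD \otimes \msL^2$: a scalar $\lambda$ acts with weights $4$ on $\msB(\mbfD)_4$, $-4$ on $\inv[2]\msD$ (via $\msD = \det\msE_1$), and $0$ on $\msL^2$ (via $\msL = \msD \otimes \inv\msY$, with $\msY = \coker(\mu)$ also carrying weight $2$), summing to $0$.

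For the inequality, I would apply $\hom^0(B,-)$ to the short exact sequence $0 \to \msG_8 \to \msG_9 \to \msO_B \to 0$ coming from the top of the filtration in \cref{prop:hW-norm-filt} to obtain a left-exact sequence $0 \to \hom^0(B,\msG_8) \to \hom^0(B,\msG_9) \xto\alpha k$. Then $\Omega = \alpha^{-1}(k^\times)$: if $\alpha = 0$ then $\Omega = \emptyset$, and otherwise $\alpha$ is surjective (the target is one-dimensional), so each of the $q-1$ nonzero fibers is a coset of $\hom^0(B,\msG_8)$, giving $\#\Omega = (q-1)\cdot\#\hom^0(B,\msG_8)$. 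In either case $\#\Omega \le (q-1)\cdot\#\hom^0(B,\msG_8)$, with equality iff $\Omega \neq \emptyset$. Dividing by $\#G(\mbfD) = (q-1)\cdot\#\PG$ yields the stated inequality, with equality iff $\Omega \neq \emptyset$, iff $\#\meG \neq 0$, iff the left hand side of the proposition is nonzero. The only step that really demands care is the bookkeeping between $G(\mbfD)$- and $\PG$-actions (in particular checking that scalars act trivially on $\msG_9$); once that is settled, the remainder is a direct orbit count combined with the cohomology sequence.
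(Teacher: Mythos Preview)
Your proof is correct and follows essentially the same route as the paper: both compute $\#\meG=\#\Omega/\#G(\mbfD)$ as an action-groupoid cardinality, identify the weighted sum with $\#\Sigma/\#\PG$ via \cref{prop:prop-emb-iso}, and handle the inequality through the exact sequence $0\to\msG_8\to\msG_9\to\msO_B\to0$. One small remark: your check that the scalar subgroup $k^\times$ acts trivially on $\msG_9$ is not actually needed anywhere --- the identity $\#\Omega/\#G(\mbfD)=\#\Sigma/\#\PG$ already follows from $\#\Omega=(q-1)\#\Sigma$ (your fiber count for $Z$) together with $\#G(\mbfD)=(q-1)\#\PG$, with no reference to how scalars act on sections.
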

\begin{proof}
    Suppose the left hand side is nonzero, i.e. that there exists some hW curve properly embedded in $\P$. Note that this forces $h^0(\msE_1)>0$. We first claim that the image of the functor $F$ of \cref{construct:F} consists exactly of the hW curves which can be properly embedded in $\P$. By definition, any curve in the image is properly embedded in $\P$. Conversely, if $H\into\P$ is properly embedded, then \cref{prop:hW-norm-bund} shows that $H$ is the zero set of some global section $s$ of $\msG_9$. Furthermore, the local models in \cref{thm:loc-model} show that the ``$Y^2$ coefficieint'' of the equation cutting out $H$ is always nonzero, i.e. that $s$ has nonzero image in $\hom^0(B,\msG_9/\msG_8)$. As a consequence of \cref{prop:prop-emb-iso}, given $s,s'\in\meG$, the induced map $\Hom_\meG(s,s')\to\Hom_{\meH(B)}(F(s),F(s'))$ is bijective. Thus, $\meG$ is equivalent to the groupoid of hW curves properly embedded in $\P$, proving the first equality in the claim.
    Since $\meG$ is the groupoid associated to action of $G:=G(\mbfD)$ on the set $X:=\ob\meG$, one easily computes $\#\meG=\#X/\#G$.
    Finally $\msG_9/\msG_8\cong\msO_B$ and $\#G=(q-1)\cdot\#\PG$, from which the rest of the claim follows.
\end{proof}

\begin{assump}
    From here on out, assume we are working within the context of \cref{set:main}. In particular, $k=\F_q$ is a finite field and $B/k$ is a smooth $k$-curve of genus $g=g(B)$.
\end{assump}

\begin{prop}\label{prop:PG-card}
    Let $d:=\deg\msL$, and let $\msV:=\sHom(\msY,\Sym^2(\msE_1))$. Then,
    \[\#\GL(\msE_1)q^{3d+3(1-g)}\le\#\PG(\mbfD)\le\#\GL(\msE_1)\cdot\#\hom^0(B,\msV)\]
\end{prop}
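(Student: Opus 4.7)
The strategy is to analyze the group homomorphism $\pi_1\colon G(\mbfD)\to\GL(\msE_1)$, $(\phi,\psi)\mapsto\phi$, together with its kernel $K$. Any $\psi\in K$ satisfies $\psi\mu=\mu$, i.e.\ $\psi|_{\Sym^2(\msE_1)}=\id$, so it can be written uniquely as $\psi=\id+\eta\circ\pi$ for some $\eta\in\Hom(\msY,\msE_2)=\hom^0(B,\sHom(\msY,\msE_2))$, where $\pi\colon\msE_2\twoheadrightarrow\msY$ is the quotient; such a $\psi$ is an automorphism iff the induced endomorphism $1+c(\eta)\in\End(\msY)=k$ is nonzero, where $c\colon\hom^0(B,\sHom(\msY,\msE_2))\to k$ records $\pi\circ\eta$. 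Tensoring the defining sequence of $\mbfD$ by $\msY^{-1}$ yields
\[0\to\msV\to\sHom(\msY,\msE_2)\to\msO_B\to0,\]
whose long exact sequence identifies $c$ with the map on $\hom^0$; the connecting homomorphism $k=\hom^0(B,\msO_B)\to\hom^1(B,\msV)$ sends $1$ to the extension class $e:=[\msE_2]\in\Ext^1(\msY,\Sym^2(\msE_1))\cong\hom^1(B,\msV)$. Hence $c$ is surjective when $e=0$ and zero when $e\neq0$, producing $\#K=(q-1)\cdot\#\hom^0(B,\msV)$ in the first case and $\#K=\#\hom^0(B,\msV)$ in the second.

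In either case $\#K\le(q-1)\cdot\#\hom^0(B,\msV)$, and since $\pi_1$ has fibers of size at most $\#K$, one gets $\#G\le(q-1)\cdot\#\GL(\msE_1)\cdot\#\hom^0(B,\msV)$; dividing by $q-1$ yields the upper bound. For the lower bound when $e=0$, the sequence splits as $\msE_2\cong\Sym^2(\msE_1)\oplus\msY$, so every $\phi\in\GL(\msE_1)$ lifts to the block-diagonal $(\phi,\Sym^2(\phi)\oplus\id_\msY)\in G$, making $\pi_1$ surjective. Combining with $\#K=(q-1)\cdot\#\hom^0(B,\msV)$ gives $\#\PG=\#\GL(\msE_1)\cdot\#\hom^0(B,\msV)\ge\#\GL(\msE_1)\cdot q^{3d+3(1-g)}$, where the last inequality is Riemann-Roch applied to $\msV$: this is a rank-$3$ bundle with $\det\msV=\msL^3$, so $\deg\msV=3d$ and $\chi(\msV)=3d+3(1-g)$.

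The delicate case is $e\neq 0$, which is the main obstacle. Here $\pi_1$ is no longer surjective: by the standard morphism-of-extensions analysis, $\phi\in\im(\pi_1)$ iff $\Sym^2(\phi)_{*}e\in\langle e\rangle\subset\hom^1(B,\msV)$, i.e.\ iff $\phi$ stabilizes the point $[e]\in\P(\hom^1(B,\msV))$ under the $\GL(\msE_1)$-action induced by $\Sym^2$. Orbit-stabilizer applied to this action forces
\[\#\im(\pi_1)\ge\frac{\#\GL(\msE_1)\cdot(q-1)}{q^{h^1(\msV)}-1},\]
where $h^1(\msV):=\dim_k\hom^1(B,\msV)\ge1$ since $e\neq0$. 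Riemann-Roch now reads $\#\hom^0(B,\msV)=q^{3d+3(1-g)+h^1(\msV)}$, and combining with $\#K=\#\hom^0(B,\msV)$ gives
\[\#G\ge\#\GL(\msE_1)\cdot(q-1)\cdot q^{3d+3(1-g)}\cdot\frac{q^{h^1(\msV)}}{q^{h^1(\msV)}-1}>(q-1)\cdot\#\GL(\msE_1)\cdot q^{3d+3(1-g)},\]
so $\#\PG\ge\#\GL(\msE_1)\cdot q^{3d+3(1-g)}$. The key point is that the shrinkage of $\im(\pi_1)$ in the non-split case is exactly compensated by the enlargement of $\#K$ forced by $h^1(\msV)\ge1$, mediated by the orbit-stabilizer bound on $\P(\hom^1(B,\msV))$.
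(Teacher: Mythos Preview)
Your proof is correct, but it takes a genuinely different route from the paper's.

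The paper sheafifies $G(\mbfD)$ to a sheaf of groups $\ul G$ and produces a short exact sequence of sheaves
\[0\too\msV\too\ul G\too\ul\GL(\msE_1)\times\G_m\too0,\]
tracking not only $\phi$ but also the scalar $\lambda\in\G_m$ that $\psi$ induces on $\msY$. It then invokes Giraud's non-abelian cohomology to obtain an exact sequence of pointed sets on global sections, with a differential $d$ into $\hom^1(B,\msV)$; the bounds come uniformly from the trivial inequalities $1\le\#\im(d)\le\#\hom^1(B,\msV)$, with no case split.

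Your approach projects only to $\GL(\msE_1)$, computes the kernel by hand via the long exact sequence of the defining extension tensored with $\msY^{-1}$, and then --- this is the part that differs most --- identifies $\im(\pi_1)$ with the stabilizer of $[e]\in\P(\hom^1(B,\msV))$ under the $\GL(\msE_1)$-action and applies orbit--stabilizer. The case split on whether $e=0$ is the price you pay for not carrying the $\G_m$ factor along. What you gain is a completely elementary argument (no appeal to Giraud), and your orbit--stabilizer step makes the role of $\hom^1(B,\msV)$ rather transparent: it is literally the ambient space for the orbit of the extension class. The paper's version is cleaner and more uniform; yours is more concrete. Both reduce to Riemann--Roch for $\msV$ in the end.
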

\begin{proof}
    We first compute $\#G(\mbfD)$, and then we divide by $(q-1)=\#\units k$. To do this, we upgrade $G(\mbfD)$ by considering the (Zariski) sheaf $\ul G$ on $B$ defined by
    \[\ul G(U):=\b{(\phi,\psi)\in\GL(\msE_1\vert_U)\by\GL(\msE_2\vert_U)\,\left|\,\commsquare{\Sym^2(\msE_1\vert_U)}\mu{\msE_2\vert_U}{\Sym^2(\phi)}\psi{\Sym^2(\msE_1\vert_U)}\mu{\msE_2\vert_U}\t{ commutes}\right.}\]
    with the obvious restriction maps. In particular, $\ul G(B)=G(\mbfD)$. We next note that there is a map $\ul G\to\ul\GL(\msE_1)\by\G_m$ given, on sections over some $U\openset B$, by $(\phi,\psi)\mapsto(\phi,\lambda)$ where $\lambda\in\G_m(U)$ is uniquely chosen so that
    \[\homses{\Sym^2(\msE_1\vert_U)}{}{\msE_2\vert_U}{}{\msY\vert_U}{\Sym^2(\phi)}\psi\lambda{\Sym^2(\msE_1\vert_U)}{}{\msE_2\vert_U}{}{\msY\vert_U}\]
    commutes. Finally this map fits into a sequence
    \[0\too\underbrace{\sHom(\msY,\Sym^2(\msE_1))}_\msV\too\ul G\too\ul\GL(\msE_1)\by\G_m\too0\]
    which is furthermore exact, as can be checked over an open cover trivializing $\msE_1,\msE_2$. Recall we are interested in computing the order of $G(\mbfD)=\hom^0(B,\ul G)$. The utility of phrasing things as above is that \cite[Proposition 3.3.2.2 + Corollaire 3.3.2.3]{giraud} now gives us an exact sequence
    \begin{equation}\label{es:gir}
        0\too\hom^0(B,\msV)\too\hom^0(B,\ul G)\xtoo F\GL(\msE_1)\by\units k\xtoo d\hom^1(B,\msV)
    \end{equation}
    of pointed sets whose differential $d$ induces an injection (of sets) $\im(F)\bs(\GL(\msE_1)\by\units k)\intoo\hom^1(B,\msV)$.
    Because $\im(F)=\ker(d)$ acts freely on $\GL(\msE_1)\by\units k$, we can take an alternating product of cardinalities in \cref{es:gir} to conclude that
    \[\frac{\#\hom^0(B,\msV)}{\#\hom^0(B,\ul G)}\cdot\#(\GL(\msE_1)\by\units k)=\#\im(d).\]
    The trivial inequalities $1\le\#\im(d)\le\#\hom^1(B,\msV)$ thus give
    \begin{equation*}
       (q-1)\#\GL(\msE_1)q^{\chi(\msV)}=(q-1)\#\GL(\msE_1)\cdot\frac{\#\hom^0(B,\msV)}{\#\hom^1(B,\msV)} \le \#\hom^0(B,\ul G) \le (q-1)\#\GL(\msE_1)\cdot\#\hom^0(B,\msV)
       .
    \end{equation*}
    One easily computes $\deg\msV=3\deg\msL=3d$, so the claimed lower bound follows from Riemann-Roch.
\end{proof}

\subsection{Counting Minimal hW Curves}\label{sect:count-hW}
We continue to work in the context of \cref{set:main}.
\begin{rec}[see \cref{notn:Hs}]
    Recall that $\meH_M(B)\into\meH(B)$ denotes the full subgroupoid consisting of minimal hW curves, and that $\meH_{M,NT}(B)\into\meH_M(B)$ denotes the full subgroupoid consisting of those minimal hW curves $(H\xto\pi B,D)$ for which $D_K$ is not twice a point.
\end{rec}
Recall  that every hW curve is isomorphic to one which can be properly embedded in the projective bundle associated to some unique (up to isomorphism), normalized (1,2,1)-datum (\cref{lem:basic-0} and \cref{cor:basic}). In order to bound the number of minimal hW curves, we will partition them according to their normalized (1,2,1)-datum, and then count the number of curves w/ given (1,2,1)-datum using a combination of \cref{prop:weighted-hW,prop:PG-card}. In order to compute the quantities appearing in these propositions, we will make use of the filtration constructed in \cref{prop:hW-norm-filt}. That being said, let us first name the objects which will appear in our analysis.
\begin{notn}\label{notn:myriad}
    Given a normalized (1,2,1)-datum $\mbfD=(\msE_1,\msE_2,\mu)$, define the following myriad of objects.
    \begin{itemize}
        \item Let $\msD=\msD(\mbfD):=\det(\msE_1)$. If $\msE_1$ is unstable, it has Harder-Narasimhan filtration
        \begin{equation}\label{ses:normalized}
            0\too\msO_B\too\msE_1\too\msD\too0.
        \end{equation}
        \item Let $u=u(\mbfD)$ be the unstable degree of $\msE_1$. This is $0$ if $\msE_1$ is semistable, but is otherwise $\deg\msD<0$, see \cref{defn:normalized}.
        \item Let $\msL=\msL(\mbfD):=\det(\msE_1)\otimes\coker(\mu)^{-1}$ be the Hodge bundle of the datum. 
        \item Let $d=d(\mbfD):=\deg\msL$.
        \item Let $0=\msF_0\subset\dots\subset\msF_9=\msB(\mbfD)_4$ denote the filtration of \cref{prop:hW-norm-filt}. Only $\msF_0,\msF_5,\msF_8,\msF_9$ are defined if $\msE_1$ is semistable.
        \item Let $\msG_i:=\msF_i\otimes\msL^2\otimes\inv[2]\msD$ for all $i$. By \cref{prop:hW-norm-filt}, we always have an exact sequence
        \begin{equation}\label{ses:ss-filt}
            0\too \underbrace{\Sym^4(\msE_1)\otimes\inv[2]\msD\otimes\msL^2}_{\msG_5}\too \msG_8\too\Sym^2(\msE_1)\otimes\inv\msD\otimes\msL\too0,
        \end{equation}
        and if $\msE_1$ is unstable (i.e. if $u<0$), we further have
        \begin{equation}\label{eqn:filt-grad}
            \frac{\msG_{i+1}}{\msG_i}\cong\Threecases{\msD^{i-2}\otimes\msL^2}{0\le i\le4}{\msD^{i-6}\otimes\msL}{5\le i\le 7}{\msO_B}{i=8.}
        \end{equation}
    \end{itemize}
\end{notn}
Motivated by \cref{prop:weighted-hW}, our first task will be to find an upper bound for $\#\hom^0(B,\msG_8)$. Equivalently, in light of Riemann-Roch, we first bound $\#\hom^1(B,\msG_8)$ for the $(1,2,1)$-data relevant to our count. For later use, we also bound $\#\hom^1(B,\msG_8/\msG_5)$.
\begin{rem}\label{rem:u-constraint}
    By \cref{cor:deg(E)-constraint}, all normalized (1,2,1)-data associated to an hW curve in $\meH_{M,NT}(B)$ satisfy $-(d+g)\le u$. Recall also that $u\le0$ always, by definition, and that every hW curve is isomorphic to one whose associated (1,2,1)-datum is normalized, by \cref{lem:hW-normalized}.
\end{rem}
\begin{lemma}\label{lem:G8-bound-us}
    Let $\mbfD$ be a normalized (1,2,1)-datum with $-(d+g)\le u<0$. Furthermore, assume $d>3g$. Then, $h^1(\msG_8)\le7g-2$ and $h^1(\msG_8/\msG_5)\le3g-1$.
\end{lemma}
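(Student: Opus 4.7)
The plan is to exploit the filtration of $\msG_8$ coming from the graded pieces displayed in \cref{eqn:filt-grad}, bounding $h^1$ of each line bundle summand via Riemann--Roch (equivalently, the Serre-dual bound $h^1(\msM)\le\max(0,2g-1-\deg\msM)$) and then summing. All size constraints on $u$ and $d$ feed directly into these degree estimates.

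First I would set up the inequality machinery. Under the hypothesis $-(d+g)\le u<0$ combined with $d>3g$, the degrees of the graded pieces in \cref{eqn:filt-grad} can be tabulated as follows: for the quotients of $\msG_8$ by $\msG_5$, the pieces $\msD^{-1}\otimes\msL$ and $\msL$ have degrees $d-u$ and $d$, both exceeding $3g>2g-2$, so their $h^1$ vanishes; the top piece $\msD\otimes\msL$ has degree $u+d\ge -g$, so Riemann--Roch (via Serre duality applied to $\omega_B\otimes\pinv{(\msD\otimes\msL)}$) yields $h^1(\msD\otimes\msL)\le 2g-1-(u+d)\le 2g-1-(-g)=3g-1$. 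Chaining the resulting bounds through the short exact sequences $0\to\msG_i\to\msG_{i+1}\to\msG_{i+1}/\msG_i\to0$ for $i=5,6,7$ gives $h^1(\msG_8/\msG_5)\le 3g-1$, establishing the second half of the lemma.

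Second, I would bound $h^1(\msG_5)$ using the same filtration technique applied to the pieces $\msD^{i-2}\otimes\msL^2$ for $0\le i\le 4$, of degrees $2d-2u$, $2d-u$, $2d$, $2d+u$, $2d+2u$. The first four all exceed $2g-2$: indeed $2d-2u>2d-u>2d>6g>2g-2$, and $2d+u\ge d-g>2g>2g-2$ since $d>3g$. Thus only the top quotient $\msD^2\otimes\msL^2$ can contribute, with degree $2d+2u\ge -2g$, giving $h^1(\msD^2\otimes\msL^2)\le 2g-1-(2d+2u)\le 2g-1+2g=4g-1$. Summing through the filtration yields $h^1(\msG_5)\le 4g-1$.

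Finally, applying the short exact sequence $0\to\msG_5\to\msG_8\to\msG_8/\msG_5\to0$ and combining the two estimates gives $h^1(\msG_8)\le h^1(\msG_5)+h^1(\msG_8/\msG_5)\le (4g-1)+(3g-1)=7g-2$, which is the first half of the lemma. The only subtle step is the degree computation $u+d\ge -g$, which is exactly what the normalization hypothesis $u\ge -(d+g)$ from \cref{rem:u-constraint} supplies; this is the point in the argument where the minimality/non-triviality assumption enters (through its downstream consequence \cref{cor:deg(E)-constraint}). No other obstacle arises: the rest is a direct bookkeeping exercise with Riemann--Roch.
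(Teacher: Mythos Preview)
Your proposal is correct and follows essentially the same approach as the paper: both arguments use the filtration of \cref{eqn:filt-grad}, check that all graded pieces except $\msD^2\otimes\msL^2$ and $\msD\otimes\msL$ have degree exceeding $2g-2$ (hence vanishing $h^1$), and bound the two remaining pieces via Serre duality to obtain $4g-1$ and $3g-1$ respectively. The only cosmetic difference is that you split the estimate into $h^1(\msG_5)$ and $h^1(\msG_8/\msG_5)$ before summing, whereas the paper sums $h^1$ over all eight graded pieces directly.
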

\begin{proof}
    The existence of the filtration $\msG_i$ of \cref{notn:myriad} shows that $h^1(\msG_8)\le\sum_{i=0}^7h^1(\msG_{i+1}/\msG_i)$ and $h^1(\msG_8/\msG_5)\le\sum_{i=5}^7h^1(\msG_{i+1}/\msG_i)$. With our bounds on $u$, for $i\neq4,7$, $\deg(\msG_{i+1}/\msG_i)>2g-2$ (see \cref{eqn:filt-grad}), so $h^1(\msG_{i+1}/\msG_i)=0$ unless $i=4,7$ (i.e. excluding the graded pieces $\msD^2\otimes\msL^2$ and $\msD\otimes\msL$). Recalling that $u+d\ge-g$ by assumption, for these pieces, one has
    \[h^1(\msD^2\otimes\msL^2)=h^0(\omega_B\otimes\inv[2]\msD\otimes\inv[2]\msL)\le\deg(\omega_B\otimes\inv[2]\msD\otimes\inv[2]\msL)+1=2g-2-2(u+d)+1\le4g-1,\]
    and similarly $h^1(\msD\otimes\msL)\le3g-1$.
\end{proof}
We still need to bound $h^1(\msG_8)$ when $u=0$, i.e. when $\msE_1$ is semistable.
\begin{lemma}\label{lem:ss-sym}
     Let $\msE$ be a rank $r\ge1$ semistable vector bundle on $B$, and fix an integer $k\ge1$. Let $\msM$ be a line bundle on $B$ with $\deg\msM\ge2grk-1$. Then, 
     \[H^1\p{B,\Sym^{rk}(\msE)\otimes\pinv[k]{\det\msE}\otimes\msM}=0.\]
\end{lemma}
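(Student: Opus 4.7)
My plan is to apply Serre duality to reduce the desired vanishing to a slope bound on $\Sym^{rk}(\msE)$ coming from the semistability of $\msE$.

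By Serre duality,
\[
H^1\p{B,\,\Sym^{rk}(\msE)\otimes\pinv[k]{\det\msE}\otimes\msM}^\vee
\;\cong\;
\Hom_{\msO_B}\!\p{\Sym^{rk}(\msE),\,\omega_B\otimes(\det\msE)^k\otimes\inv\msM}.
\]
Suppose for contradiction the right side is nonzero, and choose a nonzero $\phi\colon\Sym^{rk}(\msE)\to\omega_B\otimes(\det\msE)^k\otimes\inv\msM$. Its image is a line subsheaf $\msQ$ of the line bundle $\omega_B\otimes(\det\msE)^k\otimes\inv\msM$, so
\[
\deg\msQ\;\le\;(2g-2)+k\deg\msE-\deg\msM\;\le\;k\deg\msE-2g(rk-1)-1,
\]
using the hypothesis $\deg\msM\ge 2grk-1$. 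Since $\msQ$ is simultaneously a line-bundle quotient of $\Sym^{rk}(\msE)$, this would contradict the slope estimate
\begin{equation}\label{eq:slopebound}
\mu_{\min}\!\p{\Sym^{rk}(\msE)}\;\ge\;rk\,\mu(\msE)-2g(rk-1),
\end{equation}
which is what remains to prove.

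In characteristic $0$, \cref{eq:slopebound} holds in the much stronger form $\mu_{\min}(\Sym^{rk}(\msE)) = rk\,\mu(\msE)$, since Schur functors preserve semistability (Ramanan-Ramanathan). In positive characteristic, $\Sym^{rk}(\msE)$ may fail to be semistable, so the weaker bound \cref{eq:slopebound} must be proved separately. The approach I would take: any line-bundle quotient of $\Sym^{rk}(\msE)$ lifts, through the natural surjection $\msE^{\otimes rk}\onto\Sym^{rk}(\msE)$, to a line-bundle quotient of $\msE^{\otimes rk}$; so it suffices to show $\mu_{\min}(\msE^{\otimes rk})\ge rk\,\mu(\msE)-2g(rk-1)$. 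This in turn follows by induction on $n$ from a ``tensor-product semistability defect'' inequality of the form $\mu_{\min}(\msE\otimes\msF)\ge\mu_{\min}(\msE)+\mu_{\min}(\msF)-2g$, valid on any smooth projective curve in any characteristic: the inductive step applies this with $\msF=\msE^{\otimes(n-1)}$, yielding a cumulative defect of $2g(n-1)$ after $n$ iterations.

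The main obstacle is the positive-characteristic control of slopes under tensor products. In characteristic $0$ one has $\mu_{\min}(\msE\otimes\msF)=\mu_{\min}(\msE)+\mu_{\min}(\msF)$, but in characteristic $p$ a careful Harder-Narasimhan analysis is required to control the defect term. The hypothesis $\deg\msM\ge 2grk-1$ is calibrated exactly so that the accumulated defect $2g(rk-1)$ is absorbed, with the extra ``$-1$'' compensating for the $(2g-2)$ contributed by $\omega_B$ in the Serre-duality step.
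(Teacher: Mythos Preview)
Your reduction via Serre duality is fine, and you have correctly identified the crux: one needs a lower bound on $\mu_{\min}(\Sym^{rk}(\msE))$ in positive characteristic. However, the ``tensor-product semistability defect'' inequality $\mu_{\min}(\msE\otimes\msF)\ge\mu_{\min}(\msE)+\mu_{\min}(\msF)-2g$ that you invoke is not a standard result, and you provide neither a proof nor a reference. In positive characteristic, controlling $\mu_{\min}$ of tensor products is genuinely delicate---the known uniform bounds (e.g.\ those of Langer) are typically phrased via Frobenius pullbacks and involve the ranks and the characteristic in a more complicated way---and I do not see how a ``careful Harder--Narasimhan analysis'' produces this specific bound with defect exactly $2g$. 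Since you yourself call this the main obstacle, the proposal as written has a real gap at precisely the point where the difficulty lies.

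The paper's argument sidesteps this issue entirely with a trick worth noting. Observe that $\Sym^{rk}(\msE)\otimes(\det\msE)^{-k}$ is invariant under $\msE\rightsquigarrow\msE\otimes\msN$ for any line bundle $\msN$, so one may twist $\msE$ at the outset to arrange $(2g-1)r<\deg\msE\le 2gr$, in particular $\mu(\msE)>2g-1$. Then $\msE$ is globally generated, giving a surjection $\msO_B^{\oplus N}\onto\msE$, and hence a surjection
\[
\msE^{\oplus N(rk-1)}=\msE\otimes(\msO_B^{\oplus N})^{\otimes(rk-1)}\onto\msE\otimes\msE^{\otimes(rk-1)}\onto\Sym^{rk}(\msE).
\]
Tensoring by $(\det\msE)^{-k}\otimes\msM$ and using that $H^1$ is right-exact on a curve, one is reduced to showing $H^1\bigl(B,\msE\otimes(\det\msE)^{-k}\otimes\msM\bigr)=0$. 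This last bundle is \emph{semistable} (being $\msE$ twisted by a line bundle), and a direct slope computation using $\deg\msE\le 2gr$ and $\deg\msM\ge 2grk-1$ shows its slope exceeds $2g-2$, whence $H^1=0$. The key point is that the paper never needs any semistability or slope control for $\Sym^{rk}(\msE)$ or for tensor powers of $\msE$---only for $\msE$ itself---thereby avoiding the positive-characteristic pathology altogether.
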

\begin{proof}
    First note that the vector bundle $\Sym^{rk}(\msE)\otimes\pinv[k]{\det\msE}$ is unchanged under the substitution $\msE\squigto\msE\otimes\msN$ for any line bundle $\msN$ on $B$. Thus, we may twist $\msE$ in order to assume that 
    \[(2g-1)r<\deg(\msE)\le 2gr,\]
    in particular, that it has slope $\mu(\msE)>2g-1$.  Since $\msE$ is semistable of high slope, \cite[Proposition 10.27]{mukai} tells us that it is globally generated. Fix a surjection $\msO^{\oplus N}_B\onto\msE$. From this, one obtains a surjection $\msE^{\oplus N(rk-1)}=\msE\otimes\p{\msO_B^{\oplus N}}^{\otimes(rk-1)}\onto\msE\otimes\msE^{\otimes(rk-1)}\onto\Sym^{rk}(\msE)$. Tensoring with $\pinv[k]{\det\msE}\otimes\msM$ then gives the surjection
    \begin{equation}\label{surj:ss-sym}
        F:\sq{\msE\otimes\pinv[k]{\det\msE}\otimes\msM}^{\oplus N(rk-1)}\onto\Sym^{rk}(\msE)\otimes\pinv[k]{\det\msE}\otimes\msM
        .
    \end{equation}
    Because $\hom^2(B,\ker F)=0$, \cref{surj:ss-sym} induces a surjection on $\hom^1$'s, so it suffices to show that $\hom^1(B,\msE\otimes\pinv[k]{\det\msE}\otimes\msM)=0$. Because $\msE\otimes\pinv[k]{\det\msE}\otimes\msM$ is semistable with slope
    \[\mu(\msE)-k\deg(\msE)+\deg(\msM)>(2g-1)-2grk+(2grk-1)=2g-2,\] 
    we win by \cite[Proposition 10.26]{mukai}.
\end{proof}
\begin{cor}\label{cor:G8-bound-ss}
    Let $\mbfD$ be a normalized (1,2,1)-datum with $u=0$. Furthermore, assume $d\ge4g$. Then, $h^1(\msG_8)=0$ and $h^1(\msG_8/\msG_5)=0$.
\end{cor}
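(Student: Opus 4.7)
The plan is to apply \cref{lem:ss-sym} to each of the three graded pieces of the filtration which together compute $h^1(\msG_8)$, namely $\msG_5$ and $\msG_8/\msG_5$, and then to deduce the vanishing of $h^1(\msG_8)$ via the short exact sequence \cref{ses:ss-filt}.

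First, I would treat $\msG_8/\msG_5$. By \cref{notn:myriad}, this is isomorphic to $\Sym^2(\msE_1)\otimes\inv\msD\otimes\msL$, which is of the form $\Sym^{rk}(\msE)\otimes\pinv[k]{\det\msE}\otimes\msM$ (as in \cref{lem:ss-sym}) with $r=2$, $k=1$, $\msE=\msE_1$ (semistable because $u=0$), and $\msM=\msL$. The hypothesis of \cref{lem:ss-sym} requires $\deg\msM=d\ge 2grk-1=4g-1$, which is satisfied since $d\ge 4g$. Hence $h^1(\msG_8/\msG_5)=0$.

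Next, I would treat $\msG_5\cong\Sym^4(\msE_1)\otimes\inv[2]\msD\otimes\msL^2$, which fits the same template with $r=2$, $k=2$, $\msE=\msE_1$, and $\msM=\msL^2$. The required inequality $\deg\msL^2=2d\ge 2grk-1=8g-1$ follows from $d\ge 4g$, so \cref{lem:ss-sym} yields $h^1(\msG_5)=0$.

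Finally, taking the long exact sequence in cohomology attached to \cref{ses:ss-filt},
\[
\hom^1(B,\msG_5)\too\hom^1(B,\msG_8)\too\hom^1(B,\msG_8/\msG_5),
\]
both outer terms vanish by the two preceding steps, so $h^1(\msG_8)=0$ as well. There is no main obstacle here: the entire argument is a bookkeeping exercise to verify that the numerical hypothesis $d\ge 4g$ is exactly enough to trigger \cref{lem:ss-sym} for both of the nontrivial graded pieces of the filtration on $\msG_8$.
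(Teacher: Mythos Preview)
Your proof is correct and follows exactly the approach the paper takes: apply \cref{lem:ss-sym} to the two graded pieces $\msG_5$ and $\msG_8/\msG_5$ of \cref{ses:ss-filt}, verifying the numerical hypothesis in each case, and then conclude via the long exact sequence. The paper's proof is the one-line ``this follows from \cref{ses:ss-filt} along with \cref{lem:ss-sym}'', of which your argument is simply the unpacked version.
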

\begin{proof}
    That $u=0$ means that $\msE_1$ is semistable. Thus, this follows from \cref{ses:ss-filt} along with \cref{lem:ss-sym}.
\end{proof}
Given some $(1,2,1)$-datum $\mbfD$, \cref{prop:weighted-hW,prop:PG-card} tell us that
\[\sum_{H\into\P(\mbfD)}\frac1{\#\Aut_{\meH(B)}(H)}\le\frac{\#\hom^0(B,\msG_8)}{\#\PG}\le\frac{\#\hom^0(B,\msG_8)}{\#\GL(\msE_1)q^{3d+3(1-g)}}.\]
Recall (\cref{defn:121-datum}) that $\msE_1$ above is not an isomorphism invariant of $\mbfD$, but its associated $\PGL_2$-torsor is. Thus, we would like a bound given only in terms of this $\PGL_2$-torsor.

\begin{lemma}\label{lem:Aut(P)}
    Let $\msE$ be a rank 2 vector bundle on $B$, with associated $\PGL_2$-torsor $P=\ul\Isom(\msO^{\oplus2},\msE)\cby{\GL_2}\PGL_2$. Then,
    \[\#\Aut(P)=\frac{\#\GL(\msE)}{q-1}.\]
\end{lemma}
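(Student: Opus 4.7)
The plan is to express $\Aut(P)$ as global sections of a sheaf of groups on $B$ fitting into a central extension by $\G_m$, and then read off the cardinality from the associated long exact cohomology sequence.

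Let $Q := \ul\Isom(\msO^{\oplus 2}, \msE)$, the $\GL_2$-torsor on $B$ whose pushout along $\GL_2\to\PGL_2$ is $P$. Twisting the central short exact sequence
\[
1 \to \G_m \to \GL_2 \to \PGL_2 \to 1
\]
by $Q$ (via conjugation, which is trivial on the center) gives a short exact sequence of fppf sheaves of groups on $B$
\[
1 \to \G_m \to \ul\GL(\msE) \to \ul\Aut(P) \to 1,
\]
where $\ul\Aut(P) := Q \cby{\GL_2,\,\mathrm{conj}} \PGL_2$ is the inner form of $\PGL_2$ corresponding to $P$ and, by construction, $\Gamma(B, \ul\Aut(P)) = \Aut(P)$. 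Taking fppf cohomology on $B$ and using Hilbert~90 together with the identification $H^0(B,\G_m) = \Gamma(B,\msO_B)^\times = \F_q^\times$ yields the exact sequence of pointed sets
\[
1 \to \F_q^\times \to \GL(\msE) \to \Aut(P) \xrightarrow{\delta} \Pic(B),
\]
in which the kernel of $\GL(\msE) \to \Aut(P)$ is the scalar subgroup $\F_q^\times$ of order $q-1$.

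The key remaining step, which is the main obstacle, is to show that the connecting map $\delta$ is trivial. Unwinding the non-abelian coboundary, an element $\sigma \in \Aut(P)$ is the same as an automorphism of the projective bundle $\P(\msE)\to B$, and $\delta(\sigma)$ is the class in $\Pic(B)$ of the line bundle $\msN_\sigma$ determined by $\sigma^* \msO_{\P(\msE)}(1) \simeq \msO_{\P(\msE)}(1) \otimes \pi^* \msN_\sigma$; equivalently, $\delta(\sigma) = 0$ iff $\sigma$ lifts to an automorphism of the bundle $\msE$ itself (up to scalars). Once this lifting is established, the exact sequence above collapses to
\[
1 \to \F_q^\times \to \GL(\msE) \to \Aut(P) \to 1,
\]
and taking cardinalities gives $\#\Aut(P) = \#\GL(\msE)/(q-1)$, as required.
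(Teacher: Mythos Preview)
Your framework coincides with the paper's: both twist $1\to\G_m\to\GL_2\to\PGL_2\to1$ to obtain $1\to\G_m\to\ul\GL(\msE)\to\ul\Aut(P)\to1$ and reduce the claim to the vanishing of the coboundary $\delta\colon\Aut(P)\to\Pic(B)$. The gap is that you never actually argue $\delta=0$. You give the (correct) interpretation $\delta(\sigma)=[\msN_\sigma]$ where $\sigma^*\msO_{\P(\msE)}(1)\simeq\msO_{\P(\msE)}(1)\otimes\pi^*\msN_\sigma$, and then write ``Once this lifting is established\ldots'' without supplying any argument for why $\msN_\sigma$ must be trivial. The paper tries to close this by continuing the sequence one step to $\hom^1(B,\G_m)\xrightarrow{(2)}\hom^1(B,\ul\GL(\msE))$ and asserting that $(2)$ has trivial kernel via comparison with the map $\msL\mapsto\msL\oplus\msL$ into $\hom^1(B,\GL_2)$.

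In fact this step cannot be completed as stated. The map $(2)$ sends $\msL$ to the $\ul\GL(\msE)$-torsor $\ul\Isom(\msE,\msE\otimes\msL)$, so $\im(\delta)=\ker((2))=\{\msL\in\Pic(B):\msE\otimes\msL\simeq\msE\}\subset\Pic(B)[2]$. Whenever $\Pic^0(B)[2]\neq0$ one can choose $\msE$ making this nontrivial: take $\msN\in\Pic^0(B)[2]\sm\{0\}$ and $\msE=\msO_B\oplus\msN$; then $\msE\otimes\msN\simeq\msE$, and the resulting automorphism of $\P(\msE)$ swaps the two tautological sections, whereas every element of $\GL(\msE)$ is diagonal (since $\hom^0(B,\msN)=\hom^0(B,\inv\msN)=0$) and hence fixes them. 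Here $\#\GL(\msE)=(q-1)^2$ while $\#\Aut(P)=2(q-1)$, so the stated equality fails. Only the inequality $\#\Aut(P)\ge\#\GL(\msE)/(q-1)$ follows from your exact sequence, and that is immediate.
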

\begin{proof}
    Taking inner twists in $1\to\G_m\to\GL_2\to\PGL_2\to1$ by a cocycle defining $\msE$ gives the exact sequence
    \[0\too\G_m\too\ul\GL(\msE)\too\ul\Aut(P)\too0.\]
    To prove the claim, it suffices to show that this sequence remains exact after taking global sections. Consider the following commutative diagram with top row exact:
    \[\begin{tikzcd}
        \GL(\msE)\ar[r, "(1)"]&\Aut(P)\ar[r]&\hom^1(B,\G_m)\ar[r, "(2)"]\ar[d, equals]&\hom^1(B,\ul\GL(\msE))\ar[d]&T\ar[l, symbol=\in]\ar[d, maps to]\\
        &&\hom^1(B,\G_m)\ar[r, "(3)"]&\hom^1(B,\GL_2)&T\cby{\ul\GL(\msE)}\ul\Isom(\msO^{\oplus2},\msE)\ar[l, symbol=\in]
    \end{tikzcd}\]
    Surjectivity of (1) is equivalent, by exactness of the top row, to injectivity of (2). Commutativity tells us that (2) is injective if (3) is. Finally, (3) is injective because it can be identified with the map sending a line bundle $\msL$ to the rank 2 vector bundle $\msL\oplus\msL$.
\end{proof}
\begin{cor}\label{cor:case-2-ineq}
    Let $\mbfD$ be a normalized (1,2,1)-datum. Then,
    \[\sum_{H\into\P(\mbfD)}\frac1{\#\Aut_{\meH(B)}(H)}\le\frac{\#\hom^0(B,\msG_8)}{(q-1)\cdot\#\Aut(P)q^{3d+3(1-g)}}.\]
\end{cor}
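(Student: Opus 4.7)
The plan is to observe that this corollary is a direct composite of the three immediately preceding results: Proposition \ref{prop:weighted-hW}, Proposition \ref{prop:PG-card}, and Lemma \ref{lem:Aut(P)}. No new ideas are required, only a chain of inequalities and an identification of $\#\GL(\msE_1)$ with $(q-1)\cdot\#\Aut(P)$.

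First, I would invoke Proposition \ref{prop:weighted-hW}, which already yields the weighted count
\[\sum_{H\into\P(\mbfD)}\frac{1}{\#\Aut_{\meH(B)}(H)} \le \frac{\#\hom^0(B,\msG_8)}{\#\PG(\mbfD)}.\]
Next, I would apply the lower bound on $\#\PG(\mbfD)$ from Proposition \ref{prop:PG-card}, namely $\#\PG(\mbfD)\ge \#\GL(\msE_1)\,q^{3d+3(1-g)}$, to pass to
\[\sum_{H\into\P(\mbfD)}\frac{1}{\#\Aut_{\meH(B)}(H)} \le \frac{\#\hom^0(B,\msG_8)}{\#\GL(\msE_1)\,q^{3d+3(1-g)}}.\]

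Finally, I would rewrite the denominator using Lemma \ref{lem:Aut(P)}, which asserts $\#\Aut(P) = \#\GL(\msE_1)/(q-1)$, i.e. $\#\GL(\msE_1) = (q-1)\cdot\#\Aut(P)$. Substituting this in gives exactly the stated inequality
\[\sum_{H\into\P(\mbfD)}\frac{1}{\#\Aut_{\meH(B)}(H)} \le \frac{\#\hom^0(B,\msG_8)}{(q-1)\cdot\#\Aut(P)\,q^{3d+3(1-g)}}.\]

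There is no real obstacle here; the corollary is essentially a bookkeeping step whose purpose is to re-express the bound entirely in terms of the isomorphism-invariant datum $\Aut(P)$ (rather than $\GL(\msE_1)$, which depends on the particular representative $\msE_1$ of the $\PGL_2$-torsor $P$). This reformulation is what will allow the subsequent counting arguments to sum over $\PGL_2$-torsors (or equivalently over isomorphism classes of normalized $(1,2,1)$-data) without overcounting.
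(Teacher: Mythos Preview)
Your proposal is correct and follows exactly the same approach as the paper, which simply cites \cref{prop:weighted-hW,prop:PG-card,lem:Aut(P)} in a single line. Your expansion of the chain of inequalities is precisely what that citation unpacks to.
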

\begin{proof}
    This follows from \cref{prop:weighted-hW,prop:PG-card,lem:Aut(P)}.
\end{proof}
In the end, we will need to understand the sum of the above expressions as $\mbfD$ varies over isomorphism classes of (1,2,1)-data.
\begin{notn}
    \hfill\begin{itemize}
        \item Let $P$ be a $\PGL_2$-torsor on $B$. We let $\msV(P)$ denote the rank 3 vector bundle (associated the to the $\GL_3$-torsor) obtained by pushing $P$ along the $\PGL_2$-representation $\Sym^2(\texttt{taut})\otimes\inv{\texttt{det}}:\PGL_2\to\GL_3$.
        \item Furthermore, extending \cref{notn:myriad}, given a (1,2,1)-datum $\mbfD=(\msE_1,\msE_2,\mu)$, we let $P=P(\mbfD)$ denote the $\PGL_2$-torsor associated to $\msE_1$.
        
        Note that, in this context, $\msV(P)\cong\Sym^2(\msE_1)\otimes\pinv{\det\msE_1}=\Sym^2(\msE_1)\otimes\inv\msD$.
    \end{itemize}
\end{notn}
\begin{lemma}\label{lem:121-iso-bij}
    There is a bijection between isomorphism classes of $(1,2,1)$-data and triples $(P,\msL,\eps)$, where $P\in\hom^1(B,\PGL_2)$, $\msL\in\Pic(B)$, and $\eps\in\Ext^1(\inv\msL,\msV(P))\cong\hom^1(B,\msV(P)\otimes\msL)$.
\end{lemma}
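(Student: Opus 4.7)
The plan is to define the bijection explicitly in both directions and then verify that it is well-defined on isomorphism classes. In one direction, given a $(1,2,1)$-datum $\mbfD=(\msE_1,\msE_2,\mu)$, I set $P:=P(\mbfD)$ (the $\PGL_2$-torsor associated to $\msE_1$), I set $\msL:=\msL(\mbfD)=\det(\msE_1)\otimes\coker(\mu)^{-1}$, and I produce $\eps$ by twisting the defining short exact sequence
\[0\too\Sym^2(\msE_1)\xtoo\mu\msE_2\too\coker(\mu)\too0\]
by $\pinv{\det\msE_1}=\inv\msD$. Using $\coker(\mu)\simeq\msD\otimes\inv\msL$ and $\Sym^2(\msE_1)\otimes\inv\msD\simeq\msV(P)$, this gives an extension
\[0\too\msV(P)\too\msE_2\otimes\inv\msD\too\inv\msL\too0\]
whose class is the desired $\eps\in\Ext^1(\inv\msL,\msV(P))$. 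The identification $\Ext^1(\inv\msL,\msV(P))\simeq\hom^1(B,\msV(P)\otimes\msL)$ is automatic since $\inv\msL$ is locally free, so $\sExt^{\ge1}(\inv\msL,\msV(P))=0$ and the local-to-global $\Ext$ spectral sequence degenerates.

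The first thing I have to verify is that the triple $(P,\msL,\eps)$ is invariant under the equivalence of $(1,2,1)$-data from \cref{defn:121-datum}. Under the twist $\msE_1\squigto\msE_1\otimes\msM$, $\msE_2\squigto\msE_2\otimes\msM^2$, one has $\det(\msE_1)\squigto\det(\msE_1)\otimes\msM^2$ and $\coker(\mu)\squigto\coker(\mu)\otimes\msM^2$, so $\msL$ is unchanged. The $\PGL_2$-torsor $P$ is visibly invariant, and the bundle $\msV(P)$ is as well (being defined by a $\PGL_2$-representation). The defining extension acquires a factor of $\msM^2$ everywhere which cancels upon twisting by the new $\inv\msD$, so $\eps$ is also unchanged. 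Handling arbitrary isomorphisms $(\phi,\psi)$ between twisted data is now an immediate functoriality check on extension classes.

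In the reverse direction, given $(P,\msL,\eps)$, I pick any rank $2$ vector bundle $\msE_1$ whose associated $\PGL_2$-torsor is $P$ (such an $\msE_1$ exists by lifting along the surjection $\hom^1(B,\GL_2)\to\hom^1(B,\PGL_2)$, e.g.\ as obstruction lies in $\hom^2(B,\G_m)=0$ by Tsen's theorem, or simply because one may choose any rank 2 representative), set $\msD:=\det(\msE_1)$, and interpret $\eps$ via the identification $\msV(P)\simeq\Sym^2(\msE_1)\otimes\inv\msD$ as a class in $\Ext^1(\msD\otimes\inv\msL,\Sym^2(\msE_1))$ after tensoring with $\msD$. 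This class produces an extension $0\to\Sym^2(\msE_1)\to\msE_2\to\msD\otimes\inv\msL\to0$, and I take $\mu$ to be the inclusion. The cokernel is the line bundle $\msD\otimes\inv\msL$, so the Hodge bundle of the constructed datum is $\det(\msE_1)\otimes(\msD\otimes\inv\msL)^{-1}=\msL$, as required. A different choice of $\msE_1$ differs by a twist $\msE_1\otimes\msM$, and tracing through the construction shows the resulting data are isomorphic in the sense of \cref{defn:121-datum}.

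The two maps are clearly inverse on the level of the data: starting from $\mbfD$, running the inverse construction rebuilds the same extension up to the freedom in choosing $\msE_1$, which is exactly the isomorphism equivalence. The only genuinely delicate step will be bookkeeping the twist-compatibility in the definition of isomorphism of $(1,2,1)$-data with the extension class $\eps$; once one fixes conventions carefully this is straightforward, but it is where the bulk of the careful verification lies. With the bijection established, the isomorphism $\Ext^1(\inv\msL,\msV(P))\simeq\hom^1(B,\msV(P)\otimes\msL)$ follows from $\sHom(\inv\msL,\msV(P))\simeq\msV(P)\otimes\msL$.
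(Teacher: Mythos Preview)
Your proposal is correct and follows essentially the same approach as the paper: define the forward map by sending $\mbfD$ to $(P(\mbfD),\msL(\mbfD),\eps(\mbfD))$ where $\eps$ arises from twisting the defining extension by $\inv\msD$, and define the inverse by lifting $P$ to a rank $2$ bundle using $\hom^2(B,\G_m)=0$ and then reconstructing the extension from $\eps$. The paper's proof is more terse but structurally identical; your added verification that the triple is invariant under twists by $\msM$ is exactly the content of the paper's phrase ``easily checked to be an isomorphism invariant.''
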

\begin{proof}
    Let $\mbfD=(\msE_1,\msE_2,\mu)$ be a (1,2,1)-datum. Then, $\msV(P(\mbfD))\cong\Sym^2(\msE_1)\otimes\inv\msD$, so the extension $0\to\Sym^2(\msE_1)\xto\mu\msE_2\to\inv{\msL(\mbfD)}\otimes\msD\to0$, after tensoring with $\inv\msD$, gives rise to a class $\eps(\mbfD)\in\Ext^1(\inv\msL,\msV(P))$. In one direction, the bijection is given by $\mbfD\mapsto(P(\mbfD),\msL(\mbfD),\eps(\mbfD))$. This triple is easily checked to be an isomorphism invariant.

    Conversely, suppose we're given $(P,\msL,\eps)$. Because $H^2(B,\G_m)=0$ by \cite[Example III.2.22 Case (g)]{milne-et}, we can choose some rank 2 vector bundle $\msE$ lifting $P$. Having made such a choice, $\eps$ defines an extension $0\to\Sym^2(\msE)\otimes\pinv{\det\msE}\xto{\mu'}\msE'\to\inv\msL\to0$. Observe that $(\msE,\msE'\otimes\det\msE,\mu'\otimes1)$ is a (1,2,1)-datum and that its isomorphism class is independent of the choices made. This gives the other direction of the bijection.
\end{proof}
\begin{notn}\label{notn:G8-iso-inv}
    Given $P,\msL,\eps$ as in \cref{lem:121-iso-bij}, let $\msG_8=\msG_8(P,\msL,\eps)$ denote the (isomorphism class of the) rank $8$ vector bundle $\msG_8(\mbfD)$ associated to any (1,2,1)-datum $\mbfD$ associated to the triple $(P,\msL,\eps)$ via \cref{lem:121-iso-bij}. We similarly define $\msG_i=\msG_i(P,\msL,\eps)$ for all other $i\in\{0,5,8,9\}$.
\end{notn}
\begin{notn}\label{notn:M}
    Let $\Bun_{\PGL_2}(k)$ denote the groupoid of $\PGL_2$-torsors over $B$, and set
    \[M:=\abs{\Bun_{\PGL_2}(k)}=\hom^1(B,\PGL_2),\]
    the set of isomorphism classes of $\PGL_2$-torsors over $B$. Endow $M$ with the discrete measure $m$ where each $[P]\in\hom^1(B,\PGL_2)$ is weighted by $1/\#\Aut(P)$.
\end{notn}
\begin{lemma}\label{lem:BunG-card}
    $\#\Bun_{\PGL_2}(k)=\int_M\dm=2q^{3(g-1)}\zeta_B(2)$
\end{lemma}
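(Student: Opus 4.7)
The first equality is an unpacking of definitions: since $m$ is the discrete measure on $M=|\Bun_{\PGL_2}(k)|$ assigning each $[P]$ the mass $1/\#\Aut(P)$, one has $\int_M \dm = \sum_{[P]\in M} 1/\#\Aut(P) = \#\Bun_{\PGL_2}(k)$, so the content is entirely in the second equality.

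For the second equality, the plan is to invoke the Harder--Siegel--Tamagawa mass formula for $G$-bundles on a smooth projective geometrically connected curve $B/\F_q$ of genus $g$: for a split semisimple group $G/\F_q$, one has
\[
\#\Bun_G(k) \;=\; \tau(G)\cdot q^{(g-1)\dim G}\cdot \prod_{i=1}^{r}\zeta_B(d_i),
\]
where the $d_i$ are the degrees of the fundamental invariants of $G$ and $\tau(G)$ is its Tamagawa number. Specialized to $G=\PGL_2$: we have $\dim G = 3$, a unique fundamental invariant of degree $d_1=2$, and $\tau(\PGL_2) = 2$ by Ono's computation of Tamagawa numbers (which identifies $\tau(G^{\mathrm{ad}})$ with $\#\pi_1(G^{\mathrm{ad}})$ for $G=\SL_2$ with center $\mu_2$). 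Substituting yields $2q^{3(g-1)}\zeta_B(2)$, as desired.

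As a more self-contained alternative that avoids citing Tamagawa numbers directly, one can derive the result from the Atiyah-Bott identity $\#\Bun_{\SL_2}(k)=q^{3(g-1)}\zeta_B(2)$ together with the fppf central extension $1\to\mu_2\to\SL_2\to\PGL_2\to1$: the resulting long exact sequence in non-abelian cohomology, combined with the Kummer sequence $1\to\mu_2\to\G_m\to\G_m\to1$ (and the vanishing $H^2(B,\G_m)=\Br(B)=0$ for $B$ smooth proper over $\F_q$), relates $\#\Bun_{\PGL_2}(k)$ to $\#\Bun_{\SL_2}(k)$ by a multiplicative factor that evaluates to $2$. A third route is to compute via $1\to\G_m\to\GL_2\to\PGL_2\to1$, exploiting \cref{lem:Aut(P)} (which yields $\#\Aut(\msE) = (q-1)\#\Aut(P)$ for any rank~$2$ lift $\msE$ of $P$) and the fact that the induced fibration of groupoids has fibers equivalent to $\Bun_{\G_m}$ of cardinality $\#\Pic(B)/(q-1)$, reducing the claim to the known cardinality of $\Bun_{\GL_2}(k)$.

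The main obstacle is handling residue characteristic $2$. In each approach above, the argument goes through unchanged so long as one consistently works in the fppf topology (as this paper does): this is essential for treating $\mu_2$ when $2$ is not invertible in $k$, and for the Kummer/Brauer-vanishing inputs. The end numerical result is uniform across characteristics.
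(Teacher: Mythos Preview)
Your proposal is correct and takes essentially the same approach as the paper: both invoke Siegel's mass formula relating $\#\Bun_{\PGL_2}(k)$ to the Tamagawa number $\tau(\PGL_2)=2$ and the local Euler factors, which assemble into $q^{3(g-1)}\zeta_B(2)$. The paper carries out the local computation $\#\PGL_2(\kappa(x))/(\#\kappa(x))^3=1-q_x^{-2}$ explicitly rather than citing the fundamental-invariant form of the mass formula, and cites Gaitsgory--Lurie (rather than Ono) for $\tau(\PGL_2)=2$, but these are cosmetic differences; your alternative routes via $\SL_2$ or $\GL_2$ are not used in the paper.
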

\begin{proof}
    Note that the first equality is by definition. Siegel's formula \cite[Theorem 4.8 + Proposition 4.13]{tama} tells us that the Tamagawa number $\tau(\PGL_2)$ of $\PGL_2$ is related to the groupoid cardinality of $\Bun_{\PGL_2}(k)$ via
    \[\frac{\tau(\PGL_2)}{\#\Bun_{\PGL_2}(k)}=q^{(1-g)\dim\PGL_2}\prod_{\t{closed }x\in B}\frac{\#\PGL_2(\kappa(x))}{(\#\kappa(x))^{\dim\PGL_2}}=q^{3(1-g)}\prod_{\t{closed }x\in B}\p{1-q^{-2\deg x}}=q^{3(1-g)}\zeta_B(2)^{-1}.\]
    It is well-know that $\tau(\PGL_2)=2$; this can be deduced e.g. from the main result of \cite{gaitsgory-lurie} (see also \cite[Theorem 6.1]{tama}). Thus, we conclude that $\#\Bun_{\PGL_2}(k)=2q^{3(g-1)}\zeta_B(2)$.
\end{proof}
\begin{thm}\label{thm:mas-count-hard}
    Use notation as in \cref{set:main}. Then,
    \[\limsup_{d\to\infty}\frac{\#\meH_{M,NT}^{=d}(B)}{\#\meM_{1,1}^{= d}(K)}\le2\zeta_B(2)\zeta_B(10).\]
\end{thm}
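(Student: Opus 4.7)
The plan is to count $\#\meH_{M,NT}^{=d}(B)$ by partitioning hW curves according to the isomorphism class of their normalized $(1,2,1)$-datum, then compare with the asymptotic $\#\meM_{1,1}^{=d}(K) \sim \#\Pic^0(B)\, q^{10d + 2(1-g)}/[(q-1)\zeta_B(10)]$ from \cref{thm:EC-d-asymp}. By \cref{lem:basic-0,cor:basic}, each element of $\meH_{M,NT}^{=d}(B)$ properly embeds (uniquely up to isomorphism) into $\P(\mbfD)$ for some normalized $\mbfD$, so combining \cref{cor:case-2-ineq} with the parameterization in \cref{lem:121-iso-bij} yields
\[
\#\meH_{M,NT}^{=d}(B) \;\le\; \sum_{(P,\msL,\eps)} \frac{\#H^0(B,\msG_8(P,\msL,\eps))}{(q-1)\,\#\Aut(P)\,q^{3d+3(1-g)}},
\]
where $(P,\msL,\eps)$ ranges over triples with $\deg\msL = d$ and (by \cref{rem:u-constraint}) unstable degree $u = u(P) \ge -(d+g)$. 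Using the filtration of \cref{prop:hW-norm-filt}, a direct determinant computation gives $\deg\msG_8 = 13d$ (independent of $u$), so Riemann-Roch gives $h^0(\msG_8) = 13d + 8(1-g) + h^1(\msG_8)$ and the bound simplifies to $\sum_{(P,\msL,\eps)} q^{10d + 5(1-g) + h^1(\msG_8)}/[(q-1)\,\#\Aut(P)]$.

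Partitioning the sum by $u$, the main contribution will come from $u = 0$ (semistable $P$). For $d \ge 4g$, \cref{cor:G8-bound-ss} forces $h^1(\msG_8) = 0$, while \cref{lem:ss-sym} applied to $\msV(P) \otimes \msL$ (of degree $3d$) gives $h^1(\msV(P)\otimes\msL) = 0$; thus only $\eps = 0$ contributes and the sum over $\msL \in \Pic^d(B)$ produces a factor of $\#\Pic^0(B)$. Bounding $\sum_{P \text{ semistable}} 1/\#\Aut(P) \le \int_M \dm = 2q^{3(g-1)}\zeta_B(2)$ via \cref{lem:BunG-card}, the semistable contribution collapses to $2\zeta_B(2)\,\#\Pic^0(B)\,q^{10d+2(1-g)}/(q-1)$, which upon division by the elliptic asymptotic of \cref{thm:EC-d-asymp} is exactly $2\zeta_B(2)\zeta_B(10)$.

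The main obstacle is then to show that the unstable strata ($u < 0$) contribute $o(q^{10d + 2(1-g)})$, so that they vanish in the limsup. The uniform bound $h^1(\msG_8) \le 7g - 2$ from \cref{lem:G8-bound-us} is the starting point, and one must balance it against $\sum_P 1/\#\Aut(P)$ (ranging over unstable $P$ with given $u$) and $\#H^1(B, \msV(P)\otimes\msL)$ (counting the $\eps$'s). Two regimes arise: for $u \in [2g-1-d,\,-1]$, the filtration of $\msV(P)\otimes\msL$ has all graded pieces of degree $\ge 2g-1$, forcing $\#\{\eps\} = 1$, while the contribution of $\sum_{P : u(P)=u} 1/\#\Aut(P)$ decays geometrically in $q^{u}$ once $-u$ exceeds $2g-2$ (owing to the enlarged automorphism group $\#\Aut(P) \gtrsim (q-1)\,q^{h^0(\inv\msD)}$ for split extensions $\msE_1 \cong \msO_B \oplus \msD$ with $\deg\msD = u$); for $u \in [-(d+g),\,2g-2-d]$, a range of width only $O(g)$, the bound $h^1(\msV(P)\otimes\msL) \le 2g-1-(d+u)$ produces many $\eps$'s, but these are offset by the exponentially small number of unstable $P$ with such very negative $u$. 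Careful bookkeeping of these competing factors, using Riemann-Roch on the filtration pieces of $\msV(P)\otimes\msL$ and on the extension classes defining $\msE_1$, shows the total unstable contribution is $O(q^{10d - \delta})$ for some $\delta = \delta(g,q) > 0$. Dividing by $\#\meM_{1,1}^{=d}(K)$ and passing to the limit gives the claimed bound $\limsup_{d\to\infty} \#\meH_{M,NT}^{=d}(B)/\#\meM_{1,1}^{=d}(K) \le 2\zeta_B(2)\zeta_B(10)$.
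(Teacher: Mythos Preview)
Your setup is correct through the reduction to $\sum_{(P,\msL,\eps)} q^{10d+5(1-g)+h^1(\msG_8)}/[(q-1)\,\#\Aut(P)]$, but the strategy thereafter has a genuine error. You claim the unstable strata contribute $O(q^{10d-\delta})$, i.e.\ are negligible in the limit. This is false: for any \emph{fixed} unstable $P$ with $u(P)=u<0$, once $d$ is large enough (say $u>2g-2-d$), all graded pieces of $\msG_8$ and of $\msV(P)\otimes\msL$ have degree $>2g-2$, so $h^1(\msG_8)=0$ and only $\eps=0$ survives. Thus such a $P$ contributes exactly $\#\Pic^0(B)\,q^{10d+5(1-g)}/[(q-1)\#\Aut(P)]$, the \emph{same} order as a semistable $P$. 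Summing over all unstable $P$ gives a term of size $q^{10d+2(1-g)}\cdot\int_{M^{\mathrm{unstable}}}\dm$, which is $\Theta(q^{10d+2(1-g)})$, not $o(q^{10d+2(1-g)})$. Your ``geometric decay in $q^u$'' of $\sum_{u(P)=u}1/\#\Aut(P)$ is correct, but summing a convergent geometric series yields a positive constant, not zero. So bounding the semistable part by the full $\int_M\dm$ and then adding a nonnegligible unstable part would overshoot the target.

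The paper's fix is not to separate by stability at all. Instead, one writes the bound as $\int_M \chi_d(P)\,I_d(P)\,\dm$ with $I_d(P)=\sum_{\msL\in\Pic^d(B)}\sum_\eps q^{h^1(\msG_8)}$, observes that \cref{lem:G8-bound-us,cor:G8-bound-ss} give the \emph{uniform} bound $I_d(P)\le\#\Pic^0(B)\,q^{10g-3}$ for $d\gg_g1$ and $P\in M^{\ge-(d+g)}$, and applies the Dominated Convergence Theorem. For each fixed $P$, Serre vanishing forces $I_d(P)\to\#\Pic^0(B)$ pointwise, so after swapping limit and integral the answer is $q^{3(1-g)}\zeta_B(10)\cdot\int_M 1\,\dm = 2\zeta_B(2)\zeta_B(10)$ by \cref{lem:BunG-card}. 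The point is that semistable and unstable $P$ contribute on equal footing in the limit; DCT is precisely the device that lets you pass to the pointwise limit uniformly without tracking how the ``bad'' region $u\le 2g-2-d$ shrinks as $d$ grows.
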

\begin{proof}
    We begin with a bit of notation. For any $u\in\Z_{<0}$, let $M^{\ge u}\subset M$ denote the subset consisting of isomorphism classes of $\PGL_2$-torsors over $B$ which lift to a rank 2 vector bundle $\msV$ on $B$ which is either semistable or has Harder-Narasimhan filtration of the form $0\to\msO_B\to\msV\to\det\msV\to0$, with $\deg\msV\ge u$.

    Below, when we write $\sum_\mbfD$, we mean that the sum is over isomorphism classes of \important{normalized} (1,2,1)-data.
    \begin{align}\label{ineq:M-NT-init-count}
        \#\meH_{M,NT}^{=d}(B)
        &=\sum_{\substack{\alpha\in\abs{\meH_{M,NT}(B)}\\\Ht(\alpha)=d}}\frac1{\#\Aut_{\meH(B)}(\alpha)}
        \nonumber\\
        &=\sum_{\substack{\mbfD\\d(\mbfD)=d}}\sum_{\substack{H\in\abs{\meH_{M,NT}(B)}\\\mbfD(H)\cong\mbfD}}\frac1{\#\Aut_{\meH(B)}(H)} &&\t{by \cref{lem:hW-normalized}}
        \nonumber\\
        &=\sum_{\substack{\mbfD\\d(\mbfD)=d}}\sum_{\substack{H\into\P(\mbfD)\\H\in\abs{\meH_{M,NT}(B)}}}\frac1{\#\Aut_{\meH(B)}(H)} &&\t{by \cref{cor:basic}}
        \nonumber\\
        &=\sum_{\substack{\mbfD\\d(\mbfD)=d\\-(d+g)\le u(\mbfD)\le0}}\sum_{\substack{H\into\P(\mbfD)\\H\in\abs{\meH_{M,NT}(B)}}}\frac1{\#\Aut_{\meH(B)}(H)} &&\t{by \cref{rem:u-constraint}}
        \nonumber\\
        &\le\sum_{\substack{\mbfD\\d(\mbfD)=d\\-(d+g)\le u(\mbfD)\le0}}\frac{\#\hom^0(B,\msG_8)}{(q-1)\cdot\#\Aut(P)q^{3d+3(1-g)}}&&\t{by \cref{cor:case-2-ineq}}
        \nonumber\\
        &=\sum_{P\in M^{\ge-(d+g)}}\;\sum_{\msL\in\Pic^d(B)}\;\sum_{\eps\in\hom^1(B,\msV(P)\otimes\msL)}\frac{\#\hom^0(B,\msG_8)}{(q-1)\cdot \#\Aut(P)q^{3d+3(1-g)}}&&\t{by \cref{lem:121-iso-bij}}
        \nonumber\\
        &=\int_{M^{\ge-(d+g)}}\sum_{\msL\in\Pic^d(B)}\;\sum_{\eps\in\hom^1(B,\msV(P)\otimes\msL)}\frac{\#\hom^0(B,\msG_8)}{(q-1)q^{3d+3(1-g)}}\dm
        \nonumber\\
        &=\frac1{q-1}\int_M\chi_d(P)\sum_{\msL\in\Pic^d(B)}\;\sum_{\eps\in\hom^1(B,\msV(P)\otimes\msL)}\frac{\#\hom^0(B,\msG_8)}{q^{3d+3(1-g)}}\dm
        ,
    \end{align}
    where $\chi_d:M\to\bits$ is the characteristic function of $M^{\ge-(d+g)}$.
    By \cref{thm:EC-d-asymp}, $\#\meM_{1,1}^{=d}(K)\sim\#\Pic^0(B)\cdot q^{10d+2(1-g)}/\sq{(q-1)\zeta_B(10)}$. Thus,
    \begin{align}\label{ineq:wrong-order}
        \limsup_{d\to\infty}\frac{\#\meH_{M,NT}^{=d}(B)}{\#\meM_{1,1}^{=d}(K)} 
        &= \limsup_{d\to\infty}(q-1)\zeta_B(10)\frac{\#\meH_{M,NT}^{=d}(B)}{\#\Pic^0(B)\cdot q^{10d+2(1-g)}} \nonumber\\
        &\le\frac{\zeta_B(10)}{\#\Pic^0(B)}\lim_{d\to\infty}\int_M{\chi_d(P)\sum_{\msL\in\Pic^d(B)}\;\sum_{\eps\in\hom^1(B,\msV(P)\otimes\msL)}\frac{\#\hom^0(B,\msG_8)}{q^{13d+5(1-g)}}}\dm \nonumber\\
        &=\frac{q^{3(1-g)}\zeta_B(10)}{\#\Pic^0(B)}\lim_{d\to\infty}\int_M\chi_d(P)\underbrace{\sum_{\msL\in\Pic^d(B)}\;\sum_{\eps\in\hom^1(B,\msV(P)\otimes\msL)}q^{h^1(\msG_8)}}_{I_d(P)}\dm
        ,
    \end{align}
    with last equality holding by Riemann-Roch (note that $\deg\msG_8=13d$).
    We would like to commute the limit and integral in \cref{ineq:wrong-order}, so we will bound $I_d(P)$ and then apply dominated convergence. Observe \cref{ses:ss-filt} $\msV(P)\otimes\msL\cong\msG_8/\msG_5$, so \cref{lem:G8-bound-us} and \cref{cor:G8-bound-ss} tell us that $h^1(\msG_8)\le7g-2$ and $h^1(\msV(P)\otimes\msL)\le3g-1$ whenever $d\gg_g1$. Putting these together, whenever $d\gg_g1$ and $P\in M^{\ge-(d+g)}$, we have (with $I_d(P)$ defined as indicated in \cref{ineq:wrong-order})
    \begin{equation}\label{ineq:Id-DCT}
        I_d(P)\le\#\Pic^0(B)\cdot q^{3g-1}\cdot q^{7g-2}=\#\Pic^0(B)q^{10g-3}.
    \end{equation}
    Observe that $\int_M\#\Pic^0(B)q^{10g-3}\dm<\infty$ and $\int_M\lim_{d\to\infty}I_d(P)\dm=\int_M\#\Pic^0(B)\dm<\infty$ (with equality by Serre vanishing) by \cref{lem:BunG-card}. Thus, \cref{ineq:Id-DCT} allows us to apply the Dominated Convergence Theorem (DCT) below:
    \begin{align}\label{ineq:M-NT-frac-final}
        \limsup_{d\to\infty}\frac{\#\meH^{=d}_{M,NT}(B)}{\#\meM^{=d}_{1,1}(K)}
        &\le\frac{q^{3(1-g)}\zeta_B(10)}{\#\Pic^0(B)}\lim_{d\to\infty}\int_M\chi_d(P)I_d(P)\dm &&\t{by \cref{ineq:wrong-order}}
        \nonumber\\
        &=\frac{q^{3(1-g)}\zeta_B(10)}{\#\Pic^0(B)}\int_M\lim_{d\to\infty}{\chi_d(P)I_d(P)}\dm &&\t{by DCT}
        \nonumber\\
        &=\frac{q^{3(1-g)}\zeta_B(10)}{\#\Pic^0(B)}\int_M\#\Pic^0(B)\dm &&\t{by Serre vanishing}
        \nonumber\\
        &=\frac{q^{3(1-g)}\zeta_B(10)}{\#\Pic^0(B)}\cdot\#\Pic^0(B)\cdot2q^{3(g-1)}\zeta_B(2) &&\t{by \cref{lem:BunG-card}}
        \nonumber\\
        &=2\zeta_B(2)\zeta_B(10)
        .
    \end{align}
    This was the claimed inequality.
\end{proof}
\begin{cor}\label{cor:mod-nt-estimate}
    Use notation as in \cref{set:main}. Then,
    \[\limsup_{d\to\infty}\frac{\#\meH^{\le d}_{M,NT}(B)}{\#\meM^{\le d}_{1,1}(K)}\le2\zeta_B(2)\zeta_B(10).\]
\end{cor}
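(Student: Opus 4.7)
The plan is to deduce this corollary directly from Theorem \ref{thm:mas-count-hard} via a Cesàro-style summation argument, using the asymptotic from Theorem \ref{thm:EC-d-asymp} only as an auxiliary input (we need just that the denominator tends to infinity, not its precise growth rate). Writing $a_d := \#\meH^{=d}_{M,NT}(B)$ and $b_d := \#\meM^{=d}_{1,1}(K)$, we have
\[
    \#\meH^{\le d}_{M,NT}(B) = \sum_{e=0}^d a_e
    \tand
    \#\meM^{\le d}_{1,1}(K) = \sum_{e=0}^d b_e,
\]
so the corollary is the assertion $\limsup_{d\to\infty}\,(\sum_{e=0}^d a_e)/(\sum_{e=0}^d b_e) \le L$, where $L := 2\zeta_B(2)\zeta_B(10)$, while Theorem \ref{thm:mas-count-hard} gives the term-by-term bound $\limsup_{d\to\infty} a_d/b_d \le L$.

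The argument will go as follows. Fix $\eps > 0$. By Theorem \ref{thm:mas-count-hard}, there is an integer $N = N(\eps)$ such that $a_e \le (L+\eps) b_e$ for every $e \ge N$. Setting $C_N := \sum_{e=0}^{N-1} a_e$ (a constant independent of $d$), summation gives
\[
    \sum_{e=0}^d a_e \;\le\; C_N \;+\; (L+\eps)\sum_{e=N}^d b_e \;\le\; C_N \;+\; (L+\eps)\sum_{e=0}^d b_e
\]
for all $d \ge N$. Theorem \ref{thm:EC-d-asymp} supplies $b_d \sim \#\Pic^0(B)\cdot q^{10d+2(1-g)}/[(q-1)\zeta_B(10)]$; in particular $b_d \to \infty$ and hence $\sum_{e=0}^d b_e \to \infty$ as $d \to \infty$. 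Dividing the previous display by $\sum_{e=0}^d b_e$ and taking $\limsup$ yields
\[
    \limsup_{d\to\infty}\frac{\sum_{e=0}^d a_e}{\sum_{e=0}^d b_e} \;\le\; L + \eps,
\]
and letting $\eps \to 0$ completes the proof.

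There is no substantive obstacle here: the entire content of the corollary is already in Theorem \ref{thm:mas-count-hard}, and passing from \textit{height equal to $d$} to \textit{height at most $d$} is a formal real-analytic consequence once one knows the denominator diverges. I expect the proof in the text to be at most a few lines, essentially as sketched above.
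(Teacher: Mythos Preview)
Your proposal is correct and matches the paper's approach: the paper's proof is the single sentence ``This is a consequence of \cref{thm:EC-d-asymp,thm:mas-count-hard},'' and you have simply spelled out the standard Ces\`aro/Stolz-type manipulation that makes this implication precise.
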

\begin{proof}
    This is a consequence of \cref{thm:EC-d-asymp,thm:mas-count-hard}.
\end{proof}
\begin{thm}[= \cref{cor:cor-mod-estimate}]\label{thm:mod-estimate}
    Use notation as in \cref{set:main}. Then,
    \[\MAS_B\le1+2\zeta_B(2)\zeta_B(10).\]
\end{thm}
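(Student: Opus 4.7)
The plan is to split the numerator $\#\CSel_2^{\le d}$ into its trivial and non-trivial parts, as anticipated in Notation \ref{notn:Sel2NT}, and bound each contribution to $\MAS_B(d)$ separately. Concretely, for each $d$,
\[
\MAS_B(d)=\frac{\#\CSel_{2,T}^{\le d}}{\#\meM_{1,1}^{\le d}(K)}+\frac{\#\CSel_{2,NT}^{\le d}}{\#\meM_{1,1}^{\le d}(K)},
\]
and I will show the first term is bounded above by $1$ while the second has $\limsup$ bounded by $2\zeta_B(2)\zeta_B(10)$. Adding gives the claim.

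First I would handle the trivial piece. By Equation \ref{eqn:CSelT-card},
\[
\#\CSel_{2,T}^{\le d}=\sum_{\substack{E/K\\\Ht(E)\le d}}\frac{1}{\#E[2](K)\cdot\#\Aut(E)}\le\sum_{\substack{E/K\\\Ht(E)\le d}}\frac{1}{\#\Aut(E)}=\#\meM_{1,1}^{\le d}(K),
\]
since $\#E[2](K)\ge1$ always. Dividing by $\#\meM_{1,1}^{\le d}(K)$ gives the clean bound $\#\CSel_{2,T}^{\le d}/\#\meM_{1,1}^{\le d}(K)\le1$ for every $d$.

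Next I would invoke the hard work already done for the non-trivial piece. The essential surjectivity part of \cref{prop:sel-conn} gives, via \cref{cor:sel-conn}, the inequality
\[
\#\CSel_{2,NT}^{\le d}\le\#\meH_{M,NT}^{\le d}(B),
\]
and \cref{cor:mod-nt-estimate} (which packages the counting work of this section together with the asymptotic of \cref{thm:EC-d-asymp}) shows
\[
\limsup_{d\to\infty}\frac{\#\meH_{M,NT}^{\le d}(B)}{\#\meM_{1,1}^{\le d}(K)}\le2\zeta_B(2)\zeta_B(10).
\]
Combining these two statements yields
\[
\limsup_{d\to\infty}\frac{\#\CSel_{2,NT}^{\le d}}{\#\meM_{1,1}^{\le d}(K)}\le2\zeta_B(2)\zeta_B(10).
\]

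Finally, taking $\limsup_{d\to\infty}$ of the displayed decomposition of $\MAS_B(d)$ and using $\limsup(f+g)\le\limsup f+\limsup g$ gives
\[
\MAS_B\le1+2\zeta_B(2)\zeta_B(10),
\]
as required. There is no genuine obstacle at this stage: all the technical content -- the asymptotic count of elliptic curves (\cref{thm:EC-d-asymp}), the parameterization of $2$-Selmer by minimal hyper-Weierstrass curves (\cref{prop:sel-conn}), and the upper bound on the latter via a careful analysis of normalized $(1,2,1)$-data (\cref{cor:mod-nt-estimate}) -- has already been established, so the proof of \cref{thm:mod-estimate} is essentially just this bookkeeping step. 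The only subtlety is remembering that the summand $1$ in the upper bound genuinely comes from the trivial Selmer classes (one for each elliptic curve, weighted by $1/\#\Aut(E)$ modulo the $\#E[2](K)$-correction that we are throwing away here), and it is precisely this discrepancy between $1$ and $\sum_{E}1/(\#E[2](K)\#\Aut E)\bigm/\sum_E1/\#\Aut E$ that later will need to be addressed when comparing $\MAS_B$ with $\AS_B$ (via \cref{eq:triv-is-1}).
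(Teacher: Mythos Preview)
Your proof is correct and follows the same overall decomposition as the paper: split $\#\CSel_2^{\le d}$ into trivial and non-trivial parts, then invoke \cref{cor:sel-conn} and \cref{cor:mod-nt-estimate} for the non-trivial piece.

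There is one small but genuine difference in how you handle the trivial piece. The paper (see \cref{cor:cor-mod-estimate}) appeals to \cref{prop:count-triv-sel}, which establishes that $\#\CSel_{2,T}^{\le d}/\#\meM_{1,1}^{\le d}(K)\to1$; that proposition in turn relies on the bounds of \cref{thma:E[2]} showing that $0\%$ of elliptic curves have nontrivial $2$-torsion. You instead observe directly from \cref{eqn:CSelT-card} and $\#E[2](K)\ge1$ that the ratio is $\le1$ for every $d$, bypassing \cref{prop:count-triv-sel} and \cref{thma:E[2]} entirely. Your shortcut is legitimate and more elementary for the purpose of proving the \emph{upper bound} in \cref{thm:mod-estimate}. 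The paper's stronger statement (that the limit equals $1$) is still needed elsewhere, namely in comparing $\MAS_B$ with $\AS_B$ via \cref{eq:triv-is-1}, which you correctly flag in your final remark.
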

We postpone a proof of \cref{thm:mod-estimate} until \cref{sect:main-results}. In light of \cref{eqn:CSelT-card}, the key to deducing \cref{thm:mod-estimate} from \cref{cor:mod-nt-estimate} is showing that 0\% of elliptic curves have a nonzero 2-torsion point. We will verify this in \cref{sect:count-EC-2tors}, and then afterwards prove \cref{thm:mod-estimate} (See \cref{cor:cor-mod-estimate}).

\subsection{Relating $\MAS_B(d)$ and $\AS_B(d)$ in Characteristic $2$}\label{sect:MAS-AS-char-2}
The just-proven \cref{cor:mod-nt-estimate} bounds the average number of non-trivial $2$-Selmer elements for an elliptic curve $E/K$ only when each $2$-Selmer element is given a non-standard weighting. We would like to show that the average computed using this non-standard weighting is at least as large as the average computed using the standard weighting (the $1/\#\Aut(E)$ weighting). In the present section, we verify this when working in characteristic 2 (see \cref{cor:AS-MAS-comp-char-2} for a precise statement). In \cref{sect:main-results} we will give a separate, but simpler, argument which verifies this in all other positive characteristics. 
\begin{set}
    We work within the context of \cref{set:main}. In addition, we assume that $\Char K=2$.
\end{set}
By \cref{lem:sel-aut}, for a given $(C,E,\rho,D)\in\CSel_2$ (representing some $[(C,D)]\in\Sel_2(E)$), its weight in $\MAS_B(d)$ (where, say, $d=\Ht(E)$) differs from its weight in $\AS_B(d)$ only if $E[2](K)\neq0$ or $\#\Aut(E)>2$. Furthermore, its weight in $\MAS_B(d)$ is smaller than its weight in $\AS_B(d)$ only if $E[2](K)\neq0$. Thus, in order to prove that the modified average is at least as large as the usual average, it will suffice to show that elliptic curves with non-trivial 2-torsion contribute $0$ to the average size of 2-Selmer. In the parlance of \cref{sect:count-hW}, this amounts to bounding the number of minimal hW curves (of bounded height) whose generic fibers have Jacobians with non-trivial 2-torsion.
\begin{defn}\label{defn:extra}
    We say that a hW curve $(H/B,D)\in\meH_{M,NT}(B)$ is \define{extra} if the elliptic curve $E=\Jac(H_K)$ has a non-trivial 2-torsion $K$-point.
\end{defn}
The main technical result of this section will show that the number of extra hW curves of height $d$ is $o(\#\meM_{1,1}^{=d}(K))$ (\cref{cor:extra-no-contribution}). Once this is known, we will be able to deduce that $\AS_B\le\MAS_B$ (\cref{cor:AS-MAS-comp-char-2}).

As in \cref{sect:count-hW}, we will partition (extra) hW curves according to their normalized (1,2,1)-datum. As in the aforementioned section, whenever we consider a normalized $(1,2,1)$-datum $\mbfD$, we additionally define the myriad of objects listed in \cref{notn:myriad}. By of the work of that section (in particular, by the application of the dominated convergence theorem in the proof of \cref{thm:mas-count-hard}), for understanding the asymptotic count of extra hW curves of height $d$ as $d\to\infty$, it will suffice to consider normalized (1,2,1)-data $\mbfD$ where $d(\mbfD)$ is much larger than $u(\mbfD)$; see \cref{lem:DCT-shortcut} for a more precise statement.

\subsubsection{Some Preliminary Simplifications}
Before getting started with bounding the number of extra hW curves, we need to set up some additional notation which will be used in the argument.
\begin{notn}
    Let $\meH_E(B)\into\meH_{M,NT}(B)$ denote the full subgroupoid consisting of extra (in the sense of \cref{defn:extra}) hW curves. For $d\ge0$, define $\meH_E^{=d}(B)$ and $\meH_E^{\le d}(B)$ as expected.
\end{notn}
\begin{rec}\label{rec:G8(P L e)}
    By \cref{lem:121-iso-bij}, to a triple $(P,\msL,\eps)$ where $P\in\hom^1(B,\PGL_2)$, $\msL\in\Pic(B)$, and $\eps\in\Ext^1(\inv\msL,\msV(P))$, one can associate a unique isomorphism class of $(1,2,1)$-data. Given such a triple $(P,\msL,\eps)$, we let $\msG_8=\msG_8(P,\msL,\eps)$ and $\msG_9=\msG_9(P,\msL,\eps)$ denote the (isomorphism classes of the) vector bundles alluded to in \cref{notn:G8-iso-inv}.
\end{rec}
\begin{defn}
    Let $(P,\msL,\eps)$ be as in \cref{rec:G8(P L e)}. We say that $(P,\msL,\eps)$ is \define{admissible} if both $\eps=0$ and $0\to\msG_8\to\msG_9\to\msG_9/\msG_8\to0$ splits.
\end{defn}
Note that $(P,\msL,\eps)$ will automatically be admissible if $\deg\msL\gg_P1$.
\begin{defn}
    Let $\mbfD=(\msE_1,\msE_2,\mu)$ be a (1,2,1)-datum. Suppose that there exists a short exact sequence
    \[0\too\msO_B\too\msE_1\too\msD\too0\]
    (note that, here, we do \important{not} assume that $\msE_1$ is unstable or that $\deg\msD<0$), and note that we can use this to extend the filtration $0=\msG_0\subset\msG_5\subset\msG_8\subset\msG_9$ (defined as in \cref{notn:myriad}) to a filtration $0=\msG_0\subset\msG_1\subset\msG_2\subset\dots\subset\msG_9$ whose graded pieces are exactly as indicated in \cref{eqn:filt-grad}. In this case, we say that $\mbfD$ is \define{admissible} if the corresponding triple $(P,\msL,\eps)$, as in \cref{lem:121-iso-bij}, is admissible, $\hom^1(B,\msG_{i+1}/\msG_i)=0$ for all $i\in\{0,1,\dots,8\}$, and $\deg(\msG_{i+1}/\msG_i)>0$ for all $i$. 
\end{defn}
Note that $\mbfD=(\msE_1,\msE_2,\mu)$ will automatically be admissible if $\deg\msL\gg_{\msE_1}1$. Furthermore, if $\mbfD$ is admissible, then $\hom^0(B,\msG_9)\cong\bigoplus_{i=0}^8\hom^0(B,\msG_{i+1}/\msG_i)$.
\begin{notn}
    Given an admissible $(P,\msL,\eps)$, we let $N_E(P,\msL,\eps)$ denote the number of of sections $s_1\in\hom^0(B,\msG_8)$ such that the (isomorphism class of the) hW curve cut out by $s=(s_1,1)\in\hom^0(B,\msG_8)\oplus\hom^0(B,\msO_B)\cong\hom^0(B,\msG_9)$ (the curve `$Z(s)$' in \cref{construct:F}) is extra. This number is independent of all choices involved.
\end{notn}
\begin{lemma}\label{lem:DCT-shortcut}
    Let $M=\hom^1(B,\PGL_2)$ equipped with the measure $m$ defined in \cref{notn:M}. Then,
    \[\limsup_{d\to\infty}\frac{\#\meH^{=d}_E(B)}{\#\meM_{1,1}^{=d}(K)}\le\frac{q^{3(1-g)}\zeta_B(10)}{\#\Pic^0(B)}\int_M\limsup_{d\to\infty}\sum_{\msL\in\Pic^d(B)}\frac{N_E(P,\msL,0)}{q^{13d+5(1-g)}}\dm.\]
\end{lemma}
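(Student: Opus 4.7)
The plan is to mirror the argument of \cref{thm:mas-count-hard} with $\meH_E$ replacing $\meH_{M,NT}$. Using \cref{lem:hW-normalized,cor:basic,rem:u-constraint,lem:121-iso-bij}, I would write
\[\#\meH^{=d}_E(B)=\sum_{P\in M}\;\sum_{\msL\in\Pic^d(B)}\;\sum_{\eps\in H^1(B,\msV(P)\otimes\msL)}\;\sum_{\substack{H\into\P(\mbfD)\\H\in\abs{\meH_E(B)}}}\frac{1}{\#\Aut_{\meH(B)}(H)},\]
where $(P,\msL,\eps)$ determines the normalized datum $\mbfD$ and the first sum is restricted to those $P$ contributing under \cref{rem:u-constraint}. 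Letting $X_E(\mbfD)\subset H^0(B,\msG_9)$ denote the set of sections with nonzero image in $H^0(B,\msG_9/\msG_8)$ whose zero scheme is an extra hW curve, the extra-curve analogue of \cref{cor:case-2-ineq} (obtained from \cref{prop:weighted-hW,prop:PG-card,lem:Aut(P)} exactly as in \cref{sect:count-hW}) reads
\[\sum_{\substack{H\into\P(\mbfD)\\H\in\abs{\meH_E(B)}}}\frac{1}{\#\Aut_{\meH(B)}(H)}\le\frac{\#X_E(\mbfD)}{(q-1)^2\#\Aut(P)q^{3d+3(1-g)}}.\]

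The key structural observation is that, for each fixed $P\in M$, admissibility becomes automatic once $d$ is large enough. Admissibility of $(P,\msL,\eps)$ requires $H^1$-vanishing of every graded piece of $\msG_8$, which for fixed unstable degree $u=u(P)$ follows from \cref{prop:hW-norm-filt} whenever $d\gg_P0$; since $H^1(B,\msV(P)\otimes\msL)$ is filtered (via the decomposition underlying \cref{ses:ss-filt}) by these same graded pieces, admissibility forces $\eps=0$. For admissible $\mbfD$, the sequence $0\to\msG_8\to\msG_9\to\msO_B\to 0$ splits, so every element of $X$ has the form $(s_1,\lambda)$ with $\lambda\in k^\times$, and $\units k$-scaling of sections identifies $X_E(\mbfD)/k^\times$ with the set of $s_1\in H^0(B,\msG_8)$ enumerated by $N_E(P,\msL,0)$, giving $\#X_E(\mbfD)=(q-1)N_E(P,\msL,0)$. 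Consequently, for each $P$, the non-admissible contribution is supported on only finitely many $\msL$ (independent of $d$) and tends to $0$ as $d\to\infty$.

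Packaging the sum over $\mbfD$ as $\int_M\sum_{\msL,\eps}\dm$ as in \cref{ineq:wrong-order} and dividing by $\#\meM^{=d}_{1,1}(K)\sim\#\Pic^0(B)q^{10d+2(1-g)}/[(q-1)\zeta_B(10)]$ from \cref{thm:EC-d-asymp}, I would obtain
\[\frac{\#\meH^{=d}_E(B)}{\#\meM^{=d}_{1,1}(K)}\le\frac{q^{3(1-g)}\zeta_B(10)}{\#\Pic^0(B)}\int_M\chi_d(P)\bigl(g_d(P)+e_d(P)\bigr)\dm,\]
where $g_d(P):=\sum_{\msL\in\Pic^d(B)}N_E(P,\msL,0)/q^{13d+5(1-g)}$ is the admissible contribution and $e_d(P)$ collects the non-admissible contributions. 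The pointwise bound \cref{ineq:Id-DCT} continues to dominate the full integrand by the constant $\#\Pic^0(B)q^{10g-3}$, which is $m$-integrable by \cref{lem:BunG-card}; since $\chi_d(P)\to1$ and $e_d(P)\to0$ pointwise on $M$, the reverse form of Fatou's lemma yields
\[\limsup_{d\to\infty}\int_M\chi_d(P)\bigl(g_d(P)+e_d(P)\bigr)\dm\;\le\;\int_M\limsup_{d\to\infty}g_d(P)\dm,\]
which is precisely the claimed inequality. The main technical obstacle is the admissibility step: cleanly verifying that $\eps=0$ is forced (so that the sum over $\eps$ really collapses to a single term) and that the non-admissible boundary vanishes in the $\limsup$; everything else is Riemann--Roch and measure-theoretic bookkeeping already carried out in the proof of \cref{thm:mas-count-hard}.
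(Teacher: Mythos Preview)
Your proposal is correct and follows essentially the same route as the paper: replace $\meH_{M,NT}$ by $\meH_E$ in the proof of \cref{thm:mas-count-hard}, apply the same dominating bound \cref{ineq:Id-DCT} to justify passing the $\limsup$ inside the integral, and then collapse the sum over $\eps$ to the single term $\eps=0$. The paper's write-up is simply terser on the last point: rather than separating into admissible and non-admissible contributions $g_d(P)+e_d(P)$, it observes directly that Serre vanishing gives $\hom^1(B,\msV(P)\otimes\msL)=0$ once $\deg\msL\gg_P 1$, so for fixed $P$ the $\eps$-sum has only the term $\eps=0$ for all sufficiently large $d$; your ``admissibility forces $\eps=0$'' step is exactly this.
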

\begin{proof}
    The main point here is to replace $\meH_{M,NT}(B)$ with $\meH_E(B)$ in the proof of \cref{thm:mas-count-hard}. In doing so, \cref{ineq:M-NT-init-count} becomes
    \begin{equation}\label{ineq:extra-init-bound}
        \#\meH^{=d}_E(B)\le\frac1{q-1}\int_M\chi_d(P)\sum_{\msL\in\Pic^d(B)}\;\sum_{\eps\in\hom^1(B,\msV(P)\otimes\msL)}\frac{N_E(P,\msL,\eps)}{q^{3d+3(1-g)}}\dm,
    \end{equation}
    where $\chi_d:M\to\bits$ is as in the proof of \cref{thm:mas-count-hard}.
    Because the right hand side of \cref{ineq:extra-init-bound} is smaller than the right hand side of \cref{ineq:M-NT-init-count}, one may still use the dominated convergence theorem to bound $\limsup_{d\to\infty}\left.{\#\meH^{=d}_E(B)}\right/{\#\meM_{1,1}^{=d}(K)}$. Doing so gives (compare the below with \cref{ineq:M-NT-frac-final})
    \begin{align*}
        \limsup_{d\to\infty}\frac{\#\meH^{=d}_E(B)}{\#\meM_{1,1}^{=d}(K)}
        &\le \frac{q^{3(1-g)}\zeta_B(10)}{\#\Pic^0(B)}\int_M\limsup_{d\to\infty}\sum_{\msL\in\Pic^d(B)}\;\sum_{\eps\in\hom^1(B,\msV(P)\otimes\msL)}\frac{N_E(P,\msL,\eps)}{q^{3d+3(1-g)}}\dm\\
        &=\frac{q^{3(1-g)}\zeta_B(10)}{\#\Pic^0(B)}\int_M\limsup_{d\to\infty}\sum_{\msL\in\Pic^d(B)}\frac{N_E(P,\msL,0)}{q^{3d+3(1-g)}}\dm,
    \end{align*}
    because Serre vanishing implies that $\hom^1(B,\msV(P)\otimes\msL)=0$ whenever $\deg\msL\gg_P1$.
\end{proof}
The utility of \cref{lem:DCT-shortcut} is that the limit is inside the integral, so we may consider the limit
\[\limsup_{d\to\infty}\sum_{\msL\in\Pic^d(B)}\frac{N_E(P,\msL,0)}{q^{13d+5(1-g)}}\]
separately for each fixed $P\in\hom^1(B,\PGL_2)$. This is the first simplification of our eventual count. The second simplification will be to replace the various vector bundles which show up (the $\msL,\msD,\msG_8,\dots$ of \cref{notn:myriad}) with sums of line bundles coming from divisors supported at a single point.
\begin{notn}
    Fix an arbitrary closed point $P\in B$. Also, given $x\in\R$, let $\ceil x$ denote the least integer greater than or equal to $x$.
\end{notn}
\begin{lemma}\label{lem:embed-in-nP}
    Let $\msM$ be a line bundle on $B$, and set $n:=\deg\msM$. Then, there exists an embedding $\msM\into\msO_B\p{\ceil{\frac{n+g}{\deg P}}P}$.
\end{lemma}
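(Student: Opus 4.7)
The plan is to reduce the existence of an embedding of line bundles to the nonvanishing of a particular $h^0$, and then invoke Riemann--Roch. Specifically, giving an embedding $\msM\hookrightarrow\msO_B(mP)$ is equivalent to giving a nonzero global section of $\msO_B(mP)\otimes\inv\msM$: any such section vanishes on some effective divisor $D'$, which produces the identification $\msM\simeq\msO_B(mP-D')\hookrightarrow\msO_B(mP)$. So the whole task reduces to showing that $h^0(B,\msO_B(mP)\otimes\inv\msM)>0$ for $m=\lceil(n+g)/\deg P\rceil$.

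For this, I would compute the degree of $\msO_B(mP)\otimes\inv\msM$: it equals $m\deg P-n$. By the choice $m=\lceil(n+g)/\deg P\rceil$, we have $m\deg P\ge n+g$, so the degree is at least $g$. Riemann--Roch on the smooth curve $B$ of genus $g$ then gives
\[h^0(B,\msO_B(mP)\otimes\inv\msM)\ge\deg(\msO_B(mP)\otimes\inv\msM)+1-g=m\deg P-n-g+1\ge1,\]
which supplies the required nonzero section and hence the embedding.

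The argument is entirely routine; there is no substantive obstacle. The only thing worth checking is the consistency between the ceiling in the bound and the Riemann--Roch inequality (which I did above), and noting that the claim holds even if $n+g\le0$, since in that case $m$ can be taken to be $0$ or negative (though in the application one will only care about the regime $n+g>0$).
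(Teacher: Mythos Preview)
Your proof is correct and is exactly the approach the paper takes: the paper simply says this is an easy consequence of Riemann--Roch applied to $\sHom(\msM,\msO_B(\lceil(n+g)/\deg P\rceil P))$, which is precisely the line bundle $\msO_B(mP)\otimes\inv\msM$ whose $h^0$ you bound.
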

This is an easy consequence of Riemann-Roch applied to $\sHom\p{\msM,\msO_B\p{\ceil{\frac{n+g}{\deg P}}P}}$. To see how we will use this lemma, consider some admissible $(1,2,1)$-datum $\mbfD=(\msE_1,\msE_2,\mu)$. Hence, $\msG_9=\msG_9(\mbfD)$ supports a filtration $0=\msG_0\subset\msG_1\subset\msG_2\subset\dots\subset\msG_9$ whose graded pieces are exactly as indicated in \cref{eqn:filt-grad} and which satisfies $\hom^1(B,\msG_{i+1}/\msG_i)=0$ for all $i$. When $d\gg_{\msE_1}1$, \cref{lem:embed-in-nP} allows us to choose embeddings $\msG_{i+1}/\msG_i\into\msO_B(n_iP)\subset\ul K$, for appropriate $n_i\in\N$. This allows us to view a section $s\in\hom^0(B,\msG_8)$ as a tuple $(c_0,c_1,c_2,c_3,c_4,a_0,a_1,a_2)\in\bigoplus_{i=0}^7\hom^0(B,\msG_{i+1}/\msG_i)\into\bigoplus_{i=0}^7\hom^0(B,\msO_B(n_iP))\subset K^{\oplus8}$ in such a way that the corresponding hW curve $H_s/B$ cut out by $(s,1)\in\hom^0(B,\msG_8)\oplus\hom^0(B,\msO_B)\cong\hom^0(B,\msG_9)$ has generic fiber given by the equation
\[Y^2+(a_0X^2+a_1XZ+a_2Z^2)Y=c_0X^4+c_1X^3Z+c_2X^2Z^2+c_3XZ^3+c_4Z^4\]
inside $\P(1,2,1)_K$. Since the condition of being extra (\cref{defn:extra}) only depends on the generic fiber of $H_s$, in order to bound the number of extra hW curves of height $d$, it will suffice to bound the number of tuples $(c_0,c_1,c_2,c_3,c_4,a_0,a_1,a_2)\in\bigoplus_{i=0}^7\hom^0(B,\msO_B(n_iP))$ such that the curve cut out by the above equation is extra. In particular, this discussion proves \cref{lem:simplification-2}.
\begin{notn}\label{notn:NE-n}
    Given $n_0,\dots,n_7\in\Z$, we let $N_E(n_0,\dots,n_7)$ denote the number of tuples
    \[(c_0,c_1,c_2,c_3,c_4,a_0,a_1,a_2)\in\bigoplus_{i=0}^7\hom^0(B,\msO_B(n_iP))\subset K^{\oplus8}\]
    such that curve
    \[H_K:Y^2+(a_0X^2+a_1XZ+a_2Z^2)Y=c_0X^4+c_1X^3Z+c_2X^2Z^2+c_3XZ^3+c_4Z^4\]
    inside $\P(1,2,1)_K$ is smooth and satisfies $\Jac(H_K)[2](K)\neq0$.
\end{notn}
\begin{lemma}\label{lem:simplification-2}
    Let $\mbfD=(\msE_1,\msE_2,\mu)$ be an admissible (1,2,1)-datum, let $n=\deg\msE_1$, and let $(P,\msL,0)$ be the triple classifying $\mbfD$'s isomorphism class, as in \cref{lem:121-iso-bij}. Then, letting $d=\deg\msL$ as usual, 
    \[N_E(P,\msL,0)\le N_E(n_0,n_1,\dots,n_7),\]
    where
    \begin{equation}\label{eqn:ni-extra}
        n_i = \Twocases{\ceil{\frac{(i-2)n+2d+g}{\deg P}}}{0\le i\le 4}{\ceil{\frac{(i-6)n+d+g}{\deg P}}}{5\le i\le 7}
    \end{equation}
\end{lemma}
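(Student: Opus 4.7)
The plan is to translate admissibility into a concrete coordinate description of sections of $\msG_8$, and then read off the hW equation via the globalization of \cref{rem:hw-glob-eqn}. Since $\mbfD$ is admissible, the postulated extension $0\to\msO_B\to\msE_1\to\msD\to0$ (with $\deg\msD=n$) extends the filtration of \cref{notn:myriad} to a full filtration $0=\msG_0\subset\msG_1\subset\dots\subset\msG_9$ whose graded pieces are the line bundles listed in \cref{eqn:filt-grad}. Admissibility also forces $\hom^1(B,\msG_{i+1}/\msG_i)=0$ for every $i$, so by induction on $i$ the sequence $0\to\hom^0(B,\msG_i)\to\hom^0(B,\msG_{i+1})\to\hom^0(B,\msG_{i+1}/\msG_i)\to0$ is short exact. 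Fixing any $k$-linear splittings inductively therefore yields a bijection
\[\hom^0(B,\msG_8)\iso\bigoplus_{i=0}^7\hom^0(B,\msG_{i+1}/\msG_i).\]

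Second, I would compute $\deg(\msG_{i+1}/\msG_i)$ from $\deg\msD=n$ and $\deg\msL=d$ using \cref{eqn:filt-grad}: the result is $(i-2)n+2d$ for $0\le i\le 4$ and $(i-6)n+d$ for $5\le i\le 7$, all strictly positive by the admissibility hypothesis. \cref{lem:embed-in-nP} then yields embeddings $\msG_{i+1}/\msG_i\into\msO_B(n_iP)$ with $n_i$ precisely as in \cref{eqn:ni-extra}. Composing with the bijection of the previous paragraph gives an injection
\[\hom^0(B,\msG_8)\into\bigoplus_{i=0}^7\hom^0(B,\msO_B(n_iP))\subset K^{\oplus 8},\quad s_1\longmapsto(c_0,c_1,c_2,c_3,c_4,a_0,a_1,a_2),\]
after identifying the first five graded pieces with the $c_j$'s and the last three with the $a_j$'s.

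Finally, admissibility also provides $\eps=0$ together with a splitting of $0\to\msG_8\to\msG_9\to\msO_B\to0$. With this splitting in hand, the global hW equation in \cref{rem:hw-glob-eqn} applies to $\P(\mbfD)$, and the curve $Z((s_1,1))$ is cut out by a section whose generic fiber, in the $\P(1,2,1)_K$ model, is
\[Y^2+(a_0X^2+a_1XZ+a_2Z^2)Y=c_0X^4+c_1X^3Z+c_2X^2Z^2+c_3XZ^3+c_4Z^4,\]
matching exactly the equation in \cref{notn:NE-n}. Since the ``extra'' condition (smoothness of the generic fiber together with nontrivial $K$-rational $2$-torsion on its Jacobian) depends only on the generic-fiber equation, every $s_1$ counted by $N_E(P,\msL,0)$ injects into a tuple counted by $N_E(n_0,\dots,n_7)$, giving the desired inequality. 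The main technical point to verify is that the coefficients $(c_j,a_j)$ extracted from the filtration splittings really agree with those appearing in the equation of \cref{rem:hw-glob-eqn}; this is essentially bookkeeping, since \cref{rem:hw-glob-eqn} was derived under precisely the splitting hypotheses that admissibility encodes.
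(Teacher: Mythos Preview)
Your proposal is correct and follows essentially the same approach as the paper, which proves this lemma in the discussion immediately preceding its statement: use admissibility to split $\hom^0(B,\msG_8)$ along the filtration, embed each graded piece into $\msO_B(n_iP)$ via \cref{lem:embed-in-nP}, and observe that the resulting tuple gives the generic-fiber hW equation so that the ``extra'' condition is preserved under the injection. Your write-up is in fact more explicit than the paper's about why the extracted coefficients match the hW equation; note only that \cref{rem:hw-glob-eqn} as stated assumes $\msE_1$ is unstable and that the filtration splits at the sheaf level, whereas here you only have splittings on global sections, but (as you say) the bookkeeping goes through since over the generic point everything trivializes and the ``extra'' condition depends only on the generic fiber.
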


\subsubsection{Counting Extra hW Curves}
Let us recap the notation introduced in the previous section.
\begin{set}
    Fix an admissible $(1,2,1)$-datum $\mbfD=(\msE_1,\msE_2,\mu)$. 
    \begin{itemize}
        \item Let $\msL=\msL(\mbfD)$ as in \cref{notn:myriad}, let $d:=\deg\msL$, and let $n:=\deg\msE_1$.
        \item Because $\mbfD$ is admissible, we may and do fix a short exact sequence
        \[0\too\msO_B\too\msE_1\too\msD\too0,\]
        and we may and do extend the filtration $0=\msG_0\subset\msG_5\subset\msG_8\subset\msG_9$ (defined as in \cref{notn:myriad}) to a filtration $0=\msG_0\subset\msG_1\dots\subset\msG_9$ whose graded pieces are
        \begin{equation}\label{eqn:filt-grad-ii}
            \frac{\msG_{i+1}}{\msG_i}\cong\Threecases{\msD^{i-2}\otimes\msL^2}{0\le i\le4}{\msD^{i-6}\otimes\msL}{5\le i\le 7}{\msO_B}{i=8.}
        \end{equation}
        (as indicated in \cref{eqn:filt-grad}).
        \item Fix an arbitrary closed point $P\in B$.
        \item Let $n_0,\dots,n_7$ be as in \cref{eqn:ni-extra}. Choose embeddings $\msG_{i+1}/\msG_i\into\msO_B(n_iP)$ for all $i\in\{0,\dots,7\}$.
        \item We use the above embeddings (and a choice of splitting $\hom^0(B,\msG_9)\cong\bigoplus_{i=0}^8\hom^0(B,\msG_{i+1}/\msG_i)$, which exists by admissibility) to embed $\hom^0(B,\msG_8)\into\bigoplus_{i=0}^7\hom^0(B,\msO_B(n_iP))\subset K^{\oplus8}$, and so implicitly identify sections $s\in\hom^0(B,\msG_8)$ with tuples $(\alpha_0,\alpha_1,\dots,\alpha_7)\in\bigoplus_{i=0}^7\hom^0(B,\msO_B(n_iP))$.

        We write $(\alpha_0,\dots,\alpha_7)$ instead of the usual $(c_0,c_1,c_2,c_3,c_4,a_0,a_1,a_2)$ to denote the corresponding tuple in order to slightly simplify the notation in this section.
    \end{itemize}
\end{set}
\begin{notn}
    Given $s\in\hom^0(B,\msG_8)$, we let $H_s/K$ denote the generic fiber of the hW curve cut out by $s$; letting $s'=(s,1)\in\hom^0(B,\msG_8)\oplus\hom^0(B,\msO_B)\cong\hom^0(B,\msG_9)$, this is the curve `$Z(s')$' appearing in \cref{construct:F}. More concretely, this is the curve
    \begin{equation}
        H_s:Y^2 + (\alpha_5X^2+\alpha_6XZ+\alpha_7Z^2)Y=\alpha_0X^4+\alpha_1X^3Z+\alpha_2X^2Z^2+\alpha_3XZ^3+\alpha_4Z^4,
    \end{equation}
    where $\alpha_i\in\hom^0(B,\msO_B(n_iP))\subset K$.
\end{notn}
The first thing we need is a criterion of testing whether or not some section $s\in\hom^0(B,\msG_8)$ cuts out an extra hW curve.
\begin{lemma}\label{lem:2-tors-poly-weier}
    Let $S$ be an integral $\F_2$-scheme.  Let $\msM$ be a line bundle on $S$. Let $a_i\in\hom^0(B,\msM^i)$ for $i=1,2,3,4,6$.
    Let $W$ be the Weierstrass curve defined by 
    \[W:Y^2Z+a_1XYZ+a_3YZ^2=X^3+a_2X^2Z+a_4XZ^2+a_6Z^3\]
    in $\P \colonequals \P(\msO_B\oplus\inv[2]\msM\oplus\inv[3]\msM)$.
    The involution
    \[-1:[X,Y,Z]\mapstoo[X,Y+a_1X+a_3Z,Z]\]
    of $\P$ preserves $W$.
    If $W \to S$ has a non-identity section fixed by $-1$,
    then there exists $z\in\hom^0(B,\msM^5)$ such that
    \begin{equation}\label{eqn:2-tors-poly}
        z^2=a_1a_3^3+a_1^2a_2a_3^2+a_1^3a_3a_4+a_1^4a_6.
    \end{equation}
\end{lemma}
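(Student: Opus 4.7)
The plan is to translate the geometric hypothesis of having a $-1$-fixed section into an explicit algebraic identity, and then solve for $z$ directly. The bulk of the work is a straightforward computation using the Weierstrass equation; the technical point is extracting a globally regular square root at the end.

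First I would unpack what the section is. Writing $\P\coloneqq\P(\msO_S\oplus\inv[2]\msM\oplus\inv[3]\msM)$, a section $\sigma\colon S\to W\subset\P$ corresponds to a surjection $q\colon\msO_S\oplus\inv[2]\msM\oplus\inv[3]\msM\onto\msL$ for some line bundle $\msL$, whose three components I name $z_\sigma\in H^0(S,\msL)$, $x_\sigma\in H^0(S,\msL\otimes\msM^2)$, and $y_\sigma\in H^0(S,\msL\otimes\msM^3)$. The identity section is $[0:1:0]$, so $\sigma\neq O$ combined with integrality of $S$ forces $z_\sigma$ to be a nonzero (hence nonzerodivisor) section.

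Next, the hypothesis that $\sigma$ is fixed by $-1$ amounts to $\sigma^\ast(Y)=\sigma^\ast(Y+a_1X+a_3Z)$, giving (in characteristic $2$)
\[a_1x_\sigma+a_3z_\sigma=0\tin H^0(S,\msL\otimes\msM^3).\]
Pulling back the Weierstrass equation via $\sigma$ and substituting $a_3z_\sigma=a_1x_\sigma$ cancels the two cross-terms, leaving
\[y_\sigma^2z_\sigma = x_\sigma^3+a_2x_\sigma^2z_\sigma+a_4x_\sigma z_\sigma^2+a_6z_\sigma^3.\]
Multiplying through by $a_1^4$ and replacing each occurrence of $a_1x_\sigma$ by $a_3z_\sigma$, every term on the right becomes a multiple of $z_\sigma^3$, yielding
\[(a_1^2y_\sigma)^2z_\sigma=f\cdot z_\sigma^3\tin H^0(S,\msL^3\otimes\msM^{10}),\]
where $f\coloneqq a_1a_3^3+a_1^2a_2a_3^2+a_1^3a_3a_4+a_1^4a_6$. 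Because $z_\sigma$ is not a zerodivisor, we may cancel it to obtain $(a_1^2y_\sigma)^2=f\cdot z_\sigma^2$ in $H^0(S,\msL^2\otimes\msM^{10})$.

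Finally, I would produce the required $z$ by defining the rational section $z\coloneqq a_1^2y_\sigma/z_\sigma$ of $\msM^5$; the identity above says $z^2=f$ wherever $z$ is defined, and in characteristic $2$ this determines $z$ uniquely on any reduced open by injectivity of Frobenius. The hard part will be verifying that $z$ is regular on all of $S$, i.e.\ that $z_\sigma$ divides $a_1^2y_\sigma$ globally. The key input here is the surjectivity of $q$: at a point $p\in S$ with $z_\sigma(p)=0$, some other component of $q$ must generate $\msL_p$; combined with the relation $a_1x_\sigma=a_3z_\sigma$ and the cubic relation $x_\sigma^3\in(z_\sigma)$ extracted from the Weierstrass equation, one can run through the cases (either $x_\sigma(p)=0$, forcing $y_\sigma$ to be the generator and a local computation to go through, or $a_1(p)=0$, in which case $f$ vanishes to sufficient order at $p$) to conclude that the rational section $a_1^2y_\sigma/z_\sigma$ extends regularly through every such point.
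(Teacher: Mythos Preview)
Your computation is the same as the paper's, but the paper streamlines both the setup and the final regularity step. Instead of carrying global projective coordinates $(x_\sigma,y_\sigma,z_\sigma)$ and then worrying about points where $z_\sigma$ vanishes, the paper immediately passes to the function field $K$ of $S$: there the non-identity section is just an affine $K$-point $(x,y)$, the identical manipulation (multiply by $a_1^4$, substitute $a_1x=a_3$) gives $z^2=f$ for $z\coloneqq a_1^2y\in K$, and since $f\in\hom^0(S,\msM^{10})$ one concludes $z\in\hom^0(S,\msM^5)$ in one line. Your case analysis for extending $a_1^2y_\sigma/z_\sigma$ is aiming at exactly this implication ``$z^2$ regular $\Rightarrow$ $z$ regular'', but it is both unnecessary given the paper's shortcut and not actually completed---in your $a_1(p)=0$ branch, knowing $f\in(a_1)$ does not by itself give $z_\sigma\mid a_1^2y_\sigma$ in a non-normal local ring. (For the record, the paper's one-line implication likewise relies on $S$ being normal, which holds in every application.)
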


\begin{proof}
    Let $K$ be the function field of $S$. Fix an embedding $\msM\subset\ul K$ into the sheaf of rational functions on $S$. This induces embeddings $\msM^n\subset\ul K$ for all $n$, so we may treat the $a_i$ as elements of $K$. Let $\eta\in S$ denote the generic point. Since $\sigma$ is not the identity section, we may write $\sigma(\eta)=(x,y)\in\A^2(K)$. Thus, we have
    \[y^2+a_1xy+a_3y=x^3+a_2x^2+a_4x+a_6\tand y=y+a_1x+a_3\]
    for some $x,y,a_1,a_2,a_3,a_4,a_6\in K$. The second equation tells us that $0=a_1x+a_3$, so $y^2+a_1xy+a_3y=y^2$. Hence, multiplying the above displayed equation by $a_1^4$, we see that    
    \begin{equation}\label{eqn:blah}
        (a_1^2y)^2=a_1(a_1x)^3+a_1^2a_2(a_1x)^2+a_1^3a_4(a_1x)+a_1^4a_6=a_1a_3^3+a_1^2a_2a_3^2+a_1^3a_3a_4+a_1^4a_6.
    \end{equation}
    Set $z=a_1^2y$. 
    By \cref{eqn:blah}, $z^2 \in \hom^0(B,\msM^{10})$, so $z \in\hom^0(B,\msM^5)$.
\end{proof}
\begin{lemma}\label{lem:taut-jacobian}
    Write $\F_2[\ul T]\coloneqq\F_2[T_0,\dots,T_7]$ and $\A^8_{\F_2}=\spec\F_2[\ul T]$. Consider the tautological hW curve $H'/\A^8_{\F_2}$ given by
    \[H'\colon Y^2+(T_5X^2+T_6XZ+T_7Z^2)Y=T_0X^4+T_1X^3Z+T_2X^2Z^2+T_3XZ^3+T_4Z^4\]
    inside $\P(1,2,1)_{\A^8_{\F_2}}$. Let $U\subset\A^8_{\F_2}$ be the locus above which $H'$ is smooth, and let $H:=H'_U/U$. Then, its Jacobian $J:=\Pic^0_{H/U}$ has a Weierstrass model in $\P^2_U$.
\end{lemma}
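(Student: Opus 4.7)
The plan is to show that $J/U$ admits a Weierstrass model by reducing to the theory of Weierstrass curves over a base with trivial Picard group and vanishing $\hom^1$ of the structure sheaf. First, I observe that $H \to U$ is a smooth proper family of geometrically connected curves of arithmetic genus $1$: by \cref{thm:loc-model-conv}, $(H' \to \A^8_{\F_2}, \{Z=0\})$ is an hW curve in the sense of \cref{defn:hW}, hence so is its base change $H = H'_U \to U$, and by the very definition of $U$ the morphism $H \to U$ is additionally smooth. Therefore $J \coloneqq \Pic^0_{H/U}$ is an abelian scheme of relative dimension $1$ over $U$, i.e., an elliptic curve with identity section $O$.

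Next, I would establish two properties of $U$: that $\Pic(U) = 0$ and $\hom^1(U, \msO_U) = 0$. The first follows because $U$ is an open subscheme of $\A^8_{\F_2} = \spec \F_2[\ul T]$, and any open subscheme of $\A^n$ over a field has trivial Picard group (every Weil divisor is principal, as $\F_2[\ul T]$ is a UFD, and the restriction $\Cl(\A^8_{\F_2}) \onto \Cl(U) = \Pic(U)$ is surjective since both schemes are locally factorial). For the second, I claim that the complement of $U$ in $\A^8_{\F_2}$ is a hypersurface cut out by a single nonzero polynomial $\Delta \in \F_2[\ul T]$, so that $U = D(\Delta)$ is affine and Serre vanishing gives $\hom^1(U, \msO_U) = 0$.

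With these two facts in hand, the argument proceeds analogously to \cref{rem:high-height=>weier-eqn}. Let $\msL \coloneqq \push\pi \omega_{J/U}$ be the Hodge bundle of $\pi\colon J \to U$; then $\msL \cong \msO_U$ by $\Pic(U) = 0$. The two exact sequences
\[0 \too \msO_U \too \push\pi \msO_J(2O) \too \msL^{-2} \too 0 \tand 0 \too \push\pi \msO_J(2O) \too \push\pi \msO_J(3O) \too \msL^{-3} \too 0\]
from \cref{thm:weier-sum}\bp 3 are classified, respectively, by $\Ext^1(\msL^{-2}, \msO_U) \cong \hom^1(U, \msO_U)$ and by $\hom^1(U, \msO_U) \oplus \hom^1(U, \msO_U)$, both of which vanish by the above. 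Consequently $\push\pi \msO_J(3O) \cong \msO_U^{\oplus 3}$, and \cref{thm:weier-sum}\bp 4 produces a closed embedding $J \into \P(\push\pi \msO_J(3O)) = \P^2_U$ whose image is cut out by a Weierstrass equation.

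The main obstacle will be justifying the discriminant claim: that the non-smoothness locus of $H' \to \A^8_{\F_2}$ is cut out by a single polynomial $\Delta \in \F_2[\ul T]$. One can approach this either by constructing $\Delta$ explicitly as a characteristic-$2$ analogue of the discriminant of a binary quartic (feasible since hW equations depend on only eight coefficients), or more structurally, by observing that the deformation theory of an hW curve with a single ordinary double point shows that the non-smoothness locus has a codimension-$1$ component, and then producing $\Delta$ as the defining section of the corresponding effective divisor on $\A^8_{\F_2}$ --- which must be a polynomial since $\Pic(\A^8_{\F_2}) = 0$ --- while verifying that no codimension-$\ge 2$ components of the singular locus can appear (or arguing that even if they did, $H^1(U, \msO_U) = 0$ still holds since the cohomology does not change after removing a codimension $\ge 2$ subscheme from an affine scheme).
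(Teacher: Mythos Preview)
Your approach is essentially the same as the paper's: both establish that $(J,O)$ is a Weierstrass curve over $U$, use $\Pic(U)=0$ to make the Hodge bundle trivial, and then use affineness of $U$ to split the filtration on $\push\pi\msO_J(3O)$ and obtain the embedding into $\P^2_U$ via \cref{thm:weier-sum}\bp4. The paper handles the discriminant step simply by citing \cite[Section 2]{liu:hyp-fr}, which constructs $\Delta\in\F_2[\ul T]$ explicitly; this resolves your main obstacle without further work.

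One warning: the parenthetical fallback in your last paragraph is incorrect. Removing a codimension $\ge 2$ closed subset from an affine scheme can certainly change $\hom^1(\msO)$ --- for instance, $\A^2_k\setminus\{0\}$ is not affine and has $\hom^1(\msO)\neq0$. So if the non-smooth locus really had components of codimension $\ge 2$, you would not be able to conclude $\hom^1(U,\msO_U)=0$ this way. Fortunately this issue does not arise once one has the single discriminant polynomial $\Delta$.
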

\begin{proof}
    First note that $H$ is a smooth family of genus 1 curves, so $J$ is an elliptic scheme. Let $S\subset J$ denote its identity section, so $(J,S)$ is a Weierstrass curve over $U$. Since $\Pic(U)=0$, \cref{thm:weier-sum}\bp{3,4} shows us that $J$ has a Weierstrass model in $\P(\msE)$, for some rank $3$ vector bundle $\msE$ supporting a filtration whose graded pieces are $\msO_U,\msO_U,\msO_U$. At the same time, it is well known (see e.g. \cite[Section 2]{liu:hyp-fr}) that $U$ is the nonvanishing locus of a single discriminant polynomial $\Delta\in\F_2[\ul T]$, and so is affine. Hence, any short exact sequence of vector bundles over $U$ splits, so we conclude that $\msE\simeq\msO_U^{\oplus3}$ is trivial. Thus, $J$ has a Weierstrass model in $\P(\msE)\simeq\P(\msO_U^{\oplus3})=\P^2_U$, as claimed.
\end{proof}
\begin{lemma}\label{lem:2-tors-poly-hW}
    There exists a polynomial $F\in\F_2[\ul T]$ with the following property. Given $s\in\hom^0(B,\msG_8)$ such that $H_s$ is a smooth $K$-curve, its Jacobian $E:=\Jac H_s$ satisfies $E[2](K)\neq0$ only if the quantity
    \[F(\alpha_0,\dots,\alpha_7)\in K\]
    is a square. Furthermore, we may choose $F$ so that
    \begin{equation}\label{eqn:F-poly-triv-case}
        F(0,1,a_2,a_4,a_6,0,a_1,a_3)\in\p{a_1a_3^3+a_1^2a_2a_3^2+a_1^3a_3a_4+a_1^4a_6}\cdot(\F_2[a_1,a_2,a_3,a_4,a_6])^2
    \end{equation}
    (the polynomial appearing in the right hand side above is the right hand side of \cref{eqn:2-tors-poly}).
\end{lemma}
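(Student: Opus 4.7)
The plan is to build $F$ from the universal Weierstrass model of the Jacobian of the tautological hW curve, using Lemma \ref{lem:taut-jacobian} as the hook between hW curves and Weierstrass curves, and then to apply Lemma \ref{lem:2-tors-poly-weier} to that universal Weierstrass model.

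First, I invoke Lemma \ref{lem:taut-jacobian} to obtain Weierstrass coefficients $a_i\in\Gamma(U,\msO_U)$ (for $i\in\{1,2,3,4,6\}$) of the Jacobian of the tautological hW family $H/U$. Because $U = \spec \F_2[\ul T][\Delta^{-1}]$ for the discriminant polynomial $\Delta\in\F_2[\ul T]$ cutting out the non-smooth locus of the tautological hW family, each $a_i$ has the form $A_i(\ul T)/\Delta^{N_i}$ with $A_i\in\F_2[\ul T]$ and $N_i\ge0$. Pulling back the universal Jacobian along the $K$-point $\spec K\to U$ corresponding to a smooth hW fibre $H_s$, the Jacobian $E = \Jac(H_s)$ acquires the Weierstrass model with coefficients $a_i(\alpha_0,\dots,\alpha_7)\in K$. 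In characteristic 2 the involution $[-1]$ is precisely the involution of Lemma \ref{lem:2-tors-poly-weier}, so a nonzero section of $E[2]$ over $K$ is exactly a non-identity section fixed by $-1$. Applying Lemma \ref{lem:2-tors-poly-weier}, if $E[2](K)\neq0$, then $Q \coloneqq a_1a_3^3+a_1^2a_2a_3^2+a_1^3a_3a_4+a_1^4a_6$ is a square in $K$. Clearing denominators by a sufficiently large even power of $\Delta$, set
\[
F(\ul T) \;\coloneqq\; \Delta(\ul T)^{2M}\cdot Q(\ul T) \;\in\; \F_2[\ul T].
\]
Since $\Delta(\alpha_0,\dots,\alpha_7)\in\units K$ (as $H_s$ is smooth) and $\Delta^{2M}$ is a square, $F(\alpha_0,\dots,\alpha_7)$ is a square in $K$, giving the first conclusion.

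For the ``Furthermore'' clause I would analyze the substitution $(\alpha_0,\dots,\alpha_7)=(0,1,a_2,a_4,a_6,0,a_1,a_3)$ directly. Under this specialization the hW equation becomes
\[
Y^2 + (a_1XZ+a_3Z^2)Y \;=\; X^3Z+a_2X^2Z^2+a_4XZ^3+a_6Z^4
\]
in $\P(1,2,1)_R$ with $R=\F_2[a_1,a_2,a_3,a_4,a_6]$. On the chart $Z=1$ this is literally the affine Weierstrass equation in $\A^2_{X,Y}$; computing on the chart $X=1$ as well, one identifies the resulting genus 1 curve with the Weierstrass curve $W$ with coefficients $a_1,a_2,a_3,a_4,a_6$. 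This $W$ has an obvious rational point at infinity making it (isomorphic to) its own Jacobian, so $E$ here agrees with $W$. It follows that the specialization of the tautological Weierstrass data $a_i(\ul T)$ agrees with $(a_1,a_2,a_3,a_4,a_6)$ up to an $R$-valued change-of-variables of Weierstrass form (a tuple $(u,r,s,t)$ with $u\in\units R$ in the notation of \cite[Table III.3.1]{silverman}). A short explicit check on each of the generators of this transformation group verifies that the expression $a_1a_3^3+a_1^2a_2a_3^2+a_1^3a_3a_4+a_1^4a_6$ is sent to itself times a polynomial square. Combined with the fact that $\Delta^{2M}$ specializes to a square, this shows
\[
F(0,1,a_2,a_4,a_6,0,a_1,a_3) \;\in\; \bigl(a_1a_3^3+a_1^2a_2a_3^2+a_1^3a_3a_4+a_1^4a_6\bigr)\cdot\bigl(\F_2[a_1,a_2,a_3,a_4,a_6]\bigr)^2,
\]
as required.

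The main obstacle will be the bookkeeping in the final step: tracking how the universal Weierstrass coefficients $a_i(\ul T)$ specialize and showing that the change-of-variables relating them to $(a_1,a_2,a_3,a_4,a_6)$ only introduces a square factor into $Q$. This amounts to a direct but tedious computation with the transformation formulas from \cite[Table III.3.1]{silverman}; handling the $u$-scaling and the $r,s,t$-shifts on each monomial $a_1a_3^3,\,a_1^2a_2a_3^2,\,a_1^3a_3a_4,\,a_1^4a_6$ is the bulk of the work, but each piece is elementary in characteristic $2$ and the overall quadratic dependence on $Q$ means only square factors appear.
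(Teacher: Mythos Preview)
Your construction of $F$ via the universal Jacobian and \cref{lem:2-tors-poly-weier}, followed by clearing denominators with an even power of the discriminant, is exactly the paper's approach, and the first conclusion of the lemma is correctly established this way.

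There is, however, a genuine gap in your argument for the ``Furthermore'' clause. Your claim that the expression $Q\coloneqq a_1a_3^3+a_1^2a_2a_3^2+a_1^3a_3a_4+a_1^4a_6$ is sent to itself \emph{times} a polynomial square under Weierstrass changes of variables is false in characteristic~$2$. For instance, under the $s$-shift (i.e.\ $u=1$, $r=t=0$), the transformation formulas give $a_1'=a_1$, $a_2'=a_2+sa_1+s^2$, $a_3'=a_3$, $a_4'=a_4+sa_3$, $a_6'=a_6$, and a direct computation yields
\[
Q' \;=\; Q + (s\,a_1a_3)^2.
\]
Thus $Q$ is modified by \emph{adding} a square, not by multiplying by one. (This is still consistent with ``$Q$ is a square'' being an isomorphism invariant, since in characteristic~$2$ a sum of two squares is a square.) Consequently, your specialized $F$ would take the form $(\text{square})\cdot Q + (\text{square})$, which generically does \emph{not} lie in $Q\cdot\bigl(\F_2[a_1,\dots,a_6]\bigr)^2$.

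The paper circumvents this by a different maneuver: rather than tracking how $Q$ transforms, it shows that the change of variables over $V$ relating $\pull\phi J$ to $W$ can be \emph{lifted} to a change of variables over all of $U$. The point is that both restriction maps $\pull\phi\colon\msO(U)\to\msO(V)$ and $\pull\phi\colon\units{\msO(U)}\to\units{\msO(V)}$ are surjective; the first because $\phi$ is a closed immersion, the second via the computation $\pull\phi\Delta_H=\Delta_W$ (exactly, not merely up to a power). After applying this lifted change of variables on $U$, the universal Weierstrass coefficients $a_i'$ may be arranged so that $\pull\phi a_i'=A_i$ on the nose. The specialization of $F$ is then literally $\Delta_W^{2M}\cdot Q(A_1,\dots,A_6)$, which visibly lies in $Q\cdot\bigl(\F_2[a_1,\dots,a_6]\bigr)^2$.
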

\begin{proof}
    Let $U,H,J$ be as in \cref{lem:taut-jacobian}. Let $\msO(U)$ denote the coordinate ring of $U$. By that lemma, we may write
    \begin{equation}\label{eqn:taut-J-weier}
        J\colon Y^2Z+a_1'XYZ+a_3'YZ^2=X^3+a_2'X^2Z+a_4'XZ^2+a_6'Z^3\tforsome a_i'\in\msO(U).
    \end{equation}
    Our desired polynomial $F$ will come from applying \cref{lem:2-tors-poly-weier} to \cref{eqn:taut-J-weier}. However, in order to guarantee that \cref{eqn:F-poly-triv-case} holds, we will need to choose the coefficients $a_i'$ somewhat carefully.

    To this end, write $\F_2[\ul A]:=\F_2[A_1,A_2,A_3,A_4,A_6]$, let $\A^5_{\F_2}:=\spec\F_2[\ul A]$, and consider the tautological Weierstrass curve $W'/\A^6_{\F_2}$ given by
    \[W'\colon Y^2Z+A_1XYZ+A_3YZ^2=X^3+A_2X^2Z+A_4XZ^2+A_6Z^3\]
    inside $\P^2_{\A^5_{\F_2}}$. As before, let $V\subset\A^5_{\F_2}$ be the locus above which $W'$ is smooth and let $W:=W'_V/V$, so $W\subset\P^2_V$ is an elliptic scheme over $V$. Letting $S\subset W$ denote the identity section, note that $(W,2S)$ is a hW curve with $W$ as its Jacobian and whose associated hyper-Weierstrass equation is $Y^2+(A_1X+A_3Z)YZ=(X^3+A_2X^2Z+A_4XZ^2+A_6Z^3)Z$. In other words, the map $(0,1,A_2,A_4,A_6,0,A_1,A_3):\A^5_{\F_2}\to\A^8_{\F_2}$ restricts to a map $\phi:V\into U$ satisfying $\pull\phi J\simeq W$. 
    
    Write $\msO(V)$ for the coordinate ring of $V$. Since $\pull\phi J\simeq W$, general theory (see e.g. \cite[The discussion around (1.2)]{deligne}) guarantees the existence of some linear change of coordinates on $\P^2_V$ which maps $\pull\phi J$ (embedded in $\P^2$ via \cref{eqn:taut-J-weier}) onto $W$. We claim that this change of coordinates lifts to $\P^2_U$, and so that we may arrange that the $a_i'\in\msO(U)$ satisfy $\pull\phi a_i'=A_i\in\F_2[\ul A]\subset\msO(V)$.

    To prove that we can change coordinates on $\P^2_U$ (instead of just $\P^2_V$) in the desired fashion, it suffices to show that the restriction maps $\pull\phi:\msO(U)\to\msO(V)$ and $\pull\phi:\units{\msO(U)}\to\units{\msO(V)}$ are both surjective. We note that both $U$ and $V$ are affine. Indeed, as remarked in the proof of \cref{lem:taut-jacobian}, $U\subset\A^8$ is the nonvanishing locus of a single discriminant polynomial $\Delta_H\in\F_2[\ul T]$, described e.g. in \cite[Section 2]{liu:hyp-fr}; similarly, $V\subset\A^5$ is the nonvanishing locus of the usual Weierstrass discriminant $\Delta_W\in\F_2[\ul A]$, described e.g. in \cite[Section III.1]{silverman}. Hence, $\msO(U)\to\msO(V)$ is surjective simply because $\phi:V\into U$ is a closed immersion. Furthermore, we can compute that $\units{\msO(U)}=\units{\F_2[\ul T][1/\Delta_H]}=\Delta_H^\Z$ and $\units{\msO(V)}=\Delta_W^\Z$, so $\pull\phi(\Delta_H)=\Delta_W^n\in\F_2[\ul A][1/\Delta_W]$ for some $n\in\Z$. We claim that $n=1$. This follows from computing the discriminant of the elliptic curve $Y^2Z+XYZ=X^3+A_6Z^3$ and that of its associated hW curve:
    \begin{align*}
        A_6^n 
        = \Delta_W(1,0,0,0,A_6)^n
        = \pull\phi(\Delta_H)(1,0,0,0,A_6)
        = \Delta_H(0,1,0,0,A_6,0,1,0)
        = -A_6(1+432A_6)
        = A_6
    \end{align*}
    (see \cite[Proposition A.1.1]{silverman} and \cite[Section 2]{liu:hyp-fr} for more details on computing these discriminants). Thus, $\pull\phi(\Delta_H)=\Delta_W$, proving surjectivity of $\units{\msO(U)}\to\units{\msO(V)}$.

    Therefore, as earlier remarked, we may choose that $a_i'\in\msO(U)$ so that they satisfy $\pull\phi a_i'=A_i\in\F_2[\ul A]$. To finish, we scale these by appropriate even powers of $\Delta_H$ so that $a_i'\in\F_2[\ul T]\subset\msO(U)=\F_2[\ul T][1/\Delta_H]$ and then apply \cref{lem:2-tors-poly-weier} to \cref{eqn:taut-J-weier}.
\end{proof}
\begin{rem}
    For $F$ as in \cref{lem:2-tors-poly-hW}, it follows from \cref{eqn:F-poly-triv-case} that, for any $\F_2$-algebra $R$ and any $\alpha\in R$,
    \begin{equation}\label{eqn:F-poly-special-case}
        F(0,1,0,0,\alpha,0,1,0)=\alpha\beta^2\tforsome\beta\in R.
    \end{equation}
    We will later use this remark to show that certain polynomials constructed from $F$ are not squares.
\end{rem}
Our task is now to bound the number of tuples $(\alpha_0,\alpha_1,\dots,\alpha_7)\in\bigoplus_{i=0}^7\hom^0(B,\msO_B(n_iP))$ for which $F(\alpha_0,\dots,\alpha_7)\in K$ is a square. For this, we will find it helpful to decompose each $\alpha_i$ into (approximately) a sum of a square and a nonsquare; doing so will ultimately allow us to reduce the problem of checking that $F$ evaluates to a square into the simpler problem of checking whether some auxiliary polynomial(s) vanish.
\begin{notn}
    Fix a choice of $t\in K$ which is regular away from $P$, but which has a pole of odd order $r\ge1$ at $P$. In particular, $t\not\in K^2$.
\end{notn}
\begin{lemma}\label{lem:square-basis}
    For every $m\ge0$ and every $s\in\hom^0(B,\msO_B(mP))$, it is possible to write $s=s_0^2+ts_1^2+e$ for some $e\in\hom^0(B,\msO_B((4g+6+r)P))$ and some $s_0,s_1\in K$ such that $s_0^2,ts_1^2\in\hom^0(B,\msO_B(mP))$.
\end{lemma}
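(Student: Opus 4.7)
The plan is to write $s$ as a sum $s_0^2+ts_1^2+e$ by iteratively peeling off the leading pole of the residue at $P$, one order at a time, from $m$ down to $4g+7+r$, after which the residue fits into $\hom^0(B,\msO_B((4g+6+r)P))$ and becomes $e$. When $m\le 4g+6+r$ I will take $s_0=s_1=0$ and $e=s$; so assume $m>4g+6+r$ below.

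The foundation is the decomposition $K=K^2\oplus tK^2$. Since $K$ is a one-variable function field over the perfect field $\F_q$ of characteristic $2$, one has $[K:K^2]=2$; and $t\notin K^2$ because $v_P(t)=-r$ is odd while every element of $K^2$ has even valuation at $P$. Set $M\coloneqq\sum_{v\ne P}\lfloor v(t)/2\rfloor\,v$, an effective divisor on $B$ (its support is finite since $v(t)=0$ at all but finitely many $v\ne P$). Then $s_0\in\hom^0(B,\msO_B(\lfloor m/2\rfloor P))$ automatically forces $s_0^2\in\hom^0(B,\msO_B(mP))$, and $s_1\in\hom^0(B,\msO_B(\lfloor(m-r)/2\rfloor P+M))$ automatically forces $ts_1^2\in\hom^0(B,\msO_B(mP))$ by a valuation check at $P$ and at each $v\ne P$; these are the pole constraints I will maintain throughout.

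The core step says: for every integer $k$ with $4g+7+r\le k\le m$ and every $c\in\kappa(P)$, one can realize the leading coefficient $c$ at pole order $k$ using either $(s_0^{(k)})^2$ (if $k$ is even, with $s_0^{(k)}\in\hom^0(B,\msO_B((k/2)P))$) or $t(s_1^{(k)})^2$ (if $k$ is odd, with $s_1^{(k)}\in\hom^0(B,\msO_B(((k-r)/2)P+M))$). For even $k$ this reduces to solving $d^2=c$ in $\kappa(P)$, which has a unique solution because $\kappa(P)$ is a finite (hence perfect) field of characteristic $2$; Riemann--Roch then supplies a lift $s_0^{(k)}$ with leading coefficient $d$, valid because $k/2>2g-1$. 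For odd $k$ one instead solves $c_t d^2=c$ in $\kappa(P)$, with $c_t\in\units{\kappa(P)}$ the leading coefficient of $t$ at $P$, and similarly lifts, using $(k-r)/2>2g-1$.

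Iterating downward: starting from $s^{(m)}\coloneqq s$, at each step $k=m,m-1,\dots,4g+7+r$ I cancel the leading coefficient of $s^{(k)}$ at pole order $k$ by subtracting the appropriate $(s_0^{(k)})^2$ or $t(s_1^{(k)})^2$, obtaining $s^{(k-1)}\in\hom^0(B,\msO_B((k-1)P))$. Defining $s_0\coloneqq\sum_{k\text{ even}}s_0^{(k)}$, $s_1\coloneqq\sum_{k\text{ odd}}s_1^{(k)}$, and $e\coloneqq s^{(4g+6+r)}$, the characteristic-$2$ identity $\bigl(\sum a_i\bigr)^2=\sum a_i^2$ yields $s=s_0^2+ts_1^2+e$ with $e\in\hom^0(B,\msO_B((4g+6+r)P))$. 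The individual pole bounds on the summands give $s_0\in\hom^0(B,\msO_B(\lfloor m/2\rfloor P))$ and $s_1\in\hom^0(B,\msO_B(\lfloor(m-r)/2\rfloor P+M))$, hence $s_0^2,ts_1^2\in\hom^0(B,\msO_B(mP))$ as required. The main bookkeeping hurdle is checking that the Riemann--Roch hypothesis remains uniformly valid through the entire iteration; the threshold $4g+6+r$, rather than the sharp bound of roughly $4g+r$, is chosen to leave comfortable slack.
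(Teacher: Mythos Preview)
Your proof is correct and follows essentially the same approach as the paper: both peel off the leading pole of $s$ one order at a time (via downward induction/iteration), using Riemann--Roch together with perfectness of $\kappa(P)$ to cancel the top coefficient by a square when the pole order is even and by $t$ times a square when it is odd, until the remainder lies in $\hom^0(B,\msO_B((4g+6+r)P))$. The auxiliary divisor $M$ you introduce is harmless but unnecessary, since $t$ is assumed regular away from $P$, so one may simply take each $s_1^{(k)}\in\hom^0(B,\msO_B(((k-r)/2)P))$ as the paper does.
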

\begin{proof}
    If $m\le4g+6+r$, take $s_0=0$, $s_1=0$, and $e=m$. We inductively show the claim holds for $m\ge4g+7+r$ as well. Fix such an $m$ as well as some $s\in\hom^0(B,\msO_B(mP))$. 
    \begin{itemize}
        \item Suppose that $m$ is even, say $m=2k$. Then, $k>2g+3$. Riemann-Roch guarantees the existence of some $s_0'\in\hom^0(B,\msO_B(kP))$ having a pole of order $k$ at $P$ and with any prescribed leading coefficient. Since the residue field $\kappa(P)$ satisfies $\kappa(P)=\kappa(P)^2$, we may choose $s_0'$ so that $(s_0')^2$ has a pole of order $m=2k$ at $P$ with the same leading coefficient as $s$ has. Thus, $s-(s_0')^2\in\hom^0(B,\msO_B((m-1)P))$.
        \item Suppose that $m$ is odd, say $m=r+2k$. Then, $k>2g+3$ and we argue as in the previous case. Riemann-Roch and perfectness of $\kappa(P)$ together allow us to find some $s_1'\in\hom^0(B,\msO_B(kP))$ such that $t(s_1')^2$ has a pole of order $m$ at $P$ with the same leading coefficient that $s$ has. Thus, $s-t(s_1')^2\in\hom^0(B,\msO_B((m-1)P)$.
    \end{itemize}
    In either case, the inductive hypothesis applied to $m-1$ allows us to write $s$ in the desired form.
\end{proof}
\begin{notn}
    Fix a polynomial $F\in\F_2[\ul T]$ as in \cref{lem:2-tors-poly-hW}, and set $E:=\hom^0(B,\msO_B((4g+6+r)P))$. For each $\ul e=(e_0,\dots,e_7)\in E^8$, let
    \[G_{\ul e}\colonequals F\p{U_0^2+tV_0^2+e_0,\dots,U_7^2+tV_7^2+e_7}\in K\sq{U_0^2,\dots,U_7^2,V_0^2,\dots,V_7^2}\equalscolon K\sq{\ul U^2,\ul V^2}.\]
    Note that $\{1,t\}$ is a basis for $K\sq{\ul U^2,\ul V^2}$ over $(K^2)[\ul U^2,\ul V^2]=(K[\ul U,\ul V])^2$, and write
    \[G_{\ul e}=G_{\ul e,0}^2+tG_{\ul e,1}^2\tforsome G_{\ul e,0},G_{\ul e,1}\in K[\ul U,\ul V].\]
\end{notn}
We will want to bound how often each $G_{\ul e}$ evaluates to a square in $K$ (i.e. how often each $G_{\ul e,1}$ evaluates to $0$). For this, we will appeal to the following version of the Schwartz-Zippel lemma.
\begin{lemma}[Schwartz-Zippel]\label{lem:schartz-zippel}
    Let $A$ be an integral domain, and let $Q\in A[x_1,\dots,x_n]$ be a nonzero polynomial. Let $d:=\deg Q$ denote the total degree of $Q$. For $1\le j\le n$, let $I_j$ be any finite, nonempty subset of $A$, and write $N_j=\#I_j$. Then, $Q$ has at most
    \[N_1N_2\dots N_n\cdot\sum_{j=1}^n\frac d{N_j}\]
    zeroes in the set $I_1\by\dots\by I_n$.
\end{lemma}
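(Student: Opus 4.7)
The plan is to prove the lemma by induction on the number of variables $n$. The base case $n=1$ is exactly the classical fact that a nonzero polynomial of degree $d$ over an integral domain has at most $d$ roots: since the claimed bound specializes to $N_1 \cdot (d/N_1) = d$, this is immediate.

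For the inductive step, the plan is to single out the highest-degree contribution in the last variable. First I would write $Q = \sum_{i=0}^{k} x_n^i\, Q_i(x_1,\dots,x_{n-1})$, where $k$ is the largest index with $Q_k \neq 0$; note $\deg Q_k \le d - k$. For each tuple $\underline a = (a_1,\dots,a_{n-1}) \in I_1 \times \cdots \times I_{n-1}$, I would split into two cases. If $Q_k(\underline a) \neq 0$, then $Q(\underline a, x_n)$ is a nonzero univariate polynomial of degree $k$ in $x_n$, hence has at most $k$ zeros in $I_n$ by the base case. If $Q_k(\underline a) = 0$, one can only bound the number of zeros in $I_n$ trivially by $N_n$.

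Applying the inductive hypothesis to $Q_k$ (with total degree at most $d-k$) bounds the number of tuples $\underline a$ with $Q_k(\underline a) = 0$ by $N_1 \cdots N_{n-1} \sum_{j=1}^{n-1} (d-k)/N_j$. Combining the two cases, the total count of zeros of $Q$ in $I_1 \times \cdots \times I_n$ is at most
\[k \cdot N_1 \cdots N_{n-1} \;+\; N_n \cdot N_1 \cdots N_{n-1} \sum_{j=1}^{n-1} \frac{d-k}{N_j} \;=\; N_1 \cdots N_n \left( \frac{k}{N_n} + \sum_{j=1}^{n-1} \frac{d-k}{N_j} \right).\]
To close, I would use $k \le d$ and $d-k \le d$ to majorize this by $N_1 \cdots N_n \sum_{j=1}^n d/N_j$, which is the claimed bound.

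There is no serious obstacle here: the only points requiring mild care are ensuring that $Q_k$ is genuinely nonzero so that induction applies (which is the reason for selecting $k$ as the top nonvanishing index), and verifying that the integral domain hypothesis is used in exactly one place, namely the univariate root bound that grounds the induction. The argument does not use anything specific to the setting of the paper and is a direct adaptation of the standard Schwartz--Zippel inductive proof.
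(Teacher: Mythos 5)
Your proof is correct. The paper does not actually prove this lemma -- it simply cites it as (a slightly weaker version of) Lemma~1 of Schwartz's original paper -- and your argument is the standard induction on the number of variables that underlies that reference. The key points are all handled properly: choosing $k$ as the top index with $Q_k\neq0$ so that the inductive hypothesis applies to a genuinely nonzero polynomial of total degree at most $d-k$, and invoking the integral-domain hypothesis exactly once, in the univariate bound that both grounds the induction and handles the case $Q_k(\underline a)\neq0$.
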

\begin{proof}
    This is (a slightly weaker version of) \cite[Lemma 1]{schwartz}.
\end{proof}
\begin{lemma}\label{lem:Gm-nonzero}
    For any $\ul e\in E^8$, the polynomial $G_{\ul e}$ is not a square in $K[\ul U, \ul V]$, i.e. $G_{\ul e,1}\neq0$.
\end{lemma}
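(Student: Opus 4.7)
The approach I would take is to find a $K$-algebra specialization $\sigma \colon K[\ul U, \ul V] \to K[U_4, V_4]$ under which $\sigma(G_{\ul e})$ becomes a non-square in $K[U_4, V_4]$; since any specialization of a square is a square, this forces $G_{\ul e}$ itself to not be a square, proving the lemma. The specific target I would aim for is to make the arguments of $F$ take the pattern $(0, 1, 0, 0, \alpha, 0, 1, 0)$, where $\alpha = U_4^2 + tV_4^2 + e_4$, so that the special form \cref{eqn:F-poly-special-case} applies directly.

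To construct $\sigma$, I first note that $t \notin K^2$ (because $t$ has a pole of odd order $r$ at $P$, while any square in $K$ has poles of even order). Since $K$ is separably generated over the perfect field $\F_q$ and $\Char K = 2$, we have $[K : K^2] = 2$, so $\{1, t\}$ is a basis for $K$ over $K^2$; equivalently, every $c \in K$ may be written as $u^2 + tv^2$ for some $u, v \in K$. For each $i \in \{0, 1, 2, 3, 5, 6, 7\}$, I pick $u_i, v_i \in K$ such that $u_i^2 + tv_i^2 + e_i$ equals $1$ if $i \in \{1, 6\}$ and $0$ otherwise, and then define $\sigma$ to send $U_i \mapsto u_i$, $V_i \mapsto v_i$ for $i \ne 4$, leaving $U_4, V_4$ as variables. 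By construction,
\[\sigma(G_{\ul e}) = F(0, 1, 0, 0, U_4^2 + tV_4^2 + e_4, 0, 1, 0).\]
Applying \cref{eqn:F-poly-special-case} to the $\F_2$-algebra $R = K[U_4, V_4]$ with $\alpha = U_4^2 + tV_4^2 + e_4$, we may write $\sigma(G_{\ul e}) = (U_4^2 + tV_4^2 + e_4) \cdot \gamma^2$ for some $\gamma \in K[U_4, V_4]$.

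To finish, suppose for contradiction that $G_{\ul e} = H^2$ for some $H \in K[\ul U, \ul V]$. Applying $\sigma$ yields $\sigma(H)^2 = (U_4^2 + tV_4^2 + e_4) \gamma^2$ inside the UFD $K[U_4, V_4]$, and (granting $\gamma \ne 0$) unique factorization forces $U_4^2 + tV_4^2 + e_4$ to be a square in $K[U_4, V_4]$. But in characteristic $2$ any square in $K[U_4, V_4]$ has the form $(aU_4 + bV_4 + c)^2 = a^2 U_4^2 + b^2 V_4^2 + c^2$, so matching coefficients would force $t = b^2$ for some $b \in K$, contradicting $t \notin K^2$. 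The main obstacle is the subclaim $\gamma \ne 0$, and handling it requires opening up the construction of $F$ in the proof of \cref{lem:2-tors-poly-hW}: $F$ is obtained from the classical Weierstrass $2$-torsion polynomial $a_1 a_3^3 + a_1^2 a_2 a_3^2 + a_1^3 a_3 a_4 + a_1^4 a_6$ by substituting the Jacobian's Weierstrass coefficients $a_i''$, which are rescaled by an even power of $\Delta_H$ to live in $\F_2[\ul T]$. Under $\pull \phi$ this rescaling contributes an even power of $\Delta_W(1, 0, 0, 0, \alpha) = \alpha$, and the unscaled polynomial itself specializes to $\alpha$; the upshot is $F(0, 1, 0, 0, \alpha, 0, 1, 0) = \alpha^{2N+1}$ for some integer $N \ge 0$, so $\gamma = \alpha^N \ne 0$ after the substitution $\alpha \mapsto U_4^2 + tV_4^2 + e_4$.
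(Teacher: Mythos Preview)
Your proof is correct and takes essentially the same approach as the paper's: both specialize so that the arguments of $F$ hit the pattern $(0,1,0,0,*,0,1,0)$ and then invoke \cref{eqn:F-poly-special-case}. The paper streamlines the bookkeeping slightly by first reducing $G_{\ul e}$ to $G_{\ul 0}$ via the shift $(U_i,V_i)\mapsto(U_i+\alpha_i,\,V_i+\beta_i)$ with $e_i=\alpha_i^2+t\beta_i^2$, and then evaluates $G_{\ul 0}$ fully at a single point of $K^{16}$ (obtaining $F(0,1,0,0,t,0,1,0)=t\beta^2$) rather than keeping $U_4,V_4$ free. On the other hand, you are more careful than the paper in explicitly justifying $\gamma\ne0$ by tracing through the construction of $F$ to extract $F(0,1,0,0,\alpha,0,1,0)=\alpha^{2N+1}$; the paper's proof leaves the analogous point ``$\beta\ne0$'' implicit.
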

\begin{proof}
    Fix any choice of $\ul e=(e_0,\dots,e_7)$. Since $K=K^2+tK^2$, we may write $e_i=\alpha_i^2+t\beta_i^2$ for all $i$. After doing so, we have
    \[G_{\ul e}=G_{\ul 0}\p{U_0 + \alpha_0,\dots, U_7+\alpha_7,V_0+\beta_0,\dots,V_7+\beta_7}.\]
    Thus, it suffices to show that $G_{\ul0}$ is not a square in $K[\ul U,\ul V]$. For this, we use \cref{eqn:F-poly-special-case} to compute that
    \[G_{\ul 0}\p{0,1,0,0,0,0,1,0,0,0,0,0,1,0,0,0}=F(0,1,0,0,t,0,1,0)=t\beta^2\]
    for some $\beta\in K$. In particular, it is not a square, so neither is the polynomial $G_{\ul0}$.
\end{proof}
\begin{prop}\label{prop:SZ-appli}
    Recall $N_E(\dots)$ from \cref{notn:NE-n}. When $n_0,\dots,n_7$ are chosen as in \cref{eqn:ni-extra}, $N_E(n_0,\dots,n_7)\le O(q^{12.5d})$ as $d\to\infty$. Here, the implicit constant arising from this argument depends on the choice of the the number $n\in\Z$ in \cref{eqn:ni-extra}, the closed point $P\in B$, and the polynomial $F$ as well as the field $K$.
\end{prop}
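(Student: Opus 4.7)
The plan is to reduce the count to a polynomial zero-counting problem and then invoke the Schwartz-Zippel lemma (\cref{lem:schartz-zippel}), after partitioning the indices $\{0,\dots,7\}$ according to the size of $n_i$. By \cref{lem:2-tors-poly-hW}, the condition $\Jac(H_s)[2](K)\neq 0$ forces $F(\alpha_0,\dots,\alpha_7)\in K^2$, so $N_E(n_0,\dots,n_7)$ is bounded by the number of tuples $\vec\alpha\in\bigoplus_i\hom^0(B,\msO_B(n_iP))$ with $F(\vec\alpha)\in K^2$ (dropping the smoothness requirement only weakens the count). Applying \cref{lem:square-basis} coordinatewise to overcount each such $\vec\alpha$ by decompositions $\alpha_i=s_{0,i}^2+ts_{1,i}^2+e_i$ with $e_i\in E$, and using the basis $K=K^2\oplus tK^2$, the condition translates into $G_{\vec e,1}(\vec s_0,\vec s_1)=0$. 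Since $|E|^8$ is bounded independently of $d$, it suffices to bound the zero count in $\prod_iV_{0,i}\times V_{1,i}$ uniformly in $\vec e\in E^8$.

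Next I would record the key dimension estimates. Using \cref{eqn:ni-extra} and the cancellations $\sum_{i=0}^4(i-2)=0=\sum_{i=5}^7(i-6)$, the total satisfies $\sum_in_i\deg P=13d+O(1)$, so Riemann-Roch yields $\prod_{(j,i)}|V_{j,i}|\le q^{13d+O(1)}$. Since the unstable degree $n\in[-(d+g),0]$ forces $n_i\deg P\ge 2d+O(1)$ for $i\in\{0,1,2\}$, the corresponding variables satisfy $|V_{j,i}|\ge q^{d+O(1)}$. I would then partition $\{0,\dots,7\}$ into $I_{\mathrm{big}}=\{i:n_i\deg P\ge d\}$ (which always contains $\{0,1,2\}$) and its complement $I_{\mathrm{small}}$, so that $|V_{j,i}|\ge q^{d/2+O(1)}$ for all $i\in I_{\mathrm{big}}$, while $\prod_{i\in I_{\mathrm{small}}}\#\hom^0(B,\msO_B(n_iP))\le q^{\sum_{I_{\mathrm{small}}}n_i\deg P+O(1)}$. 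Summing over fixings of the $\alpha_i$ with $i\in I_{\mathrm{small}}$ and applying \cref{lem:schartz-zippel} to $G_{\vec e,1}$ in the remaining large variables $\{s_{j,i}:i\in I_{\mathrm{big}},\,j\in\{0,1\}\}$ (which has total degree at most $d_F\coloneqq\deg F$, a constant) gives
\[
\#\{G_{\vec e,1}=0\}\le d_F\cdot\!\prod_{\substack{i\in I_{\mathrm{big}}\\ j\in\{0,1\}}}\!|V_{j,i}|\cdot\!\sum_{\substack{i\in I_{\mathrm{big}}\\ j\in\{0,1\}}}\!\frac{1}{|V_{j,i}|}\le O\!\left(\prod_{\substack{i\in I_{\mathrm{big}}\\ j\in\{0,1\}}}|V_{j,i}|\cdot q^{-d/2}\right)\!,
\]
where the final inequality uses $|V_{j,i}|\ge q^{d/2+O(1)}$ and $|I_{\mathrm{big}}|\le 8$. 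Combining the large- and small-index contributions yields $N_E(n_0,\dots,n_7)\le O(q^{13d-d/2})=O(q^{12.5d})$, as required.

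The main obstacle is verifying that the restricted polynomial $G_{\vec e,1}$ actually remains nonzero as a polynomial in only the large variables after specializing the small-index variables: the explicit nonvanishing substitution in \cref{lem:Gm-nonzero} sets $V_4=1$, and the index $4$ may lie in $I_{\mathrm{small}}$ when the unstable degree $n$ is close to $-(d+g)$. I would address this by either (a) producing an alternative nonvanishing substitution of $G_{\vec 0,1}$ supported only on variables in $\{0,1,2\}\subseteq I_{\mathrm{big}}$, via direct inspection of the universal polynomial $F$ from \cref{lem:2-tors-poly-hW} (guided by the structural identities \cref{eqn:F-poly-triv-case,eqn:F-poly-special-case}, which show $F$ is linear in certain variables); or (b) bounding the exceptional small-fixings --- those where the restriction to the large variables does vanish identically --- by a further application of \cref{lem:schartz-zippel} to the resulting leading coefficient, which remains a nonzero polynomial of degree $\le d_F$ in the small-index variables and hence admits $O(1)$ zeros among their $O(1)$-sized ranges.
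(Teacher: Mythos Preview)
You have overlooked the key hypothesis stated in the proposition: the implicit constant is allowed to depend on $n$. This means $n$ is \emph{fixed} while $d\to\infty$; it is not ranging over $[-(d+g),0]$. With $n$ fixed, inspection of \cref{eqn:ni-extra} shows that each $n_i\deg P$ equals $d+O_n(1)$ or $2d+O_n(1)$, so in particular $n_i\deg P\ge d+O_n(1)$ for \emph{every} $i\in\{0,\dots,7\}$ once $d$ is large. Consequently your partition $I_{\text{big}}\sqcup I_{\text{small}}$ is vacuous ($I_{\text{small}}=\emptyset$ for $d\gg_n1$), and the ``main obstacle'' you identify --- that index $4$ might lie in $I_{\text{small}}$ --- simply does not occur. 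The paper's proof is therefore just the direct application of Schwartz--Zippel (\cref{lem:schartz-zippel}) to $G_{\ul e,1}$ over all $16$ variables simultaneously, using \cref{lem:Gm-nonzero} for nonvanishing; this yields a savings factor of $\sum_i O(q^{-m_i\deg P})=O(q^{-d/2})$ against the crude count $q^{13d+O(1)}$, with no further casework.

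Your proposed fallback option (b) is also not correct as written. You claim the small-index ranges are ``$O(1)$-sized,'' but your definition $I_{\text{small}}=\{i:n_i\deg P<d\}$ allows $n_i\deg P$ to be any value below $d$ (e.g.\ $d/2$), giving ranges of size $q^{O(d)}$, not $O(1)$. Moreover, bounding the vanishing locus of a \emph{single} leading coefficient does not control where the full restriction vanishes identically: that would require all coefficients (in the big variables) to vanish simultaneously, a system rather than one equation. So even in the (nonexistent) regime you are worried about, option (b) would need substantially more work.
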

\begin{proof}
    By \cref{lem:square-basis}, given $(\alpha_i)_{i=0}^7\in\bigoplus_{i=0}^7\hom^0(B,\msO_B(n_iP))$ we may write $\alpha_i=\alpha_{i,0}^2+t\alpha_{i,1}^2+e_i$ for some
    \[\alpha_{i,0},\alpha_{i,1}\in\hom^0\p{B,\msO_B\p{\ceil{\frac{n_i}2}P}}\tand e_i\in\hom^0(B,\msO_B((4g+6+r)P)).\]
    Hence, by \cref{lem:2-tors-poly-hW}, the tuple $(\alpha_i)_{i=0}^7\in\bigoplus_{i=0}^7\hom^0(B,\msO_B(n_iP))$ will cut out a smooth hW curve (over $K$) whose Jacobian has non-trivial 2-torsion only if $G_{\ul e}(\alpha_{0,0},\dots,\alpha_{7,0},\alpha_{0,1},\dots,\alpha_{7,1})\in K$ is a square, where $\ul e\coloneqq(e_0,\dots,e_7)$, i.e. only if $G_{\ul e,1}(\alpha_{0,0},\dots,\alpha_{7,0},\alpha_{0,1},\dots,\alpha_{7,1})=0$. To ease notation, set $m_i\coloneqq\ceil{n_i/2}$. When $d$ is large, the number of possibilities for $(\alpha_{0,0},\dots,\alpha_{7,1})$ is\footnote{For the inequality labelled \cref{eqn:ni-extra} appearing here, we used the inequality 
    \[2y\ceil{\frac{\ceil{x/y}}2}\le2y\p{\frac{x/y+1}2+1}=x+3y\]
    for real numbers $x,y\ge1$.}
    \[\prod_{i=0}^7\#\hom^0\p{B,\msO_B\p{m_iP}}^2=q^{2\sum m_i\deg P+16(1-g)}\overset{\cref{eqn:ni-extra}}\le q^{13d+8g+24\deg P+16(1-g)}=q^{13d+24\deg P+16-8g}\]
    By \cref{lem:Gm-nonzero,lem:schartz-zippel}, for fixed $\ul e$, the proportion of such tuples causing $G_{\ul e,1}$ to vanish is at most
    \[\sum_{i=0}^7\frac{2\deg G_{\ul e,1}}{\#\hom^0(B,\msO_B(m_iP))}=\sum_{i=0}^7O\pfrac1{q^{n_i/2}}=O\pfrac1{q^{d/2}}.\]
    Since there are a constant, finite number of $\ul e$'s, we conclude that $N_E(n_0,\dots,n_7)$ is at most $O(q^{12.5d})$, as claimed.
\end{proof}
\begin{cor}\label{cor:extra-no-contribution}
    \[\lim_{d\to\infty}\frac{\#\meH_E^{=d}(B)}{\#\meM_{1,1}^{=d}(K)}=0\]
\end{cor}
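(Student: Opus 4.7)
The plan is simply to chain together the three main ingredients established in this section: the dominated-convergence reduction \cref{lem:DCT-shortcut}, the filtered-bundle simplification \cref{lem:simplification-2}, and the Schwartz--Zippel estimate \cref{prop:SZ-appli}. Since the ratio $\#\meH_E^{=d}(B)/\#\meM_{1,1}^{=d}(K)$ is non-negative, it suffices to show that the $\limsup$ as $d\to\infty$ is $0$.

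First, by \cref{lem:DCT-shortcut},
\[
\limsup_{d\to\infty}\frac{\#\meH^{=d}_E(B)}{\#\meM_{1,1}^{=d}(K)}\le\frac{q^{3(1-g)}\zeta_B(10)}{\#\Pic^0(B)}\int_M\limsup_{d\to\infty}\sum_{\msL\in\Pic^d(B)}\frac{N_E(P,\msL,0)}{q^{13d+5(1-g)}}\dm,
\]
so it is enough to show that the inner $\limsup$ is $0$ for each $P\in M$ (the integrand is then everywhere $0$, and the integral vanishes trivially, without any further appeal to dominated convergence).

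Next, fix $P\in M$. Since $H^2(B,\G_m)=0$, we may lift $P$ to a rank $2$ vector bundle $\msE_1$ on $B$; set $n:=\deg\msE_1$ and fix any closed point of $B$. For $d\gg_{\msE_1}1$, Serre vanishing guarantees that every triple $(P,\msL,0)$ with $\msL\in\Pic^d(B)$ is admissible in the sense required by \cref{lem:simplification-2}: the extension $0\to\msG_8\to\msG_9\to\msG_9/\msG_8\to0$ splits (its class lies in an $\hom^1$ of a line bundle of degree $\gg0$) and each graded piece $\msG_{i+1}/\msG_i$ has vanishing $\hom^1$ and positive degree once $\deg\msL$ is large enough compared to $n$. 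For such $d$, \cref{lem:simplification-2} gives
\[
N_E(P,\msL,0)\le N_E(n_0,n_1,\dots,n_7),
\]
with $n_i$ chosen as in \cref{eqn:ni-extra}, and \cref{prop:SZ-appli} upgrades this to $N_E(P,\msL,0)=O_P(q^{12.5d})$, the implicit constant depending only on $P$ (through $n$, the chosen point, and the fixed polynomial $F$ of \cref{lem:2-tors-poly-hW}).

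Finally, summing over $\msL\in\Pic^d(B)$ contributes a factor $\#\Pic^d(B)=\#\Pic^0(B)$, a constant independent of $d$, so
\[
\sum_{\msL\in\Pic^d(B)}\frac{N_E(P,\msL,0)}{q^{13d+5(1-g)}}\;\le\;\#\Pic^0(B)\cdot O_P\!\left(\frac{q^{12.5d}}{q^{13d+5(1-g)}}\right)=O_P\!\left(q^{-d/2}\right)\xrightarrow{d\to\infty}0.
\]
Thus the $\limsup$ inside the integral of \cref{lem:DCT-shortcut} is $0$ for every $P\in M$, the integral is $0$, and the corollary follows. The whole argument is essentially bookkeeping; the genuine obstacle, already overcome in the preceding pages, was producing the universal polynomial $F$ of \cref{lem:2-tors-poly-hW} together with the Schwartz--Zippel count of \cref{prop:SZ-appli} that beats the $q^{13d}$ denominator with enough room to spare.
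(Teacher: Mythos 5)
Your proof is correct and is essentially the paper's own argument: the paper's proof of this corollary is literally ``combine \cref{lem:DCT-shortcut,lem:simplification-2} with \cref{prop:SZ-appli},'' and you have filled in exactly the intended bookkeeping (admissibility for $d\gg_P 1$, the $O_P(q^{12.5d})$ bound against the $q^{13d+5(1-g)}$ denominator, and the constant factor $\#\Pic^d(B)=\#\Pic^0(B)$). The only cosmetic slip is calling the splitting obstruction an ``$\hom^1$ of a line bundle'' when it lies in $\hom^1(B,\msG_8)$, a rank-$8$ bundle filtered by line bundles of large degree; this does not affect the argument.
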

\begin{proof}
    Combine \cref{lem:DCT-shortcut,lem:simplification-2} with \cref{prop:SZ-appli}.
\end{proof}

\subsubsection*{\bf Consequences for $\AS_B(d)$} In the remainder of this section, we use \cref{cor:extra-no-contribution} to compare $\MAS_B(d)$ and $\AS_B(d)$. Roughly, \cref{cor:extra-no-contribution} essentially shows that elliptic curves with non-trivial 2-torsion contribute nothing to the average size of 2-Selmer, so \cref{lem:sel-aut} will show that $\AS_B\le\MAS_B$, i.e. that the modified weighting can only lead to a larger average. More accurately, \cref{cor:extra-no-contribution} bounds the contribution of \important{non-trivial} $2$-Selmer elements attached to elliptic curves with non-trivial 2-torsion. We still also need to bound the contribution of trivial 2-Selmer elements attached to such curves, i.e. we need to bound the number of such curves. After doing so, we will show, in \cref{cor:AS-MAS-comp-char-2}, that $\AS_B\le\MAS_B$.
\begin{thm}\label{thm:E[2]-bound-char-2}
    Assume $\Char K=2$. The weighted number of elliptic curves $E/K$ of height $d$ with $E[2](K)\neq0$ is $O\p{q^{9d}}$ as $d\to\infty$.
\end{thm}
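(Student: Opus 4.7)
The plan is to apply the explicit polynomial criterion from \cref{lem:2-tors-poly-weier} --- which characterizes when a Weierstrass curve in characteristic $2$ has a nontrivial section fixed by $[-1]$ --- and then count the resulting Weierstrass equations via a direct linear-algebraic argument. The key observation that makes this work (in place of the Schwartz--Zippel approach of \cref{prop:SZ-appli}) is that the relevant polynomial is \emph{linear} in $a_6$; exploiting this lets one avoid the $q^{d/2}$ loss inherent in Schwartz--Zippel and obtain the exponent $9d$.

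First I would reduce to the case $a_1 \neq 0$. In characteristic $2$, the $j$-invariant of a smooth Weierstrass curve is $j = a_1^{12}/\Delta$, so $a_1 = 0$ forces $j = 0$ and thus $E$ supersingular; but the $2$-torsion group scheme of a supersingular elliptic curve in characteristic $2$ is connected, so $E[2](K) = 0$ in that case. Hence it suffices to count Weierstrass equations $(a_1, a_2, a_3, a_4, a_6) \in \bigoplus_i \hom^0(B, \msL^i)$ with $a_1 \neq 0$. For these, \cref{lem:2-tors-poly-weier} (applied with $S = B$ and $\msM = \msL$) ensures that $E[2](K) \neq 0$ implies the existence of $z \in \hom^0(B, \msL^5)$ with
\[
z^2 = f(a_1, \ldots, a_6) := a_1 a_3^3 + a_1^2 a_2 a_3^2 + a_1^3 a_3 a_4 + a_1^4 a_6 \in \hom^0(B, \msL^{10}).
\]

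Next I would carry out the count. Fix $\msL \in \Pic^d(B)$ with $d > N(g)$, and fix $(a_1, a_2, a_3, a_4)$ with $a_1 \neq 0$. As a function of $a_6$, $f$ defines an injective affine $\F_q$-linear map $\varphi \colon \hom^0(B, \msL^6) \to \hom^0(B, \msL^{10})$, the injectivity coming from the fact that multiplication by the nonzero section $a_1^4 \in \hom^0(B, \msL^4)$ is injective on global sections. The set of squares in $\hom^0(B, \msL^{10})$ coincides with the image of the Frobenius map $\hom^0(B, \msL^5) \to \hom^0(B, \msL^{10})$, $z \mapsto z^2$; since Frobenius is injective in characteristic $2$, this image has size $q^{h^0(\msL^5)}$. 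Therefore the number of $a_6$'s making $\varphi(a_6)$ a square is at most $q^{h^0(\msL^5)}$. Summing over $(a_1, a_2, a_3, a_4)$ and invoking Riemann--Roch, the number of Weierstrass equations at $\msL$ satisfying the 2-torsion criterion is at most $q^{\sum_{i=1}^5 h^0(\msL^i)} = q^{15d+5(1-g)}$.

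Finally, applying \cref{cor:count-eqns-for-W} --- which says each elliptic curve of height $d$ contributes $(q-1)\,q^{6d+3(1-g)} / \#\Aut(E)$ minimal Weierstrass equations (with $\msL$ equal to its Hodge bundle) --- and summing over $\msL \in \Pic^d(B)$ (using $\#\Pic^d(B) = \#\Pic^0(B)$), the weighted number of elliptic curves of height $d$ with $E[2](K) \neq 0$ is bounded above by
\[
\frac{\#\Pic^0(B) \cdot q^{15d+5(1-g)}}{(q-1)\,q^{6d+3(1-g)}} = O\!\left(q^{9d}\right),
\]
as claimed. The one genuinely non-routine ingredient is the initial reduction to $a_1 \neq 0$ via the characterization of supersingular reduction in characteristic $2$; the rest is elementary linear algebra on spaces of sections.
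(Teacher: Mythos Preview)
Your proof is correct and follows essentially the same route as the paper: apply \cref{lem:2-tors-poly-weier}, reduce to $a_1\neq0$, observe that $a_6$ is then determined by $(z,a_1,a_2,a_3,a_4)$, and divide the resulting equation count by the number of Weierstrass equations per curve. The only notable difference is in the justification that $a_1\neq0$: you argue via $j=a_1^{12}/\Delta$ and supersingularity, whereas the paper argues directly that $a_1=0$ together with a point fixed by $[-1]$ forces $a_3=0$ (from $0=a_1x+a_3$), making the equation singular---both are valid and equally short.
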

\begin{proof}
    Consider an elliptic curve $E/K$ of height $d$ sufficiently large for $E$'s minimal Weierstrass model to be cut out by a Weierstrass equation (in the sense of \cref{defn:weier-eqn}) for which $E[2](K)\neq0$. Letting $\msL\in\Pic^d(B)$ denote $E$'s Hodge bundle, \cref{lem:2-tors-poly-weier} thus tells us that any minimal Weierstrass equation
    \[Y^2Z + a_1XYZ + a_3YZ^2 = X^3 + a_2X^2Z + a_4XZ^2 + a_6Z^3\]
    for $E$ must satisfy 
    \[z^2=a_1a_3^3+a_1^2a_2a_3^2+a_1^3a_3a_4+a_1^4a_6\]
    for some $z\in\hom^0(B,\msL^5)$. Note that we must have $a_1\neq0$ above since $E[2](K)\neq0$. Indeed, if $a_1=0$, then the existence of a point fixed by negation would force $a_3=0$; however, in this case, $E$, the generic fiber of this equation, would be singular, a contradiction. Because $a_1\neq0$, we see that $a_6$ is determined by the choices of $z,a_1,a_2,a_3,a_4$. Hence, the total number of Weierstrass equations cutting out curves with Hodge bundle $\cong\msL$ and which support a non-trivial $2$-torsion point is at most
    \[\#\hom^0(\msL^5)\cdot\prod_{i=1}^4\#\hom^0(\msL^4)=q^{15d+5(1-g)}.\]
    Finally, arguing as in \cref{cor:WE-from-UW}, we conclude that the count of elliptic curves $E/K$, weighted by $1/\#\Aut(E)$, of height $d$ with $E[2](K)\neq0$ is at most
    \[\frac{\#\Pic^0(B)\cdot q^{15d+5(1-g)}}{(q-1)q^{6d+3(1-g)}}=O\p{q^{9d}}. \qedhere\]
\end{proof}

Now, we will find it useful to define the \define{intermediate average size of $2$-Selmer}:
\begin{equation}\label{eqn:IAS-def}
    \IAS_B(d)\colonequals\frac{N(d)}{\#\meM_{1,1}^{\le d}(K)}\twhere N(d)\coloneqq\sum_{\substack{E/K\\\Ht(E)\le d\\E[2](K)=0}}\frac{\#\Sel_2(E)}{\#\Aut(E)}.
\end{equation}
(observe that this only incorporates 2-Selmer elements of elliptic curves with trivial 2-torsion subgroups).
\begin{prop}\label{prop:IAS-AS-comp-char2}
    Use notation as in \cref{set:main}, and assume that $\Char K=2$. Then,
    \[\lim_{d\to\infty}\IAS_B(d)=\lim_{d\to\infty}\AS_B(d).\]
\end{prop}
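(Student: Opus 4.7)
The plan is to show that $\AS_B(d)-\IAS_B(d)\to 0$ as $d\to\infty$. Comparing the definitions in \cref{eqn:AS-def} and \cref{eqn:IAS-def}, this difference equals
\[\AS_B(d)-\IAS_B(d)=\frac{N_{\t{bad}}(d)}{\#\meM_{1,1}^{\le d}(K)},\qquad N_{\t{bad}}(d):=\sum_{\substack{E/K\\\Ht(E)\le d\\E[2](K)\neq 0}}\frac{\#\Sel_2(E)}{\#\Aut(E)}.\]
By \cref{thm:EC-d-asymp}, the denominator grows like $c\cdot q^{10d+2(1-g)}$, so it is enough to prove $N_{\t{bad}}(d)=o(q^{10d})$. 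I would split $\#\Sel_2(E)=1+(\#\Sel_2(E)-1)$ to decompose $N_{\t{bad}}(d)$ into a \emph{trivial} piece $T(d):=\sum_{E,E[2](K)\neq 0,\,\Ht(E)\le d}1/\#\Aut(E)$ (counting curves with extra $2$-torsion) and a \emph{nontrivial} piece $N(d):=\sum_{E,E[2](K)\neq 0,\,\Ht(E)\le d}(\#\Sel_2(E)-1)/\#\Aut(E)$ (counting their nonzero $2$-Selmer elements).

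The bound $T(d)=O(q^{9d})=o(q^{10d})$ follows directly from \cref{thm:E[2]-bound-char-2}, whose $O(q^{9d})$ bound at height $=d$ is preserved under the geometric sum over $e\le d$. For $N(d)$, the plan is to compare it to $\#\meH_E^{\le d}(B)$. From \cref{lem:sel-aut} one has $\#\Aut_{\CSel_2}(\alpha)=\#E[2](K)\cdot\#\Stab_{\Aut(E)}([\alpha])$ for any $\alpha=(C,E,\rho,D)\in\CSel_{2,NT}$; applying orbit--stabilizer to $\Aut(E)\actson(\Sel_2(E)\sm\{0\})$ yields, with $\meX\subset\CSel_{2,NT}^{\le d}$ the full subgroupoid of objects whose attached elliptic curve has nontrivial $2$-torsion,
\[N(d)=\sum_{[\alpha]\in\meX}\frac{\#\Jac(\alpha)[2](K)}{\#\Aut_{\CSel_2}(\alpha)}\le 4\cdot\#\meX,\]
where I have used $\#E[2](K)\le 4$.

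To bound $\#\meX\le\#\meH_E^{\le d}(B)$, I would invoke the parameterization $F\colon\meH_{LS}(B)\to\CSel_2$ of \cref{construct:hW-Sel-func}. Since $F$ sends $(H/B,D_H)$ to a Selmer element whose attached elliptic curve is $\Jac(H_K)$, an hW curve is extra (in the sense of \cref{defn:extra}) if and only if its image under $F$ belongs to $\meX$. Hence \cref{prop:sel-conn} supplies an essentially surjective, height-preserving functor from the full subgroupoid $\meH_E^{\le d}(B)\cap\meH_{LS,NT}(B)\subset\meH_E^{\le d}(B)$ onto $\meX$, and the faithfulness proved in \cref{prop:F-faithful} ensures $\#\Aut(\beta)\mid\#\Aut(F(\beta))$ for every $\beta$; summing $1/\#\Aut$ across iso-classes then gives the desired inequality.

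To conclude, \cref{cor:extra-no-contribution} says $\#\meH_E^{=d}(B)/\#\meM_{1,1}^{=d}(K)\to 0$, which (combined with $\#\meM_{1,1}^{=d}(K)\sim cq^{10d+2(1-g)}$) means $\#\meH_E^{=d}(B)=o(q^{10d})$. A short geometric-series argument (for each $\eps>0$, pick $d_0$ so $\#\meH_E^{=e}(B)<\eps q^{10e}$ for $e\ge d_0$, then sum over $e\le d$) upgrades this to $\#\meH_E^{\le d}(B)=o(q^{10d})$, giving $N(d)=o(q^{10d})$ and completing the proof. The only substantive input is \cref{cor:extra-no-contribution}; the rest is bookkeeping relating the three different weightings of ``bad'' $2$-Selmer data (via $\#\Aut(E)$, via $\#\Aut_{\CSel_2}$, and via hyper-Weierstrass automorphisms).
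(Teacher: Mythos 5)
Your proof is correct and follows essentially the same route as the paper's: both reduce to showing that the contribution $E(d)$ of elliptic curves with nontrivial $2$-torsion is $o(\#\meM_{1,1}^{\le d}(K))$, split this into a trivial part handled by \cref{thm:E[2]-bound-char-2} and a nontrivial part bounded by $4\cdot\#\meH_E^{\le d}(B)$ via \cref{lem:sel-aut}, \cref{prop:sel-conn}, and \cref{prop:F-faithful}, and then invoke \cref{cor:extra-no-contribution} together with \cref{thm:EC-d-asymp}. The only differences are cosmetic (you split $\#\Sel_2(E)=1+(\#\Sel_2(E)-1)$ before passing to the groupoid, and spell out the orbit--stabilizer bookkeeping and the $=d$ versus $\le d$ summation slightly more explicitly).
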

\begin{proof}
    First remark that
    \[\AS_B(d)-\IAS_B(d)=\frac{E(d)}{\#\meM_{1,1}^{\le d}(K)}\twhere E(d):=\sum_{\substack{E/K\\\Ht(E)\le d\\E[2](K)\neq0}}\frac{\#\Sel_2(E)}{\#\Aut(E)}.\]
    Next, recall the definition of the 2-Selmer groupoid (\cref{defn:Selmer-groupoid}). By construction, there is a bijection between isomorphism classes of objects of $\CSel_2$ and pairs $(E,\alpha)$, where $E$ is an isomorphism class of elliptic curves and $\alpha\in\Sel_2(E)$ (See \cref{rem:selmer-interpretation}). Thus,
    \[E(d)=\sum_{\substack{[(C,E,\rho,D)]\in\abs{\CSel_2}\\\Ht(E)\le d\\E[2](K)\neq0}}\frac1{\#\Aut(E)}.\]
    At the same time, an easy consequence of \cref{lem:sel-aut} is that, for any $(C,E,\rho,D)\in\CSel_2$, one has $4\#\Aut(E)\ge\#\Aut(C,E,\rho,D)$. Combined with the above displayed equality, this shows that
    \[E(d)\le4\sum_{\substack{[(C,E,\rho,D)]\in\abs{\CSel_2}\\\Ht(E)\le d\\E[2](K)\neq0}}\frac1{\#\Aut(C,E,\rho,D)},\]
    i.e. $E(d)$ is bounded above by 4 times the cardinality of the subgroupoid $\CSel_{2,E}^{\le d}\into\CSel_2^{\le d}$ consisting of 2-Selmer elements attached to elliptic curves $E/K$ with $E[2](K)\neq0$. It is clear from \cref{construct:hW-Sel-func} that the functor $F$ in \cref{prop:sel-conn} sends an hW curve to an object in $\CSel_{2,E}$ if and only if that curve is extra in the sense of \cref{defn:extra}. Thus, \cref{prop:sel-conn} shows that the (groupoid) cardinality of the `non-trivial' objects in $\CSel_{2,E}^{\le d}$ is bounded above by $\#\meH_E^{\le d}$. As such (the first summands counts the `trivial' objects in $\CSel^{\le d}_{2,E}$),
    \[E(d)\le 4\sum_{\substack{E/K\\\Ht(E)\le d\\E[2](K)\neq0}}\frac1{\#\Aut_{\CSel_2}(E,E,\rho_E,2O)}+4\#\meH_E^{\le d}.\]
    Combining \cref{thm:E[2]-bound-char-2} with the observation that $\#\Aut_{\CSel_2}(E,E,\rho_E,2O)=\#E[2](K)\cdot\#\Aut(E)$ shows that the first summand above is $O(q^{9d})$. Similarly, \cref{cor:extra-no-contribution} shows that the second summand is $o\p{\#\meM_{1,1}^{\le d}(K)}$. Finally, \cref{thm:EC-d-asymp} allows us to conclude that $E(d)=o\p{\#\meM_{1,1}^{\le d}(K)}$, from which the claim follows.
\end{proof}
\begin{lemma}\label{lem:IAS-MAS-comp}
    Use notation as in \cref{set:main} (with no assumption on $\Char K$). Then,
    \[\limsup_{d\to\infty}\IAS_B(d)\le\limsup_{d\to\infty}\MAS_B(d).\]
\end{lemma}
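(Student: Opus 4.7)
The plan is to verify the inequality $\#\CSel_2^{\le d}\ge N(d)$ termwise for each $d$, from which the desired inequality of limsups follows immediately upon dividing by $\#\meM_{1,1}^{\le d}(K)$. The entire proof is a matter of unpacking the definition of groupoid cardinality for $\CSel_2$ and applying \cref{lem:sel-aut} together with orbit-stabilizer, so I do not expect a serious obstacle.

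First I would partition $\abs{\CSel_2^{\le d}}$ according to the underlying elliptic curve. By \cref{rem:selmer-interpretation}, the isomorphism classes of objects of $\CSel_2$ lying above a given $E/K$ are in bijection with the orbits of $\Aut(E)\actson\Sel_2(E)$. By \cref{lem:sel-aut}, if $(C,E,\rho,D)\in\CSel_2$ represents $\alpha\in\Sel_2(E)$, then
\[\#\Aut_{\CSel_2}(C,E,\rho,D)=\#E[2](K)\cdot\#\Stab_{\Aut(E)}(\alpha).\]
Summing $1/\#\Aut_{\CSel_2}(C,E,\rho,D)$ over orbit representatives and applying the orbit-stabilizer relation $\#\Stab_{\Aut(E)}(\alpha)=\#\Aut(E)/\#(\Aut(E)\cdot\alpha)$, I obtain
\[\sum_{\substack{[(C,E,\rho,D)]\in\abs{\CSel_2}\\ \text{above }E}}\frac1{\#\Aut_{\CSel_2}(C,E,\rho,D)}=\frac1{\#E[2](K)}\sum_{\text{orbits}}\frac{\#(\Aut(E)\cdot\alpha)}{\#\Aut(E)}=\frac{\#\Sel_2(E)}{\#E[2](K)\cdot\#\Aut(E)}.\]
Summing over all $E/K$ with $\Ht(E)\le d$ yields the exact identity
\[\#\CSel_2^{\le d}=\sum_{\substack{E/K\\\Ht(E)\le d}}\frac{\#\Sel_2(E)}{\#E[2](K)\cdot\#\Aut(E)}.\]

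Second, I would discard all summands with $E[2](K)\ne0$ (each contributes non-negatively) and note that for the surviving summands $\#E[2](K)=1$. This gives
\[\#\CSel_2^{\le d}\ge\sum_{\substack{E/K\\\Ht(E)\le d\\E[2](K)=0}}\frac{\#\Sel_2(E)}{\#\Aut(E)}=N(d).\]
Dividing by $\#\meM_{1,1}^{\le d}(K)$, recalling the definitions in \cref{eqn:MAS-def} and \cref{eqn:IAS-def}, gives $\MAS_B(d)\ge\IAS_B(d)$ for every $d\ge0$, and taking the limit superior as $d\to\infty$ concludes the proof.
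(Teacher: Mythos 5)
Your proof is correct and follows essentially the same route as the paper: both arguments reduce to \cref{lem:sel-aut}, which lets one compare the groupoid weight $1/\#\Aut_{\CSel_2}(C,E,\rho,D)$ with the standard weight $1/\#\Aut(E)$, the point being that these agree (up to the favorable direction of the inequality) once $E[2](K)=0$. The only cosmetic difference is that you first derive the exact identity $\#\CSel_2^{\le d}=\sum_{E}\#\Sel_2(E)/(\#E[2](K)\cdot\#\Aut(E))$ via orbit--stabilizer and then discard the $E[2](K)\neq0$ terms, whereas the paper rewrites $N(d)$ as a sum over isomorphism classes and bounds it termwise; your bookkeeping of the $\Aut(E)$-orbit structure is, if anything, slightly more careful and is consistent with the computation in \cref{eqn:CSelT-card}.
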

\begin{proof}
    For this, one simply observes that the numerator $N(d)$ of $\IAS_B(d)$ can be expressed as a sum over isomorphism classes of objects of $\CSel_2$ and that \cref{lem:sel-aut} shows that $\#\Aut_{\CSel_2}(C,E,\rho,D)\le\#\Aut(E)$ if $E[2](K)=0$. Put together, these say that
    \[N(d)=\sum_{\substack{E/K\\\Ht(E)\le d\\E[2](K)=0}}\frac{\#\Sel_2(E)}{\#\Aut(E)}=\sum_{\substack{[(C,E,\rho,D)]\in\abs{\CSel_2}\\\Ht(E)\le d\\E[2](K)=0}}\frac1{\#\Aut(E)}\le\sum_{\substack{[(C,E,\rho,D)]\in\abs{\CSel_2}\\\Ht(E)\le d\\E[2](K)=0}}\frac1{\#\Aut_{\CSel_2}(C,E,\rho,D)}\le\#\CSel^{\le d},\]
    from which the claim follows.
\end{proof}
\begin{cor}\label{cor:AS-MAS-comp-char-2}
    Use notation as in \cref{set:main}, and assume that $\Char K=2$. Then,
    \[\AS_B\le\MAS_B.\]
\end{cor}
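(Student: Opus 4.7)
The plan is to simply chain together the two previous results of this subsection. Specifically, \cref{prop:IAS-AS-comp-char2} shows (in its proof) that the difference $\AS_B(d) - \IAS_B(d)$ equals $E(d)/\#\meM_{1,1}^{\le d}(K)$, where $E(d)$ is the weighted contribution coming from elliptic curves with nontrivial rational $2$-torsion, and this quantity tends to $0$ as $d \to \infty$. In particular, the sequences $\AS_B(d)$ and $\IAS_B(d)$ differ by a null sequence, hence agree in limsup.

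More concretely, I would write
\[
\AS_B \;=\; \limsup_{d\to\infty} \AS_B(d) \;=\; \limsup_{d\to\infty}\bigl(\IAS_B(d) + (\AS_B(d) - \IAS_B(d))\bigr) \;=\; \limsup_{d\to\infty} \IAS_B(d),
\]
using that $\AS_B(d) - \IAS_B(d) \to 0$ by (the proof of) \cref{prop:IAS-AS-comp-char2}. Then applying \cref{lem:IAS-MAS-comp} (which does not require any assumption on $\Char K$) yields
\[
\limsup_{d\to\infty} \IAS_B(d) \;\le\; \limsup_{d\to\infty} \MAS_B(d) \;=\; \MAS_B,
\]
completing the chain $\AS_B \le \MAS_B$.

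There is no real obstacle here: all of the substantive work has already been done in \cref{cor:extra-no-contribution} (ruling out extra hW curves via Schwartz--Zippel) and \cref{thm:E[2]-bound-char-2} (bounding elliptic curves with rational $2$-torsion), both of which feed into \cref{prop:IAS-AS-comp-char2}. The only thing to double-check is that the equality from \cref{prop:IAS-AS-comp-char2} (stated with $\lim$) is really being used in the limsup sense; but since the difference $\AS_B(d) - \IAS_B(d)$ goes to $0$, the two sequences have the same limsup regardless of whether their individual limits exist.
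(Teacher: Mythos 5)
Your proposal is correct and is essentially identical to the paper's proof, which simply combines \cref{prop:IAS-AS-comp-char2} with \cref{lem:IAS-MAS-comp}. Your extra care about interpreting the $\lim$ in \cref{prop:IAS-AS-comp-char2} via the vanishing difference $\AS_B(d)-\IAS_B(d)\to0$ is a reasonable and correct clarification, but the route is the same.
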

\begin{proof}
    Combine \cref{lem:IAS-MAS-comp} with \cref{prop:IAS-AS-comp-char2}.
\end{proof}

\section{\bf Proof of the Main Result}\label{sect:main-results}
In this section, we prove \cref{thma:main} (see \cref{thm:main}). We first extend the work of \cref{sect:MAS-AS-char-2} by relating $\MAS_B(d)$ and $\AS_B(d)$ in characteristics away from $2$. The argument used in \cref{sect:MAS-AS-char-2} heavily used that $\Char K=2$ and so does not apply in other characteristics; however, we will see that separate, simpler arguments suffice when $\Char K\neq2$. In brief, we will bound the weighted number of elliptic curves $E/K$ with $E[2](K)\neq0$ and separately obtain a strong enough on bound the sizes of their 2-Selmer groups in order to conclude that such curves do not contribute to the average size of 2-Selmer. Completing the first of these tasks (i.e. bounding the number of $E/K$ with $E[2](K)\neq0$) will already suffice to complete the proof of \cref{thm:mod-estimate} which states that
\[\MAS_B\le1+2\zeta_B(2)\zeta_B(10),\]
with $B,\zeta_B$ as in \cref{set:main} and $\MAS_B(d)$ defined in \cref{eqn:MAS-def}. Afterwards, completing the second task as well (i.e. bounding $\#\Sel_2(E)$ for appropriate $E$) will allow us to show that \[\AS_B\le\MAS_B,\] 
with $\AS_B(d)$ defined in \cref{eqn:AS-def}, even when $\Char K\neq2$. This will complete the proof of \cref{thma:main}.

\subsection{Some Bounds for Elliptic Curves with non-trivial 2-torsion, when $\Char K\neq2$}\label{sect:count-EC-2tors}
Work throughout in the context of \cref{set:main}. We will first bound the number of elliptic curves $E/K$ with $E[2](K)\neq0$. When $\Char K=2$, this was done already in \cref{thm:E[2]-bound-char-2}, so we focus here on the case $\Char K\neq2$. Our bound will be based on the existence of Weierstrass equations, so we begin by recalling the following.
\begin{rec}[\cref{prop:high-height=>weier-eqn}]
    Any Weierstrass curve $W/B$ of height $>N(g):=\max\{-1,2g-2\}$ is cut out by some Weierstrass equation
    \[Y^2Z + a_1XYZ + a_3YZ^2 = X^3 + a_2X^2Z + a_4XZ^2 + a_6Z^3\]
    in $\P:=\P\p{\msO_B\oplus\inv[2]\msL\oplus\inv[3]\msL}$, where $\msL$ is $W$'s Hodge bundle and $a_i\in\hom^0(B,\msL^i)$.
\end{rec}
\begin{assump}
    Assume $\Char K\neq2$.
\end{assump}

\subsubsection{The Number of Such Curves}
Let $E/K$ be an elliptic curve with Hodge bundle $\msL$ of height $d:=\deg\msL>N(g)$. Let $(W\xto\pi B,S)$ be its minimal Weierstrass model, so $W$ is given by some Weierstrass equation
\[Y^2Z + a_1XYZ + a_3YZ^2 = X^3 + a_2X^2Z + a_4XZ^2 + a_6Z^3\twith a_i\in\Gamma(B,\msL^i)\]
(inside of $\P:=\P(\msO_B\oplus\inv[2]\msL\oplus\inv[3]\msL)$). Note that negation on $E$ extends to the morphism
$$-1:\sq{X,Y,Z}\mapstoo\sq{X,-Y-a_1X-a_3Z,Z}$$
on $W\subset\P$. Suppose that $E$ has a non-trivial 2-torsion point $P\in E[2](K)$. By the valuative criterion of properness, $P$ extends to a section $\sigma:B\to W$.
Using the universal property of $\P$, the map $B\xto\sigma W\into\P$ corresponds to some line bundle $\msM\in\Pic(B)$ along with a surjection
\[\msO_B\oplus\inv[2]\msL\oplus\inv[3]\msL\onto\msM.\]
We first observe that, in fact, $\msM$ must be trivial. Indeed, it follows from \cite[Proposition VII.3.1(a)]{silverman} that, because $\Char K\neq2$, the image $\sigma(B)\subset W$ is disjoint from the zero section $S\subset W$, i.e. $P$ does not reduce to the identity at any place. Thus, $\sigma$ misses the subscheme $\{Z=0\}\subset W$, so the surjection $\msO_B\oplus\inv[2]\msL\oplus\inv[3]\msL\onto\msM$ defining $\sigma$ restricts to a map $\msO_B\to\msM$ which is non-vanishing in every fiber. Since $\msO_B,\msM$ are line bundles, this must in fact be an isomorphism.

The upshot is that we may view the section $\sigma$ as the triple $[\sigma_X,\sigma_Y,1]$ where $\sigma_X\in\Gamma(B,\msL^2)=\Hom(\inv[2]\msL,\msO_B)$ and $\sigma_Y\in\Gamma(B,\msL^3)=\Hom(\inv[3]\msL,\msO_B)$. Since $\sigma$ lands in $W\subset\P$, these are required to satisfy
\[\sigma^2_Y+a_1\sigma_X\sigma_Y+a_3\sigma_Y=\sigma_X^3+a_2\sigma_X^2+a_4\sigma_X+a_6.\]
Furthermore, since $P$ is 2-torsion, i.e. since $P=-P$, they must also satisfy
\begin{equation}\label{eqn:a3-determined}
    \sigma_Y = -\sigma_Y-a_1\sigma_X-a_3 \tandso a_3=-2\sigma_Y-a_1\sigma_X.
\end{equation}
Combining the previous two equations, we get that
\begin{align}\label{eqn:a6-determined}
    -\sigma_Y^2=\sigma_X^3+a_2\sigma_X^2+a_4\sigma_X+a_6 \tandso a_6 = -\sigma_Y^2-\sigma_X^3-a_2\sigma_X^2-a_4\sigma_X.
\end{align}
\begin{thm}\label{thm:E[2]-bound-char-not-2}
    Assume $\Char K\neq2$. The weighted number of elliptic curves $E/K$ of height $d$ with $E[2](K)\neq0$ is $O\p{q^{6d}}$ as $d\to\infty$.
\end{thm}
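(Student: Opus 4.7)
The plan is to adapt the parametrization established in the setup preceding the statement, counting pairs consisting of a minimal Weierstrass equation together with a choice of non-identity 2-torsion section. Fix $\msL\in\Pic^d(B)$ with $d>N(g)$. By \cref{prop:high-height=>weier-eqn}, every elliptic curve $E/K$ with Hodge bundle $\cong\msL$ has a minimal Weierstrass model cut out by coefficients $a_i\in\hom^0(B,\msL^i)$ for $i\in\{1,2,3,4,6\}$. If moreover $E[2](K)\neq0$, the discussion above the statement (which uses $\Char K\neq2$ crucially, via \cite[Proposition VII.3.1(a)]{silverman}, to ensure the section avoids the zero section) shows that any non-identity 2-torsion point extends to a section $\sigma=[\sigma_X,\sigma_Y,1]$ with $(\sigma_X,\sigma_Y)\in\hom^0(B,\msL^2)\times\hom^0(B,\msL^3)$, and that \cref{eqn:a3-determined,eqn:a6-determined} express $a_3$ (resp.\ $a_6$) as a polynomial in $(a_1,\sigma_X,\sigma_Y)$ (resp.\ $(a_2,a_4,\sigma_X,\sigma_Y)$).

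First I would bound, for fixed $\msL$, the number of tuples $(a_1,a_2,a_4,\sigma_X,\sigma_Y)$ lying in $\hom^0(B,\msL)\times\hom^0(B,\msL^2)\times\hom^0(B,\msL^4)\times\hom^0(B,\msL^2)\times\hom^0(B,\msL^3)$. Using Riemann--Roch (valid since $d>N(g)\ge 2g-2$), each of these spaces has cardinality $q^{id+(1-g)}$ for the appropriate $i$, yielding a total of $q^{(1+2+4+2+3)d+5(1-g)}=q^{12d+5(1-g)}$ tuples per $\msL$. Summing over $\msL\in\Pic^d(B)$ then multiplies this by $\#\Pic^0(B)=O(1)$. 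Because every minimal Weierstrass equation of height $d$ whose generic fiber has nonzero 2-torsion gives rise to at least one such tuple (and is itself recovered from the coefficients visible in the tuple together with the determination formulas), the count of such minimal Weierstrass equations is $O(q^{12d})$.

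To finish, I would apply \cref{cor:count-eqns-for-W}, by which each elliptic curve $E/K$ of height $d>N(g)$ is cut out by exactly $(q-1)q^{6d+3(1-g)}/\#\Aut(E)$ distinct minimal Weierstrass equations. Dividing yields
\[
\sum_{\substack{E/K\\\Ht(E)=d\\E[2](K)\neq0}}\frac{1}{\#\Aut(E)}\;\le\;\frac{O(q^{12d})}{(q-1)\,q^{6d+3(1-g)}}\;=\;O(q^{6d}),
\]
as desired; the finitely many heights $d\le N(g)$ contribute only to the implicit constant. No step in this argument is genuinely difficult: the essential content is already in the setup preceding the statement, and what remains is simply a Riemann--Roch count of the degrees of freedom in $(a_1,a_2,a_4,\sigma_X,\sigma_Y)$ followed by division by the Weierstrass-equation multiplicity supplied by \cref{cor:count-eqns-for-W}.
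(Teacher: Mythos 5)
Your proposal is correct and follows essentially the same route as the paper: parametrize pairs $(E,P)$ by tuples $(\msL,a_1,a_2,a_4,\sigma_X,\sigma_Y)$ using \cref{eqn:a3-determined,eqn:a6-determined} to eliminate $a_3$ and $a_6$, count $\#\Pic^0(B)\cdot q^{12d+5(1-g)}$ such tuples by Riemann--Roch, and divide by the Weierstrass-equation multiplicity $(q-1)q^{6d+3(1-g)}/\#\Aut(E)$ from \cref{cor:count-eqns-for-W}. No substantive differences.
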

\begin{proof}
    Consider a pair $(E,P)$ of an elliptic curve $E/K$ of height $d>N(g)$ along with a choice of non-identity point $P\in E[2](K)$. The above discussion shows that $(E,P)$ arises from some tuple
    \[\p{\msL,a_1,a_2,a_3,a_4,a_6,\sigma_X,\sigma_Y}\]
    with $\msL\in\Pic^d(B)$, $a_i\in\hom^0(B,\msL^i)$, $\sigma_X\in\hom^0(B,\msL^2)$, and $\sigma_Y\in\hom^0(B,\msL^3)$. Furthermore, \cref{eqn:a3-determined} shows that $a_3$ is completely determined once $a_1,\sigma_X,\sigma_Y$ are chosen. Similarly, \cref{eqn:a6-determined} shows that $a_6$ is determined once $a_2,a_4,\sigma_X,\sigma_Y$ are chosen. Thus, the entire tuple is determined once one chooses $\msL$ followed by choosing $a_1,a_2,a_4,\sigma_X,\sigma_Y$. Therefore, the total number of possible tuples is bounded above by
    \[\#\Pic^d(B)\cdot\#\hom^0(\msL)\cdot\#\hom^0(\msL^2)\cdot\#\hom^0(\msL^4)\cdot\#\hom^0(\msL^2)\cdot\#\hom^0(\msL^3)=\#\Pic^0(B)\cdot q^{12d+5(1-g)},\]
    with equality by Riemann-Roch since $d>N(g)$.
    Finally, arguing as in \cref{cor:WE-from-UW}, we conclude that the count of pairs $(E,P)$, weighted by $1/\#\Aut(E)$, of height $d$ is at most
    \[\frac{\#\Pic^0(B)\cdot q^{12d+5(1-g)}}{(q-1)q^{6d+3(1-g)}}=O\p{q^{6d}}.\qedhere\]
\end{proof}

\subsubsection{The Size of 2-Selmer for Such Curves}
We will also need a bound on the 2-Selmer groups of such elliptic curves.
\begin{lemma}\label{lem:num-5}
    Let
    \[\begin{tikzcd}
        A_1\ar[r, "f_1"]\ar[d, "\alpha"]&B_1\ar[r]\ar[d, "\beta"]&C_1\ar[r]\ar[d, "\gamma"]&D_1\ar[r, "g_1"]\ar[d, "\delta"]&E_1\ar[d, "\eps"]\\
        A_2\ar[r, "f_2"]&B_2\ar[r]&C_2\ar[r]&D_2\ar[r, "g_2"]&E_2
    \end{tikzcd}\]
    be a homomorphism of exact sequences of abelian groups. Then,
    \[\#\ker\gamma\le\#\ker\beta\cdot\#(\ker\delta\cap\ker g_1)\cdot\#\coker\p{\im(f_1)\xto\beta\im(f_2)}\le\#\ker\beta\cdot\#\ker\delta\cdot\#\im f_2.\]
\end{lemma}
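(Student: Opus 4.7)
The plan is a careful diagram chase followed by two index computations. Write $b_i\colon B_i\to C_i$ and $c_i\colon C_i\to D_i$ for the unnamed horizontal maps in row $i$. First, I would construct a natural map $\phi\colon\ker\gamma\to\ker\delta\cap\ker g_1$ by $x\mapsto c_1(x)$. To check this is well-defined, observe $\delta(c_1(x))=c_2(\gamma(x))=0$ by commutativity, and $g_1(c_1(x))=0$ by exactness of the top row. By exactness of the top row at $C_1$, the kernel of $\phi$ is $\ker\gamma\cap\ker c_1=\ker\gamma\cap\im b_1$, so
\[\#\ker\gamma \;\le\;\#(\ker\gamma\cap\im b_1)\cdot\#(\ker\delta\cap\ker g_1).\]

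Next I would bound $\#(\ker\gamma\cap\im b_1)$. Define $N\coloneqq\beta^{-1}(\im f_2)\subseteq B_1$. If $y\in N$, write $\beta(y)=f_2(a)$; then $\gamma(b_1(y))=b_2(\beta(y))=b_2(f_2(a))=0$ by exactness of the bottom row, so $b_1(y)\in\ker\gamma\cap\im b_1$. Conversely, if $b_1(y)\in\ker\gamma$, then $b_2(\beta(y))=0$, so $\beta(y)\in\ker b_2=\im f_2$ and $y\in N$. Hence $b_1$ restricts to a surjection $N\twoheadrightarrow\ker\gamma\cap\im b_1$ with kernel $\ker b_1=\im f_1$, giving $\#(\ker\gamma\cap\im b_1)=\#N/\#\im f_1$. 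Applying $\beta$ to $N$ yields a surjection $N\twoheadrightarrow\im\beta\cap\im f_2$ with kernel $\ker\beta$, so $\#N=\#\ker\beta\cdot\#(\im\beta\cap\im f_2)$.

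Since $\beta f_1=f_2\alpha$, we have $\beta(\im f_1)\subseteq\im\beta\cap\im f_2\subseteq\im f_2$, and the index $[(\im\beta\cap\im f_2):\beta(\im f_1)]$ is at most $[\im f_2:\beta(\im f_1)]=\#\coker(\im f_1\xto{\beta}\im f_2)$. Also $\#\beta(\im f_1)\le\#\im f_1$. Therefore
\[\#(\im\beta\cap\im f_2)\;\le\;\#\im f_1\cdot\#\coker(\im f_1\xto{\beta}\im f_2),\]
and combining gives
\[\#(\ker\gamma\cap\im b_1)\;\le\;\#\ker\beta\cdot\#\coker(\im f_1\xto{\beta}\im f_2).\]
Multiplying by $\#(\ker\delta\cap\ker g_1)$ yields the first claimed inequality.

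The second inequality is immediate: $\ker\delta\cap\ker g_1\subseteq\ker\delta$, and $\coker(\im f_1\xto{\beta}\im f_2)=\im f_2/\beta(\im f_1)$ is a quotient of $\im f_2$, so its order is $\le\#\im f_2$. No step is genuinely an obstacle here; the only place that requires care is the identification of the kernel of $\phi$, which uses exactness of the top row, and the bookkeeping in showing that $\#(\im\beta\cap\im f_2)/\#\im f_1\le\#\coker(\im f_1\xto{\beta}\im f_2)$.
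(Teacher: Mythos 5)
Your proof is correct and follows essentially the same route as the paper: you split the five-term sequence at $C$ into the pieces $\im b_1\cong\coker f_1$ and $\ker g_1$, which is exactly the decomposition the paper uses, except that the paper packages the resulting chase as two applications of the snake lemma (to $0\to\coker f_1\to C_1\to\ker g_1\to0$ and $0\to\im f_1\to B_1\to\coker f_1\to0$) while you carry it out by hand --- your map $\phi$ and your subgroup $N=\beta^{-1}(\im f_2)$ are precisely the connecting data of those two snake sequences. The only cosmetic caveat is the division by $\#\im f_1$, which should be rephrased multiplicatively if one cares about infinite groups; for the finite groups to which the lemma is applied, everything you wrote is fine.
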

\begin{proof}
    Consider the homomorphisms
    \[\homses{\coker f_1}{}{C_1}{}{\ker g_1}{\bar\beta}\gamma\delta{\coker f_2}{}{C_2}{}{\ker g_2}\tand\homses{\im f_1}{}{B_1}{}{\coker f_1}{}\beta{\bar\beta}{\im f_2}{}{B_2}{}{\coker f_2}\]
    of short exact sequences. Applying the snake lemma to both of them immediately shows that
    \[\#\ker\gamma\le\#\ker\bar\beta\cdot\#(\ker\delta\cap\ker g_1)\tand\#\ker\bar\beta\le\#\ker\beta\cdot\#\coker\p{\im(f_1)\to\im(f_2)}.\qedhere\]
\end{proof}
\begin{lemma}\label{lem:ell-sch-dual-ses}
    Let $S$ be an arbitrary scheme, and let $\msE/S$ be an elliptic scheme. Let $\alpha\into\msE$ be a finite locally free $S$-group scheme of order $n$, and let $\dual\alpha:=\ul\Hom(\alpha,\G_m)$ be its Cartier dual. Then, there is a short exact sequence
    \[0\too\alpha\too\msE[n]\too\dual\alpha\too0\]
    of abelian sheaves on $\fppf S$.
\end{lemma}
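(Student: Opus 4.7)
The plan is to realize the desired exact sequence via the theory of dual isogenies. First, I would form the quotient $\msE' := \msE/\alpha$, which (since $\alpha$ is a finite locally free subgroup scheme of $\msE$) makes sense as an elliptic scheme, and the quotient morphism $\phi: \msE \to \msE'$ is an fppf-surjective isogeny of degree $n$ with $\ker\phi = \alpha$. Using the autoduality of elliptic schemes, $\msE \cong \msE^\vee := \rPic^0_{\msE/S}$ (and similarly for $\msE'$), we obtain the dual isogeny $\hat\phi: \msE' \to \msE$, and the general theory of dual isogenies of abelian schemes gives both the canonical identification $\ker(\hat\phi) \cong \dual\alpha$ and the relation $\hat\phi \circ \phi = [n]_\msE$.

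Granted this setup, the construction of the exact sequence is essentially formal. Since $[n] = \hat\phi \circ \phi$ kills $\msE[n]$, the map $\phi$ carries $\msE[n]$ into $\ker\hat\phi = \dual\alpha$, giving a homomorphism $\psi := \phi|_{\msE[n]}: \msE[n] \to \dual\alpha$. Its kernel is $\msE[n] \cap \ker\phi = \msE[n] \cap \alpha = \alpha$, where the last equality uses that $\alpha$ is $n$-torsion because it has order $n$ (so $[n]$ vanishes on $\alpha$, giving $\alpha \subset \msE[n]$). This yields the exactness at $\alpha$ and at $\msE[n]$.

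For the fppf surjectivity of $\psi$, I would argue as follows: given an $S$-scheme $T$ and a section $x \in \dual\alpha(T) \subset \msE'(T)$, the fppf surjectivity of $\phi: \msE \to \msE'$ furnishes an fppf cover $T' \to T$ and a lift $y \in \msE(T')$ with $\phi(y) = x_{T'}$. Since $x \in \ker\hat\phi$, we get $[n]y = \hat\phi(\phi(y)) = \hat\phi(x_{T'}) = 0$, so $y \in \msE[n](T')$ and $\psi(y) = x_{T'}$. Hence $\psi$ is an fppf epimorphism, completing the short exact sequence $0 \to \alpha \to \msE[n] \to \dual\alpha \to 0$.

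The only nontrivial input is the existence of the dual isogeny $\hat\phi$ together with the identification $\ker\hat\phi \cong \dual\alpha$ and the identity $\hat\phi\circ\phi = [n]$; for elliptic schemes over an arbitrary base this is standard (it goes back to Raynaud, and is recorded for instance in \cite[Ch.~I, \S1.2]{katz-mazur} or \cite[\S15]{milne-ab}). Beyond this citation, the argument is purely diagrammatic, so I do not anticipate any genuine obstacle.
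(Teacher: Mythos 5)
Your proof is correct and follows essentially the same route as the paper: form $\msE'=\msE/\alpha$, use the dual isogeny with $\hat\phi\circ\phi=[n]$ and $\ker\hat\phi\cong\dual\alpha$ (the paper cites Oda for this), and read off the short exact sequence of kernels. Your extra verifications (the fppf lifting for surjectivity, and $\alpha\subset\msE[n]$, which in fact follows immediately from $\alpha=\ker\phi\subset\ker(\hat\phi\circ\phi)$) just make explicit what the paper leaves implicit.
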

\begin{proof}
    Consider the quotient map $q:\msE\onto\msE/\alpha=:\msE'$ 
    as well as its dual $\dual q:\msE'\to\msE$. Since $\dual qq=[n]:\msE\to\msE$, we get a short exact sequence of kernels
    \[0\too\ker q\too\msE[n]\too\ker\dual q\too0.\]
    Now, $\ker q=\alpha$ by construction, and so $\ker\dual q\simeq\dual\alpha$ by \cite[Corollary 1.3(ii)]{oda-dR}.
\end{proof}
\begin{prop}\label{prop:2-tors-Selmer-bound}
    Let $S\subset B$ be the set of places of bad reduction for $E$. Assume that $E[2](K)\neq0$. Then, 
    \[\dim_{\F_2}\Sel_2(E)\le3\#S+2\dim_{\F_2}\Pic^0(B)[2]+2\le3\#S+4g+2.\]
\end{prop}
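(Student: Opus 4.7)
The plan is to leverage the $K$-rational $2$-torsion point to split $E[2]$ into two copies of $\underline{\Z/2\Z}$, reduce bounding $\dim_{\F_2}\Sel_2(E)$ to bounding the two Selmer groups of the associated dual pair of $2$-isogenies, and then control each via the cohomology of $\underline{\Z/2\Z}$ unramified outside $S$.

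First I would pick $0 \neq P \in E[2](K)$, set $\alpha \coloneqq \langle P \rangle \cong \underline{\Z/2\Z}$, and consider $\phi\colon E \to E' \coloneqq E/\alpha$ together with its dual $\hat\phi\colon E' \to E$. Since $\Char K \neq 2$, the Cartier dual $\alpha^\vee \cong \mu_2$ is itself isomorphic to $\underline{\Z/2\Z}$ over $K$, so \cref{lem:ell-sch-dual-ses} produces the short exact sequence $0 \to \alpha \to E[2] \to \alpha^\vee \to 0$. Applying \cref{lem:num-5} to the diagram whose top row is the five-term global Galois cohomology LES of this sequence and whose bottom row is the product over places $v$ of the corresponding local LES, with the middle three columns modded out by the local Selmer images for $\phi$, $[2]$, and $\hat\phi$ respectively, yields the standard Selmer comparison
\[
\dim_{\F_2}\Sel_2(E) \le \dim_{\F_2}\Sel_\phi(E) + \dim_{\F_2}\Sel_{\hat\phi}(E').
\]

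Next, because $\phi$ is \'etale and $E$ has good reduction at every $v \notin S$ (and $v \nmid 2$ automatically), the local $\phi$-Selmer image at such $v$ equals $H^1_{\mathrm{ur}}(K_v,\alpha)$; hence both $\Sel_\phi(E)$ and $\Sel_{\hat\phi}(E')$ sit inside $H^1(K,\underline{\Z/2\Z};S)$, the subgroup of classes unramified outside $S$. Kummer theory identifies this with the subgroup of $K^\times/(K^\times)^2$ consisting of square classes with even valuation off $S$, fitting into the short exact sequence
\[
0 \to \msO_{K,S}^\times/(\msO_{K,S}^\times)^2 \to H^1(K,\underline{\Z/2\Z};S) \to \Pic(\msO_{K,S})[2] \to 0.
\]
Dirichlet's $S$-unit theorem in the function field setting gives $\msO_{K,S}^\times \cong \F_q^\times \times \Z^{\max(0,\#S-1)}$, so $\dim_{\F_2}\msO_{K,S}^\times/(\msO_{K,S}^\times)^2 \le \max(1,\#S)$; while the exact sequence $0 \to \Pic^0(B)/R \to \Pic(\msO_{K,S}) \to \Z/(\gcd_{v\in S}\deg v)\Z \to 0$ (with $R$ the image in $\Pic^0(B)$ of divisors supported on $S$), together with the fact that surjections of finite abelian groups cannot increase the $\F_2$-dimension of $2$-torsion, gives $\dim_{\F_2}\Pic(\msO_{K,S})[2] \le \dim_{\F_2}\Pic^0(B)[2]+1$. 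Summing and handling $S=\emptyset$ separately yields $\dim_{\F_2}H^1(K,\underline{\Z/2\Z};S) \le \#S + \dim_{\F_2}\Pic^0(B)[2]+1$.

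Combining gives $\dim_{\F_2}\Sel_2(E) \le 2\bigl(\#S + \dim_{\F_2}\Pic^0(B)[2]+1\bigr) \le 3\#S + 2\dim_{\F_2}\Pic^0(B)[2]+2$ (using $\#S \ge 0$), and the second claimed inequality follows from $\dim_{\F_2}\Pic^0(B)[2]\le 2g$, itself a consequence of $\Jac(B)[2](\overline{\F_q})\cong(\Z/2\Z)^{2g}$ in characteristic $\neq 2$. The main obstacle will be verifying exactness of the bottom row in the \cref{lem:num-5} diagram---this amounts to checking compatibility of the local Selmer images for $\phi$, $[2]$, and $\hat\phi$ across the connecting maps of the local LES---but one can sidestep this by invoking the classical Cassels exact sequence $0 \to E'(K)[\hat\phi]/\phi E(K)[2] \to \Sel_\phi(E) \to \Sel_2(E) \to \Sel_{\hat\phi}(E') \to \cdots$ to obtain the same comparison directly.
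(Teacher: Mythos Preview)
Your argument is correct and in fact yields the slightly sharper bound $2\#S + 2\dim_{\F_2}\Pic^0(B)[2] + 2$, from which the stated inequality follows trivially. However, the route differs from the paper's in a meaningful way.

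The paper does \emph{not} pass through the isogeny Selmer groups $\Sel_\phi(E)$ and $\Sel_{\hat\phi}(E')$. Instead it works directly with $\Sel_2(E)$: it embeds $\Sel_2(E)$ into the group $G$ of classes in $\hom^1(U,\msE[2])$ (with $U=B\setminus S$ and $\msE$ the N\'eron model over $U$) satisfying the local Selmer condition at each $v\in S$, and then bounds $G$ via the short exact sequence
\[
0 \to A \to G \to \prod_{v\in S}\frac{E(K_v)}{2E(K_v)} \to 0,
\]
where $A$ is the kernel of restriction $\hom^1(U,\msE[2])\to\prod_{v\in S}\hom^1(K_v,E[2])$. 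The local piece contributes $\le 2\#S$. For $A$, the paper applies \cref{lem:num-5} to the ladder obtained from the sequence $0\to\ul{\zmod2}\to\msE[2]\to\mu_2\to0$ of \cref{lem:ell-sch-dual-ses}, comparing $\hom^1(U,-)$ to $\prod_{v\in S}\hom^1(K_v,-)$; this gives $\dim_{\F_2}A\le 2\dim_{\F_2}\ker\beta+\#S$ where $\ker\beta$ consists of everywhere-unramified $\zmod2$-classes, bounded by class field theory as $\le\dim_{\F_2}\Pic^0(B)[2]+1$. Summing yields $3\#S$ rather than your $2\#S$.

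So both proofs use \cref{lem:ell-sch-dual-ses} to split $E[2]$, but the paper's use of \cref{lem:num-5} is applied to a \emph{different} diagram (global-vs-local over $U$ and $S$) than the one you sketch, and the paper never invokes the Cassels sequence or $\Sel_\phi$. Your approach is the more classical ``descent via $2$-isogeny'' argument; it is cleaner here and loses one $\#S$, at the cost of importing the Cassels-type exact sequence as a black box. The paper's approach keeps everything self-contained via \cref{lem:num-5} but picks up an extra $\#S$ from the $\#\im f_2$ term there.
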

\begin{proof}
    Let $U=B\sm S$ be the locus of good reduction for $E$, and let $\msE/U$ be $E$'s N\'eron model. Note that $[2]:\msE\to\msE$ is a flat (even \'etale) cover, so we can form the following commutative diagram with exact rows:
    \[\begin{tikzcd}
        0\ar[r]&\frac{\msE(U)}{2\msE(U)}\ar[d]\ar[r]&\hom^1(U,\msE[2])\ar[r]\ar[d]&\hom^1(U,\msE)[2]\ar[r]\ar[d]&0\\
        0\ar[r]&\prod_{v\in S}\frac{E(K_v)}{2E(K_v)}\ar[r, "\prod_v\delta_v"]&\prod_{v\in S}\hom^1(K_v,E[2])\ar[r]&\prod_{v\in S}\hom^1(K_v,E)[2]\ar[r]&0
        .
    \end{tikzcd}\]
    Now, it is not hard to show that $\hom^1(U,\msE[2])\subset\hom^1(K,E[2])$ consists exactly of cohomology classes which are everywhere unramified over $U$, and so produce a injection
    \[\Sel_2(E)\into\b{c\in\hom^1(U,\msE[2]):c_v\in\im\delta_v\tforall v\in S}=:G.\]
    Hence, it suffices to bound $\dim_{\F_2}G$. For this, we observe that it sits in a short exact sequence
    \[0\too\underbrace{\ker\p{\hom^1(U,\msE[2])\too\prod_{v\in S}\hom^1(K_v,E[2])}}_A\too G\too\prod_{v\in S}\frac{E(K_v)}{2E(K_v)}\too0.\]
    We separately bound the sizes of $A$ (defined in the above displayed sequence) and $\prod_{v\in S}E(K_v)/2E(K_v)$.
    \begin{itemize}
        \item For $A$, we first remark that, by \cref{lem:ell-sch-dual-ses}, we have a short exact sequence $0\to\ul{\zmod2}_U\to\msE[2]\to\mu_{2,U}\to0$. Comparing this with the analogous sequences over $K_v$ for $v\in S$, taking cohomology, and observing that $\ul{\zmod2}_U\simeq\mu_{2,U}$, we obtain
        \[\begin{tikzcd}
            \zmod 2\ar[d]\ar[r]&\hom^1(U,\zmod 2)\ar[d, "\beta"]\ar[r]&\hom^1(U,\msE[2])\ar[d, "\gamma"]\ar[r]&\hom^1(U,\zmod 2)\ar[d, "\delta"]\ar[r]&\hom^2(U,\zmod 2)\ar[d]\\
            (\zmod 2)^{\#S}\ar[r]&\prod_{v\in S}\hom^1(K_v,\zmod 2)\ar[r]&\prod_{v\in S}\hom^1(K_v,E[2])\ar[r]&\prod_{v\in S}\hom^1(K_v,\zmod 2)\ar[r]&\prod_{v\in S}\hom^2(K_v,\zmod2).
        \end{tikzcd}\]
        We now apply \cref{lem:num-5} to conclude that
        \begin{equation}\label{ineq:dim-A}
            \dim_{\F_2}A=\dim_{\F_2}\ker\gamma\le\dim_{\F_2}\ker\beta+\dim_{\F_2}\ker\delta+\#S=2\dim_{\F_2}\ker\beta+\#S,
        \end{equation}
        so we are reduced to bounding the size of
        \[B:=\ker\p{\hom^1(U,\zmod2)\xtoo\beta\prod_{v\in S}\hom^1(K_v,\zmod2)}.\]
        Note that
        \[\hom^1(U,\zmod 2)\simeq\ctsHom(G_{K,U},\zmod2)\tand\hom^1(K_v,E[2])\simeq\ctsHom(G_{K_v},\zmod2),\]
        where $K^s$ (resp. $K_v^s$) is the maximal separable extension of $K$ (resp. $K_v$), $G_K$ (resp. $G_{K_v}$) is the absolute Galois group of $K$ (resp. $K_v$), and $G_{K,U}=\Gal(K_U/K)$, where $K_U$ is the maximal extension of $K$ unramified above $U$. Thus any element of $B$ is represented by an everywhere unramified continuous homomorphism $G_K\to\zmod 2$, so $B\subset\ctsHom(\Pic(B),\zmod2)$
        by class field theory. As $\Pic(B)\cong\Pic^0(B)\by\Z$, this says that $B\subset\Hom(\Z,\zmod 2)\by\Hom(\Pic^0(B),\zmod 2)$. The first factor here is $\cong\zmod 2$, while the second factor has dimension $\dim_{\F_2}\Pic^0(B)[2]$. Recalling \cref{ineq:dim-A}, we conclude
        \[\dim_{\F_2}A\le2+2\dim_{\F_2}\Pic^0(B)[2]+\#S.\]
        \item For $\prod_{v\in S}E(K_v)/2E(K_v)$, we simply use the fact that, for each $v$, $E(K_v)$ is a profinite group with a finite index pro-$p$ subgroup (recall $p=\Char K\neq2$), and so $\#E(K_v)/2E(K_v)=\#E(K_v)[2]\le4$. Thus, $\dim_{\F_2}\prod_{v\in S}E(K_v)/2E(K_v)\le2\#S$.
    \end{itemize}
    The claim follows from combining these two bullet points.
\end{proof}
What remains is to convert this bound into one expressed in terms of the height of $E$ instead of its number of places of bad reduction.
\begin{lemma}\label{lem:div-supp-smol-deg}
    Let $D\subset B$ be an effective divisor. Fix $x\in\R$ such that every point in the support of $D$ has degree $<x$. Then,
    \[\#\supp D\le\frac{2g+2}{q-1}q^{x+1}.\]
\end{lemma}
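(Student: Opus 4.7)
The plan is to prove the bound by reducing it to the standard point-counting estimate for closed points on $B$, since $\#\supp D$ is bounded by the number of closed points of $B$ of degree $<x$. Concretely, if we write $a_d$ for the number of closed points of $B$ of degree exactly $d$, then $\#\supp D \le \sum_{d<x} a_d$, so the claim reduces to the uniform estimate
\[
\sum_{d<x} a_d \;\le\; \frac{2g+2}{q-1}\,q^{x+1}.
\]

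First I would use the identity $\#B(\F_{q^d}) = \sum_{e\mid d} e\cdot a_e$, which in particular gives $d\cdot a_d \le \#B(\F_{q^d})$. By the Weil conjectures for $B$, one has $|\#B(\F_{q^d}) - (q^d+1)| \le 2g\,q^{d/2}$, hence
\[
a_d \;\le\; \frac{q^d + 1 + 2g\,q^{d/2}}{d} \;\le\; (2g+2)\,q^d
\]
for every integer $d\ge 1$ (using $q^{d/2}\le q^d$ and $1\le q^d$, and dropping the $1/d$). This is the main input; it is completely standard.

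The only remaining step is a geometric-series summation: since every $d$ contributing to $\#\supp D$ satisfies $d<x$, hence $d\le \lfloor x\rfloor$, we get
\[
\sum_{d<x} a_d \;\le\; (2g+2)\sum_{d=1}^{\lfloor x\rfloor} q^d \;=\; (2g+2)\cdot\frac{q\,(q^{\lfloor x\rfloor}-1)}{q-1} \;\le\; \frac{(2g+2)\,q^{x+1}}{q-1},
\]
which is the desired inequality. There is no real obstacle here; the argument is just an application of the Hasse--Weil bound followed by summing a geometric progression, and the constant $2g+2$ comes directly from combining the $q^d$, $1$, and $2g\,q^{d/2}$ terms in the Weil estimate.
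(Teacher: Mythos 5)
Your proof is correct and follows essentially the same route as the paper: bound the number of degree-$d$ closed points by $\#B(\F_{q^d})$, apply the Hasse--Weil estimate to get $\#B(\F_{q^d})\le(2g+2)q^d$, and sum the geometric series. The only cosmetic difference is that you explicitly use $d\cdot a_d\le\#B(\F_{q^d})$ while the paper just bounds the count of degree-$d$ points directly by $\#B(\F_{q^d})$; both are fine.
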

\begin{proof}
    One can deduce from the Hasse-Weil bound that $\#B(\F_{q^r})\le(2g+2)q^r$ for any $r\ge1$. Hence,
    \[\#\supp D\le\sum_{1\le r<x}\#B(\F_{q^r})\le(2g+2)\sum_{r=1}^{\floor x}q^r\le\frac{2g+2}{q-1}q^{x+1}.\qedhere\]
\end{proof}
\begin{prop}\label{prop:div-bounds}
    Let $D\subset B$ be an effective divisor of degree $d\ge2$. Then,
    \begin{alignat}{4}
        \#\supp D &\le\frac{2d\log q}{\log d}+\frac{(2g+2)q}{q-1}\sqrt d&&=O\pfrac d{\log d}\label{ineq:supp-bound}
        .
    \end{alignat}
\end{prop}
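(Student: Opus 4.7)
The plan is to dyadically split $\supp D$ by degree and bound the two pieces separately, then optimize the cutoff. Fix a threshold $r$ (a positive integer, to be chosen later). Partition
\[\supp D = S_1 \sqcup S_2 \twhere S_1 \coloneqq \{v \in \supp D : \deg v < r\} \tand S_2 \coloneqq \{v \in \supp D : \deg v \ge r\}.\]
Each $v \in S_2$ contributes at least $r$ to $d = \deg D$, so $\#S_2 \le d/r$. For $S_1$, I will use the Hasse--Weil bound exactly as in the proof of \cref{lem:div-supp-smol-deg}: the number of closed points of $B$ of degree $<r$ is at most
\[\sum_{i=1}^{r-1} \#B(\F_{q^i}) \le (2g+2)\sum_{i=1}^{r-1} q^i \le \frac{(2g+2)q^r}{q-1},\]
which bounds $\#S_1$ from above since $S_1$ is a subset of this set.

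Next, I would pick $r \coloneqq \lceil \tfrac{\log d}{2 \log q} \rceil$, so that $r - 1 < \tfrac{\log d}{2\log q} \le r$; in particular $r \ge \tfrac{\log d}{2\log q}$ and $q^{r-1} < \sqrt{d}$, hence $q^r < q\sqrt{d}$. Combining:
\[\#\supp D \le \frac{d}{r} + \frac{(2g+2)q^r}{q-1} \le \frac{2d \log q}{\log d} + \frac{(2g+2)q}{q-1}\sqrt{d},\]
which is the claimed inequality \cref{ineq:supp-bound}. The asymptotic $O(d/\log d)$ then follows because $\sqrt d = o(d/\log d)$ as $d \to \infty$.

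There really isn't a main obstacle here: the argument is a textbook ``pick the right cutoff'' estimate, and the only thing to verify carefully is that the ceiling choice of $r$ produces the specific constants $2\log q$ and $(2g+2)q/(q-1)$ rather than slightly worse ones. A minor edge case to mention is that $d \ge 2$ is needed so that $\log d > 0$ and $r \ge 1$; for very small $d$, the inequality is trivial from $\#\supp D \le d$.
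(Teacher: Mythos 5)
Your proposal is correct and follows essentially the same route as the paper: split $\supp D$ at the degree threshold $\log_q\sqrt d$, bound the low-degree part via the Hasse--Weil point count (the paper's \cref{lem:div-supp-smol-deg}) and the high-degree part by the trivial observation that each such point contributes at least the threshold to $\deg D$. The only difference is your use of an integer ceiling for the cutoff versus the paper's real-valued cutoff, which is cosmetic and yields the same constants.
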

\begin{proof}
    Write $D=\sum_pn_p[p]$, so $d=\deg D=\sum_pn_p\deg p$. Consider the function $f(x):=\frac12\frac{\log x}{\log q}=\log_q(\sqrt x)$, and split $D$ as $D=D_1+D_2$, where
    \[D_1=\sum_{\substack{p\\\deg p<f(d)}}n_p[p]\tand D_2=\sum_{\substack{p\\\deg p\ge f(d)}}n_p[p].\]
    By \cref{lem:div-supp-smol-deg}, we have
    \[\#\supp D_1\le\frac{2g+2}{q-1}q^{f(d)+1}=\frac{(2g+2)q}{q-1}\sqrt d.\]
    Furthermore,
    \[d=\deg D\ge\deg D_2\ge f(d)\sum_{\deg p\ge f(d)}n_p=f(d)\cdot\#\supp D_2\tandso \#\supp D_2\le\frac d{f(d)}=\frac{2d\log q}{\log d}.\]
    The claim follows, as $\#\supp D\le\#\supp D_2+\#\supp D_1$.
\end{proof}
\begin{prop}\label{prop:weak-selmer-bound}
    Use notation as in \cref{set:main}. Assume that $\Char K\neq2$. Let $E/K$ be an elliptic curve with conductor $N\in\Div(B)$, and set $n:=\deg N$. If $E[2](K)\neq0$, then
    \[\dim_{\F_2}\Sel_2(E)\le3\sq{\frac{2n\log q}{\log n}+\frac{(2g+2)q}{q-1}\sqrt n}+4g+2.\]
    In particular, if one restricts attention to elliptic curves $E/K$ with $E[2](K)\neq0$, then
    \[\dim_{\F_2}\Sel_2(E)=O\pfrac n{\log n}\le O\pfrac{\Ht(E)}{\log\Ht(E)}\]
    as $n\to\infty$.
\end{prop}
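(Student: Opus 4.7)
The plan is to deduce this proposition by directly combining the two preceding results, namely the abstract bound in \cref{prop:2-tors-Selmer-bound}, which is phrased in terms of the set $S$ of places of bad reduction, and the combinatorial estimate \cref{prop:div-bounds}, which controls $\#\supp D$ for an effective divisor $D$ of given degree. The key observation is that the conductor $N$ is an effective divisor with $\supp N = S$, so $\#S = \#\supp N$ and \cref{prop:div-bounds} applies with $D = N$ and $d = n$.

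Concretely, I would first handle the small-$n$ case trivially (say $n \le 1$, where $\#S \le 1$ and the bound $4g+5$ suffices and beats the asserted estimate). For $n \ge 2$, I would apply \cref{prop:div-bounds} to the conductor $N$ to obtain
\[
\#S \;=\; \#\supp N \;\le\; \frac{2n\log q}{\log n} + \frac{(2g+2)q}{q-1}\sqrt n,
\]
and then plug this into \cref{prop:2-tors-Selmer-bound} to obtain the first displayed inequality of the proposition. The first asymptotic statement $\dim_{\F_2}\Sel_2(E) = O(n/\log n)$ is then immediate from the $O(\cdot)$ bound recorded in \cref{prop:div-bounds}.

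For the comparison with height, I need $n = \deg N \le C\cdot \Ht(E)$ for some constant $C$ (depending only on $K$, as is our convention). This follows from the standard fact, valid at every place $v$ of bad reduction, that the conductor exponent is bounded by the valuation of the minimal discriminant: Ogg's formula gives $v(\Delta) = f_v(E) + m_v - 1$ with $m_v \ge 1$, so $f_v(E) \le v(\Delta)$ at each $v$. Summing with multiplicities and invoking \cref{rem:height-disc}, which identifies $\deg\Delta$ with $12\Ht(E)$, yields $n \le 12\,\Ht(E)$. Composing with the monotone function $x \mapsto x/\log x$ for $x$ large gives the second asymptotic bound.

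No real obstacle is expected here: both of the hard inputs (the Selmer bound and the divisor support estimate) are already in hand, and the only external ingredient is the crude, well-known bound $\deg N \le \deg\Delta$, which could alternatively be stated as a conservative $\deg N = O(\Ht(E))$ without appealing to Ogg's formula. The only minor care needed is to ensure $n$ is large enough to invoke \cref{prop:div-bounds} and to be in the asymptotic regime where $\log n$ and $\log\Ht(E)$ differ only by a bounded multiplicative constant.
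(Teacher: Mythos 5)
Your proposal is correct and follows essentially the same route as the paper: combine \cref{prop:2-tors-Selmer-bound} with \cref{prop:div-bounds} applied to $D=N$ (using $\supp N=S$), then bound $n\le\deg\Delta=12\Ht(E)$ via Ogg's formula and \cref{rem:height-disc} for the height comparison. The only difference is your explicit (and harmless) attention to the small-$n$ case, which the paper elides by stating the conclusion asymptotically.
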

\begin{proof}
    The first part follows simply from combining \cref{prop:2-tors-Selmer-bound} with \cref{ineq:supp-bound} along with the observation that the set of bad places for $E$ is precisely $\supp N$. The asymptotic claim of the theorem statement is clear once one notes that $n\le\deg\Delta=12\Ht(E)$, with the inequality following e.g. from Ogg's formula \cite[Theorem 2]{ogg}, and the equality holding by \cref{rem:height-disc}.
\end{proof}

\subsection{Bounding the Average Size of 2-Selmer}\label{sect:final}
We are now in a position to prove \cref{thma:main}. We begin by completing the proof of \cref{thm:mod-estimate}, which we will restate below for the reader's convenience.
\begin{rec}
    Let $K$ be the function field of a smooth curve $B/\F_q$. Recall that $\meM_{1,1}(K)$ denotes the groupoid of elliptic curves over $K$, and that $\meM^{\le d}_{1,1}(K)$ denotes its full subgroupoid consisting elliptic curves of height $\le d$. Furthermore, recall the functions
    \[\AS_B(d):=\frac{\displaystyle\sum_{\substack{E/K\\\Ht(E)\le d}}\frac{\#\Sel_2(E)}{\#\Aut(E)}}{\#\meM^{\le d}_{1,1}(K)}\tand\MAS_B(d):=\frac{\#\CSel_2^{\le d}}{\#\meM_{1,1}^{\le d}(K)}\]
    defined in \cref{eqn:AS-def} and \cref{eqn:MAS-def}, respectively.
\end{rec}
\begin{prop}\label{prop:count-triv-sel}
      The groupoid $\CSel_{2,T}$ of trivial 2-Selmer elements (\cref{notn:Sel2NT}) satisfies 
      \[\lim_{d\to\infty}\frac{\#\CSel_{2,T}^{\le d}}{\#\meM_{1,1}^{\le d}(K)}=1.\]
\end{prop}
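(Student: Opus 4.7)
The plan is to exploit the identity
\[
\#\meM_{1,1}^{\le d}(K)-\#\CSel_{2,T}^{\le d}
=\sum_{\substack{E/K\\ \Ht(E)\le d}}\left(1-\frac{1}{\#E[2](K)}\right)\frac{1}{\#\Aut(E)},
\]
which follows immediately by comparing the formula for $\#\meM_{1,1}^{\le d}(K)$ with the expression for $\#\CSel_{2,T}^{\le d}$ given in \cref{eqn:CSelT-card}. The summand vanishes when $E[2](K)=0$ and is bounded above by $1$ (in fact by $3/4$) otherwise, so the difference is at most
\[
\sum_{\substack{E/K\\ \Ht(E)\le d\\ E[2](K)\neq 0}}\frac{1}{\#\Aut(E)}.
\]

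Next, I would invoke \cref{thma:E[2]} (which covers both the $\Char K\neq 2$ and $\Char K=2$ cases via \cref{thm:E[2]-bound-char-not-2} and \cref{thm:E[2]-bound-char-2}) to control this sum. That result gives, for each $d'$,
\[
\sum_{\substack{E/K\\\Ht(E)=d'\\E[2](K)\neq 0}}\frac{1}{\#\Aut(E)}=O\p{q^{Cd'}},
\quad\text{where } C=6\ \text{if}\ \Char K\neq 2\ \text{and}\ C=9\ \text{if}\ \Char K=2.
\]
Summing geometrically over $d'\le d$ gives
\[
\sum_{\substack{E/K\\ \Ht(E)\le d\\ E[2](K)\neq 0}}\frac{1}{\#\Aut(E)}=O\p{q^{Cd}}.
\]

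Finally I would compare with the denominator. By \cref{thma:EC-count}, $\sum_{\Ht(E)=d'}\tfrac{1}{\#\Aut(E)}\asymp q^{10d'}$, so summing over $d'\le d$ yields $\#\meM_{1,1}^{\le d}(K)\gg q^{10d}$ for $d$ large. Since $C\le 9<10$, we obtain
\[
0\le 1-\frac{\#\CSel_{2,T}^{\le d}}{\#\meM_{1,1}^{\le d}(K)}
=\frac{\#\meM_{1,1}^{\le d}(K)-\#\CSel_{2,T}^{\le d}}{\#\meM_{1,1}^{\le d}(K)}
=O\p{q^{(C-10)d}}\longrightarrow 0
\]
as $d\to\infty$, which is the desired limit.

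There is no genuine obstacle at this stage of the paper: all the work has been packaged into \cref{thma:E[2]} (whose characteristic-2 half \cref{thm:E[2]-bound-char-2} is the technical heart, relying on the Weierstrass-coefficient analysis around \cref{lem:2-tors-poly-weier}, and whose odd-characteristic half \cref{thm:E[2]-bound-char-not-2} is proved in \cref{sect:count-EC-2tors}) together with the asymptotic count of elliptic curves in \cref{thma:EC-count}. The proposition is simply the observation that these two inputs together imply the weighted proportion of curves with extra rational 2-torsion tends to $0$.
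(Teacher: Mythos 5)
Your proposal is correct and follows essentially the same route as the paper: both start from the identity furnished by \cref{eqn:CSelT-card}, bound the discrepancy by the weighted count of curves with nontrivial rational $2$-torsion, and then conclude by comparing the $O(q^{Cd})$ bound of \cref{thma:E[2]} (with $C\le 9$) against the $\asymp q^{10d}$ asymptotic for $\#\meM_{1,1}^{\le d}(K)$ from \cref{thm:EC-d-asymp}. No further comment is needed.
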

\begin{proof}
    Observe
    \begin{align*}
        \#\CSel_{2,T}^{\le d}
        &= \sum_{\substack{E/K\\\Ht(E)\le d}}\frac1{\#E[2](K)\cdot\#\Aut(E)} &&\t{by \cref{eqn:CSelT-card}}\\
        &= \sum_{\substack{E/K\\\Ht(E)\le d\\E[2](K)\neq0}}\frac1{\#E[2](K)}\cdot\frac1{\#\Aut(E)}+\sum_{\substack{E/K\\\Ht(E)\le d\\E[2](K)=0}}\frac1{\#\Aut(E)}\\
        &= \sum_{\substack{E/K\\\Ht(E)\le d}}\frac1{\#\Aut(E)}-\sum_{\substack{E/K\\\Ht(E)\le d\\E[2](K)\neq0}}\p{1-\frac1{\#E[2](K)}}\frac1{\#\Aut(E)}\\
        &\ge\sum_{\substack{E/K\\\Ht(E)\le d}}\frac1{\#\Aut(E)}-\sum_{\substack{E/K\\\Ht(E)\le d\\E[2](K)\neq0}}\frac1{\#\Aut(E)} &&\t{since }1-\frac1{\#E[2](K)}\le1
        .
    \end{align*}
    It is clear from \cref{eqn:CSelT-card} that $\#\Sel_{2,T}^{\le d}\le\#\meM_{1,1}^{\le d}(K)$. Combined with the above, we have
    \begin{equation}\label{ineq:N(d)}
        \#\meM_{1,1}^{\le d}(K)-\sum_{\substack{E/K\\\Ht(E)\le d\\E[2](K)\neq0}}\frac1{\#\Aut(E)}\le\#\CSel_{2,T}^{\le d}\le\#\meM_{1,1}^{\le d}(K).
    \end{equation}
    The claim now follows from dividing \cref{ineq:N(d)} by $\#\meM_{1,1}^{\le d}(K)$ and comparing the asymptotics obtained in \cref{thm:EC-d-asymp} and \cref{thma:E[2]}.
\end{proof}
\begin{cor}[= \cref{thm:mod-estimate}]\label{cor:cor-mod-estimate}
    Fix notation as in \cref{set:main}. Then,
    \[\MAS_B\le1+2\zeta_B(2)\zeta_B(10).\]
\end{cor}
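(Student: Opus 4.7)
The plan is to prove this by decomposing $\#\CSel_2^{\le d}$ into contributions from trivial and non-trivial 2-Selmer elements and then bounding each piece using results already established in the paper. Concretely, since $\abs{\CSel_2^{\le d}}$ is the disjoint union $\abs{\CSel_{2,T}^{\le d}}\sqcup\abs{\CSel_{2,NT}^{\le d}}$ and groupoid cardinality is additive over full subgroupoids, I would first write
\[
\MAS_B(d)
=\frac{\#\CSel_{2,T}^{\le d}}{\#\meM_{1,1}^{\le d}(K)}
+\frac{\#\CSel_{2,NT}^{\le d}}{\#\meM_{1,1}^{\le d}(K)}.
\]
This sets up the argument as a straightforward sum of two known estimates.

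Next, I would apply \cref{prop:count-triv-sel} (which is proven independently just above) to the first summand, obtaining
\[
\lim_{d\to\infty}\frac{\#\CSel_{2,T}^{\le d}}{\#\meM_{1,1}^{\le d}(K)}=1.
\]
For the second summand, \cref{cor:sel-conn} gives the inequality $\#\CSel_{2,NT}^{\le d}\le\#\meH_{M,NT}^{\le d}(B)$ (via the essentially surjective, faithful functor $F\colon\meH_{LS}(B)\to\CSel_2$ from \cref{prop:sel-conn}, together with the observation that $F$ sends $\meH_{LS,NT}(B)$ onto $\CSel_{2,NT}$), so
\[
\limsup_{d\to\infty}\frac{\#\CSel_{2,NT}^{\le d}}{\#\meM_{1,1}^{\le d}(K)}
\le\limsup_{d\to\infty}\frac{\#\meH_{M,NT}^{\le d}(B)}{\#\meM_{1,1}^{\le d}(K)}
\le 2\zeta_B(2)\zeta_B(10),
\]
with the last inequality being exactly \cref{cor:mod-nt-estimate}.

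Combining these via the elementary inequality $\limsup(a_d+b_d)\le\lim a_d+\limsup b_d$ (valid when $\lim a_d$ exists), I would conclude
\[
\MAS_B=\limsup_{d\to\infty}\MAS_B(d)\le 1+2\zeta_B(2)\zeta_B(10),
\]
which is the desired bound. There is no real obstacle here: all the substantive work (the asymptotic count of elliptic curves in \cref{sect:EC-count}, the parametrization by hyper-Weierstrass curves in \cref{sect:hW}, and the $\meH_{M,NT}$ count in \cref{sect:count-hW}) has already been done, and \cref{prop:count-triv-sel} supplies the trivial-Selmer contribution; this corollary is just the bookkeeping step that packages them together.
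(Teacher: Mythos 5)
Your proof is correct and is exactly the paper's argument: the paper's own proof of \cref{cor:cor-mod-estimate} is the one-line combination of the decomposition $\#\CSel_2^{\le d}=\#\CSel_{2,T}^{\le d}+\#\CSel_{2,NT}^{\le d}$ with \cref{prop:count-triv-sel}, \cref{cor:sel-conn}, and \cref{cor:mod-nt-estimate}. You have simply spelled out the same bookkeeping in more detail.
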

\begin{proof}
     $\#\CSel_2^{\le d}=\#\CSel_{2,T}^{\le d}+\#\CSel_{2,NT}^{\le d}$, so combine \cref{prop:count-triv-sel} with \cref{cor:sel-conn,cor:mod-nt-estimate}.
\end{proof}
Continue to work within the context of \cref{set:main}. To prove \cref{thma:main}, it now suffices to prove the inequality $\AS_B\le\MAS_B$.
Recall that \cref{cor:AS-MAS-comp-char-2} showed that the above holds when $\Char K=2$. Furthermore, recalling the quantity
\[\IAS_B(d):=\frac{N(d)}{\#\meM_{1,1}^{\le d}(K)}\twhere N(d):=\sum_{\substack{E/K\\\Ht(E)\le d\\E[2](K)=0}}\frac{\#\Sel_2(E)}{\#\Aut(E)}\]
from \cref{eqn:IAS-def}, \cref{lem:IAS-MAS-comp} shows that $\limsup_{d\to\infty}\IAS_B(d)\le\limsup_{d\to\infty}\MAS_B(d)$ in every characteristic. Thus, it will suffice to compare $\IAS_B(d)$ and $\AS_B(d)$ when $\Char K\neq2$.
\begin{prop}\label{prop:comp-IAS-AS}
    Assume that $\Char K\neq2$. Then,
    \[\lim_{d\to\infty}\AS_B(d)=\lim_{d\to\infty}\IAS_B(d).\]
\end{prop}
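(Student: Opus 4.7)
The plan is to bound the difference $\AS_B(d) - \IAS_B(d)$ directly by combining the two estimates already established for elliptic curves with nontrivial $2$-torsion in odd residue characteristic. Unwinding definitions,
\[
\AS_B(d) - \IAS_B(d) = \frac{1}{\#\meM_{1,1}^{\le d}(K)}\sum_{\substack{E/K\\ \Ht(E)\le d\\ E[2](K)\neq 0}}\frac{\#\Sel_2(E)}{\#\Aut(E)},
\]
and, since $\AS_B(d)\ge\IAS_B(d)$ trivially, it suffices to show the right-hand side tends to $0$ as $d\to\infty$.

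I would first stratify the sum by height. For each $e\in\{0,1,\dots,d\}$, \cref{thm:E[2]-bound-char-not-2} gives the weighted count
\[
\sum_{\substack{E/K,\ \Ht(E)=e\\ E[2](K)\neq 0}}\frac{1}{\#\Aut(E)} = O\p{q^{6e}},
\]
while \cref{prop:weak-selmer-bound} gives, uniformly for such $E$,
\[
\#\Sel_2(E) = 2^{\dim_{\F_2}\Sel_2(E)} \le 2^{O(e/\log e)} = q^{o(e)}
\]
as $e\to\infty$. Multiplying these two bounds and summing over $e\le d$ (the finitely many small $e$ contribute $O(1)$, hence are negligible), the numerator above is bounded by
\[
\sum_{e=0}^{d} q^{6e+o(e)} = q^{6d + o(d)}\qquad\text{as } d\to\infty,
\]
where the $o(d)$ absorbs the geometric-series constant and the subexponential factor from the Selmer bound.

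On the other hand, \cref{thm:EC-d-asymp} implies
\[
\#\meM_{1,1}^{\le d}(K) \sim \#\Pic^0(B)\cdot\frac{q^{10d+2(1-g)}}{(q-1)\zeta_B(10)}\cdot\frac{q^{10}}{q^{10}-1},
\]
so in particular $\#\meM_{1,1}^{\le d}(K) \gg q^{10d}$. Dividing the two estimates,
\[
0 \le \AS_B(d) - \IAS_B(d) \le \frac{q^{6d+o(d)}}{q^{10d + O(1)}} = q^{-4d + o(d)} \xrightarrow{d\to\infty} 0,
\]
which proves the proposition. The only subtlety in the argument is ensuring that the $o(e)$ term from $\dim_{\F_2}\Sel_2(E) = O(e/\log e)$ is genuinely subexponential in base $q$, but this is immediate from $2^{Ce/\log e}=q^{Ce/((\log e)\log q)}$ with the exponent $o(e)$; so there is no real obstacle, and the result follows by combining the already-proved ingredients.
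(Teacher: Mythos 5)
Your proposal is correct and follows essentially the same route as the paper: write $\AS_B(d)-\IAS_B(d)$ as the (normalized) contribution of curves with nontrivial $2$-torsion, bound it by multiplying the weighted count of such curves (Theorem~\ref{thm:E[2]-bound-char-not-2}) by the uniform Selmer bound $2^{O(d/\log d)}$ from Proposition~\ref{prop:weak-selmer-bound}, and compare against $\#\meM_{1,1}^{\le d}(K)\gg q^{10d}$ from Theorem~\ref{thm:EC-d-asymp}. The only cosmetic differences are that you stratify by exact height before summing (the paper bounds the cumulative sum directly) and that you use the sharper exponent $6$ where the paper is content with $9$; both comfortably beat $10$.
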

\begin{proof}
    We first remark that
    \[\AS_B(d)-\IAS_B(d)=\frac{E(d)}{\#\meM_{1,1}^{\le d}(K)}\twhere E(d):=\sum_{\substack{E/K\\\Ht(E)\le d\\E[2](K)\neq0}}\frac{\#\Sel_2(E)}{\#\Aut(E)}.\]
    By combining \cref{thma:E[2]} with \cref{prop:weak-selmer-bound}, we see that
    \[E(d)=O\p{q^{9d}}\cdot O\p{2^{d/\log d}}=O\p{q^{9d+d/\log d}}\]
    as $d\to\infty$. Since, by \cref{thm:EC-d-asymp}, $\#\meM_{1,1}^{\le d}(K)\sim Cq^{10d}$ for some positive constant $C$, we conclude that $\lim_{d\to\infty}E(d)/\#\meM^{\le d}_{1,1}(K)=0$, from which the claim follows.
\end{proof}
\begin{thm}[= \cref{thma:main}]\label{thm:main}
    Fix notation as in \cref{set:main}. Then, $\AS_B\le1+2\zeta_B(2)\zeta_B(10)$.
\end{thm}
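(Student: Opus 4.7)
The plan is to deduce \cref{thm:main} by combining the inequality $\MAS_B\le1+2\zeta_B(2)\zeta_B(10)$ of \cref{cor:cor-mod-estimate} with the comparison $\AS_B\le\MAS_B$, which has already been established in two pieces throughout the paper. The only genuine task that remains is to verify this comparison in characteristic different from $2$; the characteristic $2$ case is settled directly by \cref{cor:AS-MAS-comp-char-2}.

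First I would split the argument into the two cases $\Char K=2$ and $\Char K\neq2$. When $\Char K=2$, the chain
\[\AS_B\;\overset{\cref{cor:AS-MAS-comp-char-2}}\le\;\MAS_B\;\overset{\cref{cor:cor-mod-estimate}}\le\;1+2\zeta_B(2)\zeta_B(10)\]
is immediate and needs no further argument.

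When $\Char K\neq2$, I would invoke \cref{prop:comp-IAS-AS}, which says that $\lim_{d\to\infty}\AS_B(d)=\lim_{d\to\infty}\IAS_B(d)$ (so in particular the $\limsup$'s agree), and then chain this with \cref{lem:IAS-MAS-comp}, which gives $\limsup_{d\to\infty}\IAS_B(d)\le\limsup_{d\to\infty}\MAS_B(d)=\MAS_B$ in any characteristic. Together these yield $\AS_B=\limsup_{d\to\infty}\AS_B(d)\le\MAS_B$, and once again \cref{cor:cor-mod-estimate} closes out the bound.

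No step here poses a real obstacle; all the hard content is already packaged into the results being cited. The only thing to be careful about is that \cref{prop:comp-IAS-AS} uses \cref{thma:E[2]} (the characteristic $\neq 2$ bound of $O(q^{6d})$ on elliptic curves with a rational $2$-torsion point) together with the Selmer rank bound of \cref{prop:weak-selmer-bound} to absorb the contribution of elliptic curves with nontrivial $2$-torsion, and this input is what makes the characteristic $\neq 2$ argument go through without requiring the delicate Schwartz--Zippel analysis of \cref{sect:MAS-AS-char-2}; thus the bookkeeping amounts to citing the correct result in each characteristic and assembling the chain of inequalities.
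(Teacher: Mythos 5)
Your proposal is correct and is essentially identical to the paper's own proof: both combine \cref{cor:cor-mod-estimate} with \cref{cor:AS-MAS-comp-char-2} in characteristic $2$, and with \cref{lem:IAS-MAS-comp} and \cref{prop:comp-IAS-AS} in characteristic $\neq2$. Nothing further is needed.
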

\begin{proof}
    Combine \cref{cor:cor-mod-estimate} with \cref{cor:AS-MAS-comp-char-2} if $\Char K=2$ or with \cref{lem:IAS-MAS-comp} and \cref{prop:comp-IAS-AS} if $\Char K\neq2$.
\end{proof}

\appendix
\appendixpage
\addappheadtotoc

\section{\bf Applications of Cohomology and Base Change}
\numberwithin{thm}{section}

We will need to apply the theorem of cohomology and base change in several places throughout this paper. In order to limit how much we repeat ourselves, we collect some standard consequences in this appendix.
\begin{thm}[Cohomology and Base Change]\label{thm:coh-bc}
    Let $f:X\to B$ be a proper, finitely presented morphism of schemes, and let $\msF$ be a finitely presented sheaf on $X$ which is flat over $B$. Suppose that for a point $b\in B$ and an integer $i$, the comparison map
    $$\phi^i_b:R^i\push f\msF\otimes\kappa(b)\too\hom^i(X_b,\msF_b)$$
    is surjective. Then, all of the following hold.
    \begin{enumerate}
        \setcounter{enumi}{-1}
        \item $\phi^i_b$ is an isomorphism.
        \item there is an open neighborhood $V\subset B$ of $b$ s.t. for any morphism $B'\xto gV$ of schemes, the comparison map
        $$\phi^i_{B'}:\pull gR^i\push f\msF\isoo R^i\push f'(\pull{g'}\msF)$$
        is an isomorphism. Above, $f',g'$ are the morphisms in the Cartesian square
        $$\commsquare{X'}{g'}X{f'}f{B'}g{B.}$$
        In particular, if $\phi^i_b$ is surjective for all $b\in B$, then formation of $R^i\push f\msF$ commutes with arbitrary base change.
        \item $\phi_b^{i-1}$ is surjective if and only if $R^i\push f\msF$ is a vector bundle in an open neighborhood of $b$.
        
        In particular, $\phi_b^{i-1}$ is surjective for all $b\in B$ if and only if $R^i\push f\msF$ is a vector bundle on $B$.
    \end{enumerate}
\end{thm}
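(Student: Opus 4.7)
The plan is to reduce the theorem to a statement about a single bounded complex of finite free modules on $B$, and then handle that statement by pure linear algebra.

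First, since all three conclusions are local on $B$, I would pass to the case $B = \spec R$ affine. The fundamental input I would invoke is the existence of a \emph{base change complex}: a bounded complex $K^{\bullet}$ of finite free $R$-modules, together with a functorial isomorphism
\[
R\Gamma(X_{S}, \mathscr{F}_{S}) \;\simeq\; K^{\bullet} \otimes_R^{L} S
\]
for every $R$-algebra $S$, inducing $R^{i}f'_{*}(g'^{*}\mathscr{F}) \cong H^{i}(K^{\bullet} \otimes_{R} S)$ in the notation of the theorem. In the Noetherian finite-type case this is the classical construction via an affine \v{C}ech cover followed by replacing the resulting complex of flat $R$-modules with a quasi-isomorphic bounded complex of finite free modules (Mumford, \emph{Abelian Varieties}, \S 5; Hartshorne III.12; EGA III.7). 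In the finitely presented case one obtains it by standard noetherian approximation: write $R$ as a filtered colimit of finite-type $\mathbb{Z}$-algebras, spread out $(X,\mathscr{F})$ to some finite-type stage, and pull back the complex constructed there. After this reduction, the statement $\phi^{i}_{b}$ surjective $\Rightarrow$ $\phi^{i}_{b}$ iso (respectively a base-change iso, respectively a local-freeness criterion for $R^{i}f_{*}\mathscr{F}$) is a statement purely about how $H^{i}$ of a bounded complex of finite free modules interacts with tensor product.

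Next, I would prove the linear-algebra statement. Define the additive $R$-linear functor $T^{i}(M) := H^{i}(K^{\bullet} \otimes_{R} M)$ on $R$-modules; this is half-exact (a short exact sequence of modules yields a long exact sequence of $T^{i}$'s) and commutes with filtered colimits. There is a natural transformation $\alpha^{i} \colon T^{i}(R) \otimes_{R} - \;\to\; T^{i}(-)$, and one wants the "exchange lemmas":
\begin{itemize}
    \item[(E1)] If $\alpha^{i}_{\kappa(b)}$ is surjective, then $\alpha^{i}_{M}$ is an isomorphism for all $M$, after localizing at $b$. This gives (0) and (1).
    \item[(E2)] Local freeness of $T^{i}(R)$ near $b$ is equivalent to $\alpha^{i-1}_{\kappa(b)}$ being surjective. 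This gives (2).
\end{itemize}
The proof of (E1) proceeds by considering the two-step complex $K^{i}\xrightarrow{d^{i}} K^{i+1}$: using right-exactness of tensor product one rewrites $T^{i}(M)$ as the kernel of a map of the form $\operatorname{coker}(d^{i-1}) \otimes M \to K^{i+1}\otimes M$, and a diagram chase with Nakayama shows that surjectivity of $\alpha^{i}$ at $\kappa(b)$ forces the relevant cokernel to be \emph{flat} at $b$, which globalizes both the isomorphism and the stability under further base change. For (E2), once one knows $\alpha^{i}$ is an iso near $b$, the vanishing of $\operatorname{Tor}_{1}(T^{i}(R),\kappa(b))$ is equivalent, via the long exact sequence coming from $0\to Z^{i} \to K^{i}\to B^{i+1}\to 0$ and the iso $T^{i-1}(\kappa(b)) = Z^{i}\otimes \kappa(b)/\operatorname{image}$, to surjectivity of $\alpha^{i-1}$ at $b$, and hence to local freeness of $T^{i}(R)$ at $b$.

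The main obstacle is the construction of the base change complex $K^{\bullet}$ in the finitely-presented, not necessarily Noetherian setting stated in the theorem; this is where the deep input lies, and it is the reason one cannot simply quote Hartshorne III.12 verbatim. Once $K^{\bullet}$ is in hand, the rest is a straightforward exercise in homological algebra of bounded complexes of finite free modules, and the three conclusions $(0)$, $(1)$, $(2)$ follow from (E1) and (E2) applied at the relevant degrees.
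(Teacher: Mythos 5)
The paper offers no proof of this statement beyond citing Vakil's and Alper's notes, and your outline is exactly the standard argument those references (and Mumford, \emph{Abelian Varieties} \S 5, or Hartshorne III.12) give: reduce to an affine base, represent $Rf_*\mathscr{F}$ by a bounded complex of finite free modules whose formation commutes with arbitrary base change (with noetherian approximation handling the finitely presented, non-noetherian case), and deduce (0)--(2) from the two exchange lemmas by linear algebra on that complex. Your proposal is therefore correct and takes essentially the same route as the proof the paper points to.
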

\begin{proof}
    See \cite[Theorem 25.1.6]{ravi-notes} and \cite[Theorem A.7.5]{alper}.
\end{proof}
\begin{lemma}\label{lem:glob-gen}
    Let $f:X\to B$ be a morphism of schemes. Let $\msL$ be a line bundle on $X$ such that $\push f\msL$ is a vector bundle on $B$ whose formation commutes with arbitrary base change. Suppose that, for each $b\in B$, the fibral line bundle $\msL_b:=\msL\vert_{X_b}$ on $X_b$ is globally generated. Then, the natural map
    $$\pull f\push f\msL\too\msL$$
    is surjective.
\end{lemma}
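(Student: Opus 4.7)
The plan is to check surjectivity of the counit $\pull f\push f\msL\to\msL$ stalk-by-stalk. Fix $x\in X$ and set $b:=f(x)$. Because $\msL_x$ is a free $\msO_{X,x}$-module of rank one, Nakayama's lemma reduces surjectivity of $(\pull f\push f\msL)_x\to\msL_x$ to surjectivity after tensoring with $\kappa(x)$, i.e., to surjectivity of
\[
(\push f\msL)_b\otimes_{\msO_{B,b}}\kappa(x)\too\msL_x\otimes_{\msO_{X,x}}\kappa(x)=\kappa(x).
\]

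Next, I would use the base-change hypothesis applied to the inclusion $\spec\kappa(b)\to B$ to rewrite the left-hand side. That hypothesis gives a canonical isomorphism $(\push f\msL)_b\otimes_{\msO_{B,b}}\kappa(b)\iso\hom^0(X_b,\msL_b)$, so after a further tensor with $\kappa(x)$ over $\kappa(b)$ (which is legitimate since $\msO_{B,b}\to\msO_{X,x}\to\kappa(x)$ factors through $\kappa(b)$) the displayed map becomes
\[
\hom^0(X_b,\msL_b)\otimes_{\kappa(b)}\kappa(x)\too\kappa(x).
\]
Unwinding the counit $\pull f\push f\msL\to\msL$ (its formation is compatible with restriction to the fiber $X_b$, which is itself a manifestation of the base-change hypothesis), this map is exactly evaluation-at-$x$ of global sections of $\msL_b$ on $X_b$.

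Finally, global generation of $\msL_b$ is precisely the assertion that the evaluation map $\hom^0(X_b,\msL_b)\otimes_{\kappa(b)}\msO_{X_b}\onto\msL_b$ is surjective, so in particular its fiber at $x$ is surjective. Combined with Nakayama, this gives surjectivity at every stalk and hence surjectivity of $\pull f\push f\msL\to\msL$. No real obstacle is expected here: the only slightly fussy point is the compatibility of the counit with the base-change isomorphism, which is a formal diagram chase using naturality of the $\pull f\dashv\push f$ adjunction.
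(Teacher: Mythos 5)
Your proof is correct and follows essentially the same route as the paper's: reduce to residue fields via Nakayama, use the base-change hypothesis to identify the resulting map with evaluation of global sections of $\msL_b$ on the fiber, and conclude from global generation. The only cosmetic difference is that you work point-by-point at each $x\in X_b$ while the paper restricts to the whole fiber $X_b$ at once.
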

\begin{proof}
    This argument comes from the proof of \cite[Proposition A.7.10]{alper}. Surjectivity can be checked on stalks. Applying Nakyama to the cokernels of the maps on stalks, we see that surjectivity can even be checked on the fibers of the line bundles. Thus, it also suffices to check that $\p{\pull f\push f\msL}\vert_{X_b}\too\msL\vert_{X_b}=\msL_b$ is surjective for each $b\in B$. Note that the left hand side is the pullback of $\push f\msL$ along the composition $X_b\into X\xto fB$, which is equivalently the composition $X_b\xto{f_b}\spec\kappa(b)\xinto bB$, so we are asking for surjectivity of the induced map
    $$\hom^0(X_b,\msL_b)\otimes\msO_{X_b}=\pull f_b\p{\wt{\hom^0(X_b,\msL_b)}}\simeq\pull f_b\p{\push f\msL\otimes\kappa(b)}\too\msL_b,$$
    where the second isomorphism holds since the formation of $\push f\msL$ commutes with base change along $\spec\kappa(b)\xinto bB$. The above map is surjective since $\msL_b$ is globally generated by assumption, so we win.
\end{proof}
\begin{lemma}\label{lem:fibral-gs+ds}
    Let $\pi:\meC\to B$ be a $B$-curve (see \cref{sect:conventions} for our definition of `curve'). Furthermore, assume that, for all $b\in B$, one has $\hom^0(\meC_b,\msO_{\meC_b})=\kappa(b)$ and $\omega_{\meC_b}\simeq\msO_{\meC_b}$. Then, $\push\pi\msO_\meC=\msO_B$ holds after arbitrary base change, and $\omega_{X/B}=\pull\pi\msL$ for a unique $\msL\in\Pic(B)$. In fact, $\msL\simeq\push\pi\omega_{\meC/B}$, whose formation will also commute with arbitrary base change.
\end{lemma}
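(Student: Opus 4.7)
The plan is to apply Theorem \ref{thm:coh-bc} iteratively to both $\msF = \msO_\meC$ and $\msF = \omega_{\meC/B}$. The key numerical input is that, by Serre duality on each fiber together with our hypotheses,
\[h^0(\meC_b,\msO_{\meC_b}) = h^1(\meC_b,\msO_{\meC_b}) = h^0(\meC_b,\omega_{\meC_b}) = h^1(\meC_b,\omega_{\meC_b}) = 1\]
is constant in $b$, and $R^i\push\pi\msF = 0$ for $i\ge 2$ and either $\msF$ (Grothendieck vanishing, since the fibers are 1-dimensional). A standing auxiliary fact I will use is that formation of the relative dualizing sheaf $\omega_{\meC/B}$ commutes with arbitrary base change, which is standard for proper flat finitely presented morphisms with Gorenstein fibers — precisely our definition of a $B$-curve.

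To establish $\push\pi\msO_\meC = \msO_B$ after arbitrary base change, I apply Theorem \ref{thm:coh-bc} with $\msF=\msO_\meC$ in descending order of $i$. For $i=2$ the target of $\phi_b^2$ is $0$, so $\phi_b^2$ is trivially surjective, hence iso by \bp0 and its formation commutes with base change by \bp1; this is automatic anyway since $R^2\push\pi\msO_\meC = 0$. Because $R^2\push\pi\msO_\meC$ is (trivially) a vector bundle, \bp2 says $\phi_b^1$ is surjective; then \bp0 makes it an iso and \bp1 gives base-change compatibility for $R^1\push\pi\msO_\meC$, which is thus a sheaf with 1-dimensional fibers whose formation commutes with base change. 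Applying \bp2 once more (with $i=1$) then forces $\phi_b^0$ to be surjective — equivalently, $R^1\push\pi\msO_\meC$ is locally free of rank 1 — and then \bp0,\bp1 and a final application of \bp2 (with $i=0$, where the ``$\phi^{-1}_b$ surjective'' hypothesis is vacuous) yield that $\push\pi\msO_\meC$ itself is a line bundle whose formation commutes with arbitrary base change. The unit $\msO_B \to \push\pi\msO_\meC$ is a map of line bundles which on each fiber becomes the identity $\kappa(b)\iso\hom^0(\meC_b,\msO_{\meC_b})=\kappa(b)$, hence it is an isomorphism by Nakayama.

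The argument for the dualizing sheaf is identical: $\omega_{\meC/B}\vert_{\meC_b}\simeq\omega_{\meC_b}\simeq\msO_{\meC_b}$ by base-change compatibility of $\omega_{\meC/B}$ and our hypothesis, so the fibral cohomology numbers are again constant and the same iterative application of Theorem \ref{thm:coh-bc} shows that $\msL\coloneqq\push\pi\omega_{\meC/B}$ is a line bundle on $B$ whose formation commutes with arbitrary base change. To pull this back to $\omega_{\meC/B}$ itself, I observe that $\omega_{\meC_b}\simeq\msO_{\meC_b}$ is globally generated on every fiber, so Lemma \ref{lem:glob-gen} applies and gives a surjection $\pull\pi\msL = \pull\pi\push\pi\omega_{\meC/B}\onto\omega_{\meC/B}$; a surjection between line bundles is an isomorphism. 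Uniqueness of $\msL$ is then a one-line application of the projection formula together with the first claim: if $\pull\pi\msL'\simeq\omega_{\meC/B}\simeq\pull\pi\msL$, then pushing forward and using $\push\pi\msO_\meC=\msO_B$ yields $\msL'\simeq\msL$. The only step that requires any care beyond bookkeeping is the base-change compatibility of $\omega_{\meC/B}$, which underpins the fact that $\omega_{\meC/B}\vert_{\meC_b}=\omega_{\meC_b}$ and so allows the fibral cohomology calculation to feed into cohomology and base change.
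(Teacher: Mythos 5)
Your overall strategy is the paper's (iterate \cref{thm:coh-bc}, then use \cref{lem:glob-gen} and the projection formula), but there is one genuine logical gap at the step ``Applying \bp2 once more (with $i=1$) then forces $\phi^0_b$ to be surjective.'' Part \bp2 of \cref{thm:coh-bc} is a biconditional, conditional on $\phi^1_b$ being surjective: it tells you that $\phi^0_b$ is surjective \emph{if and only if} $R^1\push\pi\msO_\meC$ is locally free near $b$. You have established neither side. Your implicit justification seems to be that a coherent sheaf with $1$-dimensional fibers whose formation commutes with base change must be a line bundle; that is false over a non-reduced base (e.g.\ $k$ as a module over $k[\eps]/(\eps^2)$ has constant fiber dimension $1$ but is not free), and the lemma is invoked in the paper over arbitrary base schemes (e.g.\ in \cref{thm:loc-model-conv}), so you cannot assume $B$ reduced. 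The descending induction therefore stalls at $i=1$ and needs one direct input, which is exactly what the paper supplies: $\phi^0_b\colon\push\pi\msO_\meC\otimes\kappa(b)\to\hom^0(\meC_b,\msO_{\meC_b})=\kappa(b)$ is surjective because the composite $\kappa(b)\to\push\pi\msO_\meC\otimes\kappa(b)\to\kappa(b)$ is a ring map sending $1\mapsto1$, hence nonzero into a $1$-dimensional target. With that sentence added, the rest of your argument for $\push\pi\msO_\meC=\msO_B$ closes.

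The same gap recurs, less fixably, in your parallel run of the machine for $\msF=\omega_{\meC/B}$: there is no canonical ``$1$'' in $\hom^0(\meC_b,\omega_{\meC_b})$, so the direct surjectivity argument for $\phi^0_b$ does not transfer, and you are again left needing local freeness of an $R^1$ with no justification. The paper sidesteps this entirely: once $\phi^0_b$ is known surjective for $\msO_\meC$, part \bp2 gives that $R^1\push\pi\msO_\meC$ is a line bundle commuting with base change, and then Grothendieck duality identifies $\push\pi\omega_{\meC/B}\simeq\pdual{R^1\push\pi\msO_\meC}$, which is therefore a line bundle with the required base-change compatibility. I would replace your second application of \cref{thm:coh-bc} with this duality step. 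Your remaining steps (global generation of $\omega_{\meC_b}$, surjection of line bundles is an isomorphism, uniqueness via the projection formula) agree with the paper and are fine.
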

\begin{proof}
    We wish to apply cohomology and base change, \cref{thm:coh-bc}. We will first apply it to $\msF=\msO_\meC$ (with $i=0$). The comparison map
    $$\phi^0_b:\push\pi\msO_\meC\otimes\kappa(b)\too\hom^0(\meC_b,\msO_{\meC_b})=\kappa(b)$$
    is nonzero (e.g. since it's a ring map, so $1\mapsto1$) and so surjective (for all $b\in B$). Therefore, by \cref{thm:coh-bc}, it is an isomorphism and $\push\pi\msO_\meC$ is a line bundle whose formation commutes with arbitrary base change. Now, the natural map $\msO_B\to\push\pi\msO_\meC$ is an isomorphism on fibers since it fits into the below commutative diagram (recall $\phi_b^0$ is itself an isomorphism)
    $$\compdiag{\kappa(b)}{}{\push\pi\msO_\meC\otimes\kappa(b)}{\phi^0_b}{\kappa(b).}{\id}$$
    Thus, $\msO_B\iso\push\pi\msO_\meC$ as desired.
    
    Now, since $h^2(\meC_b,\msO_{\meC_b})=0$ for all $b\in B$, \cref{thm:coh-bc} with $i=2$ applied to $\msF=\msO_\meC$ shows that $R^2\push f\msO_\meC=0$ and so (by part \bp3 of that theorem) $\phi_b^1$ is surjective for all $b\in B$. Since we saw above that also $\phi_b^0$ is surjective for all $b\in B$, another application of \cref{thm:coh-bc}, this time with $i=1$, to $\msF=\msO_\meC$ shows that $R^1\push\pi\msO_\meC$ is a vector bundle on $B$ of rank
    $$h^1(\meC_b,\msO_{\meC_b})=h^0(\meC_b,\omega_{\meC_b})=h^0(\meC_b,\msO_{\meC_b})=1$$
    whose formation commutes with arbitrary base change. By duality, we then conclude that $\msL:=\push\pi\omega_{\meC/B}\simeq\pdual{R^1\push\pi\msO_\meC}$ is a line bundle whose formation commutes with arbitrary base change as well. We claim that $\pull\pi\msL\simeq\omega_{\meC/B}$. This is because \cref{lem:glob-gen} gives a surjection $\pull\pi\msL\onto\omega_{\meC/B}$ and a surjective map between equal rank vector bundles is necessarily an isomorphism. Finally, uniqueness of this choice of $\msL$ follows from the projection formula, which guarantees that, if $\omega_{\meC/B}\simeq\pull\pi\msM$, then $\push\pi\omega_{\meC/B}\simeq\push\pi\msO_\meC\otimes\msM\simeq\msM$.
\end{proof}

\section{\bf Basic Geometry of Weighted Projective Space}
At a few points, we would like to use Theorem 1.4.1 and Theorem 3.3.4 from Dolgachev's paper \cite{wpv} on weighted projective varieties. However, he has a running assumption that for results about $\P(a_0,\dots,a_r)$ over a field $k$, he always assumes $\Char k\nmid a_i$ for all $i$. In this paper, we need to deal with $\P(1,2,1)$ in characteristic $2$. For completeness, here we prove special cases of Dolgachev's results which suffice for our purposes.

\begin{lemma}\label{lem:triv-locus-closed}
    Let $f:X\to Y$ be a flat, proper morphism of noetherian schemes with integral geometric fibers. For a line bundle $\msL$ on $X$, the locus
    \[\b{y\in Y:\msL_y\simeq\msO_{X_y}}\subset Y\]
    is closed.
\end{lemma}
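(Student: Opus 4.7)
The plan is to express the triviality locus as the intersection of two closed sets coming from upper semicontinuity of fiber cohomology. The key pointwise observation is this: for a proper, geometrically integral scheme $Z$ over a field $\kappa$, a line bundle $\msM$ on $Z$ is trivial if and only if $\hom^0(Z,\msM) \neq 0$ and $\hom^0(Z,\msM^{-1}) \neq 0$. The nontrivial direction is that if $s \in \hom^0(Z,\msM)$ and $t \in \hom^0(Z,\msM^{-1})$ are both nonzero, then integrality of $Z$ makes each of them an injection of sheaves, so $s \otimes t \in \hom^0(Z,\msO_Z)$ is nonzero; geometric integrality plus properness forces $\hom^0(Z,\msO_Z) = \kappa$, making $s \otimes t$ a unit, and hence $s$ an isomorphism.

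Applying this fiberwise --- each $X_y$ is proper and geometrically integral over $\kappa(y)$ by hypothesis --- the locus in the lemma coincides with
\[\b{y \in Y : \hom^0(X_y,\msL_y) \geq 1} \cap \b{y \in Y : \hom^0(X_y,\msL_y^{-1}) \geq 1}.\]
The sheaves $\msL$ and $\msL^{-1}$ on $X$ are coherent and flat over the noetherian base $Y$ (since $f$ is flat and these are line bundles on $X$), so the standard upper-semicontinuity theorem for cohomology in flat, proper families (a consequence of \cref{thm:coh-bc}, or of \cite[Theorem III.12.8]{hart}) implies that both functions $y \mapsto \hom^0(X_y,\msL_y)$ and $y \mapsto \hom^0(X_y,\msL_y^{-1})$ are upper semicontinuous. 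Hence each of the two sets displayed above is closed, as is their intersection.

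Every step is essentially routine; the one point worth mentioning is that the identity $\hom^0(X_y,\msO_{X_y}) = \kappa(y)$ genuinely uses \emph{geometric} integrality of the fibers (not just integrality), so without that hypothesis one would need to argue separately --- but since the lemma already assumes integral geometric fibers, this is immediate. I do not anticipate any serious obstacle.
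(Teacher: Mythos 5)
Your proof is correct and follows the same route as the paper: characterize triviality of $\msL_y$ on the proper, geometrically integral fiber by the simultaneous nonvanishing of $h^0(X_y,\msL_y)$ and $h^0(X_y,\msL_y^{-1})$, then apply upper semicontinuity to both. The paper states the pointwise criterion without proof, so your justification via the composite section $s\otimes t$ being a unit in $\hom^0(X_y,\msO_{X_y})=\kappa(y)$ is a welcome (and correct) amplification.
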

\begin{proof}
    Since the fibers of $f$ are geometrically integral and proper, $\msL_y\simeq\msO_{X_y}$ if and only if both $h^0(X_y,\msL_y)$ and $h^0(X_y,\inv\msL_y)$ are nonzero. Given this, the claim follows from semicontinuity \cite[Theorem 12.8]{hart}. 
\end{proof}

To be clear, everything below appears already in \cite{wpv}, except they technically include a mild characteristic restriction there.
\begin{lemma}\label{lem:ds-p121}
    Let $\P(a_1,\dots,a_r)$ with $\gcd(a_i)=1$, viewed as a scheme over any field $k$. Then, its dualizing sheaf is $\msO(-a_0-\dots-a_r)$.
\end{lemma}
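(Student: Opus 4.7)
The plan is to realize $\P \coloneqq \P(a_0,\dots,a_r) = \Proj S$, where $S = k[x_0,\dots,x_r]$ is the polynomial ring graded by $\deg x_i = a_i$, and then to reduce the statement to a graded commutative-algebra computation. The first step is to verify that $\P$ is Cohen-Macaulay, so that its dualizing sheaf $\omega_\P$ exists and is characterized by Serre duality. This follows by inspecting the standard affine cover: each $U_i = D_+(x_i) = \spec(S_{x_i})_0$ is the spectrum of a direct summand of a localized polynomial ring, hence Cohen-Macaulay.

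Next, I would compute the graded canonical module of $S$. Using the \v Cech complex for local cohomology at the irrelevant ideal $\mathfrak{m} = (x_0,\dots,x_r)$, the top local cohomology is the graded $S$-module of ``inverse monomials''
\[ H^{r+1}_{\mathfrak{m}}(S) \;=\; \bigoplus_{m_0,\dots,m_r\ge 1} k\cdot x_0^{-m_0}\cdots x_r^{-m_r}, \]
where $x_0^{-m_0}\cdots x_r^{-m_r}$ lies in degree $-\sum a_i m_i$. The unique top-degree element is $x_0^{-1}\cdots x_r^{-1}$, sitting in degree $-\sum a_i$, and the whole module is generated by this element as an (inverse) $S$-module. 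Graded Matlis duality then identifies the graded canonical module of $S$ as $\omega_S \cong S(-a_0-\dots-a_r)$.

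Finally, I would pass from $\omega_S$ to $\omega_\P$ via the sheafification functor $\widetilde{(-)}$, using the standard identification $\omega_\P \simeq \widetilde{\omega_S}$ for Cohen-Macaulay graded rings $S$ with $\dim \Proj S > 0$. This yields $\omega_\P \simeq \widetilde{S(-a_0-\dots-a_r)} = \msO_\P(-a_0-\dots-a_r)$, as claimed. The main technical obstacle is this last step: because $\msO_\P(m)$ is generally not locally free on a weighted projective space, the usual invocation of Serre duality requires a bit of care. I would handle it either by appealing to graded local duality (Matlis duality for $H^{r+1}_{\mathfrak{m}}(S)$ transported to $\Proj S$), or by restricting to the cofinal subsystem of twists $\msO_\P(n\ell)$ for $\ell = \lcm(a_i)$ (which are genuine line bundles), verifying the perfect pairing
\[ H^0(\P,\msO_\P(n\ell))\otimes H^r(\P,\msO_\P(-\textstyle\sum a_i - n\ell)) \too H^r(\P,\msO_\P(-\textstyle\sum a_i))\simeq k \]
by explicit monomial matching, and then using Yoneda to conclude $\omega_\P \simeq \msO_\P(-a_0-\dots-a_r)$.
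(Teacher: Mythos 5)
Your plan is correct, and it takes a genuinely different route from the paper. The paper's proof does no computation at all: it forms $\msL:=\omega_{\P}\otimes\msO_{\P}(\textstyle\sum a_i)$ on the weighted projective space over $\spec\Z$, invokes Dolgachev's Theorem 3.3.4 to get triviality of $\msL$ on the fibers over all primes $p\nmid a_0\cdots a_r$, and then uses \cref{lem:triv-locus-closed} (the locus of fiberwise-trivial line bundles in a flat proper family with integral geometric fibers is closed) to propagate triviality to the remaining, bad-characteristic fibers. Your approach instead computes $H^{r+1}_{\mathfrak{m}}(S)$ directly from the \v Cech complex on $x_0,\dots,x_r$ -- a computation that is visibly characteristic-independent -- identifies $\omega_S\cong S(-\sum a_i)$ by graded local duality, and sheafifies. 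What the paper's argument buys is brevity and the avoidance of any duality formalism for non-standard gradings, at the cost of leaning on an external reference and a slightly delicate specialization step (note that for general weights $\msO_\P(\sum a_i)$ is only reflexive of rank one, so \cref{lem:triv-locus-closed} as stated about line bundles is really only being applied in the well-behaved cases the paper needs, such as $\P(1,2,1)$). What your argument buys is self-containedness and uniformity in the characteristic, plus the cohomology of all twists $\msO(n)$ for free, which would also reprove \cref{lem:H^1(O(n))=0}. The two points you should be careful to nail down in a full write-up are exactly the ones you flag: (i) Cohen--Macaulayness of $\P(a_0,\dots,a_r)$ in all characteristics -- your ``direct summand'' justification is morally right but the splitting $(S_{x_i})_0\into S_{x_i}$ is not module-finite, so it is cleaner to quote Hochster's theorem that normal affine semigroup rings (hence toric varieties, hence the charts $D_+(x_i)\cong\A^r/\!\!/\mu_{a_i}$) are Cohen--Macaulay, or to use that $\mu_{a_i}$ is linearly reductive in every characteristic; and (ii) the identification $\omega_{\Proj S}\cong\widetilde{\omega_S}$ for non-standardly graded $S$, where either graded local duality (Goto--Watanabe) or your explicit perfect-pairing check on the cofinal system $\msO(n\ell)$, $\ell=\lcm(a_i)$, does the job.
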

\begin{proof}
    Let $\P:=\P(a_1,\dots,a_r)_\Z$ be the corresponding weighted projective space over $\spec\Z$, and let $\msL:=\omega_\P\otimes\msO_\P(a_0+\dots+a_r)$. It suffices to show that $\msL$ has trivial fibers over all of $\spec\Z$. \cite[Theorem 3.3.4]{wpv} tells us that $\msL_p:=\msL\vert_{\P_{\F_p}}\simeq\msO_{\P_{\F_p}}$ for any $p\nmid(a_0\dots a_r)$, so \cref{lem:triv-locus-closed} tells us that $\b{p\in\spec\Z:\msL_p\t{ trivial}}$ is a closed set containing the dense set of $p$ not dividing any $a_i$ and so is all of $\spec\Z$.
\end{proof}
\begin{cor}\label{cor:ds-hs}
    Let $V\subset\P(a_0,\dots,a_r)$ (with $\gcd(a_i)=1$) be a degree $d$ hypersurface over any field $k$. Then, $\omega_V\simeq\msO_V(d-a_0-\dots-a_r):=\msO_{\P(a_0,\dots,a_r)}(d-a_0-\dots-a_r)\vert_V$.
\end{cor}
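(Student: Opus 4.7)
The plan is to apply the standard adjunction formula for a Cartier divisor in a Gorenstein ambient scheme, using Lemma \ref{lem:ds-p121} to identify the ambient dualizing sheaf.

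First, I would observe that a degree $d$ hypersurface $V\subset\P:=\P(a_0,\dots,a_r)$ is by definition the zero scheme of a global section of $\msO_\P(d)$, so $V\hookrightarrow\P$ is an effective Cartier divisor with $\msO_\P(V)\simeq\msO_\P(d)$. Next, Lemma \ref{lem:ds-p121} tells us that $\omega_\P\simeq\msO_\P(-a_0-\dots-a_r)$ is an invertible sheaf, and in particular $\P$ is Gorenstein (the key point being that the lemma's proof via spreading out from $\spec\Z$ gives the conclusion in every characteristic, bypassing the usual characteristic hypothesis in \cite{wpv}).

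Given these two inputs, the adjunction formula for a Cartier divisor $V$ in a Gorenstein scheme $\P$ gives
\[
\omega_V \simeq (\omega_\P \otimes \msO_\P(V))\vert_V \simeq \msO_\P(d - a_0 - \dots - a_r)\vert_V = \msO_V(d - a_0 - \dots - a_r),
\]
which is exactly what we want. The one point requiring care (and which I expect to be the only mild obstacle) is the justification of the adjunction formula in this singular setting: the usual statement is most often cited for smooth ambient varieties, but it holds verbatim for an effective Cartier divisor in a Gorenstein Noetherian scheme. This follows either from Grothendieck duality applied to the closed immersion $i\colon V\hookrightarrow\P$ (using that $i_*\msO_V$ has Tor-dimension $1$ over $\msO_\P$, so that $i^!\omega_\P = i^*\omega_\P\otimes\msO_\P(V)[0]$ is a line bundle in degree $0$), or more elementarily from the conormal exact sequence together with the identification $\omega_{V/\P}\simeq\msO_V(V)$ coming from the standard locally free resolution $0\to\msO_\P(-V)\to\msO_\P\to i_*\msO_V\to 0$. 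Either way, no characteristic hypothesis is needed, so the corollary follows in full generality.
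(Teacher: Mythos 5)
Your proposal is correct and follows essentially the same route as the paper: the paper's proof likewise combines \cref{lem:ds-p121} with the adjunction formula $\omega_V\simeq(\omega_\P\otimes\msO_\P(V))\vert_V$ for a Cartier divisor in a Gorenstein ambient scheme, delegating the justification of adjunction in this singular setting to \cite[Corollary (19)]{kleiman-duality}. Your extra discussion of why adjunction needs no characteristic or smoothness hypothesis is exactly the content of that citation.
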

\begin{proof}
    This now follows directly from adjunction \cite[Corollary (19)]{kleiman-duality}.
\end{proof}
\begin{lemma}\label{lem:H^1(O(n))=0}
    Consider $\P(1,2,1)$ over an arbitrary field $k$. For any $n\in\Z$, we have
    $$\hom^1(\P(1,2,1),\msO(n))=0.$$
\end{lemma}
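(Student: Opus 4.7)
The plan is to identify the Čech cohomology of $\mathcal{O}(n)$ with a graded piece of the local cohomology of the graded polynomial ring $S := k[X,Y,Z]$ (with $\deg X = \deg Z = 1$ and $\deg Y = 2$), and to exploit the fact that $S$ is Cohen--Macaulay.

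First I would cover $\mathbb{P} := \mathbb{P}(1,2,1)$ by the three distinguished affine opens $U_X = D_+(X)$, $U_Y = D_+(Y)$, $U_Z = D_+(Z)$. All pairwise and triple intersections are distinguished opens in these affines and hence themselves affine, so by Serre vanishing on affines, Čech cohomology for this cover agrees with derived functor cohomology. Since sections of $\mathcal{O}(n)$ on $U_f$ are given by $(S_f)_n$, the Čech complex is precisely the degree-$n$ graded piece of the three-term complex
$$S_X \oplus S_Y \oplus S_Z \;\longrightarrow\; S_{XY} \oplus S_{XZ} \oplus S_{YZ} \;\longrightarrow\; S_{XYZ},$$
placed in Čech degrees $0,1,2$.

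Prepending the inclusion $S \hookrightarrow S_X \oplus S_Y \oplus S_Z$ produces the standard stable Koszul / Čech complex $\check{C}^\bullet(X,Y,Z;S)$ whose cohomology computes the local cohomology $H^\bullet_{\mathfrak{m}}(S)$ at $\mathfrak{m} = (X,Y,Z)$. A direct comparison of the two complexes identifies, graded piece by graded piece,
$$H^1\bigl(\mathbb{P},\mathcal{O}(n)\bigr) \;\cong\; \bigl(H^2_{\mathfrak{m}}(S)\bigr)_n.$$
The key step is then to observe that $X,Y,Z$ is a regular sequence in the polynomial ring $k[X,Y,Z]$, because regularity of a sequence is an ungraded condition depending only on the underlying ring. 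Consequently $S$ has depth $3$ at $\mathfrak{m}$, so $H^i_{\mathfrak{m}}(S) = 0$ for $i < 3$. In particular $H^2_{\mathfrak{m}}(S) = 0$, and the claim follows.

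There is no real obstacle here: the argument is entirely formal, and the nontrivial weight on $Y$ plays no role in either the regularity of $X,Y,Z$ or in the identification of the Čech complex with the truncated local-cohomology complex, both of which depend only on $S$ as an ungraded ring.
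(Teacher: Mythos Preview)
Your proof is correct, and it follows a genuinely different route from the paper's argument. The paper proceeds by embedding $\P^1\hookrightarrow\P(1,2,1)$ as the locus $\{Y=0\}$, obtaining the short exact sequence $0\to\msO_{\P(1,2,1)}(-2)\to\msO_{\P(1,2,1)}\to\msO_{\P^1}\to0$, and then twisting and inducting downwards from a large twist where Serre vanishing (for the ample line bundle $\msO(2)$) applies; the key step is that the restriction $\hom^0(\P(1,2,1),\msO(m))\to\hom^0(\P^1,\msO(m))$ is surjective. Your approach instead identifies $\hom^1(\P(1,2,1),\msO(n))$ with the degree-$n$ piece of $H^2_\mfm(S)$ and invokes depth, which is cleaner and immediately generalizes to any $\P(a_0,\dots,a_r)$ and to $\hom^i(\,\cdot\,,\msO(n))$ for all $0<i<r$. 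The paper's argument, by contrast, is more elementary in that it avoids local cohomology entirely, but it is tailored to this particular weighted projective space and to the middle cohomological degree.
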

\begin{proof}
    Write $\P(1,2,1)=\Proj k[X,Y,Z]$ with $X,Z$ in degree 1 and $Y$ in degree 2. Note that $\P^1\into\P(1,2,1)$ as the subscheme $Y=0$, so we have an exact sequence
    \[0\too\msO_{\P(1,2,1)}(-2)\too\msO_{\P(1,2,1)}\too\msO_{\P^1}\too0.\]
    The line bundle $\msO(2)$ on $\P(1,2,1)$ is ample, so Serre vanishing tells us that $\hom^1(\P(1,2,1),\msO(n+2k))=0$ for some $k\gg1$. We induct backwards to get the same conclusion when $k=0$. Twisting our short exact sequence by $n+2k$ and taking cohomology gives the exact sequence
    \[\hom^0(\P(1,2,1),\msO(n+2k))\to\hom^0(\P^1,\msO(n+2k))\to\hom^1(\P(1,2,1),\msO(n+2(k-1)))\to\hom^1(\P(1,2,1),\msO(n+2k))=0.\]
    The leftmost map above is easily seen to be surjective, so exactness gives $\hom^1(\P(1,2,1),\msO(n+2(k-1))=0$. Downwards induction then let's us conclude that $\hom^1(\P(1,2,1),\msO(n))=0$ as desired.
\end{proof}

\bibliographystyle{alpha}
\bibliography{citations}

\newcommand{\etalchar}[1]{$^{#1}$}
\begin{thebibliography}{CFO{\etalchar{+}}08}

\bibitem[Alp22]{alper}
Jarod Alper.
\newblock Notes on stacks and moduli.
\newblock \url{https://sites.math.washington.edu/~jarod/moduli.pdf}, 2022.

\bibitem[Art86]{ArithGeo-Lipman}
M.~Artin.
\newblock Lipman's proof of resolution of singularities for surfaces.
\newblock In {\em Arithmetic geometry ({S}torrs, {C}onn., 1984)}, pages
  267--287. Springer, New York, 1986.

\bibitem[BD09]{tama}
Kai Behrend and Ajneet Dhillon.
\newblock Connected components of moduli stacks of torsors via {T}amagawa
  numbers.
\newblock {\em Canad. J. Math.}, 61(1):3--28, 2009.

\bibitem[BKL{\etalchar{+}}15]{ec-stats}
Manjul Bhargava, Daniel~M. Kane, Hendrik~W. Lenstra, Jr., Bjorn Poonen, and
  Eric Rains.
\newblock Modeling the distribution of ranks, {S}elmer groups, and
  {S}hafarevich-{T}ate groups of elliptic curves.
\newblock {\em Camb. J. Math.}, 3(3):275--321, 2015.

\bibitem[BLR90]{neron}
Siegfried Bosch, Werner L\"{u}tkebohmert, and Michel Raynaud.
\newblock {\em N\'{e}ron models}, volume~21 of {\em Ergebnisse der Mathematik
  und ihrer Grenzgebiete (3) [Results in Mathematics and Related Areas (3)]}.
\newblock Springer-Verlag, Berlin, 1990.

\bibitem[Bru92]{brumer}
Armand Brumer.
\newblock The average rank of elliptic curves. {I}.
\newblock {\em Invent. Math.}, 109(3):445--472, 1992.

\bibitem[BS13a]{bhargava2013average}
Manjul Bhargava and Arul Shankar.
\newblock The average number of elements in the 4-selmer groups of elliptic
  curves is 7, 2013.

\bibitem[BS13b]{bhargava2013averagesize5selmergroup}
Manjul Bhargava and Arul Shankar.
\newblock The average size of the 5-selmer group of elliptic curves is 6, and
  the average rank is less than 1, 2013.

\bibitem[BS15a]{bhar-shan}
Manjul Bhargava and Arul Shankar.
\newblock Binary quartic forms having bounded invariants, and the boundedness
  of the average rank of elliptic curves.
\newblock {\em Ann. of Math. (2)}, 181(1):191--242, 2015.

\bibitem[BS15b]{bhar-shan-3sel}
Manjul Bhargava and Arul Shankar.
\newblock Ternary cubic forms having bounded invariants, and the existence of a
  positive proportion of elliptic curves having rank 0.
\newblock {\em Ann. of Math. (2)}, 181(2):587--621, 2015.

\bibitem[CFO{\etalchar{+}}08]{explicit-descent}
J.~E. Cremona, T.~A. Fisher, C.~O'Neil, D.~Simon, and M.~Stoll.
\newblock Explicit {$n$}-descent on elliptic curves. {I}. {A}lgebra.
\newblock {\em J. Reine Angew. Math.}, 615:121--155, 2008.

\bibitem[CFS10]{Cremona_2010}
John~E. Cremona, Tom~A. Fisher, and Michael Stoll.
\newblock Minimisation and reduction of 2-, 3- and 4-coverings of elliptic
  curves.
\newblock {\em Algebra and Number Theory}, 4(6):763--820, sep 2010.

\bibitem[Con05]{conrad-min}
Brian Conrad.
\newblock Minimal models for elliptic curves.
\newblock \url{http://math.stanford.edu/~conrad/papers/minimalmodel.pdf}, 2005.

\bibitem[Del75]{deligne}
P.~Deligne.
\newblock Courbes elliptiques: formulaire d'apr\`es {J}. {T}ate.
\newblock In {\em Modular functions of one variable, {IV} ({P}roc. {I}nternat.
  {S}ummer {S}chool, {U}niv. {A}ntwerp, {A}ntwerp, 1972)}, pages 53--73.
  Lecture Notes in Math., Vol. 476, 1975.

\bibitem[dJ02]{dejong}
A.~J. de~Jong.
\newblock Counting elliptic surfaces over finite fields.
\newblock volume~2, pages 281--311. 2002.
\newblock Dedicated to Yuri I. Manin on the occasion of his 65th birthday.

\bibitem[Dol82]{wpv}
Igor Dolgachev.
\newblock Weighted projective varieties.
\newblock In {\em Group actions and vector fields ({V}ancouver, {B}.{C}.,
  1981)}, volume 956 of {\em Lecture Notes in Math.}, pages 34--71. Springer,
  Berlin, 1982.

\bibitem[EL24]{ellenberg2024homologicalstabilitygeneralizedhurwitz}
Jordan~S. Ellenberg and Aaron Landesman.
\newblock Homological stability for generalized hurwitz spaces and selmer
  groups in quadratic twist families over function fields, 2024.

\bibitem[FLR23]{feng-landesman-rains}
Tony Feng, Aaron Landesman, and Eric~M. Rains.
\newblock The geometric distribution of {S}elmer groups of elliptic curves over
  function fields.
\newblock {\em Math. Ann.}, 387(1-2):615--687, 2023.

\bibitem[Gir71]{giraud}
Jean Giraud.
\newblock {\em Cohomologie non ab\'{e}lienne}.
\newblock Die Grundlehren der mathematischen Wissenschaften, Band 179.
  Springer-Verlag, Berlin-New York, 1971.

\bibitem[GL19]{gaitsgory-lurie}
Dennis Gaitsgory and Jacob Lurie.
\newblock {\em Weil's conjecture for function fields. {V}ol. 1}, volume 199 of
  {\em Annals of Mathematics Studies}.
\newblock Princeton University Press, Princeton, NJ, 2019.

\bibitem[Gro67]{egaiv.4}
A.~Grothendieck.
\newblock \'{E}l\'{e}ments de g\'{e}om\'{e}trie alg\'{e}brique. {IV}. \'{E}tude
  locale des sch\'{e}mas et des morphismes de sch\'{e}mas {IV}.
\newblock {\em Inst. Hautes \'{E}tudes Sci. Publ. Math.}, (32):361, 1967.

\bibitem[Har77]{hart}
Robin Hartshorne.
\newblock {\em Algebraic geometry}.
\newblock Graduate Texts in Mathematics, No. 52. Springer-Verlag, New
  York-Heidelberg, 1977.

\bibitem[HLHN14]{ho-lehung-ngo}
Q.~P. H{\`{\^o}}, V.~B. L{\^e}~H{\`u}ng, and B.~C. Ng{\^o}.
\newblock Average size of 2-{S}elmer groups of elliptic curves over function
  fields.
\newblock {\em Math. Res. Lett.}, 21(6):1305--1339, 2014.

\bibitem[Kle80]{kleiman-duality}
Steven~L. Kleiman.
\newblock Relative duality for quasicoherent sheaves.
\newblock {\em Compositio Math.}, 41(1):39--60, 1980.

\bibitem[Lan21]{aaron-selmer}
Aaron Landesman.
\newblock The geometric average size of {S}elmer groups over function fields.
\newblock {\em Algebra Number Theory}, 15(3):673--709, 2021.

\bibitem[Liu96]{liu:hyp-fr}
Qing Liu.
\newblock Mod\`eles entiers des courbes hyperelliptiques sur un corps de
  valuation discr\`ete.
\newblock {\em Trans. Amer. Math. Soc.}, 348(11):4577--4610, 1996.

\bibitem[Liu02]{liu}
Qing Liu.
\newblock {\em Algebraic geometry and arithmetic curves}, volume~6 of {\em
  Oxford Graduate Texts in Mathematics}.
\newblock Oxford University Press, Oxford, 2002.
\newblock Translated from the French by Reinie Ern\'{e}, Oxford Science
  Publications.

\bibitem[Liu22]{liu:global-weier}
Qing Liu.
\newblock Global {W}eierstrass equations of hyperelliptic curves.
\newblock {\em Trans. Amer. Math. Soc.}, 375(8):5889--5906, 2022.

\bibitem[Mat11]{akhil-zmt}
Akhil Mathew.
\newblock Zariski’s main theorem and some applications.
\newblock \url{https://math.uchicago.edu/~amathew/ZMTfull.pdf}, January 2011.

\bibitem[Mil80]{milne-et}
James~S. Milne.
\newblock {\em \'{E}tale cohomology}.
\newblock Princeton Mathematical Series, No. 33. Princeton University Press,
  Princeton, N.J., 1980.

\bibitem[MS72]{mum-suo-moduli}
David Mumford and Kalevi Suominen.
\newblock Introduction to the theory of moduli.
\newblock In {\em Algebraic geometry, {O}slo 1970 ({P}roc. {F}ifth {N}ordic
  {S}ummer {S}chool in {M}ath.)}, pages 171--222. Wolters-Noordhoff, Groningen,
  1972.

\bibitem[Muk03]{mukai}
Shigeru Mukai.
\newblock {\em An introduction to invariants and moduli}, volume~81 of {\em
  Cambridge Studies in Advanced Mathematics}.
\newblock Cambridge University Press, Cambridge, japanese edition, 2003.

\bibitem[Oda69]{oda-dR}
Tadao Oda.
\newblock The first de {R}ham cohomology group and {D}ieudonn\'{e} modules.
\newblock {\em Ann. Sci. \'{E}cole Norm. Sup. (4)}, 2:63--135, 1969.

\bibitem[Ogg67]{ogg}
A.~P. Ogg.
\newblock Elliptic curves and wild ramification.
\newblock {\em Amer. J. Math.}, 89:1--21, 1967.

\bibitem[O'N02]{period-index}
Catherine O'Neil.
\newblock The period-index obstruction for elliptic curves.
\newblock {\em J. Number Theory}, 95(2):329--339, 2002.

\bibitem[PR12]{poonen-rains}
Bjorn Poonen and Eric Rains.
\newblock Random maximal isotropic subspaces and {S}elmer groups.
\newblock {\em J. Amer. Math. Soc.}, 25(1):245--269, 2012.

\bibitem[PW23]{sun-woo}
Sun~Woo Park and Niudun Wang.
\newblock {On the Average of p-Selmer Ranks in Quadratic Twist Families of
  Elliptic Curves Over Global Function Fields}.
\newblock {\em International Mathematics Research Notices}, page rnad095, 05
  2023.

\bibitem[Sad11]{sadek:genus-1}
Mohammad Sadek.
\newblock Counting models of genus one curves.
\newblock {\em Math. Proc. Cambridge Philos. Soc.}, 150(3):399--417, 2011.

\bibitem[Sch80]{schwartz}
J.~T. Schwartz.
\newblock Fast probabilistic algorithms for verification of polynomial
  identities.
\newblock {\em J. Assoc. Comput. Mach.}, 27(4):701--717, 1980.

\bibitem[Sha13]{shankar-thesis}
Arul Shankar.
\newblock {\em The average rank of elliptic curves over number fields}.
\newblock ProQuest LLC, Ann Arbor, MI, 2013.
\newblock Thesis (Ph.D.)--Princeton University.

\bibitem[Sil09]{silverman}
Joseph~H. Silverman.
\newblock {\em The arithmetic of elliptic curves}, volume 106 of {\em Graduate
  Texts in Mathematics}.
\newblock Springer, Dordrecht, second edition, 2009.

\bibitem[{Sta}21]{stacks-project}
The {Stacks project authors}.
\newblock The stacks project.
\newblock \url{https://stacks.math.columbia.edu}, 2021.

\bibitem[Tho19]{thorne}
Jack~A. Thorne.
\newblock On the average number of 2-{S}elmer elements of elliptic curves over
  {$\Bbb F_q(X)$} with two marked points.
\newblock {\em Doc. Math.}, 24:1179--1223, 2019.

\bibitem[Vak23]{ravi-notes}
Ravi Vakil.
\newblock The rising sea: Foundations of algebraic geometry.
\newblock \url{http://math.stanford.edu/~vakil/216blog/FOAGjun2923public.pdf},
  June 2023.

\end{thebibliography}

\end{document}